\numberwithin{equation}{section}
\newtheorem{theorem}{Theorem}[section]
\newtheorem{lemma}[theorem]{Lemma}
\newtheorem{cor}[theorem]{Corollary}
\newtheorem{prop}[theorem]{Proposition}
\newtheorem{defin}[theorem]{Definition}
\newtheorem{remark}[theorem]{Remark}
\newcommand{\cK}{\mathcal{K}}
\newcommand{\Lp}{\mathcal{L}}
\newcommand{\cL}{\mathcal{L}}
\newcommand{\cC}{\mathcal{C}}
\newcommand{\cD}{\mathcal{D}}
\newcommand{\cI}{\mathcal{I}}
\newcommand{\cF}{\mathcal{F}}
\newcommand{\G}{\mathcal{G}}
\newcommand{\cH}{\mathcal{H}}
\newcommand{\cS}{\mathcal{S}}
\newcommand{\cO}{\mathcal{O}}
\newcommand{\cQ}{\mathcal{Q}}
\newcommand{\cG}{\mathcal{G}}
\newcommand{\cW}{\mathcal{W}}
\newcommand{\cA}{\mathcal{A}}
\newcommand{\pa}{\mathcal{P}}
\newcommand{\cR}{\mathcal{R}}
\newcommand{\bN}{\mathbb{N}}
\newcommand{\R}{\mathbb{R}}
\newcommand{\bG}{\mathbb{G}}
\newcommand{\bH}{\mathbb{H}}
\newcommand{\bP}{\mathbb{P}}
\newcommand{\bR}{\mathbb{R}}
\newcommand{\bT}{\mathbb{T}}
\newcommand{\bV}{\mathbb{V}}
\newcommand{\bZ}{\mathbb{Z}}
\newcommand{\bd}{\mathbbm d}
\newcommand{\musrb}{\mu_{\srb}}
\newcommand{\srb}{\mbox{\tiny \textup SRB}}
\newcommand{\bx}{\bar{x}}
\newcommand{\by}{\bar{y}}
\newcommand{\diam}{\mbox{diam}}
\newcommand{\vf}{\varphi}
\newcommand{\tpsi}{\tilde{\psi}}
\newcommand{\ve}{\varepsilon}
\newcommand{\tS}{\tilde{S}}
\newcommand{\tT}{\widetilde{T}}
\newcommand{\hT}{\widehat{T}}
\newcommand{\rT}{\mathring{T}}
\newcommand{\rL}{\mathring{\cL}}
\newcommand{\rM}{\mathring{M}}
\newcommand{\tM}{\widetilde{M}}
\newcommand{\tC}{\widetilde{\cC}}
\newcommand{\tH}{\widetilde{H}}
\newcommand{\tJ}{\widetilde{J}}
\newcommand{\tW}{\widetilde{W}}
\newcommand{\tf}{\tilde{f}}
\newcommand{\hL}{\widehat{L}}
\newcommand{\wcW}{{\overline{W}}}
\newcommand{\ind}{\mathbbm{1}}
\newcommand{\tri}{{|\:\!\!|\:\!\!|}}
\def\beq{\begin{equation}}
\def\eeq{\end{equation}}
\begin{document}

\title{Projective Cones for Sequential Dispersing Billiards}
\author{Mark F. Demers\thanks{Department of Mathematics, Fairfield University, Fairfield CT 06824, USA.  Email: mdemers@fairfield.edu} 
\and Carlangelo Liverani\thanks{Dipartimento di Matematica, Universit\`a di Roma II, Tor Vergata, 00133 Roma, Italy.  Member of the  GNFM-INDAM Rome, Italy. Email: liverani@mat.uniroma2.it. \hfill\break
\indent We thank Jochen Broecker, Tobias Kuna and especially Lea Oljaca for pointing out to us the possibility to prove Lemma \ref{lem:b property}. This work was supported by the PRIN Grant ``Regular and stochastic behaviour in dynamical systems" (PRIN 2017S35EHN). C.L. acknowledges the MIUR Excellence Department Project awarded to the Department of Mathematics, University of Rome Tor Vergata, CUP E83C18000100006.  M.D. was partially supported by NSF grants DMS 1800321 and DMS 2055070.}
}

\date{\today}

\maketitle

\begin{abstract}
We construct Birkhoff cones for dispersing billiards, which are contracted by the action of the transfer operator. This construction permits the study of statistical properties not only of regular dispersing billiards but also of sequential billiards (the billiard changes at each collision in a prescribed manner), open billiards (the dynamics exits some region or dies when hitting some obstacle) and many other examples.  In particular, we include applications to
chaotic scattering and the random Lorentz gas.
\end{abstract}


\section{Introduction}

Billiards are a ubiquitous source of models in physics, in particular in Statistical Mechanics. The study of the ergodic properties of billiards is of paramount importance for such applications and also a source of innovative ideas in Ergodic Theory. In particular, starting with at least \cite{Kry}, it has become clear that a quantitive estimate of the speed of convergence to equilibrium is pivotal for this research program. The first strong result of this type dates back to Bunimovich, Sinai and Chernov \cite{bsc} in 1990, but it relies on a Markov-partition-like technology that is not very well suited to producing optimal results.  The next breakthrough is due to Lai-Sang Young \cite{You98, You99} who put forward two techniques, towers and coupling, well suited to study the decay of correlations for a large class of systems,  billiards included. The idea of coupling was subsequently refined by Dolgopyat \cite{Do04a,Do04b, Do05} who introduced the notion of standard pairs, which have proved a formidable tool to study the statistical properties of dynamical systems in general and billiards in particular \cite{Ch06, Ch08, CD, chernov zhang}.  See \cite[Chapter~7]{chernov book} for a detailed exposition of these ideas and related references. 

In the meantime another powerful idea has appeared, following the seminal work of Ruelle \cite{RS75, Ruelle76} and Lasota-Yorke \cite{LY73}, to study the spectral properties of the associated transfer operator acting on
spaces of functions adapted to the dynamics.  After some preliminary attempts \cite{Fr86, Rugh, Kitaev}, the functional approach for hyperbolic systems was launched by the seminal paper \cite{BKL}, which was quickly followed and
refined by a series of authors, including \cite{Ba1, gouezel liverani,  BaT, gouezel liverani2}. 
Such an approach, when applicable, has provided the strongest results so far, see \cite{Babook} for a recent review. In particular, building on a preliminary result by Demers and Liverani \cite{DL08}, it has been applied to billiards by Demers and collaborators \cite{demzhang11, demzhang13, demzhang14, dem bill, max, thermo}. 
This has led to manifold results, notably the proof of exponential decay of correlations for certain billiard flows \cite{BDL}.

Yet, lately there has been a growing interest in non-stationary systems, when the dynamical system changes with time. Since most systems of interest are not isolated, not even in first approximation, the possibility of a change to 
the system due to external factors clearly has physical relevance.  Another important scenario in which non-stationarity appears is in dynamical systems in random media, e.g. \cite{AL}. The functional approach as such seems not to be well suited to treat these situations since it is based on the study of an operator via spectral theory. 
In the non-stationary case a single operator is substituted by a product of different operators and spectral theory 
does not apply. 

There exist several approaches that can be used to overcome this problem, notably:
\begin{enumerate}
\item consider random systems; in this case, especially in the annealed case, it is possible to recover an averaged transfer operator to which the theory applies. More recently, the idea has emerged to study quenched systems via infinite dimensional Oseledets theory, see e.g. \cite{DFGV1, DFGV2} and references therein; 
\item consider only slowly changing systems that can be treated using the perturbation theory in \cite{KL99, gouezel liverani}. For example, see \cite{DS}, and references therein, for some recent work in this direction;
\item use the technology of standard pairs, which has the advantage of being very flexible and applicable to the non-stationary case \cite{young zhang}.  Note that the standard pair technology and the above perturbation ideas can be profitably combined together, see \cite{DS16, DS18, DLPV};
\item use the cone and Hilbert metric technology introduced in \cite{liv95, liv95b, LM},  which has also been extended
to the random setting \cite{AL, atnip}.
\end{enumerate}
The first two approaches, although effective, impose severe limitations on the class of nonstationary systems that can be studied. The second two approaches are more general and seem more or less equivalent. However, coupling arguments are often cumbersome to write in detail and usually provide weaker quantitative estimates compared to the cone method. 

Therefore, in the present article we develop the cone method and demonstrate that it can be successfully applied to billiards. Indeed, we introduce a relatively simple cone that is contracted by a large class of billiards. This implies that one can easily prove a loss of memory result for sequences of billiard maps.  To show that the previous results have concrete applications we devote one third of this paper to developing applications to several  physically relevant classes of models.

We emphasize that the present paper does not exhaust the possible applications of the present ideas. To have a more complete theory one should consider, to mention just a few, billiards with corner points, billiards with electric or magnetic fields, billiards with more general reflection laws, measures different from the SRB measure (that is transfer operators with generalized potentials as in \cite{max, thermo}), etc. We believe that most of these cases can be treated by small modifications of the present theory; however, the precise implementation does require a 
non-negligible amount of work and hence exceeds the scope of this presentation, which aims only at introducing the basic ideas and producing a viable cone for dispersing billiards.

The plan of the paper is as follows.
In Section \ref{setting} we introduce the class of billiards from which we will draw our sequential dynamics and summarize our main  analytical results regarding cone contraction. 
In Section~\ref{sec:hyp} we present the uniform properties of hyperbolicity and singularity sets enjoyed 
by our class of maps, listed as {\bf (H1)}-{\bf (H5)}; we also prove a Growth Lemma for our sequences of maps  and introduce one of our main characters, the transfer operator.
In Section \ref{sec:conedef} we introduce our protagonist, the cone (see Section \ref{sec:cone_dist}).
 Section \ref{sec:cone} is devoted to showing that the cone so defined is invariant under the action of the transfer operators of the billiards in question.
In Section \ref{sec:L contract} we show that in fact the cone is eventually strictly invariant (the image has finite diameter in the associated Hilbert metric) thanks to some mixing properties of the dynamics on a finite scale. 
The strict cone contraction implies exponential mixing for a very large class of observables and densities as is explained in Section \ref{sec:exp-mix}.
 Finally, Section \ref{sec:appl} contains the announced applications, first to sequential systems with holes (open systems), then to chaotic scattering and finally to the random Lorentz gas.

 
\section{Setting and Summary of Main Results }
\label{setting}
 
Since we are interested in studying sequential billiards, below we define a set of billiard tables that will have
uniform hyperbolicity constants, following \cite{demzhang13}.  Other classes of billiards are also studied in
\cite{demzhang13}, such as infinite horizon billiards, billiards under small external forces and some types of
nonelastic reflections.  While such classes of billiards are amenable to the present technique, we do not
treat the most general case here since the greater number of technicalities would obscure the main ideas we are 
trying to present.


\subsection{Families of billiard tables with uniform properties}
\label{sec:bill family}

We first choose $K \in \mathbb{N}$ 
and numbers $\ell_i >0$, $i = 1, \ldots K$.  Let
$M = \cup_{i=1}^K I_i \times [- \frac{\pi}{2}, \frac{\pi}{2}]$, where for each $i$, $I_i=[0, \ell_i]/\sim$ is an interval of length $\ell_i$ with endpoints identified. 
$M$ will be the phase space common to our collection of billiard maps.

Given $K$ and $\{ \ell_i \}_{i=1}^K$, 
we use the notation $Q = Q(\{ B_i \}_{i=1}^K)$ to denote the 
billiard table $\mathbb{T}^2 \setminus (\cup_{i=1}^K B_i)$, where each $B_i$ is a closed, convex set whose boundary has 
arclength $\ell_i$.  We assume that the scatterers $B_i$ are pairwise disjoint and that each
$\partial B_i$ is a $C^3$ curve with strictly positive curvature.

The billiard flow is defined by the motion of a point particle traveling at unit speed in
$Q := \mathbb{T}^2 \setminus (\cup_i B_i)$ and reflecting elastically at collisions.
The associated billiard map $T$ is the discrete-time collision map which maps a point on
$\partial Q$ to its next collision.  Parameterizing $\partial Q$ according to an arclength parameter
$r$ (oriented clockwise on each obstacle $B_i$) and denoting by $\varphi$ the angle made
by the post-collision velocity vector and the outward pointing normal to the boundary yields
the canonical coordinates for the phase space $M$ of the billiard map.  In these coordinates,
$M = \cup_i I_i \times [-\frac \pi2, \frac \pi2]$, as defined previously.

For $x = (r,\vf) \in M$, let $\tau(x)$ denote the time until the next collision for $x$ under the
flow.  We assume that $\tau$ is bounded on $M$, i.e. the billiard has finite horizon.
Thus since the scatterers are disjoint, there exist constants $\tau_{\min}(Q), \tau_{\max}(Q) > 0$ depending on
the configuration $Q$ such that $\tau_{\min}(Q) \le \tau(x) \le \tau_{\max}(Q) < \infty$
for all $x \in M$.  
Moreover, by assumption there exists $\cK_{\min}(Q), \cK_{\max}(Q) >0$ such that if $\cK(r)$ denotes the curvature
of the boundary at coordinate $r$, then $\cK_{\min}(Q) \le \cK(r) \le \cK_{\max}(Q)$.  Finally, let
$E_{\max}(Q)$ denote the maximum value of the $C^3$ norm of the curves comprising $\partial Q$
when parametrized according to arclength.

Now fix $\tau_*, \cK_*, E_* \in \mathbb{R}^+$, and let $\cQ(\tau_*, \cK_*, E_*)$ denote the collection
of all billiard tables $Q( \{ B_i \}_{i=1}^K )$ such that 
\[ 
\tau_* \le \tau_{\min}(Q) \le \tau_{\max}(Q) \le \tau_*^{-1}, \; \;
\cK_* \le \cK_{\min}(Q) \le \cK_{\max}(Q) \le \cK_*^{-1}, \mbox{ and } \cK_* \le E_{\max}(Q) \le E_*.
\]  
To each table in $Q \in \cQ(\tau_*, \cK_*, E_*)$
corresponds a billiard flow and hence a billiard map $T = T(Q)$ and associated collision times.
Let $\cF(\tau_*, \cK_*, E_*)$ denote the collection of billiard maps induced by configurations
in $\cQ(\tau_*, \cK_*, E_*)$, i.e.,
\[
\cF(\tau_*, \cK_*, E_*) = \{ T = T(Q) : Q \in \cQ(\tau_*, \cK_*, E_*) \}, .
\]
Thus each $T \in \cF(\tau_*, \cK_*, E_*)$ is identified with\footnote{ We do not claim
that each such $T$ is unique.  It may be that $T(Q) = T(Q')$ pointwise
(consider a $90^\circ$ rotation of a given configuration $Q$), yet for our purposes they will be considered distinct elements of $\cF(\tau_*, \cK_*, E_*)$.} 
a table $Q \in \cQ(\tau_*, \cK_*, E_*)$, which we denote by $Q(T)$.
Note that all $T \in \cF(\tau_*, \cK_*, E_*)$ have the same phase space $M$
since we have fixed $K$ and the arclengths $\{ \ell_i \}_{i=1}^K$.

It is a standard fact that all $T \in \cF(\tau_*, \cK_*, E_*)$ preserve the same smooth invariant probability 
measure, $d\musrb = c \cos \vf \, dr \, d\vf$, where
$c = \frac{1}{2|\partial Q|} = \frac{1}{2 \sum_{i=1}^K \ell_i}$ is the normalizing constant \cite{chernov book}.
In addition, all $T \in \cF(\tau_*, \cK_*, E_*)$ are mixing with respect to $\musrb$ and so are
topologically mixing \cite{Sinai} (see also, \cite[Section~6.7]{chernov book}). 

It is proved\footnote{The abstract set-up in \cite{demzhang13} also allows billiard tables with infinite
horizon and those subjected to external forces, but we are not concerned with the most general case here.} in \cite[Theorem~2.7]{demzhang13} that all $T \in \cF(\tau_*, \cK_*, E_*)$ satisfy properties
(H1)-(H5) of that paper with uniform constants depending only on $\tau_*, \cK_*$ and $E_*$.  We recall
the relevant properties in Section~\ref{sec:hyp} that we shall use throughout the paper and label 
them {\bf (H1)}-{\bf (H5)}.

\begin{remark}
The assumption that all scatterers have the same arclength is made for convenience so
that there
is a single cone $\cC$ on which all our operators $\cL_T$, $T \in \cF$, act.  This can be relaxed slightly 
once the hyperbolicity constant $\Lambda := 1 + 2\cK_* \tau_*$ has been introduced in {\bf (H1)} by
allowing the arclength of the boundary of each scatterer to change by no more than $\ve_1$, where
$\ve_1< \frac{\Lambda-1}{\Lambda+1}$, since then rescaling the arclength parametrization of $\partial B_i$ 
to be again $[0, \ell_i]$ yields a map with similar properties {\bf (H1)}-{\bf (H5)}, but
with slightly weakened hyperbolicity constant $\tilde{\Lambda} = \Lambda \frac{1-\ve_1}{1+\ve_1}>1$ (and $\theta_0$ from {\bf (H3)} is weakened
accordingly.)

To change the arclengths drastically would force us to consider a sequence of cones $\cC_n$ on a
sequence of phase spaces $M_n$.  This would require further suitable assumptions on the maps
$T_n : M_n \to M_{n+1}$ in order to ensure hyperbolicity, and such assumptions could be tailored to 
specific applications.  We do not pursue this generality here,
but remark that for example, it would be possible to formulate such a generalization for the random Lorentz gas
with gates described in Section~\ref{sec:lorentz}, in which the central scatterer in each cell is allowed to change arclength
and the resulting billiard map between cells would still satisfy {\bf (H1)}-{\bf (H5)} (albeit the normalization in
{\bf (H5)} would vary).
\end{remark}

Next, we define a notion of distance in $\cQ(\tau_*, \cK_*, E_*)$ as follows. 
Each table $Q$ comprises $K$ obstacles $B_i$.  Each $\partial B_i$ can be parametrized according to arclength
by a function $u_i : I_i \to \mathbb{R}^2$ (unfolding $\mathbb{T}^2$).  Since two arclength parametrizations of $\partial B_i$ can differ only in their
starting point, the collection $u_{i,\theta}$, $\theta \in [0, \ell_i)$, denotes the set of parametrizations associated with 
$\partial B_i$.  
Similarly, for a configuration $\widetilde Q$, denote the parametrizations of obstacles
by $\tilde{u}_{i, \theta}$, $\theta \in [0, \tilde{\ell}_i)$.
Let $\Pi_K$ denote the set of permutations $\pi$ on $\{ 1, \ldots K \}$
which satisfy $\tilde{\ell}_{\pi(i)} = \ell_i$.
Then define
\begin{equation}
\label{eq:d def}
\bd(Q, \widetilde{Q}) = \min_{\pi \in \Pi_K} \min_{\theta \in [0, \ell_i ) }  \sum_{i=1}^K |u_{i,0} - \tilde{u}_{\pi(i), \theta}|_{C^2(I_i, \mathbb{R}^2)} \, .  
\end{equation}

Fix $Q_0 \in \cQ(\tau_*, \cK_*, E_*)$ and choose $\kappa \le \frac{1}{2} \min \{ \tau_*, \cK_* \}$.  
Let $\cQ(Q_0, E_*; \kappa)$ denote the set of 
billiard tables $Q$ with\footnote{Indeed, the distance $\bd$ allows configurations to move from finite to infinite horizon (see \cite[Section~6.2]{demzhang13}), but we will not need that here as we will restrict ourselves to finite horizon configurations.}
 $\bd(Q, Q_0) < \kappa$ and $E_{\max}(Q) \le E_*$, $\tau_{\max} \le 2/\tau_*$.
 Let $\cF(Q_0, E_*; \kappa)$
denote the corresponding set of billiard maps. The following result
is \cite[Theorem~2.8 and Section~6.2]{demzhang13}.

\begin{prop}
\label{prop:close maps}
Let $Q_0 \in \cQ(\tau_*, \cK_*, E_*)$.  For all $\kappa \le \frac 12 \min \{ \tau_*, \cK_* \}$,
we have $\cQ(Q_0, E_*; \kappa) \subset \cQ(\frac{\tau_*}{2}, \frac{\cK_*}{2}, E_*)$.  Moreover, there
exists $C>0$ such that for any $T_1, T_2 \in \cF(Q_0, E_*; \kappa)$,
\begin{itemize}
  \item[a)] $d_H(\cS_{-1}^{T_1}, \cS_{-1}^{T_1}) \le C \kappa^{1/2}$, where $d_H$ is the Hausdorff metric and
  $\cS_{-1}^T$ is the singularity set for $T^{-1}$ defined in {\bf (H1)};
  \item[b)] for $x \notin N_{C\kappa^{1/2}}(S_{-1}^{T_1}, \cS_{-1}^{T_2})$, $d(T_1^{-1}(x), T_2^{-1}(y)) \le C \kappa^{1/2}$,
  where $N_\ve(\cdot)$ denotes the $\ve$-neighborhood of a set in $M$ in the Euclidean metric. 
\end{itemize}
\end{prop}

We use an (uncountable) index set $\cI(\tau_*, \cK_*, E_*)$, identifying $\iota \in \cI(\tau_*, \cK_*, E_*)$ with a map $T_\iota \in \cF(\tau_*, \cK_*, E_*)$.
Choosing a sequence $(\iota_j)_{j \in \mathbb{N}} \subset \cI(\tau_*, \cK_*, E_*)$,
we will be interested in the dynamics of
\begin{equation}
\label{eq:comp}
T_n := T_{\iota_n} \circ \cdots \circ T_{\iota_2} \circ T_{\iota_1} \, , \quad n \in \mathbb{N} \, .
\end{equation}
If we choose $\iota_j = \iota$ for each $j$, then $T_n = T_\iota^n$, the iterates of a single map.
For convenience, denote $T_0 = $ Id.


\subsection{Main analytical results:  Cone contraction and loss of memory}
\label{sec:main}
 
As announced in the introduction, the main analytical tool developed in this paper is the construction of a convex
cone of functions $\cC_{c,A,L}(\delta)$, depending on parameters $\delta >0$, $c, A, L > 1$, as defined in
Section~\ref{sec:cone_dist}, that is contracted under the sequential action of the 
transfer operators $\cL f = f \circ T^{-1} $, defined in Section~\ref{sec:transfer} for $T \in \cF(\tau_*, \cK_*, E_*)$.   
For a sequence of maps $T_n$ as in \eqref{eq:comp}, define $\cL_n f = f \circ T_n^{-1}$.

In order to state our main result on cone contraction, 
we define open neighborhoods in $\cF(\tau_*, \cK_*, E_*)$ using the distance 
$\bd$ between tables defined in \eqref{eq:d def}.
Let $T \in \cF(\tau_*, \cK_*, E_*)$, and for 
$0 < \kappa < \frac 12 \min \{ \tau_*, \cK_* \}$, define
\begin{equation}
\label{eq:close d 1}
\cF(T, \kappa) = \{ \tT \in \cF(\tau_*, \cK_*, E_*) : \bd(Q(\tT), Q(T) ) < \kappa \} \, .
\end{equation}
Remark that since $\cF(T, \kappa) \subset \cF(Q(T), E_*; \kappa)$, the conclusions of Proposition~\ref{prop:close maps} apply as well to maps in $\cF(T, \kappa)$.
We will denote the index set corresponding to $\cF(T, \kappa)$ by $\cI(T, \kappa) \subset \cI(\tau_*, \cK_*, E_*)$.
Thus $\iota \in \cI(T, \kappa)$ if and only if $T_\iota \in \cF(T, \kappa)$.

\begin{theorem}
\label{thm:main}
Suppose $c, A$ and $L$ satisfy the conditions of Section~\ref{sec:conditions}, and that $\delta>0$ satisfies \eqref{eq:delta_0 ineq} and \eqref{eq:A-cond-s4}.
Let $N_{\cF} := N(\delta)^- + k_*n_*$ be from Theorem~\ref{thm:cone contract} and let $\kappa>0$ be from
Lemma~\ref{lem:proper cross}(b).
\begin{enumerate}[a)]
\item There exists $\chi \in(0,1)$ such that if $n \ge N_{\cF}$, $T \in \cF(\tau_*, \cK_*, E_*)$ 
and $\{ \iota_j \}_{j=1}^n \subset \cI(T, \kappa)$, 
then $\Lp_n \cC_{ c, A, L}(\delta) \subset \cC_{  \chi c, \chi A, \chi L}(\delta) $.
\item For any $\chi \in \left( \max \{ \frac 12, \frac 1L, \frac{1}{\sqrt{A-1}} \}, 1 \right)$, the cone 
$\cC_{\chi c, \chi A, \chi L}(\delta)$ has diameter at most 
\[
\log \left( \frac{(1+\chi)^2}{(1-\chi)^2} \chi L \right) < \infty
\]
in the Hilbert metric associated to $\cC_{c, A, L}(\delta)$ (see \eqref{eq:H def} for a precise definition), provided $\delta>0$ is chosen sufficiently small to satisfy \eqref{eq:delta cond}.
\end{enumerate}
\end{theorem}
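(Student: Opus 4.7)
The statement combines two claims: strict invariance of $\Lp_n$ on $\cC_{c,A,L}(\delta)$ into a scaled-down cone with some contraction parameter $\chi < 1$, and finiteness of the Hilbert diameter of that contracted cone. I would treat them separately.

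For the contraction, my plan is to bootstrap from Theorem~\ref{thm:cone contract}, which presumably gives the analogous strict invariance for iterates of the transfer operator of a single map $T \in \cF(\tau_*, \cK_*, E_*)$, producing a smaller cone $\cC_{\chi' c, \chi' A, \chi' L}(\delta)$ with some $\chi' < 1$ once $n \ge N_{\cF} = N(\delta)^- + k_*n_*$. To extend this to a sequential product $\Lp_n = \cL_{\iota_n} \cdots \cL_{\iota_1}$ with each $T_{\iota_j} \in \cF(T, \kappa)$, I would invoke Proposition~\ref{prop:close maps}: replacing one factor $T$ by a nearby $T_{\iota_j}$ perturbs each integral against a test density on a short unstable curve by a multiplicative factor $1 + O(\kappa^{1/3})$, since both the images of curves and the associated Jacobians are $C^2$-close. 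Composing $N_\cF$ such perturbations accumulates an error of the same order, which can be absorbed into the margin $\chi - \chi'$ by choosing $\kappa$ small. The extra $k_* n_*$ iterations beyond $N(\delta)^-$ are precisely what ensures that the finite-scale mixing from Lemma~\ref{lem:proper cross}(b) restores any integral lower bounds eroded by the perturbation; hence with $\kappa$ as in that lemma, one obtains $\Lp_n \cC_{c,A,L}(\delta) \subset \cC_{\chi c, \chi A, \chi L}(\delta)$.

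For the diameter bound, I would run the classical Birkhoff--Hilbert computation. Given $f_1, f_2 \in \cC_{\chi c, \chi A, \chi L}(\delta)$, set $\lambda = \int f_2 \, d\musrb / \int f_1 \, d\musrb$ (or the natural normalizing ratio dictated by the defining functionals of the cone) and try $\alpha = \lambda(1-\chi)/(1+\chi)$, $\beta = \lambda(1+\chi)/(1-\chi)$. The two-sided inequalities of $\cC_{c,A,L}(\delta)$, with upper and lower bounds governed by the parameters $c$ and $A$, lift from $\chi c, \chi A$ on both $f_1, f_2$ to $c, A$ on the combinations $f_2 - \alpha f_1$ and $\beta f_1 - f_2$ by a direct convex-combination computation; this contributes the classical factor $((1+\chi)/(1-\chi))^2$ to $\beta/\alpha$. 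The remaining factor $\chi L$ is what the single non-symmetric cone condition (presumably the mass/positivity lower bound controlled by $L$) produces, where only one of the two witnesses can be lifted cleanly. The hypothesis $\chi > \max\{1/2, 1/L, 1/\sqrt{A-1}\}$ is precisely what keeps all witness constants strictly positive and finite in this computation, while \eqref{eq:delta cond} absorbs residual cross terms involving $\delta$. The Hilbert distance is then at most $\log(\beta/\alpha) \le \log\bigl(\tfrac{(1+\chi)^2}{(1-\chi)^2}\chi L\bigr)$, as claimed.

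The main obstacle lies in Part~1: making the perturbative estimates uniform simultaneously in the sequence $(\iota_j)$ and in the base map $T$. The constants in Proposition~\ref{prop:close maps} and Theorem~\ref{thm:cone contract} depend only on $(\tau_*, \cK_*, E_*)$, so uniformity should be available, but one must carefully check that the $k_* n_*$ extra iterations give enough room to recover every integral lower bound lost to the accumulated $O(\kappa^{1/3})$ perturbations, and that the choice of $\kappa$ from Lemma~\ref{lem:proper cross}(b) is consistent with the margin $\chi - \chi'$ coming from Theorem~\ref{thm:cone contract}. Part~2, by contrast, is essentially a convex-combination exercise once the explicit cone of Section~\ref{sec:cone_dist} is written out inequality by inequality.
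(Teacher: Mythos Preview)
Your Part~1 is built on a misreading of Theorem~\ref{thm:cone contract}. You assume it is a single-map statement that must then be perturbed to the sequential case via Proposition~\ref{prop:close maps}. In fact Theorem~\ref{thm:cone contract} is \emph{already} the sequential statement: it is proved directly for any sequence $\{\iota_j\}_{j=1}^n \subset \cI(T_0,\kappa)$, and the paper's proof of the first assertion in Theorem~\ref{thm:main} is simply a citation of it (together with Proposition~\ref{prop:almost}, which handles $c$ and $A$ for arbitrary sequences in $\cI(\tau_*,\cK_*,E_*)$ without any closeness assumption at all). All the hyperbolicity and growth-lemma estimates in Sections~\ref{sec:cone} and~\ref{sec:L contract} are carried out from the outset for compositions $T_n = T_{\iota_n}\circ\cdots\circ T_{\iota_1}$; there is no single-map base case to perturb from. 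Your proposed bootstrap is therefore unnecessary, and it would also be delicate: the accumulated multiplicative error $(1+O(\kappa^{1/3}))^{N_\cF}$ over $N_\cF \sim C\log\delta^{-1}$ steps is not obviously absorbable into a fixed margin $\chi-\chi'$ with the $\kappa$ already pinned down by Lemma~\ref{lem:proper cross}(b), and the cone conditions \eqref{eq:cone 2}--\eqref{eq:cone 5} involve infima and differences of averages, not just single integrals, so a naive multiplicative perturbation does not propagate cleanly through them.

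For Part~2 your outline is in the right spirit but the paper takes a simpler route: rather than comparing two arbitrary $f_1,f_2\in\cC_\chi$, Proposition~\ref{prop:diameter} bounds $\rho(1,f)$ for every $f\in\cC_\chi$ by explicitly checking, condition by condition, for which $\lambda$ one has $f-\lambda\in\cC$ and for which $\mu$ one has $\mu-f\in\cC$. This avoids the convex-combination bookkeeping you sketch; the constraints $\chi>\max\{1/2,1/L,1/\sqrt{A-1}\}$ and \eqref{eq:delta cond} arise exactly as the conditions making the worst of the three $\bar\alpha_i$ positive and the worst of the three $\bar\beta_i$ finite, and the ratio $\bar\beta_3/\bar\alpha_2$ gives the stated bound directly.
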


The first statement of this theorem is proved in two steps:  first, Proposition~\ref{prop:almost} shows that the parameters
$c$ and $A$ contract due to the uniform hyperbolicity properties {\bf (H1)}-{\bf (H5)} of the maps in 
$\cF(\tau_*, \cK_*, E_*)$, subject to the constraints listed in
Section~\ref{sec:conditions} (all that is needed is $\{ \iota_j \}_{j=1}^n \subset \cI(\tau_*, \cK_*, E_*)$, and not
the stronger assumption $\{ \iota_j \}_{j=1}^n \subset \cI(T, \kappa)$);  second, Theorem~\ref{thm:cone contract} proves the contraction of $L$ using
the uniform mixing property of maps $\tT \in \cF(T, \kappa)$ as expressed by Lemma~\ref{lem:proper cross}. The second statement of Theorem~\ref{thm:main} is proved by
Proposition~\ref{prop:diameter}.

From this theorem follow our results on exponential loss of memory for sequential systems of billiard maps.
In the case that $T_{\iota_j} = T$ for each $j$, these results read as exponential decay of correlations and convergence
to equilibrium.  Since our maps $T \in \cF(\tau_*, \cK_*, E_*)$ all preserve the measure $\musrb$,
we also obtain a type of convergence to equilibrium in the sequential case (see Theorem~\ref{thm:equi state}).

In order to state our result for the sequential system, we define the notion of an {\em admissible sequence} of maps from
$\cF(\tau_*, \cK_*, E_*)$. 
As before, let $\kappa>0$ be from
Lemma~\ref{lem:proper cross}(b).

\begin{defin}
\label{def:admissible}
For $N \in \mathbb{N}$, we 
call a sequence $( \iota_j )_{j \ge 1}$, $\iota_j \in \cI(\tau_*, \cK_*, E_*)$, 
{\em $N$-admissible}
if there exist sequences $(T_k)_{k \ge 1} \subset \cF(\tau_*, \cK_*, E_*)$ and $(N_k)_{k \ge 1}$ 
with $N_k \ge N$, such that $T_{\iota_j} \in \cF(T_k, \kappa)$
for all $k \ge 1$ and $j \in [1 + \sum_{i=1}^{k-1} N_i, \sum_{i=1}^k N_i]$.
\end{defin}
  Thus an $N$-admissible sequence is a sequence which remains
in a $\kappa$ neighborhood of a fixed map $T_k$ for $N_k \ge N$ iterates at a time, but which may undergo a large
change between such blocks.

\begin{remark}
One can generalize the definition of $N$-admissible sequence to include short blocks where maps are not required to be close to a fixed map. As long as these short blocks can be grouped to
contain at least $n_0$ iterates, where $n_0$ is from Proposition~\ref{prop:almost}, and they are interspersed regularly with
long blocks of length at least $N = N_{\cF}$ then one can still set up a regular contraction using 
Theorem~\ref{thm:cone contract} on the long blocks.
\end{remark}

We first state our results regarding loss of memory, both with respect to $\musrb$ and leafwise: the difference of integrals along individual stable curves converge
to 0 exponentially fast along any $N_{\cF}$-admissible sequence.  Let $\cW^s(\delta)$ denote the set of  homogeneous  cone
stable curves $\cW^s$ defined in Section~\ref{sec:uni hyp}, having length between $\delta$ and $2\delta$.
We denote by $\musrb(f) = \int_M f \, d\musrb$ and by $|W|$ the (Euclidean) length of a stable curve $W$ in $M$.

Also, we denote the average value of
$\psi$ on $W$ by $\fint_W \psi\, dm_W=\frac{1}{|W|}\int_W \psi \, dm_W$,
where $m_W$ denotes the arclength measure on $W$ induced by the Euclidean metric in $M$.

In Lemma~\ref{lem:dominate}, we prove that our cone $\cC_{c,A,L}(\delta)$ contains translations 
of piecewise
H\"older continuous functions, as long as the discontinuities are transverse to the stable cone defined in {\bf (H1)}.
We make this precise as follows.  

\begin{defin}
\label{def:part}
We call a countable (mod 0) partition $\pa = \{ P_i \}_i$ of $M$ {\em regular} if each $P_i$
is an open, simply connected set, and there exist constants $K, C_\pa >0$ such that
for all $W \in \cW^s$, $W \setminus \partial \pa$ comprises at most $K$ connected components and for
any $\ve > 0$, $m_W(N_\ve(\partial \pa)) \le K C_\pa \ve$, where
$N_\ve(A)$ denotes the $\ve$-neighborhood of a set $A$ in $M$.
\end{defin}

For $t>0$, denote by $C^t(\pa)$ the set of functions on $M$ that are H\"older continuous on
each element of $\pa$ and such that
\[
|f|_{C^t(\pa)} = \sup_{P \in \pa} |f|_{C^t(P)} < \infty \, .
\]

We recall again $N_{\cF} = N(\delta)^- + k_*n_*$ from Theorem~\ref{thm:cone contract}.

\begin{theorem}
\label{thm:memory state}
Let $\pa$ be a regular partition of $M$ and let\footnote{The parameter
$\gamma \in (0,1)$ is from the cone condition \eqref{eq:cone 5}.} $t \ge \gamma$. 
There exist $C>0$ and $\vartheta<1$ such that for all $N_{\cF}$-admissible
sequences $(\iota_j)_j$, all $n \ge 0$, and all
$f, g \in C^t(\pa)$ with $\musrb(f) = \musrb(g)$:
\begin{itemize}
  \item[a)]  For all $W \in \cW^s(\delta)$ and all $\psi \in C^1(W)$, we have
\[
\left| \fint_{W} \Lp_n f  \,  \psi  \, dm_{W} - \fint_{W} \Lp_n g \,  \psi \, dm_{W} \right|
\le C \vartheta^n  \, |\psi|_{C^1(W)} \max \{ \| f \|_{C^t(\pa)},  \| g \|_{C^t(\pa)} \} \, ;
\]
\item[b)] For all $\psi  \in C^1(M)$,
\[
\left| \int_M \cL_n f \, \psi \, d\musrb - \int_M \cL_n g \, \psi \, d\musrb \right| 
\le C \vartheta^n |\psi|_{C^1(M)} \max \{ \| f \|_{C^t(\pa)},  \| g \|_{C^t(\pa)} \} \, .
\]
\end{itemize}
\end{theorem}

We remark that the regularity of $\psi \in C^1(M)$ can be relaxed to $\psi \in C^\varsigma(M)$ for any
$\varsigma>0$ by a standard approximation argument, but at the expense of obtaining a weaker rate
$\vartheta$. 

Since all our maps preserve the same invariant measure $\musrb$, we obtain additionally an 
equidistribution result for stable curves as well as convergence to equilibrium 
along admissible sequences.

\begin{theorem}
\label{thm:equi state}
Under the hypotheses of Theorem~\ref{thm:memory state},
there exists $C>0$ such that for all $N_{\cF}$-admissible
sequences $(\iota_j)_j \subset \cI(\tau_*, \cK_*, E_*)$,  
all $f, g \in C^t(\pa)$ with $\musrb(f) = \musrb(g)$, and all $n \ge 0$,
\begin{itemize}
  \item[a)]  For all $W_1, W_2 \in \cW^s(\delta)$  and all $\psi_i \in C^1(W_i)$ with $\fint_{W_1} \psi_1 = \fint_{W_2} \psi_2$, we have
\[
\left| \fint_{W_1}  \Lp_n f  \,  \psi_1  \, dm_{W_1} - \fint_{W_2} \Lp_n g \,  \psi_2 \, dm_{W_2} \right|
\le C \vartheta^n  \, (|\psi_1|_{C^1(W_1)} + |\psi_2|_{C^1(W_2)} ) \max \{ \| f \|_{C^t(\pa)},  \| g \|_{C^t(\pa)} \} \, ;
\]
in particular, for all $W \in \cW^s(\delta)$ and $\psi\in C^1(W)$,
\[
\left| \fint_W \Lp_n f \, \psi \, dm_W - \musrb(f) \fint_W \psi \, dm_W \right| 
\le C \vartheta^n \,  |\psi|_{C^1(W)} \max \{ \| f \|_{C^t(\pa)},  \| g \|_{C^t(\pa)} \}  \, ;
\]
  \item[b)]  for all $\psi \in C^1(M)$,
\[
\left| \int_M f \, \psi \circ T_n \, d\musrb - \int_M f \, d\musrb \int_M \psi \, d\musrb \right|
\le C \vartheta^n |\psi|_{C^1(M)}\max \{ \| f \|_{C^t(\pa)},  \| g \|_{C^t(\pa)} \} \, .
\]
\end{itemize}
\end{theorem}

Theorems~\ref{thm:memory state} and \ref{thm:equi state} are proved in Section~\ref{sec:exp-mix}, 
specifically in Theorems~\ref{thm:memory} and \ref{thm:equi} and Corollary~\ref{cor:extend}. 

\begin{remark}
\label{rmk:stenlund}
Theorem~\ref{thm:memory state} has some overlap with \cite{young zhang}, which also considers
sequential billiards in which scatterers shift slightly between collisions.  Note, however, that our
definition of admissible sequence allows abrupt and large changes in the configuration of scatterers
within the family $\cF(\tau_*, \cK_*, E_*)$ every $N_{\cF}$ iterates, compared to the slowly changing
requirement throughout \cite{young zhang}.  This may seem like merely a technical difference due to the
cone technique, yet it is precisely this ability to introduce occasional large changes in the dynamics
that allows us to apply our results to the chaotic scattering problem and random Lorentz gas described
in Section~\ref{sec:appl}.
\end{remark}

With these convergence results in hand, we are able to provide three applications to concrete
problems of physical interest:  sequential open systems in Section~\ref{sequential},
a chaotic scattering problem without assuming a no-eclipse condition in Section~\ref{sec:scattering},
and a variant of the random Lorentz gas in Section~\ref{sec:lorentz}.


\section{Uniform Hyperbolicity, Singularities and Transfer Operators}
\label{sec:hyp}

\subsection{Uniform properties for $T \in \cF(\tau_*, \cK_*, E_*)$} 
\label{sec:uni hyp}

Fixing $K$ and $\{ \ell_i \}_{i=1}^K$, we recall some fundamental properties of billiard maps 
$T \in \cF(\tau_*, \cK_*, E_*)$ that
depend only on the quantities $\tau_*$, $\cK_*$ and $E_*$. 
Although many of these properties are well known, a proof of their dependence on $\tau_*$, $\cK_*$, $E_*$ can be found, for example, in \cite[Section~6.1]{demzhang13}.

In order to better align with the abstract framework in 
\cite{demzhang13}, we also label our properties {\bf (H1)}-{\bf (H5)}, although our set-up here is simpler than in
\cite{demzhang13}.  We recall the corresponding index set $\cI(\tau_*, \cK_*, E_*)$ from Section~\ref{sec:bill family}
and the notation $T_n$ from \eqref{eq:comp}.

\medskip
\noindent
{\bf (H1) Hyperbolicity and Singularities.}
The (constant) family of cones 
\[
C^s(x) = \{ (dr, d\vf) \in \mathbb{R}^2 : -\cK_*^{-1} - \tau_*^{-1} \le d\vf/dr \le - \cK_* \},
\quad \mbox{for $x \in M$,}
\]
 is strictly invariant, $DT^{-1}C^s(x) \subset C^s(T^{-1}x)$, for all $T \in \cF(\tau_*, \cK_*, E_*)$.
 Moreover, 
$T^{-1}$ enjoys uniform expansion of vectors in the stable cone:  set  $\Lambda = 1 + 2\cK_* \tau_* > 1$; then
there exists $C_1 \in (0,1]$ such that,
\begin{equation}
\label{eq:hyp}
\| DT_n^{-1}(x) v \| \ge C_1 \Lambda^n \| v \|, \qquad \mbox{for all $v \in C^s(x)$},
\end{equation}
where $\| \cdot \|$ denotes the Euclidean norm given by $dr^2 + d\vf^2$. 
There is a family of unstable cones $C^u$ defined similarly, but with
$\cK_* \le d\vf/dr \le \cK_*^{-1} + \tau_*^{-1}$, which is strictly invariant under $DT$ for all $T \in \cF(\tau_*, \cK_*, E_*)$.

Due to the unbounded expansion of $DT$ near tangential collisions,  we define the standard
homogeneity strips, following \cite{bsc}.  For some $k_0 \in \mathbb{N}$, to be chosen later in  \eqref{eq:one step}, 
define
\begin{equation}
\label{eq:Hk}
\bH_{\pm k} = \{ (r, \vf) \in M : (k+1)^{-2} \le |\pm \tfrac{\pi}{2} - \vf| \le k^{-2} \} ,
\mbox{ for all } k \ge k_0.
\end{equation}
Set $\cS_0 = \{ (r, \vf) \in M : \vf = \pm \frac{\pi}{2} \}$.  
For $n \ge 1$, the singularity set for $T_n$
is denoted by $\cS_n^{T_n} = \cup_{i=0}^{n} T_i^{-1} (\cS_0)$, while
the singularity set for $T_n^{-1}$ is denoted by $\cS_{-n}^{T_n} = \cup_{i=0}^n T_i ( \cS_0)$.
On $M \setminus \cS_n^{T_n}$, $T_n$ is a $C^2$ diffeomorphism onto its image.

There exists a constant, which we still call $C_1>0$, such that
\[
\frac{C_1}{\cos \vf(Tx)} \le \frac{\| DT(x) v \|}{\| v \|} \le \frac{1}{C_1 \cos \vf(Tx)} \, , \quad \mbox{for $x \notin \cS_0$.}
\]

In order to achieve bounded distortion, we will consider the boundaries of the homogeneity strips
as an extended singularity set for $T$.  To this end, define $\cS_0^{\bH} = \cS_0 \cup (\cup_{k \ge k_0}
(\partial \bH_k \cup \partial \bH_{-k}))$, and for $n \ge 1$,
\begin{equation}\label{eq:cSbH}
\cS_n^{\bH} =\cup_{i=0}^{n} T_i^{-1} (\cS_0^{\bH}), \quad \cS_{-n}^{\bH} = \cup_{i=0}^n T_i (\cS_0^{\bH}) \, .
\end{equation}

\medskip
\noindent
{\bf (H2) Families of Stable and Unstable Curves.}
We call a curve $W \subset M$ a stable curve if for each $x \in W$, the tangent vector to $W$ at $x$ belongs to
$C^s$.  A stable curve is called homogeneous if it lies in one homogeneity strip or outside their union.
Denote by $\cW^s$ the set of homogeneous stable curves with length
at most $\delta_0\in (0,1/2)$ (defined by \eqref{eq:one step}) and with curvature at most $\bar B$.  

By \cite[Proposition~4.29]{chernov book}, we may choose $\bar B$ sufficiently large that
$T^{-1} \cW^s \subset \cW^s$, up to subdividing the curves of length larger than $\delta_0$,
for all $T \in \cF(\tau_*, \cK_*, E_*)$.

Similarly, we define an analogous set of homogeneous unstable curves by $\cW^u$.

\medskip
\noindent
{\bf (H3) One-Step Expansion.}  Defining the adapted norm $\| v \|_*$, $v = (dr, d\vf)$ as in
\cite[Sect.~5.10]{chernov book}, we have
$\| DT^{-1}(x) v \|_* \ge \Lambda \| v \|_*$ for all $v \in C^s(x)$, wherever $DT^{-1}$ is defined.
For $W \in \cW^s$, let $V_i$ denote the maximal homogeneous components of $T^{-1}W$.
Then by \cite[Lemma~5.56]{chernov book}, there exists $\theta_0 \in (\Lambda, 1)$,
a choice of $k_0$ for the homogeneity strips
and $\delta_0\in (0,1/2)$ such that,
\begin{equation}
\label{eq:one step}
\sup_{T \in \cF(\tau_*, \cK_*, E_*)} \sup_{W \in \cW^s }
\sum_i |J_{V_i}T|_* \le \theta_0 \, ,
\end{equation}
where $| J_{V_i}T |_*$ denotes the supremum of the Jacobian of $T$ along $V_i$ in the adapted metric.

Since the stable/unstable cones are global and bounded away from one another, the adapted metric can be
extended so that it is uniformly equivalent to the Euclidean metric:  There exists
$C_0 \ge 1$ such that $C_0^{-1} \| v \| \le \| v \|_* \le C_0 \| v \|$ for all $v \in \mathbb{R}^2$.

\medskip
\noindent
{\bf (H4) Distortion Bounds.}
Suppose $W \in \cW^s$ and for $n \ge 1$,
$\{ \iota_j \}_{j=1}^n \subset \cI(\tau_*, \cK_*, E_*)$ are such that $T_jW \in \cW^s$ for $j=0, \ldots n$.  
There exists $C_d>0$,
independent of $W$, $n$ and $\{ \iota_j \}_{j=1}^n$,
such that for all $x, y \in W$,
\begin{equation}
\label{eq:distortion}
| \log J_WT_n(x) - \log J_WT_n(y)| \le C_d d(x,y)^{1/3},
\end{equation}
where $J_WT_n$ is the (stable) Jacobian of $T_n$ along $W$ and $d(\cdot, \cdot)$ denotes
arclength on $W$ with respect to the metric $dr^2+d\vf^2$.

Similar bounds hold for stable Jacobians lying on the same unstable curve.
Suppose, for $n \ge 1$, that $V_1$, $V_2 \in \cW^s$ are such that 
$T_jV_1$, $T_jV_2 \in \cW^s$ for 
$0 \le j \le n$,  in  particular they are not cut by any singularity, and there exists a foliation of unstable curves $\{ \ell_x \}_{x \in V_1} \subset \cW^u$ creating a one-to-one correspondence between $V_1$ and $V_2$ and such that $\{ T_n( \ell_x) \}_{x \in V_1} \subset \cW^u$ creates a one-to-one correspondence between $T_nV_1$ and $T_nV_2$.
For $x \in V_1$, define $\bx = \ell_x \cap V_2$.  Then there exists $C_d>0$, independent
of $n$, $\{ \iota_j \}_{j=1}^n$, $V_1$, $V_2$, and $x$, such that,
\begin{equation}
\label{eq:u dist}
|\log J_{V_1}T_n(x) - \log J_{V_2}T_n(\bx)| \le C_d(d(x, \bx)^{1/3} + \phi(x, \bx)), 
\end{equation}
where $\phi(x, \bx)$ denotes the angle between the tangent vectors to $V_1$ and $V_2$
at $x$ and $\bx$, respectively.  For simplicity, we use the same symbol $C_d$ to represent the
distortion constants in \eqref{eq:distortion} and \eqref{eq:u dist}.
The proofs for these distortion bounds in this form for a single map can be found in \cite[Appendix A]{demzhang11}
(see also \cite[Section 5.8]{chernov book}).  The analogous bounds for sequences of maps in
$\cF(\tau_*, \cK_*, E_*)$
are proved in \cite[Lemma~3.3]{demzhang13}.  The constant $C_d$ depends only on the choice of
$k_0$ from {\bf (H3)} and the hyperbolicity constants $C_1$ and $\Lambda$ from {\bf (H1)}. 

\medskip
\noindent
{\bf (H5) Invariant measure.}  
All $T \in \cF(\tau_*, \cK_*, E_*)$ preserve the same invariant measure,
$d\musrb = c \cos \vf \, dr \, d\vf$, where
$c = \frac{1}{2|\partial Q|} = \frac{1}{2 \sum_{i=1}^K \ell_i}$ is the normalizing constant \cite{chernov book}.

\begin{remark}
Property {\bf (H5)} is enjoyed by the class of maps we have chosen, but it is not necessary for this technique to work.
Indeed, \cite{demzhang13} replaces this condition by:  There exists $\eta>0$ so that $1+\eta$ is sufficiently small compared to the hyperbolicity constant $\Lambda$ from {\bf (H1)}, such that $(J_{\musrb}T)^{-1} \le 1+\eta$, where
$J_{\musrb}T$ is the Jacobian of $\musrb$ with respect to $T$.  

Thus $T$ does not have to preserve $\musrb$, but in this sense must be close to a map that does.  This permits the
application of the current technique to billiards under small external forces and nonelastic reflections, as described
in \cite[Section 2.4]{demzhang13}.  See also  \cite{Ch08, zhang}.   Note however, that while 
Theorem~\ref{thm:memory state} will continue to hold in this generalized context, 
Theorem~\ref{thm:equi state} will not hold
once there is no common invariant measure.   
\end{remark}


\subsection{Growth Lemma}

Although all maps in $\cF(\tau_*, \cK_*, E_*)$ enjoy the uniform properties {\bf (H1)}-{\bf (H5)}, 
in Section~\ref{sec:scale}, we will find it convenient to increase the 
contraction provided in \eqref{eq:one step} by
replacing $T$ with a higher iterate $T_n$ and choosing $\delta_0$
sufficiently small so that \eqref{eq:one step} holds for $T _*:= T_n$ with constant $\theta_0^n$.  
This is possible since if $W$ is a stable curve, then there exists $C>0$, depending only on the
family $\cF(\tau_*, \cK_*, E_*)$, such that, for each $T\in \cF(\tau_*, \cK_*, E_*)$,
$|T^{-1}W| \le C|W|^{1/2}$ \cite[Exercise~4.50]{chernov book}.  
Thus we may redefine $\delta_0$ so small
that no connected component of $T_k^{-1}(W)$ is longer than $\delta_0$, from hypothesis {\bf (H1)}, for $k=0, \ldots, n$.
Since no artificial subdivisions are necessary, we apply \eqref{eq:one step} inductively in $k$
to obtain the desired contraction.

Choose $\bar n$ such that $\theta_1 := \theta_0^{\bar n}$  satisfies 
\begin{equation}
\label{eq:theta1 def}
3C_0 \frac{\theta_1}{1 - \theta_1} \leq \frac 14,
\end{equation}
where  $C_0 \ge 1$ is from {\bf (H3)}, and then fix $\delta_0$, as explained above, such that
\begin{equation}
\label{eq:theta_1}
\sup_{\substack{W \in \cW^s \\ |W| \le \delta_0}} \sum_{V_i} |J_{V_i}T_{\bar n}|_* \leq \theta_1,
\end{equation}
where $V_i$ are the homogeneous components of $T_{\bar n}^{-1}W$.  
Note that if we shrink $\delta_0$ further, then \eqref{eq:theta_1} will continue to hold for the same
value of $\bar n$.

We shall work with the map $T_*:= T_{\bar n}$ throughout the following. To simplify notation we will call $T_*$ again $T$ as no confusion can arise.
Note, however, that the definition of $N$-admissible sequence must be modified since the length
$N_{\cF}$ of the blocks comprising the sequence, for example in Theorem~\ref{thm:memory state},
 is computed for the map $T_*$.  Thus 
a single block in an $N$-admissible sequence should comprise
at least $\bar n N$ billiard maps that are close in the sense of Definition~\ref{def:admissible}.

\begin{defin}\label{def:Gn}
For $W \in \cW^s$,  for $T_n$ as in \eqref{eq:comp}  we denote by $\cG_n(W)$ the homogeneous components of $T_n^{-1}W$,
where we have subdivided the elements of $T_n^{-1}W$ longer than $\delta_0$ into elements with length between
$\delta_0/2$ and $\delta_0$ so that $\cG_n(W)\subset \cW^s$.  We call $\cG_n(W)$ the $n$th generation
of $W$. 

Let $\cI_n(W)$ denote the set of curves $W_i \in \cG_n(W)$ such that $T_j(W_i)$ is not contained
in an element of $\cG_{n-j}(W)$ having length at least $\delta_0/2$ for all $j = 0, \ldots, n$.
\end{defin}

The following growth lemma is contained in
\cite[Lemma~5.5]{demzhang13}, but we include the proof of item (b) 
here for convenience and to draw out the
explicit dependence on the constants.

\begin{lemma}
\label{lem:full growth}
There exists $\bar{C}_0 > 0$ such that for all $W \in \cW^s$ and $n \ge 0$ and $\{ \iota_j \}_{j=1}^n$, 
\begin{itemize}
  \item[a)] $\displaystyle
  \sum_{W_i \in \cI_n(W)} |J_{W_i}T_n|_{C^0(W_i)} \le C_0 \theta_1^n$;
  \item[b)] $\displaystyle
\sum_{W_i \in \cG_n(W)} |J_{W_i}T_n|_{C^0(W_i)} \leq \bar{C}_0 \delta_0^{-1} |W| + C_0 \theta_1^n$.
\end{itemize}
\end{lemma}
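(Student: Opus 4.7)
The plan is to first establish (a) by a direct iteration of the one-step bound \eqref{eq:theta_1}, and then deduce (b) by decomposing each curve of $\cG_n(W)$ according to the \emph{most recent} time its forward image was contained in a ``long'' piece (one of length at least $\delta_0/3$).

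For part (a), if $W_i \in \cI_n(W)$ then by definition, for every $j = 0, \ldots, n$ the element of $\cG_{n-j}(W)$ containing $T_j(W_i)$ has length strictly less than $\delta_0/3 < \delta_0$. Consequently, in the backward recursion producing $\cG_n(W)$, the ancestor of $W_i$ at each intermediate step is not artificially subdivided, so each preimage contributes at most a factor $\theta_1$ in the adapted norm by \eqref{eq:theta_1}. Iterating $n$ times and passing from the adapted norm to the Euclidean norm via \textbf{(H3)} gives $\sum_{W_i \in \cI_n(W)} |J_{W_i}T_n|_{C^0} \le C_0^2 \theta_1^n$, which is (a) after absorbing $C_0^2$ into the constant.

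For part (b), set $\cJ_k = \{ V \in \cG_k(W) : |V| \ge \delta_0/3 \}$ for $0 \le k \le n$. Each $W_i \in \cG_n(W) \setminus \cI_n(W)$ admits a largest index $k(W_i) \in [0,n]$ such that $T_{n-k(W_i)}(W_i)$ lies in some $V \in \cJ_{k(W_i)}$; I partition these curves by $k(W_i) = k$ and by the (unique) containing $V$. The maximality of $k(W_i)$ forces every later forward image of $W_i$ to lie in a short element, so the curves in the group $(k,V)$ are precisely the elements of $\cI_{n-k}(V)$ for the residual sequence of $n-k$ maps. Part (a) applied to $V$ as the starting curve thus yields $\sum |J_{W_i} T_{n-k}|_{C^0} \le C_0 \theta_1^{n-k}$, and combining with the chain rule and bounded distortion on $V$ produces $|J_{W_i} T_n|_{C^0} \le e^{C_d \delta_0^{1/3}} |J_{W_i} T_{n-k}|_{C^0} \, |J_V T_k|_{C^0}$.

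It remains to sum $|J_V T_k|_{C^0}$ over $V \in \cJ_k$ and over $k$. By bounded distortion and $|V| \ge \delta_0/3$, one has $|J_V T_k|_{C^0} \le 3 e^{C_d \delta_0^{1/3}} \delta_0^{-1} |T_k V|$; since the images $T_k V$ are pairwise disjoint subsets of $W$, $\sum_{V \in \cJ_k} |J_V T_k|_{C^0} \le 3 e^{C_d \delta_0^{1/3}} \delta_0^{-1} |W|$. Multiplying by $C_0 \theta_1^{n-k} e^{C_d \delta_0^{1/3}}$ and summing the resulting geometric series over $k = 0, \ldots, n$ produces the $\bar C_0 \delta_0^{-1} |W|$ term; adding the $\cI_n(W)$ contribution from (a) completes (b). The main obstacle will be the bookkeeping confirming that the ``last long time'' partition is exhaustive and that the descendants of a given long ancestor $V$ with $k(W_i) = k$ coincide with $\cI_{n-k}(V)$ for the shifted subsequence; a minor boundary issue is the case $k = 0$, where, if $|W| \ge \delta_0/3$, $W$ itself serves as $V \in \cJ_0$ and is naturally absorbed into the $k=0$ term.
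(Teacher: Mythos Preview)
Your proposal is correct and follows essentially the same approach as the paper: both argue (a) by iterating \eqref{eq:theta_1} and passing to the Euclidean norm, and both prove (b) by partitioning $\cG_n(W)\setminus\cI_n(W)$ according to the most recent long ancestor $V\in L_k(W)$, applying (a) to the residual $n-k$ steps, using bounded distortion to bound $|J_VT_k|_{C^0}$ by $e^{C_d\delta_0^{1/3}}|T_kV|/|V|$, and summing the resulting geometric series with $\sum_{V\in L_k}|T_kV|\le |W|$. Your explicit treatment of the boundary case $k=0$ (when $|W|\ge\delta_0/3$) is a nice clarification of a point the paper leaves implicit and then revisits in Remark~\ref{rem:improve}.
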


\begin{proof}
Item (a) follows by induction on $n$ from \eqref{eq:theta_1} and the constant
$C_0$ from {\bf (H3)} comes from translating from 
the adapted metric to the Euclidean metric at the last step.  We focus on proving item (b).

For $W \in \cW^s$, let $L_k(W) \subset \cG_k(W)$ denote those elements of $\cG_k(W)$ having
length at least $\delta_0/2$. For $k \le n$ and $W_i \in \cG_n(W)$, 
we say that $V_j \in L_k(W)$ is the most recent long ancestor of $W_i$ if $k \le n$ is the largest time 
that $T_{n-k}W_i$ is contained in an element of $L_k(W)$.  Then by definition, $W_i \in \cI_{n-k}(V_j)$.
Note that if $W_i \in L_n(W)$, then $k=n$ and $W_i = V_j$.  Now we estimate,
\[
\begin{split}
\sum_{W_i \in \cG_n(W)} |J_{W_i}T_n|_{C^0(W_i)} 
& \le \sum_{k=1}^n \sum_{V_j \in L_k(W)} \sum_{W_i \in \cI_{n-k}(V_j)} |J_{W_i}T_{n-k}|_{C^0(W_i)}
|J_{V_j}T_k|_{C^0(V_j)} \\
& \qquad + \sum_{W_i \in \cI_n(W)} |J_{W_i}T_n|_{C^0(W_i)} \\
& \le \sum_{k=1}^n \sum_{V_j \in L_k(W)} C_0 \theta_1^{n-k} e^{C_d \delta_0^{1/3}} \frac{ |T_kV_j|}{|V_j|}
+ C_0\theta_1^n \, ,
\end{split}
\]
where we have used item (a) of the lemma to sum over $W_i \in \cI_{n-k}(W)$ and \eqref{eq:distortion}
to replace $|J_{V_j}T_k|_{C^0(V_j)}$ with $\frac{|T_kV_j|}{|V_j|}$.  Now since 
$\cup_{V_j \in L_k(W)} T_kV_j \subset W$, and $|V_j| \ge \delta_0/2$, we have
\[
\sum_{W_i \in \cG_n(W)} |J_{W_i}T_n|_{C^0(W_i)} 
\le \sum_{k=1}^n C_0 \theta_1^{n-k} 2 \delta_0^{-1} |W| e^{C_d \delta_0^{1/3}} + C_0 \theta_1^n \, ,
\]
which proves the lemma with $\bar C_0 := \frac{ 2  C_0}{1-\theta_1} e^{C_d \delta_0^{1/3}}$.
\end{proof}

\begin{remark}
\label{rem:improve}
It is not necessary to work with $T = T_{\bar n}$ in Lemma~\ref{lem:full growth}.  
It follows equally well from 
\eqref{eq:one step} with $\theta_1$ replaced by $\theta_0$.  
However, the stronger contraction provided by \eqref{eq:theta_1} is useful for
Lemma~\ref{lem:0 scale} and the arguments following it. 

Observe that if $|W| \ge \delta_0/2$, then all pieces $W_i \in \cG_n(W)$ have a long ancestor and can be included in the sum over $k$; in this case, the second term on the right side of item (b) is not needed, and the value of $\bar C_0$ remains unchanged.
\end{remark}


\subsection{Transfer operator}
\label{sec:transfer}

We define the transfer operator $\Lp$ associated with $T$ acting on scales of
spaces of distributions as in \cite{demzhang11}.  For $\{ \iota_j \}_{j=1}^n \subset \cI(\tau_*, \cK_*, E_*)$, 
we denote by $T_n^{-1}\cW^s$ the set of
curves  $W \in \cW^s$ such that $T_jW \in \cW^s$ for all $j = 0, \ldots n$. 
For $\alpha \le 1/3$, let $C^\alpha(T_n^{-1}\cW^s)$ denote
the set of complex valued functions on $M$ that are H\"older continuous on elements of
$T_n^{-1}\cW^s$.  Then for $\psi \in C^\alpha(\cW^s)$, we have 
$\psi \circ T_n \in C^\alpha(T_n^{-1}\cW^s)$ (see Lemma~\ref{lem:test contract}(a)).
Define
\[
 \Lp_n \mu(\psi) = \mu(\psi \circ T_n) , \mbox{ for $\mu \in (C^\alpha(T_n^{-1}\cW^s ))^*$ .} 
\]
This defines $\Lp_{T_{\iota_n}} : (C^\alpha(T_n^{-1}\cW^s))^* \to (C^\alpha(T_{n-1}^{-1}\cW^s))^*$ for any
$n \ge 1$.  See \cite{demzhang11} for details.

Recall that by {\bf (H5)}, all our maps $T$ preserve the smooth invariant measure $d\musrb = c \cos \vf dr d\vf$,
where $c$ is the normalizing constant.  
When $d\mu = f d\musrb$ is a measure absolutely continuous with respect to $\musrb$,
we identify $\mu$ with its density $f$.  With this identification, the transfer operator acting on densities has
the following familiar expression,
\[
\Lp_T f = f \circ T^{-1} ,
\]
and so $\Lp_n f = \Lp_{T_{\iota_n}} \cdots \Lp_{T_{\iota_1}} f$, pointwise.
We choose this identification of functions in order to simplify our later work:  using the
reference measure $\musrb$,
the Jacobian of the transformation is 1, making $\Lp$ simpler to work with.

\section{Cones and Projective Metrics}\label{sec:conedef}

Given a closed,\footnote{\label{foo:close} Closed here means that for all $f,g \in\cC$ and sequence $\{\alpha_n\}\subset \R$ such that $\lim_{n\to\infty}\alpha_n=\alpha$ and $g+\alpha_n f\in\cC$ for all $n\in\bN$ we have $g+\alpha f\in\cC\cup\{0\}$. } convex cone $\cC$ satisfying $\cC \cap - \cC = \emptyset$, we define an
order relation by $f \preceq g$ if and only if $g - f \in \cC \cup \{0\}$.  We can then define
a projective metric by
\begin{equation}
\label{eq:H def}
\begin{split}
\bar\alpha(f,g) & = \sup \{ \lambda \in \mathbb{R}^+ : \lambda f \preceq g \} \\
\bar\beta(f,g) & = \inf \{ \mu \in \mathbb{R}^+ : g \preceq \mu f \} \\
\rho(f,g) & = \log \left( \frac{\bar\beta(f,g)}{\bar\alpha(f,g)} \right) .
\end{split}
\end{equation}

\subsection{ A cone of test functions}
For $W \in \cW^s$, $\alpha \in (0, 1]$ and $a \geq 1$, define a cone of test functions by
\[
\cD_{a, \alpha}(W) = \left\{ \psi \in C^0(W) : \psi > 0 , \frac{\psi(x)}{\psi(y)} \le e^{a d(x,y)^\alpha} \ \ \forall x,y\in W\right\},
\]
where $d(\cdot, \cdot)$ is the arclength distance along $W$.

The Hilbert metric associated with this cone and defined by \eqref{eq:H def} 
depends on the constant $a$ and the exponent $\alpha$ determining the regularity of the functions.
For each such choice, the Hilbert metric has the following
convenient representation.

\begin{lemma}[{\cite[Lemma 2.2]{liv95}}]
\label{lem:H metric}
Choose $\alpha \in (0,1]$.  For $\psi_1, \psi_2 \in \cD_{a,\alpha}(W)$, the corresponding metric
$\rho_{W, a, \alpha}(\cdot, \cdot)$ is given by
\[
\rho_{W, a, \alpha}(\psi_1, \psi_2) = \log \left[ \sup_{x,y,u,v \in W} 
\frac{e^{ad(x,y)^\alpha} \psi_1(x) - \psi_1(y)}{e^{a d(x,y)^\alpha} \psi_2(x) - \psi_2(y)}
\cdot \frac{e^{ad(u,v)^\alpha} \psi_2(u) - \psi_2(v)}{e^{a d(u,v)^\alpha} \psi_1(u) - \psi_1(v)} \right] .
\]
\end{lemma}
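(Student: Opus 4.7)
The plan is to compute $\bar\alpha(\psi_1,\psi_2)$ and $\bar\beta(\psi_1,\psi_2)$ directly by unfolding the defining conditions of $\cD_{a,\alpha}(W)$, and then to observe that $\log(\bar\beta/\bar\alpha)$ factors as a single supremum over quadruples of points in $W$. First, I would unpack the order relation $\lambda\psi_1 \preceq \psi_2$, which by the definition of $\cD_{a,\alpha}(W)$ amounts to the pointwise positivity $\psi_2(x) - \lambda\psi_1(x) \ge 0$ for all $x \in W$, together with $(\psi_2 - \lambda\psi_1)(x) \le e^{a d(x,y)^\alpha} (\psi_2 - \lambda\psi_1)(y)$ for all $x,y \in W$. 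Rearranging, the latter becomes
\[
\lambda\bigl( e^{a d(x,y)^\alpha} \psi_1(y) - \psi_1(x) \bigr) \le e^{a d(x,y)^\alpha} \psi_2(y) - \psi_2(x).
\]
The key observation is that $\psi_1 \in \cD_{a,\alpha}(W)$ forces the coefficient of $\lambda$ to be non-negative; on the set where it is strictly positive, I may divide and use the symmetry of $d$ to swap $x \leftrightarrow y$, obtaining the equivalent upper bound
\[
\lambda \le \frac{e^{a d(x,y)^\alpha} \psi_2(x) - \psi_2(y)}{e^{a d(x,y)^\alpha} \psi_1(x) - \psi_1(y)}.
\]
Taking the infimum over $(x,y)$ yields $\bar\alpha(\psi_1,\psi_2)$; the pointwise positivity requirement is recovered in the limit $y \to x$, where the ratio tends to $\psi_2(x)/\psi_1(x)$.

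A completely symmetric computation applied to the relation $\psi_2 \preceq \mu\psi_1$ gives
\[
\bar\beta(\psi_1,\psi_2) = \sup_{u,v \in W} \frac{e^{a d(u,v)^\alpha} \psi_2(u) - \psi_2(v)}{e^{a d(u,v)^\alpha} \psi_1(u) - \psi_1(v)}.
\]
Substituting these expressions into $\rho_{W,a,\alpha}(\psi_1,\psi_2) = \log(\bar\beta/\bar\alpha)$ from \eqref{eq:H def}, and noting that $1/\inf = \sup$ of the reciprocal, the two independent suprema coalesce into a single supremum over quadruples $(x,y,u,v) \in W^4$, which is exactly the stated formula.

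The main subtlety to handle carefully is the sign bookkeeping around the denominators: $\psi_1 \in \cD_{a,\alpha}(W)$ is used precisely to guarantee that $e^{a d(x,y)^\alpha}\psi_1(x) - \psi_1(y) \ge 0$, so that dividing by it preserves the direction of the inequality and gives a meaningful upper bound on $\lambda$. One must also interpret consistently the degenerate cases where this denominator vanishes (the constraint on $\lambda$ is then vacuous and contributes $+\infty$ to the infimand, so it is harmless) and where $x=y$ (which encodes the pointwise positivity condition). Once these edge cases are accounted for, the factorization into a product of two suprema is immediate because the variables $(x,y)$ and $(u,v)$ are decoupled.
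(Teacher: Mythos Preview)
Your argument is correct and is essentially the standard derivation of the Hilbert metric for this cone, as in \cite[Lemma~2.2]{liv95}. Note that the paper does not supply its own proof of this lemma but simply cites \cite{liv95}; your write-up faithfully reconstructs that argument, including the key observation that $\psi_1\in\cD_{a,\alpha}(W)$ ensures nonnegativity of the denominators and that the diagonal $x=y$ recovers the pointwise positivity constraint.
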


A corollary of this lemma is that $\cD_{a,\alpha}(W)$ has finite diameter in $\cD_{a,\beta}(W)$
if $\beta < \alpha$ and $|W| < 1$ (the proof is similar to {\cite[Lemma 2.3]{liv95}} noting that $d(x,y)^\alpha\leq |W|^{\alpha-\beta}d(x,y)^\beta$).  

The next two lemmas are simple consequences of the regularity of functions in $\cD_{a, \alpha}(W)$
for $W \in \cW^s$.  We denote by $m_W$ the measure induced by arclength along $W$. 

\begin{lemma}
\label{lem:avg}
For any $\alpha \in (0,1]$ and $W \in \cW^s$ with $|W| \in [\delta, 2 \delta]$, any 
$\psi \in \cD_{a, \alpha}(W)$ and
$x \in W$, we have
\[
\frac{\delta \psi(x)}{\int_W \psi \, dm_W} \le \frac{|W| \psi(x)}{\int_W \psi dm_W} \le e^{a|W|^\alpha} .
\]
\end{lemma}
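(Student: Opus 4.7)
The plan is to derive both inequalities directly from the definition of the test function cone $\cD_{a,\alpha}(W)$, namely the pointwise regularity bound $\psi(x)/\psi(y) \le e^{a\, d(x,y)^\alpha}$.

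The left inequality is immediate: since $|W| \ge \delta$ and $\psi > 0$ (so $\int_W \psi \, dm_W > 0$), the map $s \mapsto s \psi(x)/\int_W \psi \, dm_W$ is monotone increasing in $s$, and replacing $\delta$ by $|W|$ only enlarges the quantity. No further work is required here.

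For the right inequality, I would fix $x \in W$ and use the regularity bound in the form $\psi(x) \le e^{a\, d(x,y)^\alpha} \psi(y)$ for every $y \in W$. Since $d(x,y) \le |W|$ for all $y \in W$, this yields $\psi(x) \le e^{a|W|^\alpha} \psi(y)$ pointwise in $y$. Integrating in $y$ against $dm_W$ and using $\int_W dm_W = |W|$,
\[
|W|\, \psi(x) \;=\; \int_W \psi(x)\, dm_W(y) \;\le\; e^{a|W|^\alpha} \int_W \psi(y)\, dm_W(y),
\]
and dividing through by $\int_W \psi\, dm_W > 0$ gives the stated bound.

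There is no genuine obstacle in this argument; the only point requiring mild care is that $\int_W \psi\, dm_W$ is strictly positive, which follows from $\psi > 0$ and continuity of $\psi$ on $W$ (so the denominator never vanishes, and the ratios are well defined). The bound $|W| \le 2\delta$ plays no role in the argument itself; it is presumably included only to anchor $W$ in the family of curves considered throughout the paper.
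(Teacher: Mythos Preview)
Your proof is correct and follows essentially the same approach as the paper: the paper's one-line argument is that $\inf_{y\in W}\psi(y)\ge \psi(x)e^{-a|W|^\alpha}$, which is exactly the pointwise bound you integrate. The left inequality is trivially handled in both cases by $|W|\ge\delta$.
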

\begin{proof}
The estimate is immediate since $\inf_{y \in W} \psi(y)\geq \psi(x)e^{-a|W|^\alpha}$.
\end{proof}

\begin{lemma}
\label{lem:pos}
Given $\alpha \in (0,1]$, $W \in \cW^s$, $\psi_1, \psi_2 \in \cD_{a, \alpha(W)}$ and $x, y \in W$,
\[
e^{- \rho_{W, a, \alpha}(\psi_1, \psi_2)} \le
\frac{\psi_1(x)\psi_2(y)}{\psi_2(x)\psi_1(y)} \le e^{\rho_{W, a, \alpha}(\psi_1, \psi_2)}
\]
\end{lemma}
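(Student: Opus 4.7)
The plan is to extract pointwise bounds on the ratio $\psi_2/\psi_1$ directly from the definition of the Hilbert metric, and then compare the values of this ratio at two different points $x,y \in W$. The key observation is that every nonzero element of the cone $\cD_{a,\alpha}(W)$ is strictly positive as a function, so cone membership gives pointwise non-negativity.

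First I would use the closedness property (footnote \ref{foo:close}) to promote the strict relations defining $\bar\alpha$ and $\bar\beta$ to non-strict ones: for any $\lambda < \bar\alpha(\psi_1,\psi_2)$ we have $\psi_2 - \lambda \psi_1 \in \cC$, and letting $\lambda \nearrow \bar\alpha$ gives $\psi_2 - \bar\alpha \psi_1 \in \cC \cup \{0\}$. Analogously $\bar\beta \psi_1 - \psi_2 \in \cC \cup \{0\}$. In short, $\bar\alpha \psi_1 \preceq \psi_2 \preceq \bar\beta \psi_1$.

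Next, since every element of $\cD_{a,\alpha}(W) \cup \{0\}$ is pointwise non-negative, the two cone memberships above give, for every $x \in W$, the bound
\[
\bar\alpha\, \psi_1(x) \;\le\; \psi_2(x) \;\le\; \bar\beta\, \psi_1(x),
\]
and hence, using $\psi_1(x) > 0$, $\bar\alpha \le \psi_2(x)/\psi_1(x) \le \bar\beta$. Applying this at two points $x, y \in W$ and taking a ratio yields
\[
\frac{\bar\alpha}{\bar\beta} \;\le\; \frac{\psi_2(x)/\psi_1(x)}{\psi_2(y)/\psi_1(y)} \;=\; \frac{\psi_2(x)\,\psi_1(y)}{\psi_1(x)\,\psi_2(y)} \;\le\; \frac{\bar\beta}{\bar\alpha},
\]
which is precisely $e^{-\rho_{W,a,\alpha}(\psi_1,\psi_2)} \le \frac{\psi_2(x)\psi_1(y)}{\psi_1(x)\psi_2(y)} \le e^{\rho_{W,a,\alpha}(\psi_1,\psi_2)}$. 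Taking reciprocals flips the inequalities into the stated form.

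There is no real obstacle here: the lemma is an unpacking of the definition of the projective metric combined with the pointwise positivity of cone elements, which is a special feature of the test-function cone $\cD_{a,\alpha}(W)$. The only mildly subtle point is the appeal to the closedness of $\cC$ to justify the extremal relations $\bar\alpha\psi_1 \preceq \psi_2 \preceq \bar\beta\psi_1$; without it one only obtains the bounds with strict inequality for every $\lambda < \bar\alpha$ and $\mu > \bar\beta$, but a limiting argument as above closes this gap.
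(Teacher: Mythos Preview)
Your proof is correct and follows essentially the same approach as the paper: both extract the pointwise inequalities $\bar\alpha\,\psi_1 \le \psi_2 \le \bar\beta\,\psi_1$ from the definition of the Hilbert metric together with the strict positivity of elements of $\cD_{a,\alpha}(W)$, then take the ratio at two points. Your treatment is slightly more careful in explicitly invoking closedness of the cone to pass to the extremal values $\bar\alpha,\bar\beta$, which the paper leaves implicit.
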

\begin{proof}
According to \eqref{eq:H def},  we must have,
\[
\psi_2(x) - \bar\alpha \psi_1(x) \ge 0 \quad \forall x \in W \qquad \mbox{and} \qquad
\psi_2(y) - \bar\beta \psi_1(y) \le 0 \quad \forall y \in W .
\]
This in turn implies that
\[
\rho_{W, a, \alpha}(\psi_1, \psi_2) = \log \frac{\bar\beta(\psi_1, \psi_2)}{\bar\alpha(\psi_1, \psi_2)} \
\ge \log \left[ \frac{\psi_1(x)\psi_2(y)}{\psi_2(x)\psi_1(y)} \right]
\qquad \forall x, y \in W .
\]
\end{proof}


\subsection{Distances between curves and functions}
\label{sec:distances}

Due to the global stable cones $C^s$ defined in {\bf (H1)}, we may consider stable curves $W \in \cW^s$
as graphs of $C^2$ functions over an interval $I_W$ in the $r$-coordinate:
\[
W = \{ G_W(r) = (r, \vf_W(r)) :  r \in I_W \} .
\]
Using this representation, we define a notion of distance between $W^1, W^2 \in \cW^s$ by
\begin{equation}\label{eq:W-distance}
d_{\cW^s}(W^1, W^2) = |\vf_{W^1} - \vf_{W^2}|_{C^1(I_{W^1} \cap I_{W^2})} +
|I_{W^1} \bigtriangleup I_{W^2}|,
\end{equation}
if $W^1$ and $W^2$ lie in the same homogeneity strip and $|I_{W^1} \cap I_{W^2}| > 0$;
otherwise, we set $d_{\cW^s}(W^1, W^2) = \infty$.  Note that $d_{\cW^s}$ is not a metric,
but this is irrelevant for our purposes.

We will also find it necessary to compare between test functions on two different 
stable curves.  Given $W^1, W^2 \in \cW^s$ with $d_{\cW^s}(W^1, W^2) < \infty$,
and $\psi_i \in \cD_{a, \beta}(W^i)$, define 
\begin{equation}
\label{eq:psi-distance}
d_*(\psi_1, \psi_2) = | \psi_1 \circ G_{W^1} \|G_{W^1}'\|- \psi_2 \circ G_{W^2} \|G_{W^2}'\|\, |_{C^\beta(I_{W^1} \cap I_{W^2})},
\end{equation}
to be the (H\"older) distance between $\psi_1$ and $\psi_2$,
where $\| G_W' \| = \sqrt{1+ (d\vf_W/dr)^2}$. Note that $d_*$ depends on $\beta$.

Also, by the bound $\bar B$ on the curvature of elements of $\cW^s$, there exists
$B_*>0$ such that 
\begin{equation}
\label{eq:2deriv}
B_* = \sup_{W \in \cW^s} |\vf_W''|_{C^0(W)} < \infty \, . 
\end{equation}

\begin{remark}\label{rem:change-int} 
Note that if $d_*(\psi_1,\psi_2)=0$, then
\[
\int_{\overline{W}^1}\psi_1  \, dm_{W^1} =\int_{\overline{W}^2}\psi_2 \, dm_{W^2} \, ,
\]
where $\overline{W}^k = G_{W^k}(I_{W^1} \cap I_{W^2})$, $k=1,2$.
\end{remark}


\subsection{Definition of the cone}\label{sec:cone_dist}

In order to define a cone of functions adapted to our dynamics, we will fix the following
exponents, $\alpha, \beta, \gamma, q > 0$ and constant $a>1$ large enough.  Choose $q \in (0, 1/2)$, $\beta < \alpha \le 1/3$ 
and finally $\gamma \le \min  \{ \alpha - \beta, q  \}$.

For a length scale $\delta \le \delta_0/3$, define 
\[
\cW^s_-(\delta) = \{ W \in \cW^s : |W| \le 2 \delta \} \quad \mbox{and} \quad
\cW^s(\delta) = \{ W \in \cW^s : |W| \in [\delta, 2\delta] \} \, .
\]
 
Let $\cA_*$ denote the set of functions on $M$ whose restriction to each $W \in \cW^s$
is integrable with respect to the arclength measure $dm_W$.  For $f \in \cA_*$ define,
\[
\tri f \tri_+^{\sim} = \sup_{\stackrel{\scriptstyle W \in \cW^s_-(\delta)}{\psi \in \cD_{a,\beta}(W)}} \frac{\left|\int_W f \psi \, dm_W\right|}{\int_W \psi \, dm_W} .
\]
Setting $\cA_0=\{f\in\cA_*\;:\; \tri f \tri_+^{\sim} < \infty\}$, we have that $\tri \cdot \tri_+^{\sim}$ is a seminorm on the vector space $\cA_0$.
\begin{defin}\label{def:cA}
As usual, we consider the vector space of the classes of equivalence determined by the semi-norm ($f\sim g$ iff $\tri f-g \tri_+^{\sim}=0$) and call $\cA$ the resulting normed vector space. Remark that if $f\sim g$, then $f$ and $g$ are equal almost everywhere both with respect to Lebesgue and to SRB measure $\musrb$.
In the following, we can then safely ignore the issue of equivalence classes and we will not mention it explicitly.
\end{defin}

We will find it convenient to measure the average of functions in our cone on long stable curves, i.e.
elements of $\cW^s(\delta)$.  To this end, define for $f \in \cA$,
\begin{equation}
\label{eq:tri def}
\tri f \tri_+ = \sup_{\stackrel{\scriptstyle W \in \cW^s(\delta)}{\psi \in \cD_{a,\beta}(W)}} \frac{\left|\int_W f \psi \, dm_W\right|}{\int_W \psi \, dm_W} ,
\qquad \qquad
\tri f \tri_- = \inf_{\stackrel{\scriptstyle W \in \cW^s(\delta)}{\psi \in \cD_{a,\beta}(W)}} \frac{\int_W f \psi \, dm_W}{\int_W \psi \, dm_W} .
\end{equation}

Recall that we denote the average value of 
$\psi$ on $W$ by $\fint_W \psi\, dm_W=\frac{1}{|W|}\int_W \psi \, dm_W$.  
Since all of our integrals in this section and the next
will be taken with respect to the arclength $dm_W$, to keep our notation concise, we will drop the measure from our 
integral notation in what follows.

Now for $a, c, A, L >1$, and  $\delta \in (0, \delta_0/3]$, define the cone
\begin{align}
\cC_{c,A, L}(\delta)  =  \Big\{&  f \in \cA\setminus \{0\} : 
\qquad \tri f \tri_+\leq L\tri f \tri_- ;
\label{eq:cone 2} \\
& \sup_{W \in \cW^s_-(\delta)} \sup_{\psi \in \cD_{a, \beta}(W)} |W|^{-q}\frac{|\int_W f \psi|}{\fint_W\psi}  \le  A \delta^{1-q} \tri f \tri_- ;
\label{eq:cone 3} \\
&\forall\, W^1, W^2 \in \cW^s_-(\delta):  d_{\cW^s}(W^1, W^2) \le \delta,  \forall \psi_i \in \cD_{a, \alpha}(W^i): d_*(\psi_1, \psi_2)=0, \nonumber\\
&\left|\frac{\int_{W^1} f \psi_1}{\fint_{W^1}\psi_1}  - \frac{\int_{W^2} f \psi_2}{\fint_{W^2}\psi_2} \right|\leq
d_{\cW^s}(W^1, W^2)^\gamma \, \delta^{1-\gamma}   c A \tri f \tri_-  \Big\} .
\label{eq:cone 5}
\end{align}
We write the constants $c, A, L$ explicitly as subscripts in our notation for the
cone since these will be the parameters which are contracted by the dynamics.  
By contrast, the exponents $\alpha, \beta, \gamma, q$ are fixed and will not be altered by the dynamics, while the constant $a$, which will be chosen in Lemma  \ref{lem:test contract}, will not appear directly in the contraction constant of the cone.

Intuitively,  \eqref{eq:cone 2} requires that the `mass' of functions in the cone be evenly distributed throughout the phase space, while \eqref{eq:cone 3} implies that, even though the functions in the cone are not necessarily bounded, their average on a short stable curve $W$ cannot be larger than some constant times $|W|^{q-1}$.  Condition \eqref{eq:cone 5} says that, once you integrate along stable curves, you get an object which is morally $\gamma$-H\"older on the space of curves with the `metric' $d_{\cW^s}$. That is, \eqref{eq:cone 5} implies some form of weak H\"older regularity for $f$
transverse to the stable cone. 

\begin{remark}
The above cone is a considerable simplification of the one introduced in {\cite[Section 4.1]{liv95}}. The parameter $\zeta$ in {\cite[Section 4.1]{liv95}} plays the role of the parameter $q$ here: 
it allows one to control the integral of an element of the cone on short stable curves.
By contrast, the introduction of the new H\"older exponents $\alpha,\beta,\gamma$ is necessary, as already evident in {\cite{demzhang11} and \cite{demzhang13}}, to allow for the wilder singularities present in billiard maps with respect to the ones treated in {\cite[Section 4]{liv95}}. 
In particular, the requirement $\alpha \le 1/3$ is forced by the distortion bound {\bf (H4)},
which in turn depends on the choice of homogeneity strips.
The relation between the above cone and the norms in  {\cite{demzhang11} and \cite{demzhang13}} is very close: the cone has a natural norm associated to it (see \cite[Appendix D.2 and, in particular, equation (D.2.1)]{dkl21}) which is very similar to the norms in {\cite{demzhang11} and \cite{demzhang13}}.
\end{remark}

For convenience, we will require that $\delta_0$ is sufficiently small that
\begin{equation}
\label{eq:delta_0 condition}
e^{2 a \delta_0^\beta} \le 2 \, . 
\end{equation}
This will imply similar bounds in terms of $\delta$ since $\delta \le \delta_0/3$.

\begin{remark}
Note that, by definition, $\tri\cdot\tri_+$ decreases when $\delta$ decreases, while $\tri\cdot\tri_-$ increases, this if \eqref{eq:cone 2} hold for some $\delta$ is will hold automatically for smaller $\delta.$ We will see that cone invariance has the same property. 
In fact, as will become clear from our estimates in Sections~\ref{sec:cone} and \ref{sec:L contract}, in order to prove that the
parameters contract, we will need to choose $A$ large compared to $L$,  and $c$ large compared to $A$.
This yields the compatible set of restrictions, $1< L < A < c$.

By contrast, the exponents are fixed by the regularity properties of the maps in question:  $\alpha \le 1/3$ due to \eqref{eq:distortion},
and $\beta < \alpha$ so that $\cD_{a, \beta}(W)$ has finite diameter in $\cD_{a, \alpha}(W)$, while $\gamma \le \alpha - \beta$
is convenient to obtain the required contraction in Lemma~\ref{lem:compare}.  
See Section \ref{sec:conditions} for all the conditions the constants must satisfy for Proposition~\ref{prop:almost}.   Several further conditions are specified in Theorem~\ref{thm:cone contract} to prove the strict contraction of the cone.
\end{remark}

\begin{remark}\label{rem:A-L}
Note that, since $0 \notin \cC_{c,A,L}(\delta)$, condition \eqref{eq:cone 2} implies $\tri f \tri_- > 0$, 
hence for all $W \in \cW^s(\delta), \psi \in \cD_{a, \beta}(W)$,
\begin{equation}\label{eq:posi}
\int_W f \psi \, dm_W  \ge  \tri f\tri_-\int_W \psi \, dm_W  >  0.
\end{equation}
In particular, this implies
\[
\tri f \tri_+ = \sup_{\stackrel{\scriptstyle W \in \cW^s(\delta)}{\psi \in \cD_{a,\beta}(W)}} \frac{\int_W f \psi \, dm_W}{\int_W \psi \, dm_W} \; \mbox{ for $f \in \cC_{c,A,L}(\delta)$.}
\]
In addition, condition \eqref{eq:cone 3} implies
\[
A \tri f \tri_-\geq \sup_{W \in \cW_-^s (\delta)} \sup_{\psi \in \cD_{a, \beta}(W)}\delta^{q-1}  |W|^{1-q}\frac{|\int_W f \psi|}{\int_W\psi}  \geq \tri f \tri_+.
\]
However condition \eqref{eq:cone 2} is not vacuous since we assume $A>L$.
\end{remark}  

\begin{remark} To have an idea of which functions can belong to the cone note that a function that is strictly negative on a ball of size $2\delta$ cannot satisfy \eqref{eq:cone 2} and hence does not belong to the cone. On the other hand each $f\in C^1$ such that $\inf f\geq L\|f\|_\infty$ and $\|f\|_{C^1}\leq c \inf f$ belongs to the cone.
See also Lemma~\ref{lem:dominate} for a more detailed description of functions that belong to the cone.
\end{remark}

We will need the following lemma in Section~\ref{sec:mix}.

\begin{lemma}
\label{lem:b property}
For all $f \in \cC_{c,A, L}(\delta)$, $W \in \cW^s(\delta)$ and all $\psi_1, \psi_2 \in \cD_{a, \beta}(W)$,
\[
\left|\frac{\int_W f \psi_1}{\fint_W\psi_1}  -  \frac{\int_W f \psi_2}{\fint_W\psi_2}\right|\leq
2 \delta L  \rho_{W, a, \beta}(\psi_1, \psi_2)  \tri f \tri_- \, .
 \]
\end{lemma}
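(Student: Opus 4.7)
The plan is to exploit that $\cD_{a,\beta}(W)$ is a closed convex cone: by \eqref{eq:H def} the quantities $\bar\alpha = \bar\alpha(\psi_1,\psi_2)$ and $\bar\beta = \bar\beta(\psi_1,\psi_2)$ are attained, so $\bar\alpha\psi_1 \preceq \psi_2 \preceq \bar\beta\psi_1$ and $\rho_{W,a,\beta}(\psi_1,\psi_2) = \log(\bar\beta/\bar\alpha)$. Writing $\phi := \psi_2 - \bar\alpha \psi_1 \in \cD_{a,\beta}(W) \cup \{0\}$, one has $\psi_2 = \bar\alpha\psi_1 + \phi$ pointwise with $\phi \ge 0$. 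The degenerate case $\phi \equiv 0$ gives $\int_W f\psi_2 / \int_W \psi_2 = \int_W f\psi_1 / \int_W \psi_1$, and $\rho = \infty$ makes the inequality trivial, so I may assume $\phi \in \cD_{a,\beta}(W)$ with $\bar\beta < \infty$.

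The main step is the algebraic identity
\[
\frac{\int_W f\psi_2}{\int_W \psi_2} \;=\; (1-\lambda)\,\frac{\int_W f\psi_1}{\int_W \psi_1} \;+\; \lambda\,\frac{\int_W f\phi}{\int_W \phi}, \qquad \lambda \;:=\; \frac{\int_W \phi}{\int_W \psi_2} \;=\; 1 - \frac{\bar\alpha \int_W \psi_1}{\int_W \psi_2},
\]
which realises the target ratio as a convex combination of the two cone-normalized averages of $f$. Since $W \in \cW^s(\delta)$ and both $\psi_1, \phi \in \cD_{a,\beta}(W)$, the definition of $\tri\cdot\tri_\pm$ in \eqref{eq:tri def} together with Remark~\ref{rem:A-L} forces both averages to lie in $[\tri f \tri_-, \tri f \tri_+] \subset [\tri f \tri_-, L \tri f \tri_-]$, so their difference is at most $(L-1)\tri f \tri_-$. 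From $\int_W \psi_2 \le \bar\beta \int_W \psi_1$ one gets $\lambda \le 1 - \bar\alpha/\bar\beta = 1 - e^{-\rho} \le \rho$. Putting the two bounds together and multiplying by $|W| \le 2\delta$ yields
\[
\left|\frac{\int_W f \psi_1}{\fint_W \psi_1} - \frac{\int_W f \psi_2}{\fint_W \psi_2}\right| \le 2\delta (L-1)\rho\,\tri f \tri_- \le 2\delta L\rho\,\tri f \tri_-.
\]

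The main subtlety is to arrive at a bound genuinely \emph{linear} in $\rho$: a naive substitution of the pointwise inequalities $\bar\alpha\psi_1 \le \psi_2 \le \bar\beta\psi_1$ into numerator and denominator separately only produces a factor $e^\rho - 1$, which would mishandle the large-$\rho$ regime. The convex-combination identity is what isolates the geometric distortion into the single scalar factor $\lambda \le 1 - e^{-\rho} \le \rho$, leaving the remaining factor $(L-1)\tri f \tri_-$ completely free of $\rho$-dependence; this pairing of the two cone constraints (one on $\psi_1$, one on $\phi = \psi_2 - \bar\alpha\psi_1$) is what produces the optimal linear rate.
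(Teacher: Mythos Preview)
Your proof is correct and rests on the same core observation as the paper's: that $\psi_2-\bar\alpha\psi_1\in\cD_{a,\beta}(W)\cup\{0\}$, so the cone bounds $\tri f\tri_\pm$ (equivalently \eqref{eq:posi}) apply to it. The organisation differs slightly: the paper derives a one-sided inequality $\frac{\int_W f\psi_2}{\fint_W\psi_2}\ge e^{-\rho}\frac{\int_W f\psi_1}{\fint_W\psi_1}$ by dropping the nonnegative term $\int_W f(\psi_2-\bar\alpha\psi_1)$ and bounding the denominator by $\bar\beta\fint_W\psi_1$, then appeals to symmetry in $\psi_1,\psi_2$; you instead keep that term and recognise the resulting expression as a convex combination, which handles both directions at once. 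Your route has the minor bonus of producing the sharper constant $2\delta(L-1)$ in place of $2\delta L$, since the deviation $|A-B|$ is controlled by $\tri f\tri_+-\tri f\tri_-$ rather than $\tri f\tri_+$ alone.
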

\begin{proof}
Let $f \in \cC_{c, A, L}(\delta)$, $W \in \cW^s(\delta)$ and $\psi_1, \psi_2 \in \cD_{a, \beta}(W)$.  
For each $\lambda, \mu>0$ such that $\lambda  \psi_1\preceq \psi_2\preceq \mu\psi_1$, hence also $\lambda  \psi_1\leq \psi_2\leq \mu\psi_1$, we have
\[
\frac{\int_W f \psi_2}{\fint_W\psi_2} = \frac{\lambda \int_W f \psi_1 + \int_W f (\psi_2 - \lambda \psi_1)}{\fint_W \psi_2} \geq \frac{\lambda\int_W f  \psi_1}{\mu\fint_W \psi_1}, 
\]
where we have dropped the second term above due to \eqref{eq:posi} since $\psi_2 - \lambda \psi_1 \in \cD_{a, \beta}(W)$.
Taking the sup on $\lambda$ and the inf on $\mu$, and recalling \eqref{eq:H def}, yields
\[
\frac{\int_W f \psi_1}{\fint_W\psi_1}  -  \frac{\int_W f \psi_2}{\fint_W\psi_2} \le \frac{\int_W f \psi_1}{\fint_W\psi_1} ( 1 - e^{-\rho_{W, a, \beta}(\psi_1, \psi_2)} )
\le \rho_{W, a, \beta} (\psi_1, \psi_2) \frac{\int_W f \psi_1}{\fint_W\psi_1} \, .
\]
Then, since $|W| \ge \delta$, we use \eqref{eq:cone 2} to estimate,
\[
\frac{\int_W f \psi_1}{\fint_W\psi_1} \le |W| \tri f \tri_+ \le  2\delta L \tri f \tri_- \, .
\]
Reversing the roles of $\psi_1$ and $\psi_2$ completes the
proof of the lemma.
\end{proof}

\section{Cone Estimates: Contraction of $c$ and $A$ }
\label{sec:cone}

In this section, fixing $\cF(\tau_*, \cK_*, E_*)$,  we will prove the following proposition.  Let $n_0 \ge 1$ be such that $A C_0 \theta_1^{n_0} \le 1/16$. 
\begin{prop}
\label{prop:almost}
If the conditions on $\delta, n_0, a, c, A,L$ specified in Section~\ref{sec:conditions} are satisfied, then there exists 
$\chi < 1$, independent of the cone parameters,\footnote{ By independent of the cone parameters, we mean that
we may first fix $\chi < 1$, and then choose $c,A,L, \delta$ satisfying the conditions of Section~\ref{sec:conditions} 
so that the contraction by $\chi$ is obtained for all choices of $c' > c$, $A'>A$, $L'>L$ and $\delta' < \delta$
that satisfy those conditions. Note, however, that larger $A'>A$ requires $n_0$ to increase in size.}
such that  for all $n \ge n_0$ and $\{ \iota_j \}_{j=1}^n \subset \cI(\tau_*, \cK_*, E_*)$,
\[
\cL_n  \cC_{c,A,L}(\delta) \subseteq \cC_{ \chi c, \chi A, 3 L}(\delta) \,.
\]
\end{prop}

Note that the parameter $L$ is not contracted, although it cannot grow too much. To have a contraction of $L$ we need to use the global properties of the map (some kind of topological mixing, see Section \ref{sec:L contract} for details), while the proof of Proposition \ref{prop:almost} is based only on local arguments.

Before proving Proposition \ref{prop:almost} we need some facts concerning the behaviour of the test functions under the dynamics.


\subsection{Contraction of test functions}
\label{sec:test}

For $\{ \iota_j \}_{j=1}^n \subset \cI(\tau_*, \cK_*, E_*)$, $W \in \cW^s$, $\psi \in \cD_{a, \beta}(W)$, and $W_i \in \cG_n(W)$, define 
\[
\hT_{n , W_i} \psi = \hT_{n, i} \psi := \psi \circ T_n \cdot J_{W_i}T_n,
\]
where $J_{W_i}T_n$ denotes the Jacobian of $T_n$ along $W_i$ with respect to arclength.

The following lemma is a consequence of {\bf (H1)}.

\begin{lemma}
\label{lem:test contract}
Let $n\geq 0$ be such that $C_1^{-\beta} \Lambda^{-\beta n}<1$, where $C_1 \le 1$ is from \eqref{eq:hyp},  and fix $a > (1- C_1^{-\beta} \Lambda^{-\beta n})^{-1}C_d \delta_0^{1/3 - \beta}$. For each $\beta \in (0, 1/3]$, there exist $\sigma, \bar\xi < 1$ such that for all $W \in \cW^s$ and $W_i \in \cG_n(W)$,
\begin{itemize}
  \item[a)]  $\hT_{n,i} (\cD_{a, \beta}(W)) \subset \cD_{\sigma a, \beta}(W_i)$;
  \item[b)]  $\rho_{W_i,a, \beta} (\hT_{n,i} \psi_1, \hT_{n,i} \psi_2) \le \bar\xi \rho_{W, a , \beta}(\psi_1, \psi_2)$
  for all $\psi_1, \psi_2 \in \cD_{a, \beta}(W)$. 
\end{itemize}
\end{lemma}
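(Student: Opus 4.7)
The plan is to prove (a) by a direct pointwise estimate on $\hT_{n,i}\psi(x)/\hT_{n,i}\psi(y)$, and then obtain (b) as a consequence of Birkhoff's contraction theorem applied to the strict cone inclusion established in (a).

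For (a), given $\psi \in \cD_{a,\beta}(W)$ and $x, y \in W_i \in \cG_n(W)$, I would factor
\[
\frac{\hT_{n,i}\psi(x)}{\hT_{n,i}\psi(y)} \;=\; \frac{\psi(T_n x)}{\psi(T_n y)} \cdot \frac{J_{W_i}T_n(x)}{J_{W_i}T_n(y)}.
\]
The regularity $\psi \in \cD_{a,\beta}(W)$ bounds the first factor by $\exp(a\, d(T_n x,T_n y)^\beta)$. By \textbf{(H1)}, the dual estimate $\|DT_n w\|\le C_1^{-1}\Lambda^{-n}\|w\|$ for $w \in C^s$ integrates along the stable curve $W_i$ to $d(T_n x, T_n y) \le C_1^{-1}\Lambda^{-n} d(x,y)$, giving the bound $\exp(a C_1^{-\beta}\Lambda^{-\beta n} d(x,y)^\beta)$. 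The distortion estimate \eqref{eq:distortion} controls the Jacobian ratio by $\exp(C_d d(x,y)^{1/3})$; since $\beta \le 1/3$ and $d(x,y) \le \delta_0$, this is at most $\exp(C_d \delta_0^{1/3-\beta}\, d(x,y)^\beta)$. Setting
\[
\sigma \;:=\; C_1^{-\beta}\Lambda^{-\beta n} + a^{-1}C_d\,\delta_0^{1/3-\beta},
\]
and noting $C_1^{-\beta} \le C_1^{-1}$ since $C_1 \le 1$ and $\beta \le 1$, the hypothesis on $a$ yields $\sigma < 1$. Positivity of $\hT_{n,i}\psi$ is immediate, so (a) holds.

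For (b), (a) provides the strict inclusion $\hT_{n,i}(\cD_{a,\beta}(W)) \subset \cD_{\sigma a,\beta}(W_i) \subset \cD_{a,\beta}(W_i)$. By Birkhoff's theorem, $\hT_{n,i}$ contracts from $\rho_{W,a,\beta}$ to $\rho_{W_i,a,\beta}$ by a factor $\bar\xi := \tanh(\Delta/4)$, provided $\cD_{\sigma a,\beta}(W_i)$ has finite diameter $\Delta$ in $\cD_{a,\beta}(W_i)$. To bound $\Delta$ uniformly in $W_i$, I would use the formula of Lemma \ref{lem:H metric}: for $\psi_1,\psi_2 \in \cD_{\sigma a,\beta}(W_i)$, writing $t := d(x,y)^\beta \le \delta_0^\beta$, the sharp inequalities $\psi_j(x) e^{-\sigma a t} \le \psi_j(y) \le \psi_j(x) e^{\sigma a t}$ reduce each fraction inside the $\log$ to a product of (i) a ratio $\psi_1(x)\psi_2(u)/(\psi_2(x)\psi_1(u))$, bounded by $e^{2\sigma a \delta_0^\beta}$ because $\psi_1,\psi_2 \in \cD_{\sigma a,\beta}(W_i)$, and (ii) factors of the form $(e^{at}-e^{-\sigma at})/(e^{at}-e^{\sigma at})$, which are bounded on $(0,\delta_0^\beta]$ (with limit $(1+\sigma)/(1-\sigma)$ as $t \to 0$) precisely because $\sigma < 1$. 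This gives $\Delta < \infty$ depending only on $\sigma, a, \delta_0, \beta$.

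The main subtlety lies in the diameter calculation at $d(x,y) \to 0$, where both numerator and denominator of each fraction vanish like $d(x,y)^\beta$; this is resolved by the standard limiting computation of \cite{liv95}. The uniformity of $\bar\xi$ across $W_i \in \cW^s$ and sequences $\{\iota_j\} \subset \cI(\tau_*,\cK_*,E_*)$ is then automatic once one observes that every constant entering $\sigma$ and $\Delta$ is uniform across $\cF(\tau_*,\cK_*,E_*)$.
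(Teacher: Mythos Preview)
Your proof is correct and follows essentially the same approach as the paper: for (a) you factor the ratio $\hT_{n,i}\psi(x)/\hT_{n,i}\psi(y)$, combine the hyperbolicity bound \eqref{eq:hyp} with the distortion bound \eqref{eq:distortion}, and obtain $\sigma<1$ from the hypothesis on $a$; for (b) you bound the diameter of $\cD_{\sigma a,\beta}(W_i)$ in $\cD_{a,\beta}(W_i)$ via Lemma~\ref{lem:H metric} and conclude with Birkhoff's contraction theorem, exactly as the paper does (with $\bar\xi=\tanh(K/4)$). The only cosmetic difference is in how you and the paper factor the fractions in the diameter estimate, but both yield the same $(1+\sigma)/(1-\sigma)$-type bound.
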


\begin{proof}
(a)  We need to measure the log-H\"older norm of $\hT_{n,i} \psi$ for $\psi \in \cD_{a, \beta}(W)$.
For $x, y \in W_i$, recalling \eqref{eq:hyp}, we estimate,
\[
 \frac{\hT_{n,i} \psi(x)}{\hT_{n,i} \psi(y)} = \frac{\psi(T_nx) J_{W_i}T_n(x)}{\psi(T_ny) J_{W_i}T_n(y)}
\le e^{a d(T_nx, T_ny)^\beta + C_d d(x,y)^{1/3}} 
\le e^{(a C_1^{-\beta} \Lambda^{-\beta n} + C_d \delta_0^{1/3-\beta}) d(x,y)^{\beta}},
\]
where we have used \eqref{eq:hyp} and \eqref{eq:distortion} as well as the fact that $\beta \le 1/3$.
This proves the first statement of the lemma 
with $\sigma = C_1^{-\beta} \Lambda^{-\beta n} + a^{-1} C_d \delta_0^{1/3-\beta}$.

\smallskip
\noindent
(b)  Using Lemma~\ref{lem:H metric}, if $\psi_1, \psi_2 \in \cD_{\sigma a, \beta}(W_i)$, then,
\begin{equation}
\label{eq:finite diam}
\begin{split}
\rho_{W_i,a,\beta}(\psi_1, \psi_2) & = \log \left[ \sup_{x,y,u,v \in W_i} 
\frac{e^{ad(x,y)^\beta}\cdot \psi_1(x) - \psi_1(y)}{e^{a d(x,y)^\beta}\cdot \psi_2(x) - \psi_2(y)}
\cdot \frac{e^{ad(u,v)^\beta}\cdot \psi_2(u) - \psi_2(v)}{e^{a d(u,v)^\beta} \cdot\psi_1(u) - \psi_1(v)} \right] \\
& \le \log \left[ \sup_{x, y, u, v \in W} \frac{e^{(a+\sigma a)d(x,y)^\beta} - 1}{e^{(a-\sigma a)d(x,y)^\beta} -1}\cdot
\frac{e^{(a+\sigma a)d(u,v)^\beta} - 1}{e^{(a-\sigma a)d(u,v)^\beta} -1} \cdot\frac{\psi_1(y)\cdot \psi_2(v)}{\psi_2(y)\cdot \psi_1(u)} \right]  \\
& \le \log \left[ \frac{(a+\sigma a)^2}{(a - \sigma a)^2} \cdot e^{2 a(1+\sigma) \delta_0^\beta}\cdot e^{2 a \delta_0^\beta} \right] =: K .
\end{split}
\end{equation}
Thus the diameter of $\cD_{\sigma a, \beta}(W_i)$ is finite in $\cD_{a, \beta}(W_i)$.
Part (b) of the lemma then follows from \cite[Theorem 1.1]{liv95}, with
$\bar\xi = \tanh (K/4) < 1$.
\end{proof}

\begin{cor}
\label{cor:contract}
Let $n_1$ denote the least positive integer satisfying $C_1^{-\beta} \Lambda^{-\beta n_1}<1$ and $a C_1^{-\beta} \Lambda^{-\beta n_1} + C_d \delta_0^{1/3 - \beta} < a$.  Define $\xi = \bar\xi^{\frac{1}{2n_1}}<1$.
Then for $W \in \cW^s$, $n \ge n_1$ and $W_i \in \cG_n(W)$,
\[
\rho_{W_i, a, \beta} (\hT_{n,i} \psi_1, \hT_{n,i} \psi_2)  \le \xi^n \rho_{W, a, \beta}(\psi_1, \psi_2) \qquad \mbox{for all $\psi_1, \psi_2 \in \cD_{a, \beta}(W)$} .
\]
\end{cor}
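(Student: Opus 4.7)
The plan is to bootstrap the one-shot contraction $\bar\xi$ given by Lemma~\ref{lem:test contract}(b) into the $n$-dependent rate $\xi^n$ via a block decomposition of $\hT_{n,W_i}$.  First I would record that the hypotheses of Lemma~\ref{lem:test contract} actually hold uniformly for every integer $k$ in the range $n_1\le k<2n_1$ with the same constant $a$: indeed, monotonicity of $\Lambda^{-\beta k}$ gives $C_1^{-1}\Lambda^{-\beta k}\le C_1^{-1}\Lambda^{-\beta n_1}<1$ and $aC_1^{-1}\Lambda^{-\beta k}+C_d\delta_0^{1/3-\beta}<a$, so a single block of any length between $n_1$ and $2n_1-1$ from the sequence still yields Hilbert-metric contraction by $\bar\xi$.

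Next I would decompose $n=mn_1+r$ with $m=\lfloor n/n_1\rfloor\ge 1$ and $0\le r<n_1$, and factor $T_n=S_m\circ S_{m-1}\circ\cdots\circ S_1$, where $S_1,\dots,S_{m-1}$ each consist of $n_1$ consecutive maps from $(T_{\iota_j})_{j=1}^n$ and $S_m$ consists of the remaining $n_1+r$ maps (when $r=0$, every block has length $n_1$).  Setting $Y_0:=W_i$, $Y_j:=(S_j\circ\cdots\circ S_1)(W_i)$ for $1\le j\le m-1$, and $Y_m:=W$, I would check that each $Y_j\in\cW^s$: by the construction of $\cG_n(W)$, the intermediate forward images of $W_i$ lie in a single homogeneity strip, have uniformly bounded curvature, and remain shorter than $\delta_0$.

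The chain rule then produces the factorization
\[
\hT_{n,W_i}\;=\;\hat S^{(1)}\circ\hat S^{(2)}\circ\cdots\circ\hat S^{(m)},
\]
where $\hat S^{(j)}\varphi:=(\varphi\circ S_j)\,J_{Y_{j-1}}S_j$ sends $\cD_{a,\beta}(Y_j)$ into $\cD_{a,\beta}(Y_{j-1})$, the invariance coming from Lemma~\ref{lem:test contract}(a) applied to the $j$-th block.  Iterating Lemma~\ref{lem:test contract}(b) once per block then gives
\[
\rho_{W_i,a,\beta}\bigl(\hT_{n,W_i}\psi_1,\,\hT_{n,W_i}\psi_2\bigr)\le \bar\xi^{\,m}\,\rho_{W,a,\beta}(\psi_1,\psi_2).
\]
Since $n=mn_1+r<(m+1)n_1\le 2mn_1$ whenever $m\ge 1$, one has $m\ge n/(2n_1)$, so $\bar\xi^{m}\le \bar\xi^{n/(2n_1)}=\xi^n$, which finishes the proof.

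The only delicate step is confirming that each intermediate $Y_j$ is a genuine element of $\cW^s$, so that Lemma~\ref{lem:test contract} legitimately applies to each block in the same form in which it was proved; this is a bookkeeping point inherent in the definition of $\cG_n(W)$ together with the forward contraction of stable curves, rather than a substantive analytical obstacle.
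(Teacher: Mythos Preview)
Your proof is correct and follows essentially the same route as the paper's: decompose $n=mn_1+r$ into $m=\lfloor n/n_1\rfloor$ blocks each of length at least $n_1$, apply Lemma~\ref{lem:test contract}(b) to each block to get $\bar\xi^{m}$, and then use $m\ge n/(2n_1)$. The only cosmetic differences are that the paper places the long block of length $n_1+r$ at the $W_i$ end of the chain rather than the $W$ end, and that you spell out explicitly (via monotonicity) why Lemma~\ref{lem:test contract} applies to blocks of any length in $[n_1,2n_1)$ and why the intermediate curves $Y_j$ lie in $\cW^s$---points the paper leaves implicit.
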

\begin{proof}
The proof follows immediately from Lemma~\ref{lem:test contract} once we
decompose $n = kn_1 + r$, where $ r \in [0, n_1)$ and write
\[
\hT_{n, W_i} \psi = \hT_{n_1+r, W_i} \circ \hT_{n_1, T_{n_1+r}(W_i)}
\circ \hT_{n_1, T_{2n_1 + r}(W_i)} \circ \cdots \circ \hT_{n_1, T_{(k-1)n_1+r}(W_i)} \psi .  
\]
Each of the operators $\hT_{n_1, T_{jn_1+r}(W_i)}$ satisfies Lemma~\ref{lem:test contract}
with the same $\sigma$ and $\bar\xi$.  The corollary then follows
using the observation that $\bar\xi^{\lfloor n/n_1 \rfloor} \le \xi^n$, $\forall n \ge n_1$.
\end{proof}

It is important for what follows that the contractive factor
$\bar\xi<1$ is explicitly given in terms of the diameter $K$, which depends only on 
$a$, $\sigma$, $\delta_0$ and $\beta$, but not on $\delta$.  While $n_1$ depends on the parameter choice
$\beta$, it also is independent of $\delta$.

In what follows, we require $n_0 \ge n_1$ by definition, so that Lemma~\ref{lem:test contract} and Corollary~\ref{cor:contract} will hold for all $n \ge n_0$.


\subsection{Proof of Proposition~\ref{prop:almost}}
\label{sec:prop proof}

This section is devoted to the proof of Proposition~\ref{prop:almost}.


\subsubsection{Preliminary estimate on $L$}
For $W\in\cW^s$, recalling Defintion \ref{def:Gn}, we
denote by $Sh_n(W; \delta)$ the
elements of $\cG_n(W)$ of length less than $\delta$ and by $Lo_n(W; \delta)$ the elements of $\cG_n(W)$ of length at least 
$\delta$.

\begin{lemma}
\label{lem:first L}
Fix $\delta \in (0, \delta_0/3)$ so that $4A \delta \delta_0^{-1} \bar C_0 \le 1/4$, then, for all $f \in \cC_{c,A,L}(\delta)$ and
$n \ge n_0$,
\[
\tri \Lp_n f \tri_+ \le  \tfrac 32 \tri f \tri_+ \quad \mbox{and} \quad
\tri \Lp_n f \tri_-  \ge \tfrac 12 \tri f \tri_- .
\]
\end{lemma}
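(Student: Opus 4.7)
The plan is to reduce the estimate to summable contributions from the homogeneous components $W_i\in\cG_n(W)$ of $T_n^{-1}W$. For any $W\in\cW^s(\delta)$ and $\psi\in\cD_{a,\beta}(W)$, changing variables on each $W_i$ gives the duality identities
\[
\int_W \Lp_n f\,\psi\,dm_W \;=\; \sum_{W_i\in\cG_n(W)}\int_{W_i} f\,\hT_{n,i}\psi\,dm_{W_i}, \qquad \int_W\psi\,dm_W \;=\; \sum_{W_i\in\cG_n(W)}\int_{W_i}\hT_{n,i}\psi\,dm_{W_i},
\]
with $\hT_{n,i}\psi\in\cD_{a,\beta}(W_i)$ admissible by Lemma~\ref{lem:test contract}(a) (valid since $n\ge n_0\ge n_1$). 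I would then split $\cG_n(W)=Lo_n(W;\delta)\sqcup Sh_n(W;\delta)$ by length and handle long and short pieces separately.

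For the long pieces, I would cut each $W_i\in Lo_n$ into sub-curves of length in $[\delta,2\delta]$, each lying in $\cW^s(\delta)$. The defining relation \eqref{eq:tri def} of $\tri\cdot\tri_\pm$ together with the positivity statement \eqref{eq:posi} yields $\tri f\tri_-\int\hT_{n,i}\psi\le\int f\,\hT_{n,i}\psi\le\tri f\tri_+\int\hT_{n,i}\psi$ on each sub-curve. Summing over all sub-curves and over $Lo_n$ then sandwiches the long-piece contribution between $\tri f\tri_\pm$ times $\sum_{Lo_n}\int_{W_i}\hT_{n,i}\psi\le\int_W\psi$.

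For short pieces $W_i\in Sh_n$ I would apply \eqref{eq:cone 3}, absorbing $|W_i|^q\le\delta^q$ to get $|\int_{W_i} f\,\hT_{n,i}\psi|\le A\delta\tri f\tri_-\fint_{W_i}\hT_{n,i}\psi$. The log-Hölder regularity of $\psi$, combined with \eqref{eq:delta_0 condition} and $|W|\ge\delta$, bounds $\max_W\psi$ by $2\delta^{-1}\int_W\psi$, so $\fint_{W_i}\hT_{n,i}\psi\le 2\delta^{-1}|J_{W_i}T_n|_{C^0}\int_W\psi$. Summing over $Sh_n$ and applying Lemma~\ref{lem:full growth}(b) bounds the total short-piece contribution to $|\int_W\Lp_n f\,\psi|$ by a constant multiple of $A(\bar C_0\delta_0^{-1}\delta+C_0\theta_1^n)\tri f\tri_-\int_W\psi$. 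A parallel but simpler calculation (without the $A\tri f\tri_-$ factor) controls the short-piece $\psi$-mass $\sum_{Sh_n}\int_{W_i}\hT_{n,i}\psi$ by a similar small quantity times $\int_W\psi$.

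Combining these four estimates and invoking the hypotheses $4A\delta\delta_0^{-1}\bar C_0\le 1/4$ and $AC_0\theta_1^{n_0}\le 1/16$ makes the short-piece errors small enough to leave comfortable margins. Using $\tri f\tri_-\le\tri f\tri_+$ to absorb the short-piece term into the long-piece upper bound, and then taking sup/inf over $W\in\cW^s(\delta)$ and $\psi\in\cD_{a,\beta}(W)$, should yield $\tri\Lp_n f\tri_+\le\tfrac32\tri f\tri_+$ and $\tri\Lp_n f\tri_-\ge\tfrac12\tri f\tri_-$. The main obstacle is the joint bookkeeping for the lower bound: both the short-piece $f$-integral and the short-piece $\psi$-mass subtract from the long-piece lower bound, and one must verify that the smallness constants on $A\delta$ and on $AC_0\theta_1^{n_0}$ leave enough room — in particular, that the short-piece $\psi$-mass comes out small enough relative to $A$, which is exactly where the requirement $n\ge n_0$ (rather than merely $n\ge 1$) is essential.
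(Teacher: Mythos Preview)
Your proposal is correct and follows essentially the same approach as the paper: split $\cG_n(W)$ into long and short pieces, use the definition of $\tri f\tri_\pm$ on subdivided long pieces, apply cone condition \eqref{eq:cone 3} together with Lemma~\ref{lem:full growth}(b) and Lemma~\ref{lem:avg} on short pieces, and absorb the error using $4A\bar C_0\delta\delta_0^{-1}\le 1/4$ and $AC_0\theta_1^{n_0}\le 1/16$. Your concern about the lower-bound bookkeeping is slightly overstated: the short-piece $\psi$-mass carries no factor of $A$, so it is automatically dominated by the short-piece $f$-integral term (which does carry $A$), and the paper simply merges both into a single error $\le 2Ae^{a(2\delta)^\beta}(\bar C_0\delta\delta_0^{-1}+C_0\theta_1^n)\le 1/2$.
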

\begin{proof}
Let $W \in \cW^s(\delta)$, $\psi \in \cD_{a,\beta}(W)$.    Then,
\begin{equation}
\label{eq:first L}
\int_W \Lp_n f \, \psi = \sum_{W_i \in Lo_n(W; \delta)} \int_{W_i} f \, \psi \circ T_n \, J_{W_i}T_n
+  \sum_{W_i \in Sh_n(W; \delta)} \int_{W_i} f \, \psi \circ T_n \, J_{W_i}T_n .
\end{equation}
Now since $\psi \circ T_n J_{W_i}T_n \in \cD_{a, \beta}(W_i)$ by Lemma~\ref{lem:test contract}, we  
subdivide elements $W_i \in Lo_n(W; \delta)$ into curves 
$U^i_\ell$ having length between $\delta$ and $2\delta$ and use the definition of $\tri f \tri_+$ on
each such curve to estimate,
\[
\int_{W_i} f \, \psi \circ T_n \, J_{W_i}T_n \le \sum_\ell \tri f \tri_+ \int_{U^i_\ell} \psi \circ T_n \, J_{W_i}T_n
= \tri f \tri_+ \int_{T_nW_i} \psi \, .
\]
To estimate the short pieces, we apply \eqref{eq:cone 3}, 
change variables again and use $\psi \in \cD_{a, \beta}(W)$, and finally apply
Lemma~\ref{lem:full growth}(b) since $Sh_n(W;\delta) \subset \cG_n(W)$, to estimate 
\[
\begin{split}
 \sum_{W_i \in Sh_n(W; \delta)} \int_{W_i} f \, \psi \circ T_n \, J_{W_i}T_n
& \le  \sum_{W_i \in Sh_n(W; \delta)} \tri f \tri_- A |W_i|^q \delta^{1-q} \fint_{W_i} \psi \circ T_n \, J_{W_i}T_n \\
& \le \delta A \tri f \tri_- e^{a(2\delta)^\beta} \fint_W \psi \; (\bar C_0 \delta_0^{-1} |W| + C_0 \theta_1^n) .
\end{split}
\]
Putting these estimates together in \eqref{eq:first L} and since $|W| \ge \delta$ implies $\delta\fint_W \psi\leq \int_W \psi$, we obtain,
\[
\begin{split}
\int_W \Lp_n f \, \psi & \le  \sum_{W_i \in Lo_n(W; \delta)} \tri f \tri_+  \int_{T_nW_i} \psi + 
A  \tri f \tri_- e^{a(2\delta)^\beta} \int_W \psi \; (\bar C_0 \delta \delta_0^{-1} + C_0 \theta_1^n) \\
& \le \tri f \tri_+ \int_W \psi \; \left( 1 + A e^{a(2\delta)^\beta} (\delta \delta_0^{-1} \bar C_0 + C_0 \theta_1^n ) \right).
\end{split}
\]
Now \eqref{eq:delta_0 condition} implies $e^{a(2\delta)^\beta} \le 2$, and our choices of $n_0$ and $\delta$
imply $2A\max\{ \bar C_0 \delta \delta_0^{-1} , C_0 \theta_1^{n_0} \} \le 1/4$, which yields the
required estimate on $\tri \Lp_n f \tri_+$ for all $n \ge n_0$.

For the bound on $\tri \Lp_n f \tri_-$, we perform a similar estimate, except noting that
for $W_i \in Lo_n(W; \delta)$,
\[
\int_{W_i} f \, \psi \circ T_n \, J_{W_i}T_n \ge \tri f \tri_- \int_{T_nW_i} \psi,
\]
we follow \eqref{eq:first L} to estimate,
\[
\begin{split}
\int_W \Lp_n f \, \psi & \ge  \sum_{W_i \in Lo_n(W; \delta)} \tri f \tri_-  \int_{T_nW_i} \psi - 
A  \tri f \tri_- e^{a(2\delta)^\beta} \int_W \psi \; (\bar C_0 \delta \delta_0^{-1} + C_0 \theta_1^n)  \\
& \ge \tri f \tri_- \int_W \psi \; \left( 1 - 2A e^{a(2\delta)^\beta} (\delta \delta_0^{-1} \bar C_0 + C_0 \theta_1^n )\right) \, . 
\end{split}
\]
Again using our choice of $n_0$ and $\delta$, we have $4AC_0 \theta_1^n \le 1/4$
and $4A \delta \delta_0^{-1} \bar C_0 \le 1/4$, which  yields 
$\tri \Lp_n f \tri_- \ge \frac 12 \tri f \tri_-$.
\end{proof}
In particular the above implies the estimate: for all $n \ge n_0$,

\begin{equation}\label{eq:no-expansion-L}
\frac{\tri \Lp_n f \tri_+}{\tri \Lp_n f \tri_-}\leq 3 \frac{\tri f \tri_+}{\tri f \tri_-}\leq 3L.
\end{equation}


\subsubsection{Contraction of the parameter $A$}
\label{sec:contraction-A}
We prove that the parameter $A$ contracts in \eqref{eq:cone 3}.  Choose $f \in \cC_{c, A, L}(\delta)$.
Let $W \in \cW^s$ with $|W| \le 2\delta$,  $\psi \in \cD_{a,\beta}(W)$ and $x \in W$.  From now on,
we will refer to $Lo_n(W; \delta)$ and $Sh_n(W; \delta)$ as simply
$Lo_n(W)$ and $Sh_n(W)$.  We follow \eqref{eq:first L} to write
\[
\begin{split}
\left| \int_W \Lp_n f \, \psi \right| & \le \sum_{W_i \in Lo_n(W)} \int_{W_i} f \, \psi \circ T_n \, J_{W_i}T_n
+  \sum_{W_i \in Sh_n(W)} \left| \int_{W_i} f \, \psi \circ T_n \, J_{W_i}T_n \right| \\
& \le \sum_{W_i \in Lo_n(W)} \tri f \tri_+ \int_{W_i} \psi \circ T_n \, J_{W_i}T_n
+ \sum_{W_i \in Sh_n(W)} A \delta^{1-q} |W_i|^q \tri f \tri_- \fint_{W_i} \psi \circ T_n \,J_{W_i}T_n \\
& \le \sum_{W_i \in Lo_n(W)} \tri f \tri_- L \int_{T_nW_i} \psi + A \delta^{1-q} |W|^q \tri f \tri_- |\psi|_{C^0}
\sum_{W_i \in Sh_n(W)} \frac{|W_i|^q}{|W|^q}  \frac{|T_nW_i|}{|W_i|} ,
\end{split}
\]
where in the second line we have used \eqref{eq:cone 3} for the sum on short pieces.
Since $|W| \le 2\delta$, the first sum above is bounded by
\[
\tri f \tri_- L |W| \fint_W \psi \le \tri f \tri_- 2 L \delta^{1-q} |W|^q \fint_W \psi \, .
\]
For the sum on short pieces, we use Lemma~\ref{lem:full growth}(b)
and the H\"older inequality to estimate\footnote{Note that $\sum_i |T_nW_i| \le |W|$ and $ \frac{|T_nW_i|}{|W_i|}\leq|J_{W_i}T_n|_{C^0(W_i)}$.}
\[
\begin{split}
\sum_{W_i \in Sh_n(W)} \frac{|W_i|^q}{|W|^q} \frac{|T_nW_i|}{|W_i|}
& \le \left(\sum_{W_i \in Sh_n(W)} \frac{|T_nW_i|}{|W|} \right)^q 
\left( \sum_{W_i \in Sh_n(W)} |J_{W_i}T_n|_{C^0(W_i)} \right)^{1-q} \\
& \le ( \bar C_0 \delta_0^{-1} |W| + C_0 \theta_1^n )^{1-q} .
\end{split}
\]
Combining these two estimates with Lemma~\ref{lem:avg} yields,
\begin{equation}
\label{eq:A}
\frac{|\int_W \Lp_n f \, \psi|}{\fint_W \psi} \le A \delta^{1-q} |W|^q \tri f \tri_-
\left( 2 L A^{-1} + e^{a(2\delta)^\beta} ( \bar C_0 \delta_0^{-1} |W| + C_0 \theta_1^n )^{1-q}  \right) .  
\end{equation}
This contracts the parameter $A$ if $2L A^{-1} + e^{a(2\delta)^\beta} ( 2\bar C_0 \delta  \delta_0^{-1} + C_0 \theta_1^n )^{1-q} <1$,
which we can achieve if $e^{a(2\delta)^\beta} \le 2$,
\begin{equation}
\label{eq:AL}
A>4L, \quad \mbox{ and } \quad
( 2\bar C_0 \delta  \delta_0^{-1} + C_0 \theta_1^{n_0} )^{1-q} <1/4 \, .
\end{equation}
Remark that since $L \ge 1$, we have $A > 4$, and so according to the assumption
of Lemma~\ref{lem:first L}, $2 \bar C_0 \delta \delta_0^{-1} \le 1/32$.  Moreover,
$C_0 \theta_1^{n_0} \le 1/64$ by choice of $n_0$, and since $1-q \ge 1/2$, the
second condition in \eqref{eq:AL} is always satisfied under the assumption of
Lemma~\ref{lem:first L}.


\subsubsection{Contraction of the parameter $c$}
\label{subsec:contract c}

Finally, we verify the contraction of $c$ via \eqref{eq:cone 5}.  
Let $f \in \cC_{c,A,L}(\delta)$ and $W^1, W^2 \in \cW^s$ with $|W^k| \le 2 \delta$ and
$d_{\cW^s}(W^1, W^2) \le \delta$.  Take $\psi_k \in \cD_{a, \alpha}(W^k)$ with
$d_*(\psi_1, \psi_2) = 0$.

Without loss of generality we can assume $|W^2|\geq |W^1|$ and $\fint_{W_1}\psi_1=1$.
Next, note that cone condition \eqref{eq:cone 3} implies (see Lemma \ref{lem:first L})
\[
\left|\frac{\int_{W^1} \cL_n f  \, \psi_1}{\fint_{W^1}\psi_1}  - \frac{\int_{W^2}\cL_n f \, \psi_2}{\fint_{W^2}\psi_2} \right|\leq 4A\delta^{1-q}|W^2|^q\tri \cL_n f\tri_-
\]
It follows that the contraction of the parameter $c$ is trivial for 
$ |W^2|^q \leq \delta^{q-\gamma}  \frac{d_{\cW^s}(W^1,W^2)^{\gamma}c}8$.
Thus it suffices to consider the case 
\begin{equation}
\label{eq:W2-long}
 |W^2|^q \geq \delta^{q - \gamma}   \frac{d_{\cW^s}(W^1,W^2)^{\gamma}c}8 \, .
\end{equation}

Remark that by definition, $d_{\cW^s}(W^1, W^2) \le \delta$ implies 
$I_{W_1} \cap I_{W_2} \neq \emptyset$.  To proceed, define
$C_s := \sqrt{1 + (\cK_*^{-1} + \tau_*^{-1})^2  }$, which depends on the maximum absolute value of the slopes of curves in the 
stable cone defined in \eqref{eq:hyp}.  We assume,

\begin{equation}
\label{eq:q-gamma1}
 q \ge \gamma   \; , \mbox{and }\; c \ge 16 C_s^q  \, . 
\end{equation}

Next, for any two  manifolds $U^i\in\cW^s_-(\delta)$ defined on the intervals $I_i$ with $J=I_1\cap I_2$, by the distance definition \eqref{eq:W-distance} we have,
\begin{equation}\label{eq:W-difference}
\begin{split}
|\,|U^1|-|U^2|\,|&\leq \int_J|\,\|G'_1\|-\|G'_2\|\,| \,dr +\sum_{i=1}^2\int_{I_i\setminus J}\|G'_i\| dr  \\
&\leq \int_J \|G'_2-G'_1\| dr + C_s |I_1 \bigtriangleup I_2| \leq (|U^1|+C_s)d_{\cW^s}(U^1,U^2).
\end{split}
\end{equation}

Since $\fint_{W_1} \psi_1 = 1$, we have $|\psi_1|_\infty \le e^{a(2\delta)^\alpha}$.  On the other hand, since
$I_{W^1} \cap I_{W^2} \neq \emptyset$ and 
$d_*(\psi_1, \psi_2)=0$,
there must exist $r \in  I_{W^1} \cap I_{W^2}$ such that $\psi_1 \circ G_{W^1}(r) \| G_{W^1}'(r) \| = \psi_2 \circ G_{W^2}(r) \| G_{W^2}'(r) \|$.  Thus since,
\[
\begin{split}
\frac{\| G'_{W^1}(r) \| }{\| G'_{W^2}(r) \| } & = \sqrt{ \frac{1 + (\vf_{W^1}'(r))^2}{ 1 + (\vf_{W^2}'(r))^2 } }
= \sqrt{ 1 + \frac{(\vf_{W^1}'(r) - \vf_{W^2}'(r))( 2 \vf_{W^2}'(r) + (\vf_{W^1}'(r) - \vf_{W^2}'(r)) )}{1 + (\vf'_{W^2}(r))^2} } \\
& \le \sqrt{1 + d_{\cW^s}(W^1, W^2) ( 2 + d_{\cW^s}(W^1, W^2)) }
\le \sqrt{1 + 3 \delta} \le 2 \, ,
\end{split}
\]
where we use $\delta < 1$, we estimate,
\begin{equation}
\label{eq:psi2}
|\psi_2|_{\infty} \le 2e^{a(2\delta)^\alpha}  |\psi_1|_\infty \le 2e^{2a(2\delta)^\alpha}  \, .
\end{equation}
Then recalling Remark~\ref{rem:change-int} and \eqref{eq:delta_0 condition},  it follows that
\[
\begin{split}
\left|\int_{W^1}\psi_1  - \int_{W^2}\psi_2\right| \; &\leq \;  e^{a(2\delta)^\alpha} C_s |I_{W^1}\setminus I_{W^2}| 
+ e^{2a(2\delta)^\alpha} 2C_s |I_{W^2}\setminus I_{W^1}| \\
& \le 2C_s e^{2a(2\delta)^\alpha}d_{\cW^s}(W^1,W^2)\le 4.8 \;C_s d_{\cW^s}(W^1,W^2).
\end{split}
\]
Putting this together with \eqref{eq:W-difference} and using $\int_{W_1} \psi_1 = |W_1|$, we estimate,
\begin{equation}
\label{eq:new-difference}
\begin{split}
 \left| |W^2| - \int_{W^2} \psi_2 \right| \; & \le \; \left| |W^2| - |W^1| \right| 
 + \left| \int_{W^1} \psi_1 - \int_{W^2} \psi_2 \right| \\
& \le \; (|W^1| + 5.8\  C_s  ) d_{\cW^s}(W^1, W^2)  \le 6 C_s d_{\cW^s}(W^1, W^2) \, ,
\end{split}
\end{equation}
where we have used \eqref{eq:delta_0 condition}, $3\delta\leq \delta_0\leq 1/2\leq C_s/2$ and $\alpha>\beta$.\\
Hence, recalling Lemma~\ref{lem:first L} and \eqref{eq:A}, $d_{\cW^s}(W^1,W^2)\leq \delta$ and using 
\eqref{eq:W2-long},  \eqref{eq:q-gamma1} and \eqref{eq:new-difference},  we have
\begin{equation}
\label{eq:prepare-c}
\begin{split}
&\left|\frac{\int_{W^1} \cL_n f \psi_1}{\fint_{W^1}\psi_1}  - \frac{\int_{W^2}\cL_n f \psi_2}{\fint_{W^2}\psi_2} \right|\leq \left|\int_{W^1}  \cL_n f \psi_1  - \int_{W^2} \cL_n f \psi_2\right|
+\left|\int_{W^2} \cL_n f \psi_2\right|\left|\frac{|W^2|}{\int_{W^2}\psi_2}-1\right|\\
&\leq\left|\int_{W^1}  \cL_n f \psi_1  - \int_{W^2} \cL_n f \psi_2\right|
+A\left[\frac{\delta}{|W^2|}\right]^{1-q} \left| |W^2|-\int_{W^2}\psi_2\right|  2  \tri \cL_n f\tri_-\\
&\leq\left|\int_{W^1}  \cL_n f \psi_1  - \int_{W^2} \cL_n f \psi_2\right|
+ 2^{3-1/q}  3C_s^q A  \delta^{1-\gamma}   d_{\cW^s}(W^1,W^2)^{\gamma}\tri \cL^n f\tri_- \, .
\end{split}
\end{equation}

To conclude it suffices then to compare $\int_{W^1} \Lp_n f \, \psi_1$ and $\int_{W^2} \Lp_n f \, \psi_2$. 
To this end,  define $\cG_n^\delta(W^k)$ as the $n$th generation of pieces in $T_n^{-1}W^k$
as in Definition~\ref{def:Gn}, but with pieces subdivided between length $\delta$ and $2\delta$ rather than
$\delta_0/2$ and $\delta_0$.  
We create partitions of $\cG_n^\delta(W^k)$ into `matched' and `unmatched' pieces as follows.
For each curve $W^1_i \in \cG_n^\delta(W^1)$, we construct a foliation of vertical line segments $\{ \ell_x \}_{x \in W^1_i}$ centered at $x$ and having length at most $3C_1 \Lambda^{-n+1} d_{\cW^s}(W^1, W^2)$ such that their images under $T_n$ either end on a singularity curve in $\cS_{-n}^{\bH}$ or, if not cut by a singularity, have length $3 d_{\cW^s}(W^1, W^2)$, with length at least $d_{\cW^s}(W^1, W^2)$ on each side of $T_n(x)$.  

In the latter case, this implies that $\ell_x$ intersects a unique homogeneous element of $T_n^{-1}W^2$.
Let the subcurve $U^1_{i,+}\subset W^1_i$ be the union of the points $x$ for which this happens and let $U^2_{i,+}=\{\ell_x\cap T_n^{-1}W^2\}_{x\in  U_{i,+}}$ be the corresponding subcurve in $T_n^{-1}W^2$.\footnote{Note that, by \cite[Proposition 4.47]{chernov book}, given two maximal homogeneous subcurves of $T_n^{-1}W^k$ that are connected by a vertical segment disjoint from $\cS_{n}^{\bH}$, there must exist two piecewise smooth curves in $\cS_{n}^{\bH}$ that connect the boundaries of such two subcurves forming a {\em rectangle} that does not contain any element of $\cS_{n}^{\bH}$ in its interior. Thus $U^2_{i,+}$ must be a connected subcurve.} Since $U^k_{i,+}$ has length at most $2\delta$, then $U^2_{i,+}$ can intersect at most $3$ elements of $\cG_n^\delta(W^2)$, due to the possible different ways in which long pieces have been split in $\cG_n^\delta(W^1)$ and $\cG_n^\delta(W^2)$. We call $U^2_j$ the elements of $\{U^2_{i,+}\cap W^2_l\}_{W^2_l\in \cG_n^\delta(W^2)}$ and set $U^1_j=\{x\in U^1_{i,+}\;:\; \ell_x\cap U^2_j\neq \emptyset\}$.
We call the subcurves $U^k_j$, $k=1,2$ `matched', while we call the remaining subcurves $V^k_j$ `unmatched'.  Note that, by construction,  each $W_i^k\in \cG_n^\delta(W^k)$ can contain at most two unmatched elements
and at most 3 matched elements. In addition, for $x \in V^1_j$, either $T_n(\ell_x)$ intersects $\cS^{\bH}_{-n}$ or $T_n(x)$ is near an end point of $W^1$.  In either case, due to the uniform transversality of stable and unstable cones, $|T_n(V^1_j)|$ is short in a sense we will make precise below.
 
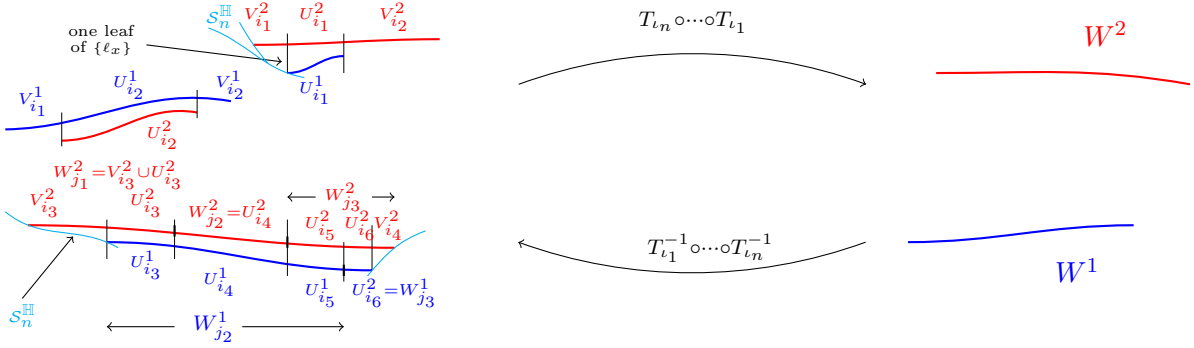
\begin{figure}
\begin{centering}
\begin{tikzpicture}[scale=0.75]
\draw[blue,thick] (-5,12) to[out=01, in=170] (-1,12.5);
\draw[blue,thick] (-3.2,10) to[out=0, in=180] (1.5,9.5);  
\draw[blue,thick] (0,13) to[out=0, in=180] (1,13.3); 
\draw[red,thick] (-4,11.8) to[out=01, in=170] (-1.6,12.3);
\draw[red,thick] (-4.6,10.3) to[out=0, in=180] (1.9,9.9);  
\draw[red,thick] (-.6,13.5) to[out=0, in=180] (2.7,13.6); 
\draw[blue, thick] (11,10) to[out=0, in=180] (15,10.3);
\draw[red, thick] (11.5,13) to[out=0, in=170] (16,12.8);  
\draw [very thin] (-3.2,9.7) to (-3.2,10.4); 
\draw [very thin] (-2,9.7) to (-2,10.4); 
\draw [very thin] (-0,9.4) to (-0,10.4);
\draw [thick] (-2,10.1) to (-2,10.3); 
\draw [thick] (-0,9.9) to (-0,10.1);
\draw [thick] (1,9.4) to (1,9.6);
\draw [very thin] (1,9.3) to (1,10);
\draw [very thin] (1.5,9.5) to (1.5,10.3);
\draw [very thin] (-4,11.7) to (-4,12.3);
\draw [very thin] (-1.6,12.2) to (-1.6,12.7);
\draw [very thin] (0,13) to (0,13.7);
\draw [very thin] (1,13) to (1,13.7);
\draw [->] (10.25,10) arc (-70:-110:9);
\draw [->] (4.1,12.8) arc (110:70:9);
\draw [->] (-2.5,13.5) -- (-0.1, 13.2);
\draw [->] (-4.7,9) -- (-3.8,10.1);
\draw [cyan] (0.3,12.92) to [out=170, in=-20] (-1.4,13.8); 
\draw [cyan] (-.35,13.177) to [out=130, in=-60] (-.85,13.9); 
\draw [cyan] (-3,9.9) to [out=150, in=-40] (-5,10.54); 
\draw [cyan] (1.42,9.4) to [out=50, in=200] (2.45,10.2); 
\draw [<-] (-3.2,8.5)--(-2,8.5);
\draw [->] (-.8,8.5)--(1,8.5);
\draw [<-] (0,10.8)--(0.5,10.8);
\draw [->] (1.5,10.8)--(1.9,10.8);
\node at (-1.3, 8.5) {$\scriptstyle\color{blue} W^1_{j_2}$};
\node at (14,9.5) {\color{blue}$W^1$};
\node at (14.5,13.7) {\color{red}$W^2$};
\node at (7.5, 9.9){$\scriptstyle T_{\iota_1}^{-1}\circ \cdots \circ T_{\iota_n}^{-1}$};
\node at (7.2, 13.9){$\scriptstyle T_{\iota_n}\circ \cdots \circ T_{\iota_1}$};
\node at (1.9,14) {$\scriptscriptstyle\color{red} V^2_{i_2}$};
\node at (0.5,14) {$\scriptscriptstyle\color{red} U^2_{i_1}$};
\node at (-0.5,14) {$\scriptscriptstyle\color{red} V^2_{i_1}$};
\node at (0.5,12.7) {$\scriptscriptstyle\color{blue} U^1_{i_1}$};
\node at (-4.5,12.5) {$\scriptscriptstyle\color{blue} V^1_{i_1}$};
\node at (-2.8,12.8) {$\scriptscriptstyle\color{blue} U^1_{i_2}$};
\node at (-1,12.8) {$\scriptscriptstyle\color{blue} V^1_{i_2}$};
\node at (-2.2,11.9) {$\scriptscriptstyle\color{red} U^2_{i_2}$};
\node at (-4.3,10.7) {$\scriptscriptstyle\color{red} V^2_{i_3}$};
\node at (-2.5,10.7) {$\scriptscriptstyle\color{red} U^2_{i_3}$};
\node at (-3,11.2) {$\scriptscriptstyle\color{red} W^2_{j_1}=V^2_{i_3}\cup U^2_{i_3}$};
\node at (-1,10.5) {$\scriptscriptstyle\color{red} W^2_{j_2}=U^2_{i_4}$};
\node at (1,10.8) {$\scriptscriptstyle\color{red} W^2_{j_3}$};
\node at (0.6,10.3) {$\scriptscriptstyle\color{red} U^2_{i_5}$};
\node at (1.3,10.3) {$\scriptscriptstyle\color{red} U^2_{i_6}$};
\node at (1.8,10.3) {$\scriptscriptstyle\color{red} V^2_{i_4}$};
\node at (-2.5,9.6) {$\scriptscriptstyle\color{blue} U^1_{i_3}$};
\node at (-1.2,9.3) {$\scriptscriptstyle\color{blue} U^1_{i_4}$};
\node at (0.6,9.1) {$\scriptscriptstyle\color{blue} U^1_{i_5}$};
\node at (1.9,9.1) {$\scriptscriptstyle\color{blue} U^2_{i_6}=W^1_{j_3}$};
\node at (-1.2, 14) {$\scriptscriptstyle\color{cyan}\cS_{n}^{\bH}$};
\node at (-4.7,8.7) {$\scriptscriptstyle\color{cyan}\cS_{n}^{\bH}$};
\node at (-3.3, 13.75){\tiny one leaf};
\node at (-3.3, 13.45){\tiny of $\scriptstyle \{\ell_x\}$};
\end{tikzpicture}
\caption{The Decomposition $U^k_j, V^k_j$. }\label{fig:construction_one}
\end{centering}
\end{figure}
 
Thus we have defined a decomposition of $\cG_n^\delta(W^k) = \cup_j U^k_j \cup \cup_j V^k_j$, such
that $U^1_j$ and $U^2_j$ are defined as the graphs of functions $G_{U^k_j}$ over the
same $r$-interval $I_j$ for each $j$. 
 
Using this decomposition, writing $\hT_{n, U^k_j} \psi_k = \psi_k \circ T_n J_{U^k_j}T_n$ 
and similarly for $\hT_{n, V^k_j}\psi_k$, 
we have
\begin{equation}
\label{eq:unstable split}
\begin{split}
\int_{W^k} \Lp_n f \, \psi_k = \sum_{j} \int_{U^k_j} f \, \hT_{n, U^k_j}\psi_k
+ \sum_j \int_{V^k_j} f \, \hT_{n, V^k_j} \psi_k.
\end{split}
\end{equation}
We estimate the contribution from unmatched pieces first.  To do so, we group the $V^k_j$
as follows.  We say $V^k_j$ is `created' at time $0 \le i \le n-1$ if $i$ is the smallest $t$ such
that either an endpoint of $T_{n-t}(V^k_j)$  intersects $T_{\iota_{t+1}}(\cS_0^\bH)$,
or $T_{n-t}(V^k_j)$ is
contained in a larger unmatched piece with this property (this second case can happen when both endpoints of
$V^k_j$ do not belong to $\cS_{T_n}^{\bH}$). 
Due to the uniform transversality of the stable cone with curves in $T_{\iota_{i+1}}(\cS_0^\bH)$ as well
as the uniform transversality of the stable and unstable cones, we have
$|T_{n-i}V^k_j| \le \bar C_3 \Lambda^{-i} d_{\cW^s}(W^1, W^2)$, for some constant $\bar C_3 > 0$. 
Define $P(i) = \{ j : V^1_j \mbox{ created at time } i \}$.

For ease of notation, when we change variables, we will adopt the following notation for $n \ge 1$,
$k \ge 0$,
\begin{equation}
\label{eq:n k notation}
T_{n, n-k} = T_{\iota_n} \circ \cdots \circ T_{\iota_{n-k+1}} \, .
\end{equation}
In this notation, $T_{n,0} = T_n$ and $T_n = T_{n, n-k} \circ T_{n-k}$.

Although we would like to change variables to estimate the contribution on the curves $T_{n-i}(V^1_j)$ 
for $j \in P(i)$, this is one time step before such cuts would be introduced according to our
definition of $\cG_n^\delta(W)$, so Lemma~\ref{lem:full growth} would not apply since there may be
many such $T_{n-i}(V^1_j)$ for each $W^1_\ell \in \cG_i^\delta(W^1)$.  However, there can be at most
two curves $T_{n-i-1}(V^1_j)$, $j \in P(i)$, per element of $W^1_\ell \in \cG_{i+1}^\delta(W^1)$, so we will change variables
to estimate the contribution from curves of the form $T_{n-i-1}(V^1_j)$ instead.
We have two cases.

\smallskip
\noindent
{\em Case 1.  The curve in  $T_{\iota_{i+1}}(\cS_0^\bH)$  that creates $V^1_j$ at time $i$
is the preimage of the boundary of a homogeneity strip.}  Then $T_{n-i-1}V^1_j$ still enjoys uniform transversality with the
boundary of the homogeneity strip and the unstable cone, and so
$|T_{n-i-1}V^1_j| \le \bar C_3 \Lambda^{-i-1} d_{\cW^s}(W^1, W^2)$ as before.

\smallskip
\noindent
{\em Case 2. The curve in $T_{\iota_{i+1}}(\cS_0^\bH)$ that creates $V^1_j$ at time $i$
is not the preimage of the boundary of a homogeneity strip.}  Then $V^1_j$ undergoes bounded expansion from time
$n-i$ to time $n-i-1$.  Thus $|T_{n-i-1}(V^1_j)| \le C \bar C_3 \Lambda^{-i} d_{\cW^s}(W^1, W^2)$, where
$C>0$ depends only on our choice of $k_0$, the minimum index of homogeneity strips.

\smallskip
In either case, we conclude that $|T_{n-i-1}(V^1_j)| \le  C_3 \Lambda^{-i} d_{\cW^s}(W^1, W^2)$, for a uniform
constant $C_3>0$.  Also, since $T_{n-i-1}(V^k_j)$ is contained in an element of $\cG_{n-i-1}^\delta(W)$, 
it follows that all such curves have length at most $2 \delta$, thus we may apply \eqref{eq:cone 3},
\[
\begin{split}
&\left| \sum_j \int_{V^1_j} f \, \hT_{n, V^1_j} \psi_1 \right|
 \le \sum_{i=0}^{n-1} \left| \sum_{j \in P(i)}  \int_{T_{n-i-1}(V^1_j)}  \Lp_{n-i-1} f \cdot 
 \psi_1 \circ T_{n, n-i-1} \, J_{T_{n-i-1}(V^1_j)} T_{n, n-i-1}
  \right| \\
& \le \sum_{i=0}^{n-1} \sum_{j \in P(i)} A \delta^{1-q} |T_{n-i-1}(V^1_j)|^q \tri \Lp_{n-i-1}f \tri_- 
|\psi_1|_{C^0(W^1)} |J_{T_{n-i-1}(V^1_j)}T_{n, n-i-1}|_{C^0(T_{n-i-1}(V^1_j))} \\
& \le \sum_{i=0}^{n-1} A \delta^{1-q} C_3^q \Lambda^{-iq} d_{\cW^s}(W^1, W^2)^q
 \tri \Lp_{n-i-1} f \tri_- \, (2 \bar C_0  + C_0 \theta_1^{i+1} ) | \psi_1|_{C^0(W^1)},
\end{split}
\]
where we have used Lemma~\ref{lem:full growth}-(b) for the sum over $j \in P(i)$ since there are at most two curves
$T_{n-i-1}(V^1_j)$ for each element $W^1_\ell \in \cG_{i+1}^\delta(W)$.\footnote{ Notice that
since we subdivide curves in $\cG_n^\delta(W)$ according to length $\delta$ and not
$\delta_0$, the estimate of Lemma~\ref{lem:full growth}(b) becomes
$\bar C_0 \delta^{-1} |W| + C_0 \theta_1^n \le 2 \bar C_0 + C_0 \theta_1^n$, since $|W| \le 2\delta$.  }

Since $n \ge 2n_0$, we have either that $i+1 \ge n_0$ or $n - (i+1) \ge n_0$.  In the former
case, $\tri \Lp_{n-i-1}f \tri_- \le 2 \tri \Lp_n f \tri_-$ by Lemma~\ref{lem:first L}.  In the latter
case, 
\begin{equation}
\label{eq:tri}
\tri \Lp_{n-i-1} f \tri_- \le \tri \Lp_{n-i-1} f \tri_+ \le \tfrac 32 \tri f \tri_+ \le \tfrac 32 \, L \tri f \tri_-
\le 3 L \tri \Lp_n f \tri_-,
\end{equation}
where we have used Lemma~\ref{lem:first L} twice, once on $\tri \Lp_{n-i-1}f \tri_+$ and
once on $\tri f \tri_-$.  Since the latter estimate \eqref{eq:tri} is the larger of the two, we may use it
for all $i$.  

Also, using the assumption that $d_{\cW^s}(W^1, W^2) \le  \delta  $ and \eqref{eq:q-gamma1} yields,
\[
\delta^{1-q} d_{\cW^s}(W^1, W^2)^{q} 
\le \delta^{1 - \gamma}  d_{\cW^s}(W^1, W^2)^\gamma.
\]
Collecting these estimates and summing over the exponential factors yields (since the estimate for $V^2_j$ is the same),

\begin{equation}
\label{eq:V}
\sum_{j,k} \left| \int_{V^k_j} f \, \hT_{n, V^k_j} \psi_k \right|
\le C_4 AL \delta^{1-\gamma}   d_{\cW^s}(W^1, W^2)^\gamma \tri \Lp_n f \tri_- ,
\end{equation}
for some uniform constant $C_4$ depending only on $\cF(\tau_*, \cK_*, E_*)$ and not on the parameters of the cone.

Next, we estimate the contribution on matched pieces $U^k_j$.  To do this, we will need to change 
test functions on the relevant curves.  Define the following functions on $U^1_j$,
\begin{equation}
\label{eq:switch}
\begin{split}
&\tpsi_2 = \psi_2 \circ T_n \circ G_{U^2_j} \circ G_{U^1_j}^{-1}\;;
\quad
\tJ_{U^2_j}T_n = J_{U^2_j}T_n \circ G_{U^2_j} \circ G_{U^1_j}^{-1}, \\
& \tT_{n, U^2_j} (\psi_2) = \tpsi_2 \cdot \tJ_{U^2_j}T_n \frac{\|G_{U^2_j}'\| \circ G_{U^1_j}^{-1}}{\|G_{U^1_j}'\|\circ G_{U^1_j}^{-1}}.
\end{split}\end{equation}
Note that $d_*(\hT_{n, U^2_j} (\psi_2), \tT_{n, U^2_j} (\psi_2))=0$ by construction.
Also we define 
\begin{equation}\label{eq:psi-diff}
\begin{split}
&\psi^-_j=\min \big\{\hT_{n, U^1_j}( \psi_1),\tT_{n, U^2_j}(\psi_2) \big\}\\
&\psi^\Delta_{1,j}=\hT_{n, U^1_j}( \psi_1)-\psi^-_j\;;\quad\psi^\Delta_{2,j}=\tT_{n, U^2_j}(\psi_2) -\psi^-_j.
\end{split}
\end{equation}

We will need the following lemma to proceed.

\begin{lemma}
\label{lem:compare}
If $c> 4(1+M_0)^q$, $M_0$ is defined in \eqref{eq:M0}, then there exists $C_5 \ge 1$, independent of $n$, $W^1$ and $W^2$ satisfying \eqref{eq:W2-long}, such that for each $j$,
\begin{itemize}
  \item[a)] $\displaystyle d_{\cW^s}(U^1_j, U^2_j) \le C_5 \Lambda^{-n} d_{\cW^s}(W^1, W^2) \,$ ;
  \item[b)]  $\displaystyle e^{-C_5 d_{\cW^s}(W^1, W^2)^\alpha}\leq \frac{\hT_{n, U^1_j} \psi_1(x)}{\tT_{n, U^2_j} \psi_2(x)} \le e^{C_5 d_{\cW^s}(W^1, W^2)^\alpha}\quad \forall x \in U^1_j \,$ ;
\item[c)]  setting $B= 8 \left[ C_5a^{-1}\right]^{\frac{\alpha-\beta}\alpha}d_{\cW^s}(W^1, W^2)^{\alpha-\beta}$
we have $\psi^\Delta_{i,j}+B\psi_j^-\in\cD_{a,\beta}(U^1_j)$, $i =1, 2$.
\end{itemize}
Moreover, $\tT_{n, U^2_j} \psi_2$ and $\psi_j^-$ belong to $\cD_{a,\alpha}(U^1_j)$.
\end{lemma}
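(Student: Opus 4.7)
The plan is to prove (a), (b), (c) in order, handling the auxiliary claims that $\tT_{n,U^2_j}\psi_2$ and $\psi_j^-$ belong to $\cD_{a,\alpha}(U^1_j)$ at the start of (c).

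For (a), the matched subcurves $U^1_j, U^2_j$ are connected by vertical segments $\ell_x$ of length $C_1\Lambda^{-n}d_{\cW^s}(W^1,W^2)$ by construction, which directly controls $|\vf_{U^1_j}-\vf_{U^2_j}|_{C^0}$. To promote this to a $C^1$ bound I would combine $d_{\cW^s}(T_nU^1_j, T_nU^2_j) \le d_{\cW^s}(W^1,W^2)$ (they sit as subcurves of $W^1,W^2$) with the contraction of $DT^{-1}$ along the stable cone from \textbf{(H1)}, iterated $n$ times, together with the uniform second-derivative bound \eqref{eq:2deriv} on stable curves. For (b), set $F(x):=\hT_{n,U^1_j}\psi_1(x)$ and $G(x):=\tT_{n,U^2_j}\psi_2(x)$ and factor the ratio $F(x)/G(x)$ into a test-function part and a Jacobian part. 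The test-function part compares $\psi_1(T_nx)\in W^1$ with $\psi_2$ at the image of the corresponding point of $U^2_j$ in $W^2$; these two image points are joined by the unstable curve $T_n(\ell_{x^*})$ of length $\le d_{\cW^s}(W^1,W^2)$, and combining the hypothesis $d_*(\psi_1,\psi_2)=0$ (which provides a base point in $I_{W^1}\cap I_{W^2}$ where the scaled densities agree) with the $\cD_{a,\alpha}$ regularity of the $\psi_i$ gives a contribution of size $e^{O(d_{\cW^s}(W^1,W^2)^\alpha)}$. The Jacobian part is controlled by the unstable-distortion bound \eqref{eq:u dist} along the foliation $\{T_n(\ell_y)\}$, the angle between stable tangents at the two image endpoints being $O(d_{\cW^s}(W^1,W^2))$; the smooth $\|G'\|$-ratio from the definition of $\tT$ contributes only a lower-order term. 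Collecting all of this yields (b) with $C_5$ absorbing constants.

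For (c), first note that $\hT_{n,U^1_j}\psi_1 \in \cD_{\sigma a,\alpha}(U^1_j)$ by rerunning the proof of Lemma~\ref{lem:test contract}(a) with $\alpha$ in place of $\beta$; the analogous computation on $U^2_j$ combined with the change of variable $G_{U^2_j}\circ G_{U^1_j}^{-1}$ (whose derivative is close to $1$ by (a)) places $\tT_{n,U^2_j}\psi_2 \in \cD_{\sigma' a,\alpha}(U^1_j)$ for some $\sigma'<1$, and $\psi_j^- = \min(F,G)$ inherits log-H\"older regularity from both. Introducing $h := \log F - \log G$, part (b) gives $|h|\le C_5 d_{\cW^s}(W^1,W^2)^\alpha$, while the log-H\"older regularity of $F,G$ gives $|h(x)-h(y)|\le Ca\,d(x,y)^\alpha$. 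I then write
\[
\Phi := \psi^\Delta_{1,j} + B\psi_j^- = \begin{cases} G\bigl(B-1+e^{h}\bigr) & \text{if } h\ge 0,\\ B\,G\,e^{h} & \text{if } h<0,\end{cases}
\]
and estimate $\Phi(x)/\Phi(y)$ across the four sign combinations of $(h(x),h(y))$. The constant-sign cases give $\Phi(x)/\Phi(y) \le e^{\sigma a d(x,y)^\alpha}\bigl(1+|e^{h(x)}-e^{h(y)}|/B\bigr)$; using the two-sided bound $|e^{h(x)}-e^{h(y)}|\le C\min(a\,d(x,y)^\alpha,d_{\cW^s}(W^1,W^2)^\alpha)$ and splitting at the length scale $d_* := (d_{\cW^s}(W^1,W^2)^\alpha/a)^{1/\alpha}$, the requirement $\Phi(x)/\Phi(y) \le e^{a d(x,y)^\beta}$ on $d(x,y)\in(0,2\delta]$ reduces, after optimization, to $B \gtrsim (C_5/a)^{(\alpha-\beta)/\alpha}d_{\cW^s}(W^1,W^2)^{\alpha-\beta}$. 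The sign-change case $h(x)\ge 0>h(y)$ forces $h(x), |h(y)| \le Ca\,d(x,y)^\alpha$ directly from H\"older regularity and gives the same threshold, whereas the opposite sign change is controlled trivially by $e^{\sigma a d(x,y)^\alpha}$. The factor $8$ in the prescribed $B$ absorbs all constants provided $\delta_0$ is small enough that terms of the form $\sigma d^{\alpha-\beta}$ remain bounded by $1/2$; the case $i=2$ is symmetric by swapping the roles of $F$ and $G$.

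The main obstacle is the case analysis in (c): since $\psi^\Delta_{i,j}=\max(0,\pm(F-G))$ vanishes wherever $F$ and $G$ cross and is therefore not log-H\"older on its own, the log-H\"older-$\beta$ property of the sum $\Phi$ must be teased out of a delicate balance between the pointwise bound $|F/G-1|=O(d_{\cW^s}(W^1,W^2)^\alpha)$ from (b) and the $\alpha$-H\"older regularity of $h$, and the precise scaling $d_{\cW^s}(W^1,W^2)^{\alpha-\beta}$ of $B$ emerges only after carefully handling the sign-change regime at the optimal length scale $d_*$.
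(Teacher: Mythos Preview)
Your approach is essentially the paper's: (a) is \cite[Lemma~3.3]{demzhang13}, (b) factors into a Jacobian part controlled by \eqref{eq:u dist} and a test-function part controlled by $d_*(\psi_1,\psi_2)=0$ plus $\alpha$-H\"older regularity, and (c) is the same two-scale case analysis (your sign-of-$h$ cases are exactly the paper's $k(x),k(y)$ cases, and your length-scale split at $d_*$ is the paper's split at $8B^{-1}C_5 d_{\cW^s}^\alpha \lessgtr a\,d(x,y)^\beta$). The one point you do not address is where the hypothesis $c>4(1+M_0)^q$ is actually used: in (b), to compare $\psi_1(T_nx)$ with $\psi_2(T_n\bar x)$ via $d_*=0$ you need the $r$-coordinate of one of these image points to lie in $I_{W^1}\cap I_{W^2}$; since $|r-\bar r|\le M_0\,d_{\cW^s}(W^1,W^2)$, failure of this would force $|I_{W^1}\cap I_{W^2}|\le M_0\,d_{\cW^s}(W^1,W^2)$ and hence $|W^2|\le (1+M_0)\,d_{\cW^s}(W^1,W^2)$, which contradicts \eqref{eq:W2-long} precisely under $c>4(1+M_0)^q$.
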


We postpone the proof of the lemma and use it to conclude the estimates of this section.

 For future use note that Lemma \ref{lem:compare}(b) implies
\begin{equation}\label{eq:deltapsi}
0\leq \psi^\Delta_{k,j}(x)\leq 2 C_5 d_{\cW^s}(W^1, W^2)^\alpha \psi^-_j(x).
\end{equation}

Observe that since 
$\psi_{k,j}^\Delta + B \psi_j^-, \psi_j^- \in \cD_{a,\beta}(U^1_j)$, $k=1,2$, recalling
\eqref{eq:posi} we may estimate,
\begin{equation}
\label{eq:prelim}
\begin{split}
\left|\int_{U^1_j} f \, \psi^{\Delta}_{k,j}  \right| 
& = \left| \int_{U^1_j} f \, (\psi^\Delta_{k,j} + B \psi^-_j ) - \int_{U^1_j} f \, B \psi^-_j \right| \\
& \le A \delta^{1-q} |U^1_j|^q \tri f \tri_- \max \left\{ \fint_{U^j}  (\psi^\Delta_{k,j} + B \psi^-_j ),  
\fint_{U^1_j} B \psi^-_j \right\} \\
& \le  A \delta^{1-q} |U^1_j|^q \tri f \tri_-  \fint_{U^j}  (\psi^\Delta_{k,j} + B \psi^-_j ) \, . 
\end{split}
\end{equation}

Since $d_*(\hT_{n, U^2_j} \psi_2, \tT_{n, U^2_j} \psi_2) =0$ by construction, and recalling
Remark \ref{rem:change-int}, Lemma \ref{lem:compare}(c), condition \eqref{eq:cone 3}, and \eqref{eq:psi-diff}, \eqref{eq:deltapsi},
\begin{equation}
\label{eq:c-decomposition}
\begin{split}
&\left|\int_{U^1_j} f \, \hT_{n, U^1_j} \psi_1 -\int_{U^2_j} f \,  \hT_{n, U^2_j} \psi_2\right| \leq 
\left|\int_{U^1_j} f \, \hT_{n, U^1_j} \psi_1 -\int_{U^1_j} f \, \tT_{n, U^2_j} \psi_2\right|  \\
&+\left|\frac{\int_{U^1_j} f \, \tT_{n, U^2_j} \psi_2}{\fint_{U^1_j}  \tT_{n, U^2_j} \psi_2} -\frac{\int_{U^2_j} f \,  \hT_{n, U^2_j} \psi_2}{\fint_{U^2_j}  \hT_{n, U^2_j} \psi_2}\right|\fint_{U^2_j}  \hT_{n, U^2_j} \psi_2\\
&+\left|\frac{\int_{U^1_j} f \, \tT_{n, U^2_j} \psi_2}{\fint_{U^1_j} \tT_{n, U^2_j} \psi_2}\right|\,\left| \frac{|U^2_j|-|U^1_j|}{|U^1_j|}\right|\fint_{U^2_j}  \hT_{n, U^2_j} \psi_2\\
&\leq  A\delta^{1-q}|U^1_j|^q\frac{\left[\fint_{U^1_j}(\psi^\Delta_{1,j}+B\psi^-_j)+\fint_{U^1_j}(\psi^\Delta_{2,j}+B\psi^-_j)\right]}{ \fint_{U^2_j}  \hT_{n, U^2_j} \psi_2 } \fint_{U^2_j}  \hT_{n, U^2_j} \psi_2 \tri f \tri_- \\
&\quad +d_{\cW^s}(U^1_j, U^2_j)^\gamma  \delta^{1-\gamma}  c A \fint_{U^2_j}  \hT_{n, U^2_j} \psi_2 \tri f \tri_- \\
&\quad +A\delta^{1-q}|U^1_j|^q \left| \frac{|U^2_j|-|U^1_j|}{|U^1_j|}\right|  \fint_{U^2_j}  \hT_{n, U^2_j} \psi_2  \tri f \tri_- \, ,
\end{split}
\end{equation}
where for the first term, we have used that $|\hT_{n, U^1_j} \psi_1 - \tT_{n, U^2_j} \psi_2 | = \psi_{1, j}^\Delta + \psi_{2,j}^\Delta$ in order to apply \eqref{eq:prelim},
and
for the second and third terms we used that $\tT_{n, U^2_j}\psi_2 \in \cD_{a,\alpha}(U^1_j)$ by Lemma~\ref{lem:compare} 
to apply cone conditions \eqref{eq:cone 5} and \eqref{eq:cone 3}, respectively.
Then, recalling Lemma~\ref{lem:full growth}(b), \eqref{eq:psi2} and \eqref{eq:delta_0 condition}, 
and using that by construction, there are at most 3 curves $U^2_j$ in each element of $\cG_n^{\delta}(W^2)$,
we can estimate
\begin{equation}
\label{eq:summingU2}
\sum_j \fint_{U^2_j}  \hT_{n, U^2_j}\psi_2\leq \sum_j \fint_{U^2_j} |J_{U_2^j}T_n |_\infty \psi_2\circ T_n
\leq 3 (\bar C_0\delta^{-1} |W^2| + C_0\theta_1^n) 2 e^{2a(2\delta)^\alpha}\leq 36 \bar C_0 \, .
\end{equation}
Next, recalling \eqref{eq:W-difference}, we have\footnote{ Since the $U^k_j$ are vertically matched, the term on the right hand side of  \eqref{eq:W-difference} proportional to $C_s$ is absent here.}
\[
|U^2_j| \le |U^1_j|(1 + d_{\cW^s}(U^1_j, U^2_j)) \le 2|U^1_j|
\]
provided we impose
\begin{equation}\label{eq:c-cond}
C_5 \Lambda^{-n_0}  \delta  \le 1
\end{equation}
where $C_5$ is from Lemma~\ref{lem:compare}-(a) and $\Lambda$ is defined in \eqref{eq:hyp}.
Moreover, remembering the definition of $B$ in Lemma \ref{lem:compare}-(c) and equation \eqref{eq:deltapsi},
\begin{equation}\label{eq:Delta-psi-B}
\begin{split}
\fint_{U^1_j}(\psi^\Delta_{k,j}+B\psi^-_j)&\leq \fint_{U^1_j} 10 C_5 \tT_{n, U^2_j}( \psi_2) d_{\cW^s}(W^1,W^2)^\gamma\\
& \leq 10 C_5\frac{|U^2_j|}{|U^1_j|} \fint_{U^2_j}\hT_{n, U^2_j} (\psi_2) d_{\cW^s}(W^1,W^2)^\gamma\\
&\leq 20 C_5 \fint_{U^2_j}\hT_{n, U^2_j} (\psi_2) d_{\cW^s}(W^1,W^2)^\gamma,
\end{split}
\end{equation}
where we have used the assumptions $\alpha-\beta\geq \gamma$ and $a>1$.

Again using \eqref{eq:W-difference} and Lemma \ref{lem:compare}-(a) we have
\begin{equation}
\label{eq:length-ratio}
\begin{split}
\left| \frac{|U^2_j|-|U^1_j|}{|U^1_j|^{1-q}}\right|&\leq  d_{\cW^s}(U^2_j, U^1_j)|U^1_j|^{q}\leq (2 \delta)^{q} C_5 \Lambda^{-n}d_{\cW^s}(W^2, W^1).
\end{split}
\end{equation}
Inserting \eqref{eq:summingU2}, \eqref{eq:Delta-psi-B} and \eqref{eq:length-ratio} in \eqref{eq:c-decomposition} and 
recalling Lemmas~\ref{lem:first L} and \ref{lem:compare}-(a) yields,
\begin{equation}\label{eq:use-in-4}
\begin{split}
& \sum_j \left|\int_{U^1_j} f \, \hT_{n, U^1_j} \psi_1 -\int_{U^2_j} f \,  \hT_{n, U^2_j}\psi_2\right| \\
&\le 
72 \bar C_0 A \delta^{1-\gamma}  d_{\cW^s}(W^1, W^2)^\gamma \tri \Lp_n f \tri_- 
\left( 2^q 40 C_5  \delta^{\gamma} 
+ c C_5 \Lambda^{-n \gamma}  
+  2^q C_5 \Lambda^{-n} \delta \right)
\end{split}
\end{equation}
Then using this estimate in \eqref{eq:prepare-c}, and recalling \eqref{eq:unstable split} and \eqref{eq:V} yields
\begin{equation}
\label{eq:switch test}
\begin{split}
& \left|\frac{\int_{W^1} \cL_nf \, \psi_1}{\fint_{W^1}\psi_1} -\frac{\int_{W^2} \cL_nf \, \psi_2}{\fint_{W^2}\psi_2}\right|\leq 
 \Big\{2^{3-1/q} 3C_s^q  +C_4 L  \\
& \qquad  + 72 \bar C_0   \left( 2^q 40 C_5  \delta^{\gamma}  + c C_5 \Lambda^{-n \gamma}  
+  2^q C_5 \Lambda^{-n} \delta \right)    \Big\}  A\delta^{1-\gamma}  d_{\cW^s}(W^1, W^2)^\gamma \tri \Lp_n f \tri_- 
\end{split}
\end{equation}
which yields the wanted estimate, provided
\begin{equation}
\label{eq:c cond}
2^{3-1/q} C_s^q  +C_4 L 
 + 72 \bar C_0   \left( 2^q 40 C_5  \delta^{\gamma} + c C_5 \Lambda^{-n \gamma}  
+  2^q C_5 \Lambda^{-n} \delta  \right)  <  c. 
\end{equation}


\subsubsection{Proof of Lemma~\ref{lem:compare}}

\begin{proof}
(a) This is \cite[Lemma 3.3]{demzhang13}.

\smallskip
\noindent
(b) Recall that $U^k_j$ is defined as the graph of a function $G_{U^k_j}(r) = (r, \vf_{U^k_j}(r))$, for $r \in I^k_j$, $k = 1,2$.
Due to the vertical matching, we have $I^1_j = I^2_j$.

Now for $x \in U^1_j$, let $r \in I^1_j$ be such that $G_{U^1_j}(r) = x$.  Set
$\bx = G_{U^2_j}(r)$ and note that $x$ and $\bx$ lie on the same vertical line in $M$ since
$U^1_j$ and $U^2_j$ are matched.  Thus by \eqref{eq:u dist},
\begin{equation}
\label{eq:M0}
\frac{J_{U^1_j}T_n(x)}{\tJ_{U^2_j}T_n(x)} = \frac{J_{U^1_j}T_n(x)}{J_{U^2_j}T_n(\bx)}
\le e^{C_d (d(T_nx, T_n\bx)^{1/3}+ \phi(x, \bx))} 
\le e^{C_d M_0 d_{\cW^s}(W^1, W^2)^{1/3}},
\end{equation}
where $M_0$ is a constant depending only on the maximum and minimum slopes in $C^s$ and
$C^u$.  

Next, for $x \in U^1_j$ consider
\[
\frac{\psi_1 \circ T_n(x)}{\tpsi_2(x)}  \frac{\| G'_{U^1_j} \| \circ G_{U^1_j}^{-1}(x)}{\| G'_{U^2_j} \| \circ G_{U^1_j}^{-1}(x)}. 
\]
Let $T_n(x)=(r,G_{W^1}(r))$ and $T_n(\bar x)=(\bar r,G_{W^2}(\bar r))$, then
\[
|r-\bar r|\leq M_0 d_{\cW^s}(W^1,W^2) \, .
\]
If $r\in I_{W^2}$, then since $d_*(\psi_1, \psi_2)=0$,
\[
\frac{\psi_1 \circ G_{W^1}(r)}{\psi_2 \circ G_{W^2}(\bar r)} = \frac{\psi_1 \circ G_{W^1}(r)}{\psi_2 \circ G_{W^2}(r)} \frac{\psi_2 \circ G_{W^2}(r)}{\psi_2 \circ G_{W^2}(\bar r)} \leq 
\frac{ \| G_{W^2}'(r) \|}{\| G_{W^1}'(r) \| } e^{a d(G_{W_1}(r), G_{W^2}(\bar r))^\alpha}.
\]
Next, since $\| G_{W^1}' - G_{W^2}' \| = |\vf'_{W^1} - \vf'_{W^2}|$ and $\| G_{W^k}' \| \ge 1$, we have
\[
\frac{ \| G_{W^2}'(r) \|}{\| G_{W^1}'(r) \| } \le e^{ \| G_{W^1}' - G_{W^2}' \| } \le e^{d_{\cW^s}(W^1, W^2)} \, .
\]
Similarly, $\frac{\| G'_{U^1_j} \| \circ G_{U^1_j}^{-1}(x)}{\| G'_{U^2_j} \| \circ G_{U^1_j}^{-1}(x)} \le e^{d_{\cW^s}(U^1_j, U^2_j)}$.  
Hence, using part (a) of the lemma and assuming

\begin{equation}\label{eq:c-cond2}
C_5 n_0 \Lambda^{-n_0} \delta^{1-\alpha}  \le 1,
\end{equation}

yields
\[
\frac{\psi_1 \circ T_n(x)}{\tpsi_2(x)} \frac{\| G'_{U^1_j} \| \circ G_{U^1_j}^{-1}(x)}{\| G'_{U^2_j} \| \circ G_{U^1_j}^{-1}(x)}
\le e^{(aM_0^\alpha+2) d_{\cW^s}(W^1, W^2)^\alpha} \, . 
\]
 The same estimate holds if $\bar r \in I_{W^1}$. Otherwise it must be that
\[
|I_{W^1}\cap I_{W^2}|\leq M_0 d_{\cW^s}(W^1,W^2)
\]
but then, since $|I_{W^1}\Delta I_{W^2}|\leq d_{\cW^s}(W^1,W^2)$ we would have $|W^2|\leq (1+M_0) d_{\cW^s}(W^1,W^2)$,
which violates \eqref{eq:W2-long} together with the assumption, 
provided
\begin{equation}\label{eq:c-cond3}
c > 4(1+M_0)^q.
\end{equation}
The estimates with the opposite sign follow similarly. Putting together these
estimates yields part (b) of the lemma with $C_5 = M_0C_d  \delta^{1/3 - \alpha} + aM_0^\alpha + 2$.

\smallskip
\noindent
(c) As noted in \eqref{eq:deltapsi}, by (b) it immediately follows that
\[
\left| \psi^\Delta_{i,j}(x)\right|\leq\left| \hT_{n, U^1_j} \psi_1(x)-\tT_{n, U^2_j} \psi_2(x)\right|\leq 
2 C_5 d_{\cW^s}(W^1, W^2)^\alpha \psi_j^-(x).
\]
Next, for $x, y \in U^1_j$, let $\bx = G_{U^2_j} \circ G_{U^1_j}^{-1}(x)$, 
$\by = G_{U^2_j} \circ G_{U^1_j}^{-1}(y)$, and note these are well-defined due to the 
vertical matching between $U^1_j$ and $U^2_j$.  
Let $r = G_{U^1_j}^{-1}(x)$ and $s = G_{U^1_j}^{-1}(y)$. Recalling \eqref{eq:2deriv}, we have
\[
\frac{\| G'_{U^1_j}(r) \|}{\| G'_{U^1_j}(s) \|} \le e^{\| G'_{U^1_j}(r) - G'_{U^1_j}(s) \|} \le e^{B_* |r-s|} \le e^{B_*  d(x,y)} \, ,
\]
and similarly for $\| G'_{U^2_j} \|$.
Using this estimate together with the proof of Lemma~\ref{lem:test contract}(a),
\begin{equation}
\label{eq:incone2}
\begin{split}
\frac{\tT_{n, U^2_j} \psi_2(x)}{\tT_{n, U^2_j} \psi_2(y)} & = \frac{\hT_{n, U^2_j} \psi_2(\bx)}{\hT_{n, U^2_j} \psi_2(\by)} 
\frac{ \| G'_{U^2_j} (r) \|}{\| G'_{U^1_j}(r)\| } \frac{\| G'_{U^1_j}(s) \|}{\| G'_{U^2_j}(s) \|} \\
& \le e^{(a C_1^{-1} \Lambda^{-\alpha n} + C_d (2\delta)^{1/3-\alpha})  d(\bx,\by)^\alpha
+ 2B_* d(x,y)}\leq e^{ad(x,y)^\alpha},
\end{split}
\end{equation}
since $d(\bx, \by) \le M_0 d(x,y)$ and provided 
\begin{equation}\label{eq:cond-a}
(aC_1^{-1} \Lambda^{-\alpha n_0} + C_d (2\delta)^{1/3 - \alpha})M_0^\alpha
+ B_*  (2\delta)^{1-\alpha} < a.
\end{equation}

To abbreviate what follows, let us denote $g_1 = \hT_{n, U^1_j} \psi_1$ and $g_2 = \tT_{n, U^2_j} \psi_2$.
Then, given $x,y\in U^1_j$, we have $\psi_j^-(x)=g_{k(x)}$, $\psi_j^-(y)=g_{k(y)}$. If $k(x)=k(y)$, then, 
by Lemma~\ref{lem:test contract}(a) and \eqref{eq:incone2},
\[
\frac{\psi_j^-(x)}{\psi_j^-(y)}=\frac{g_{k(y)}(x)}{g_{k(y)}(y)}\leq e^{a d(x,y)^\alpha} \, .
\]
If $k(x)\neq k(y)$, then without loss of generality, we can take $k(x)=1$ and $k(y)=2$.  By definition, $g_1(x) \le g_2(x)$
and $g_2(y) \le g_1(y)$.  Hence,
\[
e^{-ad(x,y)^\alpha} \le \frac{g_1(x)}{g_1(y)} \le \frac{\psi_j^-(x)}{\psi_j^-(y)}=\frac{g_1(x)}{g_2(y)} \le \frac{g_2(x)}{g_2(y)} \leq e^{a d(x, y)^\alpha} \, .
\]
It follows that $\psi_j^-\in\cD_{a,\alpha}(U^1_j)$, and by \eqref{eq:incone2}, $\tT_{n, U^2_j} \psi_2 \in \cD_{a, \alpha}(U^1_j)$.

Then, for each $1>B\geq 2 C_5 d_{\cW^s}(W^1, W^2)^\alpha$ and $x,y\in U^1_j$, 
\[
\frac{\psi^\Delta_{i,j}(x)+B\psi_j^-(x)}{\psi^\Delta_{i,j}(y)+B\psi_j^-(y)}\leq \frac{(B+2 C_5 d_{\cW^s}(W^1, W^2)^\alpha )\psi_j^-(x)}{(B-2 C_5 d_{\cW^s}(W^1, W^2)^\alpha )\psi_j^-(y)}\leq e^{a d(x,y)^\alpha+ 4 B^{-1} C_5 d_{\cW^s}(W^1, W^2)^\alpha}\leq e^{a d(x,y)^\beta}
\]
provided $8 B^{-1}C_5 d_{\cW^s}(W^1, W^2)^\alpha\leq  a d(x,y)^\beta$ and
\begin{equation}\label{eq:cond-c3}
(2\delta)^{\alpha-\beta}\leq \frac 12.
\end{equation}

It remains to consider the case $8 B^{-1}C_5 d_{\cW^s}(W^1, W^2)^\alpha\geq  a d(x,y)^\beta$. 
Again we must split into two cases.  If $k(x)=k(y)=k$, then, setting $\{\ell\}=\{1,2\}\setminus \{k\}$,
\begin{equation} \label{eq:aergh}
\begin{split}
\frac{\psi^\Delta_{\ell,j}(x)+B\psi_j^-(x)}{\psi^\Delta_{\ell,j}(y)+B\psi_j^-(y)}&\leq \frac{g_\ell(x)+(B-1)g_k(x)}{g_\ell(y)+(B-1)g_k(y)}
\leq \frac{e^{ a d(x,y)^\alpha} g_\ell(y)+e^{- a d(x,y)^\alpha} (B-1)g_k(y)}{g_\ell(y)+(B-1)g_k(y)}\\
&\leq  e^{ a d(x,y)^\alpha}\left[1+\frac{2 a d(x,y)^\alpha}{B}\right]\leq e^{a [d(x,y)^{\alpha-\beta}(1+2B^{-1})] d(x,y)^\beta}
 \leq e^{\frac a 2 d(x,y)^\beta}
\end{split}
\end{equation}
provided that 
\[
d(x,y)^{\alpha-\beta}(1+2B^{-1})\leq 4 B^{-\frac\alpha\beta}\left[ 8  C_5 d_{\cW^s}(W^1, W^2)^\alpha a^{-1}\right]^{\frac{\alpha-\beta}\beta}\leq \frac 12\,.
\]
That is, 
\[
B\geq 8 \left[ C_5a^{-1}\right]^{\frac{\alpha-\beta}\alpha}d_{\cW^s}(W^1, W^2)^{\alpha-\beta}.
\]
The second case is $k=k(x)\neq k(y)=\ell$. In this case, there must exist $\bar x \in [x,y]$ such that
$\psi_j^-(\bx) = g_1(\bx) = g_2(\bx)$.  Then,
\[
\begin{split}
\frac{\psi^\Delta_{\ell,j}(x)+B\psi_j^-(x)}{\psi^\Delta_{\ell,j}(y)+B\psi_j^-(y)}
& =  \frac{g_\ell(x)+(B-1)g_k(x)}{Bg_\ell(\bar x)} \frac{g_\ell(\bar x)+(B-1)g_k(\bar x)}{g_\ell(\bx)+(B-1)g_k(\bx)}
\leq e^{ a  d(x,y)^\beta} 
\end{split}
\]
by the estimate \eqref{eq:aergh}. A similar estimate holds for $\psi_{k,j}^\Delta$.
It follows that we can choose
\begin{equation}\label{eq:setB}
B= 8 \left[ C_5a^{-1}\right]^{\frac{\alpha-\beta}\alpha}d_{\cW^s}(W^1, W^2)^{\alpha-\beta}
\end{equation}
and have $\psi^\Delta_{i,j}+B\psi_j^-\in\cD_{a,\beta}(U^1_j)$.
\end{proof}


\subsection{Conditions on parameters }
\label{sec:conditions}

In this section, we collect the conditions imposed on the cone parameters during the proof
of Proposition~\ref{prop:almost}.  Recall the conditions on the exponents
stated before the definition of $\cC_{c,A,L}(\delta)$:
$\alpha \in (0, 1/3]$, $q \in (0,1/2)$, $\beta < \alpha$ and $\gamma \le \min \{ \alpha - \beta, q \}$.

From \eqref{eq:delta_0 condition} and Lemma~\ref{lem:first L} we require,
\[
e^{a (2\delta)^\beta} < e^{2a \delta_0^\beta} \le 2 \quad \mbox{and} \quad
4A \bar C_0 \delta \delta_0^{-1} \le 1/4 \, . 
\]
From the proof of Lemma~\ref{lem:first L} and Lemma~\ref{lem:test contract}, we require
the following conditions on $n_0$,
\[
AC_0 \theta_1^{n_0}  \le 1/16 \quad \mbox{and} \quad C_1^{-1} \Lambda^{-\beta n_0} < 1 \, .
\]
From Lemma~\ref{lem:test contract}, Corollary~\ref{cor:contract} and the proof of Lemma~\ref{lem:compare}, we require
\[
a > aC_1^{-1} \Lambda^{-\beta n_0} + C_d \delta_0^{1/3 - \beta} \quad \mbox{and} \quad
a > (aC_1^{-1} \Lambda^{-\alpha n_0} + C_d (2\delta)^{1/3 - \alpha})M_0^\alpha
 + B_* (2\delta)^{1-\alpha} 
\]
(recall that we have chosen $n_0 \ge n_1$ after Corollary~\ref{cor:contract}).

From the bound on \eqref{eq:cone 3}, we require in \eqref{eq:AL},
\[
A > 4 L \, . 
\]
For the contraction of $c$, we require (see \eqref{eq:q-gamma1}, the proof of Lemma \ref{lem:compare} and \eqref{eq:c cond})
\[
\begin{split}
& c > \max \left\{  16 C_s^q  ,  4(1+M_0)^q \right\} \;;\quad C_5 \Lambda^{-n_0}  \delta^{1-\alpha}  \le 1  \; ; 
\quad (2\delta)^{\alpha - \beta} \le \tfrac 12  \; ; \\
&  2^{3-1/q} 3C_s^q  +C_4 L 
 + 72\bar C_0   \left( 2^q 40 C_5  \delta^{\gamma}  + c C_5 \Lambda^{-n _0\gamma}  
+  2^q C_5 \Lambda^{-n_0} \delta \right)  <  c. 
\end{split} 
\]
Finally, in anticipation of \eqref{eq:delta cond}, we require,
\begin{equation}
\label{eq:cA}
cA > 2 C_s \, .
\end{equation}
These are all the conditions we shall place on the parameters for the cone, except for $\delta$, which
we will take as small as required for the mixing arguments of Section~\ref{sec:L contract}.  Indeed, note that if the above conditions are satisfied for some $\delta=\delta_*$, then they are satisfied also for all $\delta\in (0,\delta_*)$.


\section{Contraction of $L$ and Finite Diameter}
\label{sec:L contract}

Proposition~\ref{prop:almost} proves that the parameters $c$ and $A$ of the cone $\cC_{c,A,L}(\delta)$
contract simply as a consequence of the uniform properties {\bf (H1)}-{\bf (H5)} for any sequence of maps
$(T_{\iota_j})_j \subset \cF(\tau_*, \cK_*, E_*)$.  In this section, however, we will restrict our sequence of
maps to be drawn from a sufficiently small neighborhood of a single map $T_0 \in \cF(\tau_*, \cK_*, E_*)$
in order to use the uniform mixing properties maps $T$ close to $T_0$ to prove that the parameter $L$ also 
contracts under the sequential dynamics.
This is done
in two steps.  First, in Section~\ref{sec:scale}, we use a length scale $\delta_0 \ge \sqrt{\delta}$ and compare averages
on the two length scales, $\delta$ and $\delta_0$,
culminating in Proposition~\ref{prop:alternative}.  This step does not yet require us to restrict our class of maps.
Second, in Section~\ref{sec:mix}, restricting our sequential system to a neighborhood of a fixed map $T_0$,
we obtain a bound on averages
in the length scale $\delta_0$ as expressed in Lemma~\ref{lem:match}.  This  leads to the strict contraction of $L$ established in 
Theorem~\ref{thm:cone contract},
which proves Theorem~\ref{thm:main}(a).  We prove Theorem~\ref{thm:main}(b)
in Section~\ref{sec:diam}, showing that the cone $\cC_{\chi c, \chi A, \chi L}(\delta)$  has finite diameter
in the cone $\cC_{c,A,L}(\delta)$  (Proposition~\ref{prop:diameter}).


\subsection{Comparing averages on different length scales}
\label{sec:scale}

Recall the length scale $\delta_0\in (0,1/2)$ from \eqref{eq:theta_1} and that $\delta < \delta_0/2$.  Also, recall that $\cW^s(\delta_0/2)$ denotes those curves in $\cW^s$ with length between
$\delta_0/2$ and $\delta_0$. We choose $\delta \le \delta_0^2$ and define
\[
\tri f \tri_+^0 = \sup_{\substack{W \in \cW^s(\delta_0/2) \\ \psi \in \cD_{a, \beta}(W) }}
\frac{\int_W f \, \psi \, dm_W}{\int_W \psi \, dm_W} ,
\qquad \qquad
\tri f \tri_-^0 = \inf_{\substack{W \in \cW^s(\delta_0/2) \\ \psi \in \cD_{a, \beta}(W) }}
\frac{\int_W f \, \psi \, dm_W}{\int_W \psi \, dm_W} .
\]
By subdividing curves of with length in $[\delta_0/2,\delta_0]$ into curves with length in $[\delta, 2\delta]$, we immediately deduce the relations,
\begin{equation}
\label{eq:diff scale}
\tri f \tri_- \le \tri f \tri_-^0 \le \tri f \tri_+^0 \le \tri f \tri_+ \, .
\end{equation}

\begin{lemma}
\label{lem:0 scale}
Assume $e^{a \delta_0^\beta} \le 2$, from \eqref{eq:delta_0 condition}, 
and $A\delta \le \delta_0/4$, from Lemma~\ref{lem:first L}.\\
For all $n \in\bN$, $\{ \iota_j \}_{j=1}^n \subset \cI(\tau_*, \cK_*, E_*)$ and $f\in \cC_{\chi c, \chi A, \chi L}(\delta)$ we have,\footnote{The second inequality in \eqref{eq:upper 0} follows from \eqref{eq:theta1 def}.}
\begin{eqnarray}
\tri \Lp_n f \tri_+^0 & \le&  \tri f \tri_+^0 +  3 C_0 \sum_{i=1}^n \theta_1^i \tri f \tri_+  
\le \tri f \tri_+^0 +  \frac 14 \tri f \tri_+ \, , \label{eq:upper 0}  \\
\tri \Lp_n f \tri_-^0 & \ge & \frac 34 \tri f \tri_-^0 \, . \label{eq:lower 0}
\end{eqnarray}
\end{lemma}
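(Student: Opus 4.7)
The plan is to adapt the proof of Lemma~\ref{lem:first L} to the larger scale $\delta_0/2$. For $W \in \cW^s(\delta_0/2)$ and $\psi \in \cD_{a,\beta}(W)$, I would split
\[
\int_W \Lp_n f\,\psi \;=\; \sum_{W_i \in Lo_n} \int_{W_i} f\,\hT_{n,W_i}\psi \;+\; \sum_{W_i \in Sh_n} \int_{W_i} f\,\hT_{n,W_i}\psi,
\]
where $Lo_n := \{W_i \in \cG_n(W) : |W_i| \ge \delta_0/2\}$ and $Sh_n$ collects the rest. Since every element of $\cG_n(W)$ has length at most $\delta_0$, each $W_i \in Lo_n$ automatically lies in $\cW^s(\delta_0/2)$, and by Lemma~\ref{lem:test contract} we have $\hT_{n,W_i}\psi \in \cD_{a,\beta}(W_i)$. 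The definitions of $\tri \cdot \tri^0_\pm$ then yield
\[
\tri f \tri_-^0 \int_{T_n W_i}\psi \;\le\; \int_{W_i} f\,\hT_{n,W_i}\psi \;\le\; \tri f \tri_+^0 \int_{T_n W_i}\psi.
\]

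The key new ingredient is a $\psi$-mass bound on short pieces:
\[
\sum_{W_i \in Sh_n} \int_{T_n W_i}\psi \;\le\; 3 C_0 \sum_{k=1}^n \theta_1^k \int_W \psi.
\]
To establish it, I would decompose $Sh_n$ by the most-recent-long-ancestor at scale $\delta_0/2$: for $1 \le k \le n$, let $S_k \subset Sh_n$ consist of pieces whose most recent ancestor of length $\ge \delta_0/2$ sits at generation $n-k$. Because the one-step estimate \eqref{eq:theta_1} holds for any stable curve of length at most $\delta_0$, the induction underlying Lemma~\ref{lem:full growth}(a) transfers verbatim to scale $\delta_0/2$: for any long ancestor $V$ and any $k$ subsequent iterates, the sum of $|J_{W_i} T_k|_{C^0}$ over descendants that remain short at scale $\delta_0/2$ throughout is bounded by $C_0 \theta_1^k$. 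Translating Jacobian sums into $\psi$-mass via bounded distortion \eqref{eq:distortion}, Lemma~\ref{lem:avg}, and \eqref{eq:delta_0 condition}, and summing over the long ancestors $V$ (whose $T_{n-k}$-images are disjoint subsets of $W$) yields the displayed bound.

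The rest follows the pattern of Lemma~\ref{lem:first L}. For the upper estimate, each short piece is bounded by subdividing into segments of length $[\delta,2\delta]$ and applying $\tri f \tri_+$ when $|W_i| \ge \delta$, and by \eqref{eq:cone 3} when $|W_i| < \delta$ (the latter contribution is absorbed using $\tri f \tri_- \le \tri f \tri_+$ and Lemma~\ref{lem:full growth}(b), under the condition $4A \bar C_0 \delta \delta_0^{-1} \le 1/4$ already imposed in Lemma~\ref{lem:first L}). Combining with the mass bound gives \eqref{eq:upper 0}; the second inequality there is just $3C_0 \theta_1 / (1-\theta_1) \le 1/4$ from \eqref{eq:theta_1}. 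For \eqref{eq:lower 0}, I would discard the non-negative contribution from $W_i \in Sh_n$ with $|W_i| \ge \delta$ (Remark~\ref{rem:A-L} gives $\tri f \tri_- \ge 0$), use the identity $\sum_{Lo_n}\int_{T_n W_i}\psi = \int_W \psi - \sum_{Sh_n}\int_{T_n W_i}\psi$ together with the mass bound and the inequality $\tri f \tri_- \le \tri f \tri_-^0$ from \eqref{eq:diff scale}, and absorb the very-short-piece error as in Lemma~\ref{lem:first L}; under the same conditions on $\delta$, the remaining coefficient is at least $3/4$.

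The main obstacle is establishing the $\psi$-mass bound on short pieces via the most-recent-long-ancestor decomposition at scale $\delta_0/2$; the rest is a routine adaptation of Lemma~\ref{lem:first L}, since the one-step expansion and distortion estimates are scale-agnostic up to $\delta_0$.
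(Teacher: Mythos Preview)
Your approach differs from the paper's in a genuine way: you attempt a \emph{direct} (non-inductive) argument via a ``most recent long ancestor'' decomposition of $Sh_n$, whereas the paper proceeds by \emph{induction on $n$} using a ``first long ancestor'' decomposition. Concretely, the paper defines $\hL_k(W)$ as those long pieces in $\cG_k(W)$ whose ancestors at generations $1,\dots,k-1$ were all short; it groups every $W_i\in\cG_n(W)$ according to its \emph{first} long ancestor $V_j\in\hL_k(W)$, applies the inductive hypothesis $\tri\Lp_{n-k}f\tri_+^0\le\tri f\tri_+^0+3C_0\sum_{i=1}^{n-k}\theta_1^i\tri f\tri_+$ on $V_j$, and handles the remainder $\cI_n^0(W)\cup\cI_n(W)$ (pieces that were \emph{never} long at scale $\delta_0/2$) via the adapted Lemma~\ref{lem:full growth}(a), which contributes only a single $C_0\theta_1^n$ term. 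The specific form $3C_0\sum_{i=1}^n\theta_1^i$ in \eqref{eq:upper 0} arises precisely as the output of this induction.

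Your $\psi$-mass bound on $Sh_n$ is correct in spirit, and with some care the constant $3$ is attainable under the standing smallness assumptions on $\delta_0$. The gap lies in your treatment of the very short pieces $|W_i|<\delta$. You propose to bound them via \eqref{eq:cone 3} together with Lemma~\ref{lem:full growth}(b), claiming the contribution is ``absorbed.'' But Lemma~\ref{lem:full growth}(b) applied to all of $\cG_n(W)$ gives $\sum|J_{W_i}T_n|_{C^0}\le\bar C_0\delta_0^{-1}|W|+C_0\theta_1^n$, and the first term yields a contribution of order $A\delta\bar C_0\delta_0^{-1}\cdot\tri f\tri_-$, which under the stated hypotheses is $\sim\tfrac14\tri f\tri_-$ --- the same magnitude as the entire right-hand side of \eqref{eq:upper 0}, and not of the form $3C_0\sum\theta_1^i\tri f\tri_+$. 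Hence your sketch does not prove the first inequality in \eqref{eq:upper 0}, and for \eqref{eq:lower 0} the same extra term prevents you from reaching the coefficient $\tfrac34$. The point is that very short pieces that \emph{did} have a long ancestor at some intermediate time are not controlled by Lemma~\ref{lem:full growth}(b) alone; in the paper's argument they are automatically swallowed by the inductive hypothesis applied at that ancestor. To repair your route you would either have to accept weaker constants (which suffice for the downstream applications in Lemma~\ref{lem:second scale} and Proposition~\ref{prop:alternative}, at the cost of adjusting those statements), or re-route the very short pieces through their long ancestors --- which brings you back to the paper's inductive scheme.
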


\begin{proof}
We prove \eqref{eq:upper 0} by induction on $n$.  It holds trivially for $n = 0$.  We assume the inequality
holds for $0 \le k \le n-1$ and prove the statement for $n$.

Let $W \in \cW^s(\delta_0/2)$.  Define $\hL_1(W)$ to be those elements of $\cG_1(W)$
having length at least $\delta_0/2$.  For $k > 1$, let $\hL_k(W)$ denote those curves of length
at least $\delta_0/2$ in $\cG_k(W)$ whose images are not already contained in an element 
of $\hL_i(W)$ for any $i = 1, \ldots, k-1$.  For $V_j \in \hL_k(W)$, let 
$P_k(j)$ be the collection of indices $i$ such that $W_i \in \cG_n(W)$ satisfies 
$T_{n-k}W_i \subset V_j$.  
Denote by $\cI^0_n(W)$ those indices $i$ for which $T_{n-k}W_i$ is never contained in an
element of $\cG_k(W)$ of length at least $\delta_0/2$, $1 \le k \le n$, and 
$\delta \le |W_i| < \delta_0/2$.  
Let $\cI_n(W)$ denote the remainder of the indices $i$ for curves in $\cG_n(W)$, i.e.
those curves $W_i$ of length shorter than $\delta$ and for which $T_{n-k}W_i$ is not contained in
an element of $\cG_k(W)$ of length at least $\delta_0/2$.
By construction, each $W_i \in \cG_n(W)$
belongs to precisely one $P_k(j)$ or $\cI^0_n(W)$ or $\cI_n(W)$.

Now, for $\psi \in \cD_{a, \beta}(W)$, recalling \eqref{eq:n k notation}, we have,
\[
\sum_{i \in P_k(j)} 
\int_{W_i} f \, \psi \circ T_n \, J_{W_i}T_n
= \int_{V_j} \Lp_{n-k} f \, \psi \circ T_{n, n-k} \, J_{V_j}T_{n, n-k }.
\]
Using this equality, we estimate,
\[
\begin{split}
\int_W \Lp_n f \, \psi & = \sum_{k=1}^n \sum_{V_j \in \hL_k(W)} \int_{V_j} \Lp_{n-k} f \, 
 \psi \circ T_{n,n-k} \, J_{V_j}T_{n,n-k}
\; \; + \; \; \sum_{i \in \cI^0_n(W)}  \int_{W_i} f \, \psi \circ T_n \, J_{W_i}T_n \\
& \qquad + \sum_{i \in \cI_n(W)}  \int_{W_i} f \, \psi \circ T_n \, J_{W_i}T_n \\
& \le \sum_{k=1}^n \sum_{V_j \in \hL_k(W)} \tri \Lp_{n-k} f \tri_+^0 \int_{V_j}  
\psi \circ T_{n,n-k} \, J_{V_j}T_{n,n-k}
\; \; + \; \; \sum_{i \in \cI^0_n(W)}  \tri f \tri_+ \int_{W_i} \psi \circ T_n \, J_{W_i}T_n \\
& \qquad + \sum_{i \in \cI_n(W)} A \delta^{1-q} |W_i|^q \tri f \tri_- |\psi|_{C^0(W)} 
|J_{W_i}T_n|_{C^0(W_i)} \\
& \le \sum_{k=1}^n \sum_{V_j \in \hL_k(W)} \Big(\tri f \tri_+^0 + 3 \sum_{i=1}^{n-k} C_0 \theta_1^i \tri f \tri_+ \Big) \int_{T_{n,n-k} V_j } \psi  \\
& \qquad +  \sum_{i \in \cI^0_n(W)} \tri f \tri_+ \frac{\delta_0}{2} |\psi|_{C^0(W)} |J_{W_i}T_n |_{C^0(W_i)}  + A \frac{\delta}{\delta_0} \delta_0 |\psi|_{C^0(W)} \tri f \tri_+ C_0 \theta_1^n \\
& \le \int_W \psi \; \Big(\tri f \tri_+^0 + 3 \sum_{i=1}^{n-1} C_0 \theta_1^i \tri f \tri_+ \Big) 
+ \Big(1+2A \frac{\delta}{\delta_0}\Big) e^{a\delta_0^\beta} \int_W \psi \; \tri f \tri_+ C_0 \theta_1^n ,
\end{split}
\] 
where for the second inequality we have used the inductive hypothesis, and for the 
second and third we have used Lemmas ~\ref{lem:full growth}-(a) and \ref{lem:avg}.
This proves the required inequality if $\delta_0$ is small enough that $e^{a\delta_0^\beta} \le 2$
and $\delta$ is small enough that $A \delta \le \delta_0 /4$, both of which we have assumed.

We prove \eqref{eq:lower 0} similarly, although now the inductive hypothesis is
$\tri \Lp_k f \tri_-^0 \ge (1 - 3\sum_{i=1}^{k} C_0 \theta_1^i)$ for each $k = 0, \ldots, n-1$.  
We begin with
the same decompostion of $\cG_n(W)$, although we simply drop the terms in $\cI_n^0(W)$
since they are all positive (see Remark \ref{rem:A-L}).
\[
\begin{split}
\int_W \Lp_n f \, \psi & = \sum_{k=1}^n \sum_{V_j \in \hL_k(W)} \int_{V^j} \Lp_{n-k} f \, 
\psi \circ T_{n, n-k} \, J_{V_j}T_{n,n-k}
\; \; + \; \; \sum_{i \in \cI^0_n(W)}  \int_{W_i} f \, \psi \circ T_n \, J_{W_i}T_n \\
& \qquad + \sum_{i \in \cI_n(W)}  \int_{W_i} f \, \psi \circ T_n \, J_{W_i}T_n \\
& \ge \sum_{k=1}^n \sum_{V_j \in \hL_k(W)} \tri \Lp_{n-k} f \tri_-^0 \int_{V^j}  
\psi \circ T_{n,n-k} \, J_{V_j}T_{n,n-k}\\
& \qquad
 - \sum_{i \in \cI_n(W)} A \delta^{1-q} |W_i|^q \tri f \tri_- |\psi|_{C^0(W)} 
|J_{W_i}T_n|_{C^0(W_i)} \\
& \ge \sum_{k=1}^n \sum_{V_j \in \hL_k(W)}  \int_{T_{n,n-k} V_j } \psi \;  \tri f \tri_-^0 \Big(1- 3 \sum_{i=1}^{n-k} C_0 \theta_0^i \Big) 
 - A \frac{\delta}{\delta_0} \delta_0 |\psi|_{C^0(W)} \tri f \tri_- C_0 \theta_1^n \\
& \ge \int_W \psi \; \tri f \tri_-^0 \Big(1 - 3 \sum_{i=1}^{n-1} C_0 \theta_1^i \Big) 
- 2A \frac{\delta}{\delta_0} e^{a\delta_0^\beta} \int_W \psi \; \tri f \tri_-^0 C_0 \theta_1^n \\
& \quad - \tri f \tri_-^0 \Big(1 - 3 \sum_{i=1}^{n-1} C_0 \theta_1^i  \Big) \sum_{i \in \cI_n(W) \cup \cI_n^0(W)} |W_i| |\psi|_{C^0(W)}
 |J_{W_i}T_n|_{C^0(W_i)} \\
& \ge \int_W \psi \; \tri f \tri_-^0 \Big( 1 - 3 \sum_{i=1}^{n-1} C_0 \theta_1^i 
- 2A \frac{\delta}{\delta_0} e^{a\delta_0^\beta}  C_0 \theta_1^n
- e^{a\delta_0^\beta} C_0 \theta_1^n \Big)  \, ,
\end{split}
\] 
where again we have used Lemmas~\ref{lem:full growth}(a) and \ref{lem:avg} as well as the bound $ \tri f \tri_- \le \tri f \tri_-^0$.
This proves the inductive claim, and from this, \eqref{eq:lower 0} follows from \eqref{eq:theta_1}.
\end{proof}
To continue it is useful to set 
\begin{equation}\label{eq:N-}
N(\delta)^- =  \frac{\log (8C_0(L \delta_0 \delta^{-1} + 2A))}{|\log \theta_1|}.
\end{equation}

Next, we have a partial converse of Lemma~\ref{lem:0 scale}.

\begin{lemma}
\label{lem:second scale}
For all $n \ge N(\delta)^- $ and $\{ \iota_j \}_{j=1}^n \subset \cI(\tau_*, \cK_*, E_*)$, 
we have
\[
\begin{split}
\tri \Lp_n f \tri_+ & \le \max_{k = 0, \ldots n-1} \tri \Lp_k f \tri_+^0 + \frac 18 \tri f \tri_- \\
\tri \Lp_n f \tri_- & \ge \frac 34 \min_{k = 0, \ldots n-1} \tri \Lp_k f \tri_-^0 - \frac 18 \tri f \tri_-
\end{split}
\]
\end{lemma}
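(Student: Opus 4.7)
The strategy is a mirror of the proof of Lemma~\ref{lem:0 scale}, now applied to a short curve $W\in\cW^s(\delta)$ rather than a long one. Fix $W\in\cW^s(\delta)$ and $\psi\in\cD_{a,\beta}(W)$ and introduce, exactly as there, the sets $\hL_k(W)\subset\cG_k(W)$ for $k\ge 1$ (``first-ancestor'' long pieces of length $\ge\delta_0/2$), $\cI_n^0(W)$ (never-captured pieces of final length in $[\delta,\delta_0/2)$), and $\cI_n(W)$ (never-captured pieces of final length $<\delta$), so that every $W_i\in\cG_n(W)$ belongs to precisely one $P_k(j)$, $\cI_n^0(W)$, or $\cI_n(W)$. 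Since $|W|\le 2\delta<\delta_0/2$, no zeroth-level long piece appears. The basic identity is
\[
\int_W\Lp_n f\,\psi \;=\; \sum_{k=1}^n\sum_{V_j\in\hL_k(W)}\int_{V_j}\Lp_{n-k}f\cdot \psi\circ T_k\,J_{V_j}T_k \;+\; \sum_{i\in\cI_n^0(W)\cup\cI_n(W)}\int_{W_i} f\,\psi\circ T_n\,J_{W_i}T_n,
\]
together with the mass conservation relation $\sum_{k,j}\int_{T_kV_j}\psi + \sum_{i\in\cI_n^0\cup\cI_n}\int_{T_nW_i}\psi = \int_W\psi$, which follows from the fact that the pieces $\{T_nW_i\}_i$ partition $W$ and captured images fill $\bigcup_{k,j}T_kV_j$.

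For the upper bound, on each $V_j\in\hL_k(W)$ the pushed-back test function lies in $\cD_{a,\beta}(V_j)$ by Lemma~\ref{lem:test contract} and $V_j\in\cW^s(\delta_0/2)$, so the long-piece sum is dominated by $\max_{0\le k\le n-1}\tri\Lp_k f\tri_+^0 \sum_{k,j}\int_{T_kV_j}\psi\le\max_{0\le k\le n-1}\tri\Lp_k f\tri_+^0\int_W\psi$. Each $\cI_n^0(W)$ piece I subdivide into subcurves of length in $[\delta,2\delta]$ and bound using $\tri f\tri_+$; each $\cI_n(W)$ piece I bound via the cone condition \eqref{eq:cone 3} together with the H\"older argument of Section~\ref{sec:contraction-A}. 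Both error contributions are controlled by the sum of Jacobians on uncaptured pieces, which decays like $\theta_1^n$ by the analogue of Lemma~\ref{lem:full growth}(a) with threshold $\delta_0/2$. Combining these with $\tri f\tri_+\le L\tri f\tri_-$ and the standing hypothesis $n\ge \log(8C_0(L\delta_0\delta^{-1}+2A))/|\log\theta_1|$, the errors together contribute at most $\tfrac 18\tri f\tri_-\int_W\psi$, and dividing by $\int_W\psi$ and taking the supremum over $W$ and $\psi$ gives the first inequality.

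For the lower bound I use the same decomposition. Since $\tri f\tri_-\ge 0$ for $f\in\cC_{c,A,L}(\delta)$ by \eqref{eq:posi}, the $\cI_n^0(W)$ contribution is nonnegative (apply \eqref{eq:posi} on each $[\delta,2\delta]$-subdivision) and may be dropped. Moreover $\tri f\tri_-^0\ge\tri f\tri_-\ge 0$, and \eqref{eq:lower 0} gives $\tri\Lp_k f\tri_-^0\ge\tfrac 34\tri f\tri_-^0\ge 0$ for all $1\le k\le n-1$, so $\min_{0\le k\le n-1}\tri\Lp_k f\tri_-^0\ge 0$. This nonnegativity of the minimum is crucial: it lets me bound the long-piece sum below by
\[
\min_{0\le k\le n-1}\tri\Lp_k f\tri_-^0\cdot\sum_{k,j}\int_{T_kV_j}\psi \;\ge\; \min_{0\le k\le n-1}\tri\Lp_k f\tri_-^0\cdot \tfrac 34\int_W\psi,
\]
where the second inequality uses the mass conservation identity together with the $\theta_1^n$ bound on uncaptured mass under the hypothesis on $n$. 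Finally the $\cI_n(W)$ contribution is estimated in absolute value exactly as in the upper bound, yielding a subtractive error of at most $\tfrac18\tri f\tri_-\int_W\psi$. Taking the infimum over $W$ and $\psi$ gives the second inequality.

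The main bookkeeping point is the growth estimate for uncaptured pieces: Lemma~\ref{lem:full growth}(a) is stated with threshold $\delta_0/3$, whereas the decomposition above uses $\delta_0/2$. Its inductive proof, based only on the one-step contraction \eqref{eq:theta_1}, extends verbatim with this new threshold at the cost of multiplying $C_0$ by a harmless constant that can be absorbed into the lower bound on $n$. Beyond this, the argument is structurally identical to Lemma~\ref{lem:0 scale}.
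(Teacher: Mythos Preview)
Your proof is correct and follows essentially the same route as the paper: the same decomposition into $\hL_k(W)$, $\cI_n^0(W)$, $\cI_n(W)$, the same bounds on long and uncaptured pieces, and the same use of mass conservation for the lower bound. Your explicit observation that $\min_{0\le k\le n-1}\tri\Lp_k f\tri_-^0\ge 0$ (needed to factor the minimum out of the long-piece sum) and your remark on the threshold $\delta_0/3$ versus $\delta_0/2$ in Lemma~\ref{lem:full growth}(a) are points the paper uses implicitly without comment; in fact no adjustment to $C_0$ is needed, since the inductive proof of Lemma~\ref{lem:full growth}(a) only requires the parent curve to have length at most $\delta_0$, which holds for either threshold.
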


\begin{proof}
The proof follows along the lines of the proof of Lemma~\ref{lem:0 scale}, using the same decomposition into $\hat{L}_k(W)$,
$\cI_n^0(W)$ and $\cI_n(W)$, except that now we begin with $W \in \cW^s(\delta)$ and $\psi \in \cD_{a, \beta}(W)$.
We have,
\[
\begin{split}
\int_W \Lp_n f \, \psi & 
\le \sum_{k=1}^n \sum_{V_j \in \hL_k(W)} \tri \Lp_{n-k} f \tri_+^0 \int_{V^j} 
\psi \circ T_{n,n-k} \, J_{V_j}T_{n,n-k}
\; \; + \; \; \sum_{i \in \cI^0_n(W)}  \tri f \tri_+ \int_{W_i} \psi \circ T_n \, J_{W_i}T_n \\
& \qquad + \sum_{i \in \cI_n(W)} A \delta^{1-q} |W_i|^q \tri f \tri_- |\psi|_{C^0(W)} 
|J_{W_i}T_n|_{C^0(W_i)} \\
& \le  \int_{W} \psi \max_{k=0, \ldots n-1} \tri \Lp_k f \tri_+^0  
+  \tri f \tri_+ C_0 \theta_1^n \frac{\delta_0}{ \delta} \int_W \psi  + 2 A  \tri f \tri_- C_0 \theta_1^n \int_W \psi \\
& \le \int_W \psi \; \Big( \max_{k=0, \ldots n-1} \tri \Lp_k f \tri_+^0 + \tri f \tri_- C_0 \theta_1^n (L \delta_0 \delta^{-1} + 2A) \Big) \, ,
\end{split}
\] 
which proves the first inequality, given our assumed bound on $n$.
Note that the ratio $\delta_0/\delta$ appears in the second term since
$|W_i| \le \delta_0/2$, while $|W| \ge \delta$. 

The second inequality follows similarly, again along the lines of Lemma~\ref{lem:0 scale}.
\[
\begin{split}
\int_W & \Lp_n f \, \psi 
\; \ge \;\sum_{k=1}^n \sum_{V_j \in \hL_k(W)} \tri \Lp_{n-k} f \tri_-^0 \int_{V^j} 
\psi \circ T_{n,n-k} \, J_{V_j}T_{n,n-k} \\
 & \qquad \qquad \;
 - \sum_{i \in \cI_n(W)} A \delta^{1-q} |W_i|^q \tri f \tri_- |\psi|_{C^0(W)} 
|J_{W_i}T_n|_{C^0(W_i)} \\
& \ge  \min_{k =0, \ldots n-1} \tri \Lp_k f \tri_-^0  \left( \int_W \psi - \sum_{i \in \cI_n(W) \cup \cI_n^0(W)} |W_i| |\psi|_{C^0(W)}  |J_{W_i}T_n|_{C^0(W_i)} \right) - 2 A   \int_W \psi \; \tri f \tri_- C_0 \theta_1^n \\
& \ge \int_W \psi \; \left( \min_{k = 0, \ldots n-1} \tri \Lp_k f \tri_-^0 ( 1 - \delta_0 \delta^{-1} C_0 \theta_1^n ) 
- 2A  C_0 \theta_1^n \tri f \tri_- \right) \, ,
\end{split}
\] 
and our bound on $n$ suffices to complete the proof of the lemma.
\end{proof}

\begin{prop}
\label{prop:alternative}
For all $n \ge N(\delta)^-$ and $\{ \iota_j \}_{j=1}^n \subset \cI(\tau_*, \cK_*, E_*)$,  either,
\[
\frac{\tri \Lp_n f \tri_+}{\tri \Lp_n f \tri_-} \le \frac{8}{9} \frac{\tri f \tri_+}{\tri f \tri_-} \, ,
\]
or 
\[
\tri \Lp_n f \tri_+ \le 8 \tri f \tri_+^0 \quad \mbox{and} \quad \tri \Lp_n f \tri_- \ge \frac{9}{20} \tri f \tri_-^0 \, .
\]
\end{prop}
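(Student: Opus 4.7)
The plan is to combine Lemmas \ref{lem:0 scale}, \ref{lem:second scale} and \ref{lem:first L} into a single system of inequalities, and then split into cases based on whether the conclusion $\tri \Lp_n f \tri_+ \le 8 \tri f \tri_+^0$ holds. To fix notation, write $P=\tri f\tri_+$, $N=\tri f\tri_-$, $P^0=\tri f\tri_+^0$, $N^0=\tri f\tri_-^0$, and $\bar P, \bar N$ for the corresponding quantities attached to $\Lp_n f$; set $L_\ast = P/N$, and recall $N\le N^0\le P^0\le P$ from \eqref{eq:diff scale}. The first step of my strategy is to show that the lower bound $\bar N\ge \tfrac{9}{20}N^0$ from the second alternative is actually \emph{unconditional}, so that the real dichotomy is only between contraction of the ratio and the single inequality $\bar P\le 8P^0$.

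To prove $\bar N\ge \tfrac{9}{20}N^0$ unconditionally, I would combine Lemma \ref{lem:first L}, which supplies $\bar N\ge\tfrac12 N$, with Lemma \ref{lem:second scale} applied together with the lower bound $\tri\Lp_k f\tri_-^0\ge\tfrac34 N^0$ from Lemma \ref{lem:0 scale}, which supplies $\bar N\ge \tfrac{9}{16}N^0-\tfrac18 N$. A one-line case analysis on whether $N$ exceeds $\tfrac{9}{10}N^0$ shows that one of these two bounds always beats $\tfrac{9}{20}N^0$.

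Having disposed of the $\bar N$ part, the proposition reduces to the implication ``$\bar P>8P^0 \Rightarrow \bar P/\bar N\le \tfrac{8}{9}L_\ast$.'' Assume $\bar P>8P^0$. Combining Lemma \ref{lem:second scale} with the upper bound $\tri\Lp_k f\tri_+^0\le P^0+\tfrac14 P$ from Lemma \ref{lem:0 scale} yields $\bar P\le P^0+\tfrac14 P+\tfrac18 N\le P^0+\tfrac38 P$, and the assumption then forces $P^0<\tfrac{3}{56}P$. Plugging this back in gives $\bar P<\tfrac{17}{56}P+\tfrac18 N$, and dividing by the unconditional bound $\bar N\ge \tfrac{9}{20}N$ produces the numerical estimate $\bar P/\bar N\le \tfrac{85}{126}L_\ast+\tfrac{5}{18}$. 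The key observation to close the argument is that $P^0<\tfrac{3}{56}P$ together with the chain $P^0\ge N$ forces $L_\ast\ge \tfrac{56}{3}$, which is comfortably above the threshold $\tfrac{35}{27}$ at which the additive constant $\tfrac{5}{18}$ can be absorbed into the residual slack $\tfrac{8}{9}L_\ast-\tfrac{85}{126}L_\ast=\tfrac{3}{14}L_\ast$. This yields $\bar P/\bar N\le \tfrac{8}{9}L_\ast$, establishing the first alternative.

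The main obstacle is not conceptual but numerical: the constants $\tfrac{8}{9}$, $8$ and $\tfrac{9}{20}$ in the statement leave essentially no slack, so one must verify that the coefficients produced by the preceding lemmas (in particular the $\tfrac14$ in Lemma \ref{lem:0 scale} and the $\tfrac18$ in Lemma \ref{lem:second scale}, both controlled by \eqref{eq:theta_1} and by the choice of $n\ge N(\delta)^-$) fit precisely into the arithmetic above. Once one notices that the lower-bound alternative for $\bar N$ is automatic and that the failure $\bar P>8P^0$ forces $P^0\ll P$ and hence $L_\ast$ very large, the contraction of the ratio becomes easy and everything collapses to verifying the numerics.
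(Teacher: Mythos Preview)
Your proof is correct. Both your argument and the paper's begin by showing $\bar N\ge \tfrac{9}{20}N^0$ unconditionally, but the mechanics differ: the paper converts the $\tfrac18 N$ term into $\tfrac14\bar N$ via Lemma~\ref{lem:first L} and absorbs it into the left side (so $\tfrac54\bar N\ge\tfrac{9}{16}N^0$), while you run a case split on $N\lessgtr\tfrac{9}{10}N^0$. The more substantive difference is in the second step: the paper splits on whether $\bar P\le\tfrac25 P$, which makes both cases one-liners (if $\bar P\le\tfrac25 P$ then $\bar P/\bar N\le\tfrac{2/5}{9/20}\,L_\ast=\tfrac89 L_\ast$; if $\bar P\ge\tfrac25 P$ then $\tfrac14 P+\tfrac14\bar N\le\tfrac78\bar P$ and one reads off $\bar P\le 8P^0$ directly). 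Your split on the conclusion $\bar P\lessgtr 8P^0$ is more natural but forces you through the auxiliary estimate $P^0<\tfrac{3}{56}P$ and the observation $L_\ast>\tfrac{56}{3}$ to absorb the additive constant; this works, but the paper's choice of threshold avoids that detour entirely.
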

\begin{proof}
Since $n \ge N(\delta)^- \ge n_0$, we may apply both Lemmas~\ref{lem:first L} and \ref{lem:second scale}.  Now, by Lemma \ref{lem:second scale},
\[
\tri \Lp_n f \tri_- \ge \frac 34 \min_{k = 0, \ldots n-1} \tri \Lp_k f \tri_-^0 - \frac 18 \tri f \tri_-
\ge \frac{9}{16} \tri f \tri_-^0 - \frac 14 \tri \Lp_n f \tri_-\, ,
\]
applying Lemma~\ref{lem:0 scale} to the first term and Lemma~\ref{lem:first L} to the second.  This yields immediately,
$\tri \Lp_n f \tri_- \ge \frac{9}{20} \tri f \tri_-^0$, which is the final inequality in the statement of the lemma.

Now consider the following alternatives.  If $\tri \Lp_n f \tri_+ \le \frac 25 \tri f \tri_+$, then 
\[
\frac{\tri \Lp_n f \tri_+}{\tri \Lp_n f \tri_-} \le \frac{\frac 25 \tri f \tri_+}{\frac{9}{20} \tri f \tri_-^0}
\le \frac{8}{9} \frac{\tri f \tri_+}{\tri f \tri_-} \,
\]
proving the first alternative.  On the other hand, if $\tri \Lp_n f \tri_+ \ge \frac 25 \tri f \tri_+$, then
using Lemmas~\ref{lem:second scale}, \ref{lem:0 scale} and \ref{lem:first L},
\[
\begin{split}
\tri \Lp_n f \tri_+ & \le \max_{k =0, \ldots n-1} \tri \Lp_k f \tri_+^0 + \frac 18 \tri f \tri_-
\le \tri f \tri_+^0 + \frac 14 \tri f \tri_+ + \frac 14 \tri \Lp_n f \tri_- \\
& \le \tri f \tri_+^0 + \frac 78 \tri \Lp_n f \tri_+ \, ,
\end{split}
\]
which yields the second alternative.
\end{proof}


\subsection{Mixing implies contraction of $L$}
\label{sec:mix}

The importance of Proposition~\ref{prop:alternative} is that either $L$ contracts within  $N(\delta)^-$ iterates
or we can compare ratios of integrals on the length scale $\delta_0$ (which is fixed independently of $\delta$).
In the latter case we will use the uniform mixing property of maps $T \in \cF(\tau_*, \cK_*, E_*)$ in order to compare the value
of $\int_W \cL_n f \psi$ for different $W$ of length approximately $\delta_0$.  
To this end, we will define a Cantor set $R_*$ comprised of local stable and unstable manifolds of a certain length in order to make our comparison when curves cross this set.

We begin by recalling the open neighborhoods in $\cF(\tau_*, \cK_*, E_*)$ defined by \eqref{eq:close d 1}.\\
Let $T \in \cF(\tau_*, \cK_*, E_*)$, and for $0 < \kappa < \frac 12 \min \{ \tau_*, \cK_* \}$, define
\begin{equation}
\label{eq:close d}
\cF(T, \kappa) = \{ \tT \in \cF(\tau_*, \cK_*, E_*) : \bd(Q(\tT), Q(T)) < \kappa \} \, .
\end{equation}
Recall the index set corresponding to $\cF(T, \kappa)$ is $\cI(T, \kappa) \subset \cI(\tau_*, \cK_*, E_*)$.
Thus $\iota \in \cI(T, \kappa)$ if and only if $T_\iota \in \cF(T, \kappa)$.  

\begin{lemma}
\label{lem:compact F}
For any $\kappa  \in \big(0, \frac 12 \{ \tau_*, \cK_* \} \big)$, 
the set $\cF(\tau_*, \cK_*, E_*)$ can be covered by finitely many sets 
$\cF(T, \kappa)$, $T \in \cF(\tau_*, \cK_*, E_*)$.
\end{lemma}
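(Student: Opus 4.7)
The plan is to reduce the claim to total boundedness of the space of billiard tables $\cQ(\tau_*, \cK_*, E_*)$ in the metric $\bd$, and then to invoke Arzel\`a--Ascoli on the $C^3$-bounded family of boundary parametrizations.

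First, I would use Proposition~\ref{prop:close maps}: there is $C>0$ (depending only on $\tau_*, \cK_*, E_*$) and $\kappa_0 > 0$ such that for every $Q_0 \in \cQ(\tau_*, \cK_*, E_*)$ and every $\kappa' \le \kappa_0$, any $Q \in \cQ(\tau_*, \cK_*, E_*)$ with $\bd(Q, Q_0) < \kappa'$ yields an associated billiard map $T$ with $d_{\cF}(T, T_0) \le C (\kappa')^{1/3}$. Given the target $\kappa > 0$, choose $\kappa' = \min\{ (\kappa/C)^3, \kappa_0 \}$. It then suffices to exhibit a finite $\kappa'$-net for $\cQ(\tau_*, \cK_*, E_*)$ in the metric $\bd$, because each such net element $Q^{(j)}$ gives a map $T^{(j)} \in \cF(\tau_*, \cK_*, E_*)$ whose $\kappa$-neighborhood $\cF(T^{(j)}, \kappa)$ covers every map whose table is $\bd$-within $\kappa'$ of $Q^{(j)}$.

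Second, I would construct such a $\bd$-net using Arzel\`a--Ascoli. Each table $Q \in \cQ(\tau_*, \cK_*, E_*)$ is specified by $K$ arclength parametrizations $u_i : I_i \to \R^2$ of the obstacle boundaries (lifted to the universal cover, with $u_i(0)$ chosen in a compact fundamental domain of $\bT^2$). The bound $E_{\max}(Q) \le E_*$ forces $|u_i|_{C^3(I_i, \R^2)} \le E_*$ uniformly in $i$ and in $Q$, while the starting points lie in a compact set. The corresponding family of admissible $K$-tuples $(u_1, \dots, u_K)$ is therefore bounded in the product $C^3$ and hence, by Arzel\`a--Ascoli, precompact in the product $C^2$. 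Extract a finite $\kappa'/(2K)$-net in this $C^2$ topology. For any $Q \in \cQ(\tau_*, \cK_*, E_*)$, picking the closest net tuple and using $\pi = \mathrm{id}$, $\theta = 0$ in \eqref{eq:d def} yields $\bd(Q, Q^{(j)}) \le \sum_i |u_{i,0}^Q - u_i^{(j)}|_{C^2(I_i, \R^2)} < \kappa'/2$.

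The one subtle point---the only real obstacle---is that the candidate net elements produced by Arzel\`a--Ascoli need not themselves parametrize tables in $\cQ(\tau_*, \cK_*, E_*)$ (the constraints of pairwise disjoint, $C^3$-bounded, convex, uniformly curved scatterers with finite horizon do not obviously cut out a closed subset of the product $C^2$-topology). This is handled by the standard trick: after producing an arbitrary $\kappa'/2$-net of centers, keep only those balls that actually intersect $\cQ(\tau_*, \cK_*, E_*)$ and replace each such center by any one point of $\cQ(\tau_*, \cK_*, E_*)$ in that ball; enlarging the radius from $\kappa'/2$ to $\kappa'$ then gives a finite $\bd$-net $\{Q^{(1)}, \dots, Q^{(N)}\} \subset \cQ(\tau_*, \cK_*, E_*)$. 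Applying Proposition~\ref{prop:close maps} to each $Q^{(j)}$ in turn yields $\cF(\tau_*, \cK_*, E_*) \subset \bigcup_{j=1}^N \cF(T^{(j)}, \kappa)$, which is the claim.
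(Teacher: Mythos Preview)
Your proof is correct and follows essentially the same approach as the paper: both reduce to precompactness of $\cQ(\tau_*, \cK_*, E_*)$ in the $C^2$-based distance $\bd$ via the uniform $C^3$ bound $E_*$ (Arzel\`a--Ascoli), and then invoke Proposition~\ref{prop:close maps} to transfer a finite $\bd$-net to a finite $d_{\cF}$-cover. The paper is terser, simply asserting compactness of $\cQ(\tau_*, \cK_*, E_*)$ in $\bd$; your version is slightly more careful in that you only use total boundedness and explicitly handle (via the standard center-replacement trick) the possibility that raw Arzel\`a--Ascoli net elements might not lie in $\cQ(\tau_*, \cK_*, E_*)$, a point the paper glosses over.
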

\begin{proof}
Each $T \in \cF(\tau_*, \cK_*, E_*)$ is associated with a billiard table $Q \in \cQ(\tau_*, \cK_*, E_*)$.
Such billiard tables have exactly $K$ boundary curves with $C^3$ norm uniformly bounded by $E_*$.
Since the torus is compact and the distance $ \bd (Q, \widetilde{Q})$ defined in 
Section~\ref{sec:bill family} measures distance only in the $C^2$ norm, the set
$\cQ(\tau_*, \cK_*, E_*)$ is compact in the distance $\bd$.  Thus for each $\kappa>0$, there exists
$N_\kappa \in \mathbb{N}$ and a set $\{ Q_{\iota_j} \}_{j =1}^{N_\kappa} \subset \cQ(\tau_*, \cK_*, E_*)$ such 
that\footnote{Recall from Section~\ref{sec:bill family} that by Proposition~\ref{prop:close maps}, 
$\cQ(Q_{\iota_j}, E_*; \kappa) = \{ Q \in \cQ(\frac 12 \tau_*, \frac 12 \cK_*, E_*) : \bd(Q, Q_{\iota_j}) < \kappa \}$.}
$\cup_j \cQ(Q_{\iota_j}, E_*; \kappa) \supset \cQ(\tau_*, \cK_*, E_*)$.
Since $\cF(Q_{\iota_j}, E_*; \kappa) \cap \cF(\tau_*, \cK_*, E_*) = \cF(T_{\iota_j}, \kappa)$, this yields the required covering. 
\end{proof}

\begin{remark}
The primary reason we restrict to $\tT \in \cF(T, \kappa)$ is to conclude Lemma~\ref{lem:proper cross}(b) for a fixed
time $n_*$ and rectangle $R_*$.  This will enable us to make a type of `matching' argument
for our sequential system, the main comparison being established in Lemma~\ref{lem:match}.

The reader familiar with the subject will notice that the matching described here requires weaker properties
than the usual arguments
used in coupling.  After stable curves are forced to cross a fixed rectangle by Lemma~\ref{lem:proper cross},
the `matched' pieces are not Cantor sets, but rather full curves.   The cone technique thus enables us to bypass
the use of real stable/unstable manifolds used in classical coupling arguments for billiards (see \cite[Section~7]{chernov book}),
and even the modified coupling developed for sequential systems which only couples for a finite time along
approximate invariant manifolds, as in
\cite{young zhang}, both of which require a more delicate use of the structure of invariant manifolds,
in particular control of the gaps in the Cantor sets used 
for coupling.
\end{remark}

For a fixed $T \in \cF(\tau_*, \cK_*, E_*)$,  we construct an approximate rectangle $D$ in $M$, contained in a single 
homogeneity strip, whose boundaries are comprised of two local
stable and two local unstable manifolds for $T$ as follows.  
Choose $\bar \delta_0>0$ and $x \in M$ such that dist$(T^{-n}x, \cS_1^{\bH}) \ge \bar \delta_0 \Lambda^{- |n|}$
for all $n \in \mathbb{Z}$.  This implies that the homogenous local  stable and unstable manifolds\footnote{ Although the stable/unstable directions in $M$ vary, they always belong to the global stable/unstable cones defined in {\bf (H1)}
and so are uniformly transverse.} of $x$, $W^s_{\bH}(x)$ and $W^u_{\bH}(x)$, have length at least  $\bar \delta_0$  
on either side of $x$.
By the Sinai Theorem applied to homogeneous unstable manifolds (see, for example, \cite[Theorem~5.70]{chernov book}), 
we may choose $\delta_0 < \bar \delta_0$ such that more than 0.99 of the measure of points 
in $W^u_{\bH}(x) \cap B_{2.1\delta_0}(x)$  have homogeneous local stable manifolds having length at least
$2.1\delta_0$  on both sides of $W^u_{\bH}(x)$, and analogously for the points in 
$W^s_{\bH}(x) \cap B_{2.1 \delta_0}(x)$.  Since these subsets of $W^{s/u}_{\bH}(x)$ are closed, there exist two 
extreme points on each manifold whose unstable/stable manifolds define a solid rectangle, which we will denote 
by $D'_{2\delta_0}$.  By choice of $\delta_0$, the stable and unstable manifolds comprising $\partial D'_{2\delta_0}$
have length at least $4 \delta_0$.
There must exist a rectangle $D_{2\delta_0}$ fully crossing $D'_{2\delta_0}$ in the stable direction 
and with boundary comprising two stable and two unstable manifolds, such that
the unstable diameter of $D_{2\delta_0}$ is between $\delta_0^4$ and $2\delta_0^4$,\footnote{ The choice of $\delta_0^4$ will be needed in Lemma \ref{lem:close if cross}.} and the
set of local homogeneous stable and unstable manifolds fully crossing $D_{2\delta_0}$ comprise at least
$9/10$ of the measure of $D_{2\delta_0}$ with respect to $\musrb$; otherwise, at most $9/10$ of the measure
of $W^u_{\bH}(x) \cap B_{2.1\delta_0}(x)$ would have long stable manifolds on either side of 
$W^u_{\bH}(x)$, contradicting our choice of $\delta_0$.
Similarly, define $D_{\delta_0} \subset D_{2\delta_0}$ to have precisely the same stable boundaries, but
stable diameter between $1.8 \delta_0$ and $2 \delta_0$ rather than $4 \delta_0$, still centered at $W^u_\bH(x)$. See figure \ref{fig:nonsenepuopiu} for a pictorial illustration of the above construction.

\begin{figure}
\begin{centering}
\begin{tikzpicture}[scale=0.6]
\node at (4,4) {$\bullet$};
\node at (3.7,3.5) {$x$};
\node at (10,3.7) {$W^s_{\bH}(x)$};
\node at (5,9.5) {$W^u_{\bH}(x)$};
\node at (-1,6.5) {$D_{2\delta_0}$};
\draw[->] (-0.3,6.4) -- (.6,6.1);
\node at (5,6.5) {$D_{\delta_0}$};
\node at (1,2) {$D'_{2\delta_0}$};
\draw[->] (9,6.4) -- (8.5,6);
\node at (10.5,6.8) {{\tiny stable curves}};
\node at (11.5,6.5)  {{\tiny properly  crossing} $\scriptscriptstyle R_*^{2\delta_0}$};
\draw (0,0)--(8,0);
\draw (0,0)--(0,8);
\draw(0,8)--(8,8);
\draw(8,8)--(8,0);
\draw[dashed] (0,5.5)--(8,5.5);
\draw[dashed] (0,6)--(8,6);
\fill[gray!40!white] (0,5.5) rectangle (8,6);
\fill[gray] (2,5.5) rectangle (6,6);
\draw[thick](-0.5,4)--(9,4);
\draw[thick](4,-1)--(4,9.5);
\draw[thin] (-.6,5.6)--(8.3,5.6);
\draw[thin] (-.5,5.65)--(8.6,5.65);
\draw[thin] (-.4,5.66)--(8.3,5.66);
\draw[thin] (-.7,5.665)--(8.9,5.665);
\draw[thin] (-.35,5.675)--(8.1,5.675);
\draw[thin] (-.8,5.680)--(8.5,5.68);
\draw[thin] (-.1,5.665)--(8.3,5.665);
\draw[thin] (-.3,5.765)--(8.3,5.765);
\draw[thin] (-.5,5.77)--(8.3,5.77);
\draw[thin] (-.4,5.75)--(8.1,5.75);
\draw[thin] (-.6,5.85)--(8.8,5.85);
\end{tikzpicture}
\caption{The boxes $D'_{2\delta_0}$ and $D_{2\delta_0}$. }\label{fig:nonsenepuopiu}
\end{centering}
\end{figure}
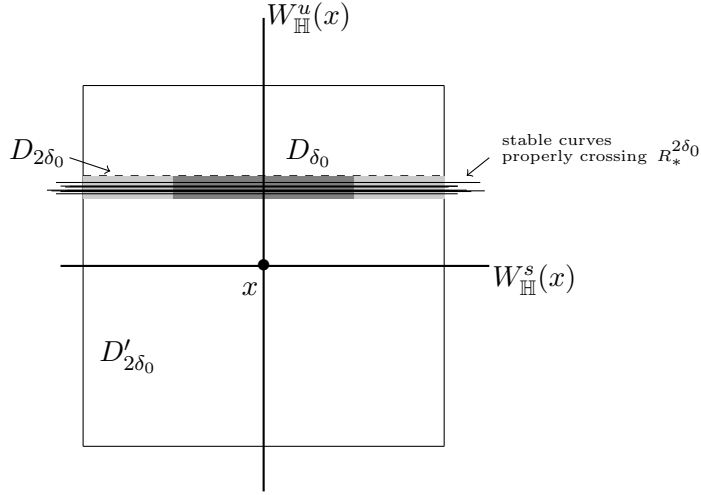
Let $\mathfrak{S}^{s/u}(D_{\delta_0})$ denote the maximal set of stable/unstable manifolds that fully cross $D_{\delta_0}$, including its boundary curves.  Define
$R_*^{\delta_0} = \mathfrak{S}^s(D_{\delta_0}) \cap \mathfrak{S}^u(D_{\delta_0})$ to be the Cantor rectangle
defined by the intersection of those maximal families.
Define $R_*^{2\delta_0}$ analogously with respect to $\mathfrak{S}^{s/u}(D_{2\delta_0})$.

By construction, $\musrb(R_*^{\delta_0}) > (0.9)^2 \musrb(D_{\delta_0}) \approx \delta_0^5$.
Below, we denote $D_{\delta_0}$ by $D(R_*^{\delta_0})$ since it is the minimal solid rectangle that defines $R_*^{\delta_0}$.  

We say that a stable curve $W \in \cW^s$ {\em properly crosses} a Cantor rectangle $R$ (in the stable direction) if $W$ intersects the interior of 
the solid rectangle $D(R)$, but does not terminate in
$D(R)$, and does not intersect the two stable manifolds contained in $\partial D(R)$.

\begin{lemma}
\label{lem:proper cross}
For $T \in \cF(\tau_*, \cK_*, E_*)$, let $R_*^{\delta_0} = R_*^{\delta_0}(T)$ be the Cantor rectangle constructed above.
\begin{itemize}
  \item[a)]  There exists $n_* \in \mathbb{N}$, depending only on $\delta_0$ and $\cF(\tau_*, \cK_*, E_*)$,  such that for all $T \in \cF(\tau_*, \cK_*, E_*)$ and all $W \in \cW^s$ with\footnote{Recall that
$\bar C_0$ is from Lemma~\ref{lem:full growth}.} $|W| \ge \delta_0/(6 \bar C_0)$, and all $n \ge n_*$,
$T^{-n}W$ contains a connected, homogeneous component that properly crosses $R_*^{\delta_0}(T)$.
  \item[b)]  There exists $\kappa>0$ such that for all $T \in \cF(\tau_*, \cK_*, E_*)$ and all $\{ \iota_j \}_{j=1}^{n_*} \subset \cI(T, \kappa)$,
  $T_{n_*}^{-1}W$ contains a connected, homogeneous component that properly crosses $R_*^{\delta_0}(T)$.  
\end{itemize}
\end{lemma}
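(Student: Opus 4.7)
The plan is to prove (a) by combining the growth lemma with known mixing properties of dispersing billiards, and then to deduce (b) from (a) by a continuity argument in the map-distance $d_{\cF}$. For (a), my first step would be to apply Lemma \ref{lem:full growth} together with Remark \ref{rem:improve}: since $|W| \ge \delta_0/(6\bar C_0)$ and $T^{-1}$ expands stable vectors by at least $C_1 \Lambda > 1$, every homogeneous component $W_i \in \cG_n(W)$ has a long ancestor for each $n \ge 1$, so the Jacobian-weighted arclength sum $\sum_{W_i} |J_{W_i}T_n|_{C^0(W_i)}$ remains bounded below uniformly in $n$ by a constant multiple of $|W|/\delta_0$. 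Next, I would invoke the uniform mixing of dispersing billiards in $\cF(\tau_*, \cK_*, E_*)$, in the form of Chernov's equidistribution of preimages of stable curves with respect to $\musrb$ (cf. \cite[Chapter~7]{chernov book}), to conclude that for all $n$ beyond some $n_*$ depending only on the family and on $\musrb(R_*^{\delta_0}) > 0$, a definite fraction of the arclength of $T^{-n}W$ falls inside the solid rectangle $D(R_*^{\delta_0})$. The last step is geometric: since $D(R_*^{\delta_0})$ has stable diameter $2\delta_0$ but unstable diameter only $O(\delta_0^4)$ and the stable/unstable cones are globally bounded away from each other by {\bf (H1)}, any homogeneous stable-curve component of $T^{-n}W$ of length comparable to $\delta_0$ that enters the interior of $D(R_*^{\delta_0})$ must exit through the unstable boundary rather than the stable boundary; the latter consists of genuine $T$-stable manifolds, which a curve in the stable cone can meet only tangentially (on a set of measure zero). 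A compactness argument via Lemma \ref{lem:compact F} upgrades any remaining $T$-dependence of $n_*$ to a bound uniform over $\cF(\tau_*, \cK_*, E_*)$.

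For (b), the strategy is a continuity argument. Conditions (C1)-(C3) defining $d_{\cF}$ say that $T_{\iota_j}^{-1}$ is $O(\kappa)$-close to $T^{-1}$ in $C^0$ and $O(\sqrt{\kappa})$-close in its tangent map, away from an $O(\kappa)$-neighborhood of $\cS^T_{-1} \cup \cS^{T_{\iota_j}}_{-1}$. Iterating $n_*$ times amplifies the error by the uniform hyperbolicity constants, yielding a $C^1$-distance $\kappa'(\kappa, n_*)$ between $T_{n_*}^{-1}$ and $T^{-n_*}$ that tends to $0$ as $\kappa \to 0$, away from a $\kappa$-neighborhood of the composed singularity sets. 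Choosing $\kappa$ so small that $\kappa' \ll \delta_0^4$, the connected component of $T_{n_*}^{-1}W$ corresponding to the properly crossing component of $T^{-n_*}W$ supplied by (a) remains inside $D(R_*^{\delta_0})$, stays transverse to its unstable boundary, and remains clear of its stable boundary, hence also properly crosses $R_*^{\delta_0}(T)$.

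The hardest step is the geometric part of (a): ensuring that the long preimage pieces \emph{properly} cross rather than merely intersect $R_*^{\delta_0}(T)$. A long piece could in principle terminate inside $D(R_*^{\delta_0})$, being cut by a singularity curve of $T_n^{-1}$, or run along one of the stable boundaries. The remedy is to exploit the construction: the stable boundary of $R_*^{\delta_0}$ lies inside a Cantor rectangle whose $\musrb$-measure is bounded below by a constant times $\delta_0^5$, and by Sinai's theorem this rectangle carries a uniformly positive density of long stable manifolds. Consequently, the set of points whose preimages land within any small $\eta$-neighborhood of the stable boundary has measure $O(\eta)$, so the ``bad'' fraction of preimage pieces is dominated by the ``good'' properly-crossing fraction once $n$ is large. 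A secondary obstacle in (b) is that the singularity sets of distinct $T_{\iota_j}$ may be positioned quite differently; this is precisely what is addressed by the remark in Section \ref{sec:map distance}, which notes that $d_{\cF}$ deliberately avoids requiring set-theoretic closeness of $\cS^T_{-1}$ and $\cS^{\widetilde T}_{-1}$, only pointwise closeness of the maps themselves.
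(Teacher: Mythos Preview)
Your geometric step in (a) contains a genuine error: the claim that a curve in the stable cone can meet a $T$-stable manifold ``only tangentially'' is false. Both have tangent vectors in the stable cone $C^s$, but that cone has a fixed positive opening, so two such curves can intersect transversally with an angle anywhere up to the opening angle. Thus a long component of $T^{-n}W$ that enters the interior of $D(R_*^{\delta_0})$ may perfectly well cross one of the stable boundary manifolds and fail to properly cross. Your measure-theoretic backup (bad preimage pieces near the stable boundary have measure $O(\eta)$) is plausible in spirit but, as written, does not close the gap: you still need to produce a single connected homogeneous component that simultaneously avoids the stable boundaries \emph{and} does not terminate inside $D$, and the equidistribution statement you invoke controls arclength, not the combinatorics of how individual components are cut.

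The paper sidesteps this entirely by running the argument in the \emph{forward} direction. It first invokes \cite[Lemma~7.87]{chernov book} to obtain a finite family $\{R_1,\dots,R_k\}$ of Cantor rectangles such that every stable curve of length $\ge \delta_0/(6\bar C_0)$ properly crosses some $R_i$. Then, using mixing of $T$, one finds $n$ so that $T^n(\bar R_*^{2\delta_0})$ meets each $R_i$; since the unstable manifolds of $R_*^{2\delta_0}$ are stretched by $T^n$ (and if cut, must terminate on unstable manifolds of $R_i$, a contradiction), $T^n(R_*^{2\delta_0})$ properly crosses $R_i$ in the \emph{unstable} direction. The key point is that the stable manifolds bounding $D(R_*^{2\delta_0})$ are genuinely $T$-invariant and hence are never cut under $T^n$; consequently $T^n(D(R_*^{2\delta_0}))$ is a coherent solid region which, by the continuation of singularities \cite[Prop.~4.47]{chernov book}, traps a full subinterval $V\subset W$. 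Then $T^{-n}V$ lies between the two stable boundaries of $D(R_*^{2\delta_0})$ \emph{by construction}, so proper crossing of $R_*^{2\delta_0}$ (and hence of the narrower $R_*^{\delta_0}$) is automatic. This forward-iteration-plus-stable-invariance mechanism is the idea your argument is missing. For (b) your continuity approach is correct in outline and matches the paper's; the paper exploits the stable-diameter buffer between $R_*^{2\delta_0}$ and $R_*^{\delta_0}$ (so $\kappa$ need only be small relative to $\delta_0$, not $\delta_0^4$), and it also uses that the singularity sets of nearby maps in fact differ only in a $C\kappa^{1/2}$-neighborhood of one another or of $\cS_0$, so cuts to $T_{n_*}^{-1}V$ occur only near its endpoints.
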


\begin{proof}
First we fix $T \in \cF(\tau_*, \cK_*, E_*)$ and prove items (a) and (b) of the lemma for this $T$, i.e. we demonstrate
that such an $n_*$ and $\kappa$ exist depending on $T$.  Then we show how Lemma~\ref{lem:compact F}
implies that $n_*$ and $\kappa$ can be chosen uniformly for $T \in \cF(\tau_*, \cK_*, E_*)$.

\smallskip
\noindent
a)  Fix $T \in \cF(\tau_*, \cK_*, E_*)$.  
By \cite[Lemma~7.87]{chernov book}, there exist finitely many Cantor rectangles\footnote{ These Cantor
rectangles $R_i$ are maximal in the sense that they are the intersection of the maximal families 
of local invariant manifolds $\mathfrak{S}^{s/u}(D(R_i))$ that fully cross the solid rectangle $D(R_i)$.}
$\cR(\delta_0) =  \{ R_1, \ldots, R_k \}$, with
$\musrb(R_i) > 0$ for each $i$, such that any stable curve $W \in \cW^s$ with $|W| \ge \delta_0/(6\bar C_0)$ properly crosses at least one of them.
Let $\ve_{\cR}$ be the minimum length of an unstable manifold in $R_i$, for any $R_i \in \cR(\delta_0)$.

Consider the solid rectangle $\bar D(R_*^{2\delta_0}) \subset D(R_*^{2\delta_0})$ which crosses 
$D(R_*^{2\delta_0})$ fully in the stable direction, but
comprises the approximate middle $2/3$ of $D(R_*^{2\delta_0})$ in the unstable direction,  with approximately $1/3$ of the unstable diameter of $D(R_*^{2\delta_0})$ on each side of $\bar D(R_*^{2\delta_0})$. 
Similarly, let $\widetilde D(R_*^{2\delta_0}) \subset \bar D(R_*^{2\delta_0})$ denote the approximate
middle $1/3$ of $D(R_*^{2\delta_0})$ in the unstable direction.
Let $\bar R_*^{2\delta_0} := R_*^{2\delta_0} \cap \bar D(R_*^{2\delta_0})$ and 
let $\widetilde R_*^{2\delta_0} := R_*^{2\delta_0} \cap \widetilde D(R_*^{2\delta_0})$.
Note that $\musrb(\bar R_*^{2\delta_0}) > \musrb( \widetilde R_*^{2\delta_0}) > 0$ 
since $\musrb(R_*^{2\delta_0}) > (0.9)^2 \musrb(D(R_*^{2\delta_0}))$ by construction.

Now given $W \in \cW^s$ with $|W| \ge \delta_0/(6\bar C_0)$, let 
$R_i \in \cR(\delta_0)$ denote the Cantor rectangle which $W$ crosses properly.
By the mixing property of $T$, there exists $n_i^* > 0$ such that for all $n \ge n_i^*$, $T^n(\widetilde R_*^{2\delta_0} ) \cap R_i \neq \emptyset$.
We may increase $n_i^*$ if necessary so
that $C_1 \Lambda^{n_i^*} \delta_0^4/12 \ge \ve_{\cR}$.  We claim that $T^n(\bar R_*^{2\delta_0})$ properly crosses 
$R_i$ in the unstable direction for all $n \ge n_i^*$.
If not, then the unstable manifolds comprising $\bar R_*^{2\delta_0}$ must be cut by a singularity curve in $\cS_1^{\bH}$ before time $n_i^*$ (since otherwise they would be longer than  $2 \ve_{\cR}$ by choice of $n_i^*$), and the images of those unstable manifolds must terminate on
the unstable manifolds in $R_i$.  But this implies that some unstable manifolds in $R_i$ will be cut under $T^{-n}$, 
a contradiction.

Since $T^n(\bar R_*^{2\delta_0})$ properly crosses $R_i$ in the unstable direction, 
it follows that $T^n(D(\bar R_*^{2\delta_0}))$ contains a solid rectangle $D_*$ that fully crosses $D(R_i)$ in the unstable direction 
(here we use the fact that the stable manifolds of $\bar R_*^{2\delta_0}$ cannot be cut under $T^n$, as well as that
the singularity curves of $T^n$ can only terminate on other elements of $\cS_n^{\bH}$ \cite[Proposition~4.47]{chernov book}).
Define $V = W \cap D_*$ and note that $V$ fully crosses $D_*$ in the stable direction.
In particular, $V$ lies between two stable manifolds in $R_i$ and thus between two stable
manifolds in $T^n(\bar R_*^{2\delta_0})$.
 Thus  $T^{-n}V$ properly crosses $\bar R_*^{2\delta_0}$, and also $R_*^{2\delta_0}$, in the stable direction.  Since 
 $R_*^{\delta_0}$ has the
same stable boundaries as $R_*^{2\delta_0}$, but half the stable diameter, then $T^{-n}V$ also
properly crosses $R_*^{\delta_0}$, as required.
Since $\cR(\delta_0)$ is finite, setting $n_* = \max_{1 \le i \le k} \{ n_i^* \} < \infty$ completes the proof of part (a)
with $n_* = n_*(T)$ depending on $T$. 

\smallskip
\noindent
(b) In the proof of part (a), for $T \in \cF(\tau_*, \cK_*, E_*)$ we constructed a rectangle $\bar\cR_*^{2\delta_0}$ 
and a time $n_*$ so that for
any $W \in \cW^s$ and $n \ge n_*$, there exists $V \subset W$ such that $T^{-n}$ is smooth on $V$ and $T^{-n}V$ properly crosses $\bar\cR_*^{2\delta_0}$.   Now
for $\{ \iota_j \}_{j=1}^{n_*} \in \cI(T, \kappa)$, 
Proposition~\ref{prop:close maps}(b) guarantees
that $T_{n_*}^{-1}V$ is close to $T^{-n_*}V$ for $\kappa$ sufficiently small, except possibly when iterates land
in a neighborhood $N_{C\kappa^{1/2}}(\cS_{-1}^T \cup \cS_{-1}^{T_{\iota_j}})$.  
But in this case, Proposition~\ref{prop:close maps}(a) implies that for $T_\iota \in \cF(T, \kappa)$,
the singularity sets $\cS_{-1}^T$ and $\cS_{-1}^{T_\iota}$ either differ by at most $C\kappa^{1/2}$ or new components
are formed in a $C\kappa^{1/2}$ neighborhood of $\cS_0$. 
By construction, since $\bar\cR_*^{2\delta_0}$ has 2/3 the unstable diameter and
twice the stable diameter as $\cR_*^{\delta_0}$, then there exists $\kappa$, depending only on
$\delta_0$ and $n_*$, such that $T_{n_*}^{-1}V$ properly crosses $\cR_*^{\delta_0}$, as required.

\smallskip
Finally, we show how $n_*$ and $\kappa$ can be chosen uniformly in $\cF(\tau_*, \cK_*, E_*)$.  For
each $T \in \cF(\tau_*, \cK_*, E_*)$, parts (a) and (b) yield $n_*(T)$ and $\kappa(T)$ with the stated properties.
Then the set of open neighborhoods $\{ \cQ(Q(T), E_*; \kappa(T)/2) \}_{T \in \cF(\tau_*, \cK_*, E_*)}$ forms an open cover of $\cQ(\tau_*, \cK_*, E_*)$,
where $Q(T)$ is the billiard table associated with $T$.
By compactness (see the proof of Lemma~\ref{lem:compact F}) there exists  a finite subcover
$\{ \cQ(Q(T_{\iota_j}), E_*; \kappa(T_{\iota_j})/2 ) \}_{j=1}^{N_\ve}$.   
For any $T \in \cF(T_{\iota_j}, \kappa(T_{\iota_j})/2)$, we have
$\cF(T, \kappa(T_{\iota_j})/2) \subset \cF(T_{\iota_j}, \kappa(T_{\iota_j}))$.
Thus $n_*(T_{\iota_j})$ and $\frac 12 \kappa(T_{\iota_j})$ have the desired properties for this $T$. 
Setting $n_* = \max_j n_*(T_{\iota_j})$ proves part (a) and $\kappa = \frac 12 \min_j \kappa(T_{\iota_j})$
proves part (b) of the lemma.
\end{proof}

From this point forward, we fix $T_0 \in \cF(\tau_*, \cK_*, E_*)$ and let $R_* = R_*^{\delta_0}(T_0)$
as constructed above.  We will consider sequences $\{ \iota_j \}_j \subset \cI(T_0, \kappa)$, where
$\kappa$ is from Lemma~\ref{lem:proper cross}(b), i.e. we will draw from maps $T \in \cF(T_0, \kappa)$.

\begin{lemma}
\label{lem:close if cross}
Let $W^1, W^2 \in \cW^s$, $n \ge 0$ and $\{ \iota_j \}_{j=1}^n \subset \cI(T_0, \kappa)$.
Suppose $U_1 \in \cG_n(W^1)$ and
$U_2 \in \cG_n(W^2)$ properly cross $R_*$ and define $\bar U_i = U_i \cap D(R_*)$, $i=1,2$.
Then there exists $C_7 >0$, depending only on the maximum slope and maximum
curvature $\bar B$ of curves in $\cW^s$, such that
$d_{\cW^s}(\bar U_1, \bar U_2) \le C_7 \delta_0^2$.
\end{lemma}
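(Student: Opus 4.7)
The plan is to exploit the narrow aspect ratio of the rectangle $D(R_*)$, whose stable diameter is $2\delta_0$ but whose unstable diameter is at most $2\delta_0^4$, and combine this with the uniform $C^2$ regularity of stable curves. By construction in Section~\ref{sec:mix}, $D(R_*)$ lies inside a single homogeneity strip, so $\bar U_1, \bar U_2$ lie in the same homogeneity strip, and the intersection $I_{\bar U_1} \cap I_{\bar U_2}$ will automatically be non-trivial (so $d_{\cW^s}$ is finite).

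First, I would use the fact that the stable and unstable cones from {\bf (H1)} are uniformly transverse and have slopes in a uniformly bounded range to show the following two geometric facts about $D(R_*)$: (i) for every $r_0$, the vertical slice $D(R_*) \cap \{r = r_0\}$ is a segment of $\vf$-length at most $C\delta_0^4$, for a constant $C$ depending only on $\cK_*$ and $\tau_*$; and (ii) each of the two unstable sides of $\partial D(R_*)$ projects to an $r$-interval of length at most $C\delta_0^4$. Both are immediate parallelogram calculations using that the unstable sides have Euclidean length at most $2\delta_0^4$ and that $(m_u-m_s)$ is uniformly bounded above and away from $0$.

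Second, since each $\bar U_i$ properly crosses $R_*$ it enters and exits $D(R_*)$ through the two unstable sides. By (ii), the left endpoints of $\bar U_1$ and $\bar U_2$ differ in $r$-coordinate by at most $C\delta_0^4$, and similarly for the right endpoints; hence
\[
|I_{\bar U_1} \triangle I_{\bar U_2}| \le 2 C \delta_0^4 \, .
\]
By (i), for every $r \in I_{\bar U_1} \cap I_{\bar U_2}$ the two points $(r, \vf_{\bar U_i}(r))$ lie in a single vertical slice of $D(R_*)$, so
\[
|\vf_{\bar U_1}(r) - \vf_{\bar U_2}(r)| \le C \delta_0^4 \, .
\]
Moreover, since each $\bar U_i$ properly crosses $D(R_*)$ in the stable direction and stable slopes are uniformly bounded, $|I_{\bar U_i}| \ge c \delta_0$ for some $c>0$, and combined with (ii) this gives $|I_{\bar U_1} \cap I_{\bar U_2}| \ge \tfrac{c}{2} \delta_0$ for $\delta_0$ small enough.

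Third, to upgrade the $C^0$ bound on $h := \vf_{\bar U_1} - \vf_{\bar U_2}$ to a $C^1$ bound, I would invoke the standard Landau--Kolmogorov type interpolation. By \eqref{eq:2deriv}, $|h''|_{C^0} \le 2 B_*$. Taylor expanding at $r \in I_{\bar U_1} \cap I_{\bar U_2}$ gives, for any admissible $L>0$,
\[
|h'(r)| \le \frac{2|h|_{C^0}}{L} + B_* L \le \frac{2 C \delta_0^4}{L} + B_* L \, .
\]
Optimizing by choosing $L = \delta_0^2 \sqrt{2C/B_*}$, which is $\ll \delta_0$ and therefore admissible given step two, yields
\[
|h'|_{C^0(I_{\bar U_1} \cap I_{\bar U_2})} \le 2 \sqrt{2 B_* C} \, \delta_0^2 \, .
\]
Combining with the previous $C^0$ and symmetric-difference bounds,
\[
d_{\cW^s}(\bar U_1, \bar U_2) = |h|_{C^1(I_{\bar U_1} \cap I_{\bar U_2})} + |I_{\bar U_1} \triangle I_{\bar U_2}| \le C_7 \delta_0^2 ,
\]
with $C_7$ depending only on $B_*$ and on the maximum slope of curves in $\cW^s$. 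The only nontrivial step is the $C^1$ interpolation, but since everything is $C^2$ with uniform bounds and the length of the interval is an order of magnitude larger than $\delta_0^2$, the standard argument gives exactly the announced exponent.
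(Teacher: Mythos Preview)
Your proof is correct and follows essentially the same approach as the paper: both use the narrow unstable width ($\sim\delta_0^4$) of $D(R_*)$ together with uniform transversality to obtain the $C^0$ bound and the symmetric-difference bound, and then use the uniform curvature bound $|h''|\le 2B_*$ to upgrade to a $C^1$ bound of order $\delta_0^2$. The only cosmetic difference is that you phrase the last step as a Landau--Kolmogorov interpolation with an explicit optimization over $L$, while the paper runs the equivalent contradiction argument (assume $|h'(r_0)|>C\delta_0^2$, propagate via $|h''|\le 2\bar C_7$ over a $\delta_0^2$-interval, and contradict the $C^0$ bound); these are the same argument in different clothing.
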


\begin{proof}
Define a foliation of vertical line segments covering $D(R_*)$.  Due to the uniform transversality
of the stable cone with the vertical direction, it is clear that the length of the segments connecting
$\bar U_1$ and $\bar U_2$ have length at most $C_3 \delta_0^4$, where $C_3>0$ depends only on the
maximum slope in $C^s(x)$.  Moreover, the unmatched parts of $\bar U_1$ and $\bar U_2$ 
near the boundary of $D(R_*)$ also have length at most $C_3 \delta_0^4$. See Figure \ref{fig:chepalle2} for an illustration.
\begin{figure}
\begin{centering}
\begin{tikzpicture}[scale=1.2]
\node at (1.2,2.9) {\color{blue}$\bar U_1$};
\draw[->] (1.4,2.7)--(2.5, 1.9);
\draw[->] (7, 3.5)--(6,2.2);
\node at (7.3,3.8) {$D(R_*)$};
\node at (-1.8,1) {\color{red}$U_2$};
\draw[->] (-1.5,1)--(-.5, 0.6);
\node at (10.1,0.2) {\tiny an unmatched part of $\scriptscriptstyle {\color{red} \bar U_2}$};
\draw[->] (8.8,0.2)--(8.01,.95);
\draw[thick] (0,0)--(8.3,0);
\draw[thick] (0,0)--(0.2,2.2);
\draw[thick](0.2,2.2)--(7.9,2);
\draw[thick](7.9,2)--(8.3,0);
\draw[draw=red] (-1,.5)--(8.5,1);
\draw[draw=blue] (0.17,1.9)--(7.98,1.6);
\draw[dashed] (0.19,1.9)--(.19,0);
\draw[dashed] (7.97,1.6)--(7.97,0);
\end{tikzpicture}
\caption{ crossing $D(R_*)$. }\label{fig:chepalle2}
\end{centering}
\end{figure}
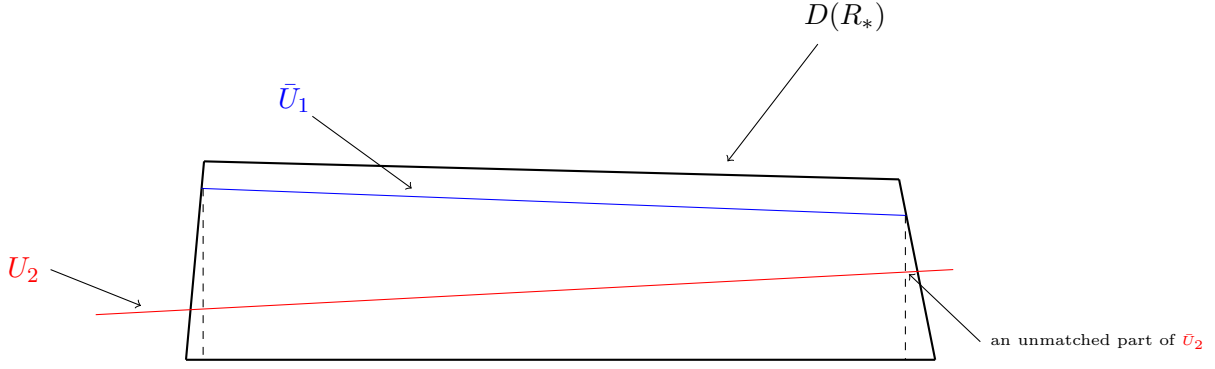

Recalling the definition of $d_{\cW^s}(\cdot, \cdot)$,
it remains to estimate the $C^1$ distance between the graphs of $\bar U_1$ and $\bar U_2$.
Denote by $\vf_1(r)$ and $\vf_2(r)$ the functions defining $\bar U_1$ and $\bar U_2$ on a common
interval $I = I_{\bar U_1} \cap I_{\bar U_2}$.  Let $\vf_i' = \frac{d \vf_i}{dr}$. 
For $x \in \bar U_1$ over $I$, let $\bar x  \in \bar U_2$
denote the point on the same vertical line segment as $x$.

Suppose there exists $x \in \bar U_1$ over $I$ such that $|\vf_1'(r(x)) - \vf_2'(r(\bar x))| > C \delta_0^2$
for some $C>0$, where $r(x)$ denotes the $r$-coordinate of $x = (r,\vf)$.  
Since the curvature of each $U_i$ is bounded by $\bar B$ by definition, we have
$|\vf_i''| \le \bar B (1+(K_{\max} + \tau_{\min}^{-1})^2)^{3/2} =: \bar C_7$.  

Now consider an interval $J \subset I$ of radius $\delta_0^2$ centered at $r(x)$.
Then $|\vf_1'(r) - \vf_1'(r(x))|  \le \bar C_7 |r-r(x)|$ for all $r \in J$, and similarly for $\vf_2'$.
Thus,
\[
|\vf_1'(r) - \vf_2'(r)| \ge C \delta_0^2 - 2 \bar C_7 \delta_0^2 = (C - 2\bar C_7) \delta_0^2 \; \mbox{ for all $r \in J$}.
\]
This in turn implies that there exists $r \in J$ such that $|\vf_1(r) - \vf_2(r) | \ge (C - 2\bar C_7) \delta_0^4$,
which is a contradiction if $C - 2\bar C_7 > C_3$.  This proves the lemma with
$C_7 = 2 \bar C_7 + C_3$.
\end{proof}

Recall that by Lemma~\ref{lem:H metric}, for $W\in \cW^s$ the cone $\cD_{a, \alpha}(W)$ has finite diameter in $\cD_{a, \beta}(W)$ for
$\alpha > \beta$, so that 
\begin{equation}
\label{eq:D_0}
\rho_{W, a, \beta}(g_1, g_2) \le D_0 \quad \mbox{ for all $g_1, g_2 \in \cD_{a, \alpha}(W)$}
\end{equation} 
for some constant $D_0 > 0$ depending only on $a, \alpha$ and $\beta$.  Without loss of generality,
we take $D_0 \ge  1$.

\begin{lemma}
\label{lem:match}
Suppose $W^1, W^2 \in \cW^s$ with $|W^1|, |W^2| \in [\delta_0/2, \delta_0]$ and $d_{\cW^s}(W^1, W^2) \le C_7\delta_0^2$.  Assume
$\psi_\ell \in \cD_{a,\alpha}(W^\ell)$ with $\int_{W^1} \psi_1 = \int_{W^2} \psi_2 =1$.  

Recall that $\delta \le \delta_0^2$ and let $\kappa>0$ be from Lemma~\ref{lem:proper cross}. Let $C>0$ be such that if $n \ge C \log(\delta_0/\delta)$ then $C_5  \Lambda^{-n} \le \delta/\delta_0^2$, where $C_5$ is from Lemma~\ref{lem:compare}.  
For all $n$ such that $n \ge C \log (\delta_0/\delta) \ge 2n_0$ and all $\{ \iota_j \}_{j=1}^n \subset \cI(T_0, \kappa)$,
we have
\[
\frac{\int_{W^1} \Lp_n f \, \psi_1}{\int_{W^2} \Lp_n f \, \psi_2} \le 2 
\]
for all $f\in \cC_{c, A, L}(\delta)$, provided
\[
\left[\frac{2\bar C_0C_3C_7(3L A \delta^{1-q}\delta_0^{2q}
+ 3 L\delta_0^2)}{1 - \Lambda^{-q}}  + 2\bar C_0 A \delta^{1-q} (2\delta^q + c\delta^{\gamma+q} + D_0 \delta^q + 2 \delta_0^q)  \right] 6e^{2a\delta_0^\alpha} \leq \delta_0.
\]
\end{lemma}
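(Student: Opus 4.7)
The target ratio $\int_{W^1}\Lp_n f\,\psi_1\le 2\int_{W^2}\Lp_n f\,\psi_2$ reduces to the absolute difference bound
$$\left|\int_{W^1}\Lp_n f\,\psi_1 - \int_{W^2}\Lp_n f\,\psi_2\right| \le \tfrac12\tri f\tri_-,$$
because $\psi_2\in\cD_{a,\alpha}(W^2)\subset\cD_{a,\beta}(W^2)$ with $\int_{W^2}\psi_2=1$ gives $\int_{W^2}\Lp_n f\,\psi_2 \ge \tri\Lp_n f\tri_- \ge \tfrac12\tri f\tri_-$ via Lemma~\ref{lem:first L}. This difference bound will be produced by a direct adaptation of the $c$-contraction estimate of Section~\ref{subsec:contract c}, now applied with initial matching distance $C_7\delta_0^2$ rather than $\delta$.

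I would set up the vertical/unstable foliation matching between $W^1$ and $W^2$ exactly as in Section~\ref{subsec:contract c}, pull back by $T_n^{-1}$, and split $\cG_n^\delta(W^\ell)$ (homogeneous components with long pieces subdivided to length in $[\delta,2\delta]$) into matched pairs $(U^1_j, U^2_{j'})$ and unmatched pieces $V^\ell_j$, as in~\eqref{eq:unstable split}. Matched pairs are vertically connected with $d_{\cW^s}(U^1_j,U^2_{j'}) \le C_5\Lambda^{-n}\cdot C_7\delta_0^2 \le\delta$ by Lemma~\ref{lem:compare}(a) and the standing assumption $n\ge C\log(\delta_0/\delta)$; this is the sole use of the logarithmic lower bound on $n$. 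Unmatched pieces are classified by their creation time $i\in\{0,\dots,n-1\}$ as in the analysis leading to~\eqref{eq:V}, with $|T_{n-i-1}V^\ell_j|\le C_3\Lambda^{-i}C_7\delta_0^2$. Applying cone~\eqref{eq:cone 3}, Lemma~\ref{lem:full growth}(b), Lemma~\ref{lem:first L} (to replace $\tri\Lp_{n-i-1}f\tri_-$ by $3L\tri\Lp_n f\tri_-$), and the uniform test-function bound $|\psi_\ell|_\infty\le 3e^{a\delta_0^\alpha}/\delta_0$ forced by $\int\psi_\ell=1$ and $|W^\ell|\ge\delta_0/3$, summation over $i$ produces the geometric series $(1-\Lambda^{-q})^{-1}$ and yields the first bracket of the hypothesis.

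For each matched pair I would transport $\psi_2$ onto $U^1_j$ via the change of variables~\eqref{eq:switch}, invoke Lemma~\ref{lem:compare} to secure $\tT_{n,U^2_{j'}}\psi_2\in\cD_{a,\alpha}(U^1_j)$ together with~\eqref{eq:deltapsi}, and apply cone condition~\eqref{eq:cone 5} as in~\eqref{eq:c-decomposition}, bounding the paired difference by $d_{\cW^s}(U^1_j,U^2_{j'})^\gamma\delta^{1-\gamma}cA\tri f\tri_-\cdot\fint_{U^2_{j'}}\hT_{n,U^2_{j'}}\psi_2$. Summing via Lemma~\ref{lem:full growth}(b) with $|\psi_\ell|_\infty\le 3e^{a\delta_0^\alpha}/\delta_0$ produces the $c\delta^{\gamma+q}$, $D_0\delta^q$, and $3\delta_0^q$ summands in the second bracket---the $D_0\delta^q$ coming from the finite $\cD_{a,\beta}$-diameter~\eqref{eq:D_0} of $\cD_{a,\alpha}$---while short components of length $<\delta$ are handled directly by cone~\eqref{eq:cone 3} and contribute the $2\delta^q$ summand. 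The overall prefactor $6e^{2a\delta_0^\alpha}$ absorbs distortion, the length-ratio adjustment of~\eqref{eq:length-ratio}, and the doubling across the two curves; the quantitative hypothesis is exactly the requirement that the combined bound not exceed $\tfrac12\tri f\tri_-$ after accounting for the common $1/\delta_0$ factor from $|\psi_\ell|_\infty$.

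The main technical obstacle is the careful bookkeeping of short, unmatched, and matched contributions into precisely the explicit form of the hypothesis, while correctly tracking the distinct roles of the scales $\delta$, $\delta_0^2$, and $\delta_0$. The sequential-mixing input of Lemma~\ref{lem:proper cross}(b) does not appear directly in this proof but has already been invoked upstream to supply the hypothesis $d_{\cW^s}(W^1,W^2)\le C_7\delta_0^2$ through Lemma~\ref{lem:close if cross}, so that it suffices here to mirror the $c$-contraction machinery on the larger initial scale.
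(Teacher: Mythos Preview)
Your overall strategy---reduce to an absolute difference bound, decompose into matched and unmatched pieces, and mirror the $c$-contraction argument of Section~\ref{subsec:contract c}---is exactly what the paper does. But one concrete choice in your outline would make the constants blow up: you propose to work with $\cG_n^\delta(W^\ell)$, i.e.\ components subdivided at scale $\delta$. Here the base curves satisfy $|W^\ell|\in[\delta_0/3,\delta_0]$, so Lemma~\ref{lem:full growth}(b) at scale $\delta$ yields $\sum_j |J_{U^\ell_j}T_n|_{C^0}\le \bar C_0\,\delta^{-1}|W^\ell|\sim \bar C_0\,\delta_0/\delta$, not $\bar C_0$. That extra factor $\delta_0/\delta$ (which is at least $\delta_0^{-1}$ since $\delta\le\delta_0^2$) destroys the smallness in the matched-piece sum: for instance the $3\delta_0^q$ summand would become $3\delta_0^q\cdot\delta_0/\delta$ and the bracket would no longer be $\le\delta_0$. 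The paper avoids this by working with $\cG_n(W^\ell)$ at scale $\delta_0$ and invoking Remark~\ref{rem:improve} (since $|W^\ell|\ge\delta_0/3$) to get $\sum_j|J|\le\bar C_0$. The price is that matched pieces $U^\ell_j$ can have length up to $\delta_0$, which forces a separate ``matched short/matched long'' split and a more careful treatment of the long ones.

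Two further points you are glossing over. First, unmatched pieces created at time $i$ may satisfy $|T_{n-i-1}V^\ell_j|\ge\delta$ (they are only bounded by $C_3C_7\Lambda^{-i}\delta_0^2$), so cone condition~\eqref{eq:cone 3} alone does not handle them; the paper splits $P(i)$ into short and long and bounds the long ones via $\tri\Lp_{n-i-1}f\tri_+$, which is where the $3L\delta_0^2$ term in the first bracket comes from. Second, for matched long pieces the term $\big|\int_{U^1_j}f\,\hT_{n,U^1_j}\psi_1-\int_{U^1_j}f\,\tT_{n,U^2_j}\psi_2\big|$ (same curve, different test functions) cannot be handled by cone~\eqref{eq:cone 5}; the paper uses Lemma~\ref{lem:b property} together with the finite diameter bound~\eqref{eq:D_0}, and this is precisely the origin of the $D_0\delta^q$ term. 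You allude to $D_0$ but your sentence ``bounding the paired difference by $d_{\cW^s}(U^1_j,U^2_{j'})^\gamma\delta^{1-\gamma}cA\tri f\tri_-\cdot\fint_{U^2_{j'}}\hT\psi_2$'' covers only the cone-\eqref{eq:cone 5} piece of the decomposition~\eqref{eq:match decomp}, not the Lemma~\ref{lem:b property} piece.
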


\begin{remark}
\label{rem:delta_0 ineq}
Since $\delta \le \delta_0^2$, the condition of Lemma~\ref{lem:match} will be satisfied if
\begin{equation}
\label{eq:delta_0 ineq}
\left[\frac{2\bar C_0C_3C_7 (3L A \delta_0+ 3L\delta_0)}{1 - \Lambda^{-q}}  +  2\bar C_0 A 
\delta_0^{1-q} (2 \delta_0^q + c\delta_0^{2\gamma +q} + D_0 \delta_0^q +2)  \right] 6e^{2a\delta_0^\alpha} \leq 1.
\end{equation}
This will determine our choice of $\delta_0$.
\end{remark}

\begin{proof}
We will change variables to integrate on $T_n^{-1}W^\ell$, $\ell = 1, 2$.  As in Section~\ref{subsec:contract c}, we split
$\cG_n(W^\ell)$ into matched pieces $\{ U^\ell_j \}_j$ and unmatched pieces $\{ V^\ell_j \}_j$.  Corresponding
matched pieces $U^1_j$ and $U^2_j$ are defined as graphs $G_{U^\ell_j}$
over the same $r$-interval $I_j$ and are connected by a foliation of vertical line segments.  Following
\eqref{eq:unstable split}, we write,
\[
\int_{W^{\ell} }\Lp_n f \, \psi_{\ell} = \sum_j \int_{U^{\ell}_j} f \, \hT_{n,U^{\ell}_j} \psi_{\ell} + \sum_j \int_{V^{\ell}_j} f \, \hT_{n,V^{\ell}_j} \psi_{\ell},
\]
where $\hT_{n,U^\ell_j} \psi_\ell := \psi_\ell \circ T_n \, J_{U^\ell_j}T_n$, and 
similarly for $\hT_{n,V^\ell_j} \psi_\ell$, 
$\ell = 1, 2$.

We perform the estimate over unmatched pieces first, following the same argument as in Section~\ref{subsec:contract c}
to conclude that $|T_{n-i-1}V^1_j| \le C_3 \Lambda^{-i} d_{\cW^s}(W^1, W^2) \le C_3 C_7 \Lambda^{-i} \delta_0^2$,
for any curve $V^1_j$ created at time $i$, $0 \le i \le n-1$. 

Recalling the sets $P(i)$ from Section~\ref{subsec:contract c} of unmatched pieces created at time $i$, we split the 
estimate into curves $P(i; S)$ if $|T_{n-i-1}V^1_j| < \delta$ and curves $P(i; L)$ if $|T_{n-i-1}V^1_j| \ge \delta$.  

The estimate over short unmatched pieces is given by (recalling the notation from \eqref{eq:n k notation}),
\begin{equation}
\label{eq:unmatched short}
\begin{split}
\sum_{i=0}^{n-1} \sum_{j \in P(i; S)} & \left| \int_{V^1_j} f \, \hT_{n,V^1_j} \psi_1 \right|
= \sum_{i=0}^{n-1} \sum_{j \in P(i; S)} \left| \int_{T_{n-i-1}V^1_j} \Lp_{n-i-1}f \cdot 
\psi_1 \circ T_{n, n-i-1} \, J_{T_{n-i-1}V^1_j}T_{n, n-i-1}
 \right| \\ 
& \le \sum_{i=0}^{n-1} \sum_{j \in P(i; S)} A \delta^{1-q} C_3^q  \Lambda^{-iq} d_{\cW^s}(W^1, W^2)^q \tri \Lp_{n-i-1} f \tri_- |\psi_1|_{C^0}
|J_{T_{n-i-1}V^1_j}T_{n, n-i-1 }|_{C^0} \\
& \le \frac{\bar C_0 A}{1 - \Lambda^{-q}} C_3^q C_7^q \delta_0^{2q}  3L \tri \Lp_n f \tri_-  \delta^{1-q} |\psi_1|_{C^0} \, ,
\end{split}
\end{equation}
where we have used Lemma~\ref{lem:full growth}-(b), $|W^1| \in [\delta_0/2, \delta_0]$, and 
Remark~\ref{rem:improve} to estimate the sum over the Jacobians, as well as \eqref{eq:tri} to estimate $\tri \Lp_{n-i-1} f \tri_- \le 3L \tri \Lp_n f \tri_-$.

For the estimate over long pieces, we subdivide them into curves of length between $\delta$ and $2\delta$  and estimate them by $\tri \Lp_{n-i-1} f \tri_+$, then we recombine them to obtain,
\begin{equation}
\label{eq:unmatched long}
\begin{split}
\sum_{i=0}^{n-1} \sum_{j \in P(i; L)} &\left| \int_{V^1_j} f \, \hT_{n,V^1_j} \psi_1 \right|
= \sum_{i=0}^{n-1} \sum_{j \in P(i; L)} \left| \int_{T_{n-i-1}V^1_j} \Lp_{n-i-1}f \cdot 
\psi_1 \circ T_{n, n-i-1} \, J_{T_{n-i-1}V^1_j}T_{n, n-i-1} 
\right| \\
& \le \sum_{i=0}^{n-1} \sum_{j \in P(i; L)} \tri \Lp_{n-i-1} f \tri_+ \int_{T_{n-i-1}V^1_j} 
\psi_1 \circ T_{n, n-i-1} \, J_{T_{n-i-1}V^1_j}T_{n, n-i-1} \\
& \le 3 L \tri \Lp_n f \tri_- \sum_{i=0}^{n-1} \sum_{j \in P(i; L)} |T_{n-i-1}V^1_j|  |\psi_1|_{C^0} |J_{T_{n-i-1}V^1_j}T_{n, n-i-1}|_{C^0} \\
& \le \frac{C_3 C_7\bar C_0}{1 - \Lambda^{-1}} \delta_0^2 3 L  \tri \Lp_n f \tri_-|\psi_1|_{C^0} \, ,
\end{split}
\end{equation}
where, in third line we used \eqref{eq:tri}, 
and in the fourth line, since $|W^1| \ge \delta_0/2$, we used Remark~\ref{rem:improve} to drop the second term in Lemma~\ref{lem:full growth}(b).

Next, we estimate the integrals over the matched pieces $U^1_j$.  We argue as in
Section~\ref{subsec:contract c}, but our estimates here are somewhat simpler since we do not need to show that parameters
contract.

We first treat the matched short pieces with $|U^1_j| < \delta$ much as we did the unmatched ones.  
By Lemma~\ref{lem:compare},
$d_{\cW^s}(U^1_j, U^2_j) \le C_5  \Lambda^{-n} d_{\cW^s}(W^1, W^2) \le \delta$,
since we have chosen $n \ge C \log (\delta_0/\delta)$.   Thus if
$|U^1_j| < \delta$ then $|U^2_j| < 2\delta$,  and the analogous fact holds for short
curves $|U^2_j| < \delta$.
With this perspective, we call $U^\ell_j$ short if either $|U^1_j| < \delta$ or $|U^2_j| < \delta$.  
On short pieces, we apply \eqref{eq:cone 3}
\begin{equation}
\label{eq:match short}
\sum_{j \; \mbox{\scriptsize short}} \left| \int_{U^1_j} f \, \hT_{n,U^1_j} \psi_1 \right|
 \le \sum_{j \; \mbox{\scriptsize short}} 2 A \delta \tri f \tri_- |\psi_1|_{C^0} |J_{U^1_j} T_n|_{C^0} 
 \le 4 A \delta \tri \Lp_n f \tri_- \bar C_0 |\psi_1|_{C^0} \, ,
\end{equation}
where we have again used Lemmas~\ref{lem:full growth}(b) and  \ref{lem:first L}  for the second inequality. Remark that the same argument holds for $U^2_j$ with test function $\psi_2$.

Finally, to estimate the integrals over matched curves with $|U^1_j|, |U^2_j| \ge \delta$ we 
follow equation \eqref{eq:c-decomposition}, recalling \eqref{eq:switch}, although we no longer have Lemma~\ref{lem:compare}(c) at our disposal,
\begin{equation}
\label{eq:match decomp}
\begin{split}
&\left|\int_{U^1_j} f \, \hT_{n,U^1_j} \psi_1 -\int_{U^2_j} f \,  \hT_{n,U^2_j} \psi_2\right| 
\leq \left|\int_{U^1_j} f \, \hT_{n,U^1_j} \psi_1 -\int_{U^1_j} f \, \tT_{n,U^2_j} \psi_2\right|  \\
&+\left|\frac{\int_{U^1_j} f \, \tT_{n,U^2_j} \psi_2}{\fint_{U^1_j}  \tT_{n,U^2_j} \psi_2} -\frac{\int_{U^2_j} f \,  \hT_{n,U^2_j} \psi_2}{\fint_{U^2_j}  \hT_{n,U^2_j} \psi_2}\right|\fint_{U^2_j}  \hT_{n,U^2_j} \psi_2+\left|\frac{\int_{U^1_j} f \, \tT_{n,U^2_j} \psi_2}{\fint_{U^1_j} \tT_{n,U^2_j} \psi_2}\right|\,\left| \frac{|U^2_j|-|U^1_j|}{|U^1_j|}\right|\fint_{U^2_j}  \hT_{n,U^2_j} \psi_2\\
&\leq  \left|\int_{U^1_j} f \, \hT_{n,U^1_j} \psi_1 -\int_{U^1_j} f \, \tT_{n,U^2_j} \psi_2\right| 
+d_{\cW^s}(U^1_j, U^2_j)^\gamma \delta^{1-\gamma}   c A  \tri f \tri_-  |J_{U^2_j}T_n|_{C^0} |\psi_2|_{C^0} \\
&\quad +A\delta d_{\cW^s}(U^1_j, U^2_j)   \tri f \tri_- |J_{U^2_j}T_n|_{C^0} |\psi_2|_{C^0} \, ,
\end{split}
\end{equation}
where we have used \eqref{eq:length-ratio} to estimate $\left| \frac{|U^2_j|-|U^1_j|}{|U^1_j|}\right|$.   

To estimate the first term on the right side above, we use \eqref{eq:cone 3} and Lemma~\ref{lem:b property},
\[
\begin{split}
\left|\int_{U^1_j} f \, \hT_{n,U^1_j} \psi_1 -\int_{U^1_j} f \, \tT_{n,U^2_j} \psi_2\right| 
& \le \left|\frac{\int_{U^1_j} f \, \hT_{n,U^1_j} \psi_1}{\fint_{U^1_j} \hT_{n,U^1_j} \psi_1} - \frac{\int_{U^1_j} f \, \tT_{n,U^2_j} \psi_2}{\fint_{U^1_j} \tT_{n,U^2_j} \psi_2} \right|  \fint_{U^1_j} \hT_{n,U^1_j} \psi_1 \\
& \qquad + \frac{\int_{U^1_j} f \, \tT_{n,U^2_j} \psi_2}{\fint_{U^1_j} \tT_{n,U^2_j} \psi_2}
\left| \fint_{U^1_j} \hT_{n,U^1_j} \psi_1 - \fint_{U^1_j} \tT_{n,U^2_j} \psi_2 \right| \\
& \le 2 \delta L \rho(\hT_{n,U^1_j} \psi_1, \tT_{n,U^2_j} \psi_2) \tri f \tri_- |J_{U^1_j}T_n|_{C^0} |\psi_1|_{C_0} \\
& \qquad + A  \delta^{1-q} \delta_0^q  \tri f \tri_- \big( |J_{U^1_j}T_n|_{C^0} |\psi_1|_{C_0} + 2|J_{U^2_j}T_n|_{C^0} |\psi_2|_{C_0} \big) \, ,
\end{split}
\]
where we have used $|U^1_j| \le \delta_0$ in the last line. 
We may apply \eqref{eq:D_0} since $\hT_{n,U^1_j} \psi_1, \tT_{n,U^2_j} \psi_2 \in \cD_{a, \alpha}(U^1_j)$ by 
Lemma~\ref{lem:compare}.  Now putting the above estimate together with \eqref{eq:match decomp},
recalling $d_{\cW^s}(U^1_j, U^2_j) \le  \delta$,
and using Lemma~\ref{lem:full growth}-(b) and Remark~\ref{rem:improve} as well as  Lemma~\ref{lem:first L}, we sum over $j$ to obtain,
\begin{equation}
\label{eq:match long}
\begin{split}
\sum_{j \; \mbox{\scriptsize long}} & \left|\int_{U^1_j} f \, \hT_{n,U^1_j} \psi_1 -\int_{U^2_j} f \,  \hT_{n,U^2_j} \psi_2\right| \\
& \qquad \le 2 A \delta^{1-q}   \tri \Lp_n f \tri_- \bar C_0 \left( c \delta^{\gamma + q} + \delta^{1+q}  + \frac{2LD_0  \delta^q}{A} 
+2 \delta_0^q  \right) (|\psi_1|_{C^0} + |\psi_2|_{C^0}) .
\end{split}
\end{equation}

Collecting \eqref{eq:unmatched short}, \eqref{eq:unmatched long}, \eqref{eq:match short}  and \eqref{eq:match long}, and recalling $D_0 \ge  1$ and $A > 4 L$, yields
\[
\begin{split}
\int_{W^1} \Lp_n f \, \psi_1 & \leq \;\frac{\bar C_0C_3C_7 (3L A \delta^{1-q}\delta_0^{2q}+3L\delta_0^2)}{1 - \Lambda^{-q}} \tri \Lp_n f \tri_-   |\psi_1|_{C^0} 
+ 4 \bar C_0 A \delta \tri \Lp_n f \tri_-  |\psi_1|_{C^0} \\
& \qquad
 + \sum_j \int_{U^2_j} f \,  \hT_{U^2_j}\psi_2 + 2 A \delta^{1-q} \tri \Lp_n f \tri_- \bar C_0 ( c \delta^{\gamma+ q}  +  D_0 \delta^q + 2 \delta_0^q  ) (|\psi_1|_{C^0} + |\psi_2|_{C^0})  \\
& \leq \; \bigg\{1+\Big[\frac{2 \bar C_0C_3C_7 (3L A \delta^{1-q}\delta_0^{2q}+3L\delta_0^2) }{1 - \Lambda^{-q}} \\
&  \qquad + 2 \bar C_0 A  \delta^{1-q}  (2 \delta^q + c\delta^{\gamma+q} + D_0 \delta^q +2 \delta_0^q )  \Big]\frac{|\psi_1|_{C^0} + |\psi_2|_{C^0}}{\int_{W^2}\psi_2}\bigg\} \int_{W^2} \Lp_n f \, \psi_2 \, .
\end{split}
\]
Now since $\int_{W^i} \psi_i =  1$, we have $e^{- a \delta_0^\alpha} \le |W^i| \psi_i \le e^{a \delta_0^\alpha}$. Thus since $|W^i| \ge \delta_0/3$,
\[
\frac{|\psi_1|_{C^0} + |\psi_2|_{C^0}}{\int_{W^2}\psi_2}
\le \frac{6}{\delta_0} e^{2a \delta_0^\alpha} \, , 
\] 
which proves the lemma.
\end{proof}

Our strategy will be the following.  For $W^1$, $W^2 \in \cW^s(\delta_0/2)$, $n$ sufficiently large and 
$\{ \iota_j \}_{j=1}^n \subset \cI(T_0, \kappa)$,  we wish to compare
$\int_{W^1} \Lp_n f \, \psi_1$ with $\int_{W^2} \Lp_n f \, \psi_2$, where we normalize
$\int_{W^1} \psi_1 = \int_{W^2} \psi_2 = 1$.  By Lemmas~\ref{lem:proper cross} and \ref{lem:close if cross}, 
we find $U^\ell_i \in \cG_{n_*}(W^\ell)$, $\ell = 1, 2$, such that $U^\ell_i$ properly crosses $R_*$, and 
$d_{\cW^s}(\bar U^1_i, \bar U^2_i) \le C_7 \delta_0^2$, where $\bar U^\ell_i = U^\ell_i \cap D(R_*)$.

Next, for each $i$, we wish to compare $\int_{\bar U^1_i} \Lp_{n-n_*} f \, \hT_{n_*, U^1_i} \psi_1$
with $\int_{\bar U^2_i} \Lp_{n-n_*} f \, \hT_{n_*, U^2_i} \psi_2 $,
where, to abbreviate notation, 
$\hT_{n_*, U^\ell_i} \psi_\ell = \psi_\ell \circ T_{n, n-n_*} J_{U^\ell_i}T_{n, n-n_*}$.
However, the weights $\int_{\bar U^\ell_i} \hT_{n_*, U^\ell_i} \psi_\ell$ may be very different for $\ell = 1,2$ since 
the stable Jacobians along the respective orbits before time $n_*$ may not be comparable.  To remedy this,
we adopt the following strategy for matching integrals on curves.  

For each curve $U^\ell_i \in \cG_{n_*}(W)$ which properly crosses $R_*$, we redefine $\bar U^\ell_i$
to denote the middle third of $U^\ell_i \cap D(R_*)$ (and so having length at least $2\delta_0/3$). Let $M^\ell$ denote the index set of such $i$.

Let $p^{(\ell)}_i = \int_{\bar U^\ell_i} \hT_{n_*, U^\ell_i} \psi_\ell$, and let 
$m_\ell = \sum_{i \in M^\ell} p^{(\ell)}_i$.  Without loss of generality, assume $m_2 \ge m_1$.

We will match the integrals $\sum_{i \in M^1} \int_{\bar U^1_i} \Lp_{n-n_*}f \, \hT_{n_*, U^1_i} \psi_1$ with
 $\sum_{j \in M^2} \frac{m_1}{m_2} \int_{\bar U^2_j} \Lp_{n-n_*}f \, \hT_{n_*, U^2_j} \psi_2$.
The remainder of the integrals
$\sum_{j \in M^2} \frac{m_2 - m_1}{m_2} \int_{\bar U^2_j} \Lp_{n-n_*}f \, \hT_{n_*, U^2_j} \psi_2$ as well as any unmatched
pieces (including the outer two-thirds of each $U^\ell_i$) we continue to iterate until such time as they can be matched
as the middle third of a curve that properly crosses $R_*$.

Set $\hT_{n_*, U^2_j} \tpsi_2 = \frac{m_1}{m_2} \hT_{n_*, U^2_j} \psi_2$, and 
consider the following decomposition of the integrals we want to match,
\[
\sum_{\substack{i \in M^1 \\ j \in M^2}} \int_{\bar U^1_i} \Lp_{n-n_*}f \, \hT_{n_*, U^1_i} \psi_1 \, \frac{p_j^{(2)}}{m_2}
\quad \mbox{and} \quad
\sum_{\substack{i \in M^1 \\ j \in M^2}} \int_{\bar U^2_j} \Lp_{n-n_*}f \, \hT_{n_*, U^2_j} \tpsi_2 \, \frac{p_i^{(1)}}{m_1}
\]
For each pair $i,j$ in the first sum, the test function has integral weight $\frac{p^{(1)}_i p^{(2)}_j}{m_2}$, and the same is true for the
corresponding pair in the second sum.  Thus these integrals are paired precisely according to the assumptions of 
Lemma~\ref{lem:match}.  It follows that if $n - n_* \ge C\log(\delta_0/\delta)$, then 
\begin{equation}
\label{eq:match compare}
\begin{split}
\sum_{i \in M^1} \int_{\bar U^1_i} \Lp_{n-n_*}f \, & \hT_{n_*, U^1_i} \psi_1
 = \sum_{\substack{i \in M^1 \\ j \in M^2}} \int_{\bar U^1_i} \Lp_{n-n_*}f \, \hT_{n_*, U^1_i}\psi_1 \frac{p_j^{(2)}}{m_2} \\
& \le 2 \sum_{\substack{i \in M^1 \\ j \in M^2}} \int_{\bar U^2_j} \Lp_{n-n_*}f \, \hT_{n_*, U^2_j} \tpsi_2 \, \frac{p_i^{(1)}}{m_1} 
=  2  \sum_{j \in M^2} \int_{\bar U^2_j} \Lp_{n-n_*}f \, \hT_{n_*,U^2_j} \tpsi_2  \, .
\end{split}
\end{equation}

We want to repeat the above construction until most of the mass has been compared. To this end we 
set up an inductive scheme.
Consider the family of curves $W^\ell_i \in \cG_{n_*}(W^\ell)$ that have not been matched.  Each 
carries a test function $\psi_{\ell, i} := \hT_{n_*, W^\ell_i} \tpsi_\ell$,
where to keep our notation uniform, we set $\tilde \psi_1 = \psi_1$. 
  Renormalizing by a factor $\mathfrak{r}_{\ell,1} < 1$,
we have $\sum_i \int_{W^\ell_i} \psi_{\ell,i} = 1$.

\begin{defin}
\label{def:admit}
Given a countable collection of curves and test functions, $\mathfrak{F} = \{ W_i, \psi_i \}_i$,  with $W_i \in \cW^s$, $|W_i| \le \delta_0$, 
$\psi_i \in \cD_{a, \alpha}(W_i)$ and $\sum_i \int_{W_i} \psi_i = 1$, we call
$\mathfrak{F}$ an {\em admissible family} if 
\begin{equation}
\label{eq:C* def}
\sum_i  \fint_{W_i} \psi_i \le C_* \, , \quad \mbox{where $C_* := 3\bar C_0 \delta_0^{-1}$.} 
\end{equation}
\end{defin}

Notice that any stable curve $W \in \cW^s(\delta_0/2)$ together with test function $\psi \in \cD_{a,\alpha}(W)$ normalized so that
$\int_W \psi = 1$ forms an admissible family since $|W| \ge \delta_0/2$.
The content of the following lemma is that an admissible family can be iterated and remain admissible; moreover,
a family with larger average integral in \eqref{eq:C* def} can be made admissible under iteration.

\begin{lemma}
\label{lem:propogate}
Let $\{ W_i, \psi_i \}_i$ be a countable collection of curves $W_i \in \cW^s$, $|W_i| \le \delta_0$, with functions 
$\psi_i \in \cD_{a,\alpha}(W_i)$,
normalized so that $\sum_i p_i = 1$, where $p_i = \int_{W_i} \psi_i$.  
Suppose that $\sum_i |W_i|^{-1} p_i = C_\sharp$.

Choose $n_\sharp \in \mathbb{N}$ so that $C_0 \theta_1^{n_\sharp} \frac{C_\sharp}{C_*} \le 1/6$.  
Then for all $n \ge n_\sharp$, and all $\{ \iota_k \}_{k=1}^n \subset \cI(T_0, \kappa)$, 
the dynamically iterated family $\{ V^i_j \in \cG_n(W_i), \hT_{n,V^i_j} \psi_i \}_{i,j}$ 
is admissible.
\end{lemma}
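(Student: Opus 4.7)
The plan is to estimate $\sum_{i,j}\fint_{V^i_j}\hT_{n,V^i_j}\psi_i$ by combining a uniform pointwise bound on each test function $\psi_i$ with Lemma~\ref{lem:full growth}(b) applied curve by curve. First, I observe that the mass normalization is automatic: by change of variables, $\sum_j \int_{V^i_j}\hT_{n,V^i_j}\psi_i = \int_{W_i}\psi_i = p_i$, so summing over $i$ yields $\sum_{i,j}\int_{V^i_j}\hT_{n,V^i_j}\psi_i = 1$. Hence only the bound \eqref{eq:C* def} on the $|V^i_j|^{-1}$-weighted integrals needs verification.

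For each piece $V^i_j \in \cG_n(W_i)$, a direct change of variables and a supremum bound give
\[
\fint_{V^i_j}\hT_{n,V^i_j}\psi_i \le |\psi_i|_{C^0(W_i)}\, |J_{V^i_j}T_n|_{C^0(V^i_j)},
\]
so it remains to control $|\psi_i|_{C^0(W_i)}$. Since $\psi_i \in \cD_{a,\alpha}(W_i)$ with $|W_i| \le \delta_0$, the log-H\"older property, together with \eqref{eq:delta_0 condition}, $\alpha > \beta$ and $\delta_0 \le 1$, yields
\[
|\psi_i|_{C^0(W_i)} \le e^{a\delta_0^\alpha}\fint_{W_i}\psi_i \le 2\,\frac{p_i}{|W_i|}.
\]
Invoking Lemma~\ref{lem:full growth}(b) to sum the Jacobians then gives
\[
\sum_j\fint_{V^i_j}\hT_{n,V^i_j}\psi_i \le 2\,\frac{p_i}{|W_i|}\bigl(\bar C_0\delta_0^{-1}|W_i|+C_0\theta_1^n\bigr) = 2\bar C_0\delta_0^{-1}p_i+2C_0\theta_1^n\frac{p_i}{|W_i|}.
\]

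Summing over $i$ and using $\sum_i p_i = 1$ together with the hypothesis $\sum_i p_i/|W_i| = C_\sharp$ produces
\[
\sum_{i,j}\fint_{V^i_j}\hT_{n,V^i_j}\psi_i \le 2\bar C_0\delta_0^{-1}+2C_0\theta_1^n C_\sharp.
\]
For $n \ge n_\sharp$ the defining condition $C_0\theta_1^{n_\sharp}C_\sharp/C_* \le 1/6$ ensures $2C_0\theta_1^n C_\sharp \le \tfrac{1}{3}C_* = \bar C_0\delta_0^{-1}$, so the total is bounded by $3\bar C_0\delta_0^{-1}=C_*$, which is exactly \eqref{eq:C* def}. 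No serious obstacle arises: the role of $n_\sharp$ is precisely to ensure that the exponential $\theta_1^n$-decay of short-piece contributions supplied by the growth lemma dominates the initial (possibly large) density factor $C_\sharp$.
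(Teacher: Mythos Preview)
Your proof is correct and follows essentially the same approach as the paper: bound $|\psi_i|_{C^0}$ via Lemma~\ref{lem:avg}, apply Lemma~\ref{lem:full growth}(b) to sum the Jacobians, then sum over $i$ using the normalization and the hypothesis on $C_\sharp$. The paper's argument is identical step by step, arriving at the same bound $2\bar C_0\delta_0^{-1}+2C_0\theta_1^n C_\sharp$ and invoking the choice of $n_\sharp$ in the same way.
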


\begin{proof}
Setting $p^{(i)}_j = \int_{V^i_j} \hT_{n,V^i_j} \psi_i = \int_{V^i_j} \psi_i \circ T_n J_{V^i_j}T_n$, it is immediate that
$\sum_{i,j} p^{(i)}_j = 1$.

Now fix $W_i$ and consider $V^i_j \in \cG_n(W_i)$.  Then using Lemmas~\ref{lem:full growth} and \ref{lem:avg} we estimate,
\[
\begin{split}
\sum_j |V^i_j|^{-1} p^{(i)}_j & = \sum_j \fint_{V^i_j} \psi_i \circ T_n \, J_{V^i_j}T_n \le \sum_j |\psi_i|_{C^0(W_i)} |J_{V^i_j}T_n|_{C^0(V^i_j)} \\
& \le |\psi_i|_{C^0} ( \bar C_0 \delta_0^{-1} |W_i| + C_0 \theta_1^n )
\le \bar C_0 \delta_0^{-1} e^{a \delta_0^\alpha} p_i + C_0 \theta_1^n e^{a \delta_0^\alpha} |W_i|^{-1} p_i  \, .
\end{split}
\]
Using that $e^{a \delta_0^\alpha} \le 2$, we sum over $i$ and use the assumption on the family $\{ W_i, \psi_i \}_i$ to obtain,
\begin{equation}
\label{eq:C* contract}
\sum_{i,j} \sum_j |V^i_j|^{-1} p^{(i)}_j
\le \sum_i \big( 2 \bar C_0 \delta_0^{-1}  p_i + 2 C_0 \theta_1^n  |W_i|^{-1} p_i \big)
\le 2 \bar C_0 \delta_0^{-1} + 2 C_0 \theta_1^n C_\sharp \, .
\end{equation}
Thus if $n \ge n_\sharp$, the above expression is bounded by $C_*$, as required.
\end{proof}

\begin{theorem}
\label{thm:cone contract}
Let $L \ge 60$.  Suppose $a, c, A$ and $L$ satisfy the conditions of Section~\ref{sec:conditions}, and that in addition, $\delta \le \delta_0^2$ satisfy \eqref{eq:delta_0 ineq} and \eqref{eq:A-cond-s4}.
Then there exists $\chi < 1$, independent of the cone parameters,
\footnote{ Indeed, using Proposition~\ref{prop:almost} and choosing $L \ge 60$, we can always choose
$\chi = \frac 89$, although this will affect the choice of $N_\cF$.}
 and $k_*\in\bN$ such that if $n \in \mathbb{N}$ satisfies $n \ge  N_\cF :=  N(\delta)^- + k_*n_*$,\footnote{ Recall that $n_*$ is defined in Lemma \ref{lem:proper cross} while $N(\delta)^-$ is defined in equation \eqref{eq:N-}.} with $k_*$ depending only on $\delta_0, L, n_*$ (see equation \eqref{eq:k*choice})
and $\{ \iota_j \}_{j=1}^n \subset \cI(T_0, \kappa)$, where $\kappa >0$ is from Lemma~\ref{lem:proper cross}(b),
then $\Lp_n \cC_{ c, A, L}(\delta) \subset \cC_{  \chi c, \chi A, \chi L}(\delta) $.
\end{theorem}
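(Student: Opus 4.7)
Proposition~\ref{prop:almost} already delivers contraction of $c$ and $A$ once $n\ge n_0$, so the remaining task is to produce $\chi<1$ such that $\tri \Lp_n f\tri_+\le \chi L\,\tri \Lp_n f\tri_-$ for every $f\in \cC_{c,A,L}(\delta)$ and every admissible $n\ge N(\delta)^-+k_*n_*$. My plan is to split $n=K+M$ with $K\ge k_*n_*$ and $M=N(\delta)^-$, write $\Lp_n=\Lp^{(2)}_M\circ \Lp^{(1)}_K$, set $g:=\Lp^{(1)}_K f$, and combine an iterated mixing estimate on the $K$ block with the dichotomy of Proposition~\ref{prop:alternative} on the $M$ block.

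For the mixing I will carry out the iterated matching scheme sketched between Lemma~\ref{lem:match} and Definition~\ref{def:admit}. Given any pair $W^1,W^2\in\cW^s(\delta_0/2)$ with normalized test functions, Lemma~\ref{lem:proper cross}(b) and Lemma~\ref{lem:close if cross} produce, every $n_*$ iterates, pieces of $\cG_{n_*}$ properly crossing the fixed Cantor rectangle $R_*=R_*^{\delta_0}(T_0)$ whose middle thirds match up to $d_{\cW^s}\le C_7\delta_0^2$. Lemma~\ref{lem:match} compares the matched integrals to within a factor $2$, while the unmatched residue, renormalized, is an admissible family by Lemma~\ref{lem:propogate} and feeds the next round. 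Since $\musrb(R_*)>0$, a definite fraction $p>0$ of the mass is matched per round, so after $k_*$ rounds the unmatched mass is at most $(1-p)^{k_*}$ times the initial mass. Choosing $k_*$ so that $L(1-p)^{k_*}\le 1$ (hence $k_*$ depends on $L$, as the statement requires) the iterated comparison yields
\[
\tri g\tri_+^0 \le C_{\mathrm{mix}}\,\tri g\tri_-^0,
\]
where $C_{\mathrm{mix}}$ depends only on $\delta_0$, $n_*$, and the constants from Lemma~\ref{lem:match}.

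With this $\delta_0$-scale bound on $g$ in hand, I apply Proposition~\ref{prop:alternative} to $g$ with the last $M=N(\delta)^-$ iterates. In alternative (b), $\tri \Lp_n f\tri_+/\tri \Lp_n f\tri_-\le (160/9)\,\tri g\tri_+^0/\tri g\tri_-^0\le (160/9)\,C_{\mathrm{mix}}$, which is strictly smaller than $\chi L$ for $L\ge 60$ and suitable $\chi<1$. In alternative (a), $\tri \Lp_n f\tri_+/\tri \Lp_n f\tri_-\le (8/9)\,\tri g\tri_+/\tri g\tri_-$; I control $\tri g\tri_+/\tri g\tri_-$ by combining Lemma~\ref{lem:second scale} (applied to $g$ as the $K$-th iterate of $f$) with Lemma~\ref{lem:0 scale} for the initial transient and the mixing bound $C_{\mathrm{mix}}$ once the admissible-family structure has stabilized, absorbing the residual $O(L(1-p)^{k_*})$ into the choice of $k_*$. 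This produces a constant bound on $\tri g\tri_+/\tri g\tri_-$, so alternative (a) also yields the required contraction.

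The main obstacle is quantitative: the unmatched residue after $k_*$ rounds contributes a term of size $(1-p)^{k_*}\tri f\tri_+\lesssim L(1-p)^{k_*}\tri f\tri_-$, which must be dominated by quantities comparable to $\tri \Lp_n f\tri_-^0$ (via Lemma~\ref{lem:first L} and \eqref{eq:diff scale}) in order to make $C_{\mathrm{mix}}$ genuinely independent of $L$. This is precisely what forces the logarithmic dependence of $k_*$ on $L$, and the standing assumption $L\ge 60$ provides the margin needed to beat the numerical constants $160/9$ from Proposition~\ref{prop:alternative} and the factor $3$ appearing in Proposition~\ref{prop:almost} in the case (a) bound.
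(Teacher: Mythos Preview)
Your overall architecture is exactly the paper's: iterated matching over $k_*$ rounds (with Lemmas~\ref{lem:proper cross}, \ref{lem:close if cross}, \ref{lem:match}, and \ref{lem:propogate} supplying, respectively, the crossing, the $d_{\cW^s}$-closeness, the factor-$2$ comparison, and the admissibility of the residual family), combined with Proposition~\ref{prop:alternative} to convert the resulting $\delta_0$-scale ratio bound to the $\delta$-scale. The arithmetic $(160/9)\cdot 3=160/3\le(8/9)L$ for $L\ge60$ in alternative~(b) is precisely what the paper obtains, and your choice of $k_*$ via $L(1-p)^{k_*}\le\mathrm{const}$ matches the paper's \eqref{eq:k*choice}.

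The gap is in your treatment of alternative~(a). Applying Proposition~\ref{prop:alternative} to $g=\Lp_K^{(1)}f$, alternative~(a) gives $\tri\Lp_n f\tri_+/\tri\Lp_n f\tri_-\le(8/9)\,\tri g\tri_+/\tri g\tri_-$, and you then need $\tri g\tri_+/\tri g\tri_-$ bounded \emph{independently of $L$}. None of the ingredients you list provide this: Lemma~\ref{lem:first L} only gives $\tri g\tri_+/\tri g\tri_-\le 3L$; Lemma~\ref{lem:second scale} applied to $f$ over $K$ iterates produces $\max_{0\le k<K}\tri\Lp_k f\tri_+^0$, and for small $k$ Lemma~\ref{lem:0 scale} bounds this by $\tri f\tri_+^0+\tfrac14\tri f\tri_+$, still of order $L\tri f\tri_-$; and your mixing bound $C_{\mathrm{mix}}$ only controls the $\delta_0$-scale ratio $\tri g\tri_+^0/\tri g\tri_-^0$, not the $\delta$-scale one. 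So $(8/9)\cdot 3L=(8/3)L$, and alternative~(a) in your framing fails to contract.

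The paper sidesteps this by organizing the two ingredients differently. First, the matching is run over the \emph{full} $n$ iterates: matching occurs at times $n_*,2n_*,\dots,k_*n_*$, and at each matching time $kn_*$ Lemma~\ref{lem:match} is invoked with the remaining $n-kn_*\ge N(\delta)^-$ iterates (this is why Lemma~\ref{lem:match}'s hypothesis $n\ge C\log(\delta_0/\delta)$ is satisfied, and why your restriction $K=k_*n_*$ is too tight---at the last round there would be no iterates left for Lemma~\ref{lem:match}). Second, the dichotomy of Proposition~\ref{prop:alternative} is invoked so that alternative~(a), applied to the original $f$, yields $(8/9)\tri f\tri_+/\tri f\tri_-\le(8/9)L$ immediately; only alternative~(b) requires the matching input, and there the $\delta_0$-scale bound at the intermediate time suffices. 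In short: do not split $n$ into a matching block followed by a Proposition~\ref{prop:alternative} block; interleave them so that Lemma~\ref{lem:match} can borrow the trailing $N(\delta)^-$ iterates, and arrange the dichotomy so that its trivial branch refers to $f$ itself.
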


\begin{proof}
As before, we take $f \in \cC_{c,A,L}(\delta)$,
$W^1$, $W^2 \in \cW^s(\delta_0/2)$ and test functions $\psi_\ell \in \cD_{a,\beta}(W^\ell)$ such that
$\int_{W^1} \psi_1 = \int_{W^2} \psi_2 = 1$. In order to iterate the matching argument described above,
we need upper and lower bounds on the amount of mass matched via the 
process described by \eqref{eq:match compare}.
\medskip

{\em Upper Bound on Matching.} 
By definition of $\bar U^\ell_i$, for each curve $U^\ell_i$ that properly crosses $R_*$ at time $n_*$, 
at least 2/3 of the length of that curve remains not matched.  Thus if $p_i = \int_{U^\ell_i} \hT_{n,U^\ell_i} \tpsi_\ell$, then at least
$(1- e^{a \delta_0^\alpha}/3) p_i$ remains unmatched.  Using $e^{a \delta_0^\alpha} \le 2$, we conclude that at least
$(1/3) p_i$ of the mass remains unmatched.
Thus if  $\mathfrak{r}$ denotes the total mass remaining after matching at time $n_*$, we have $\mathfrak{r} \ge 1/3$.  
Renormalizing the family by $\mathfrak{r}$,
we have $\sum_i |W_i|^{-1} \frac{p_i}{\mathfrak{r}} \le 3 C_*$. 

By the proof of Lemma~\ref{lem:propogate} with $C_\sharp = 3C_*$, we see that choosing 
$n_\sharp$ such that
$6C_0 \theta_1^{n_\sharp} \le 1/3$, then the bound in \eqref{eq:C* contract} is less than $C_*$, and the family recovers its regularity
in the sense of Lemma~\ref{lem:propogate} after this number of iterates.

\medskip
{\em Lower Bound on Matching.}  By definition of admissible family, for each $\ve > 0$,
$\sum_{|W_i| < \ve} p_i \le C_* \ve$.
So choosing $\ve = \delta_0 / (6\bar C_0)$, we have that
\[
\sum_{|W_i| \ge \delta_0/(6 \bar C_0)} p_i \ge \frac 12 \, .
\]
On each $W_i$ with $|W_i| \ge \delta_0/(6 \bar C_0)$, we have at least one $U^i_j \in \cG_{n_*}(W_i)$ that properly
crosses $R_*$ by Lemma~\ref{lem:proper cross}.  Then denoting by $\bar U^i_j$ the matched part (middle third)
of $U^i_j$ and setting 
\[
 \ve_{n_*}= \frac{C^{n_*} \delta_0^{(5/3)^{n_*}}}{12\,\delta_0}
 \]
 we have
\[
\begin{split}
\int_{\bar U^i_j} \hT_{n_*,U^i_j} \tpsi_i & = \int_{\bar U^i_j} \tpsi_i \circ T_{n, n-n_*} \, J_{U^i_j}T_{n, n-n_*}
\ge \tfrac{\delta_0}{3} \inf \tpsi_i \inf {J_{U^i_j}T_{n, n-n_*} } \\
& \ge \tfrac{1}{3} e^{-a \delta_0^\alpha} p_i e^{-C_d \delta_0^{1/3}} \frac{|T_{n, n-n*}U^i_j|}{|U^i_j|}
\ge  \ve_{n_*} p_i \, ,
\end{split}
\]
where we have used the fact that if $W \in \cW^s$ and $T_{\iota_j}^{-1}W$ is a homogeneous stable curve, then
$|T_{\iota_j}^{-1}W| \le C^{-1} |W|^{3/5}$ for some constant $C>0$ by {\bf (H1)} (see, for example  \cite[eq. (6.9)]{demzhang14}).

Thus a lower bound on the amount of mass coupled at time $n_*$ is $\frac{\ve_{n_*}}{2} > 0$.

\medskip
We are finally ready to put these elements together.  For $k_* \in \mathbb{N}$ and
$k = 1, \ldots k_*$, let $M^\ell(k)$ denote the index set of curves in
$\cG_{kn_*}(W^\ell)$ which are matched at time $kn_*$. By choosing $\delta_0$ small, we can ensure
that $n_\sharp \le n_*$, where $n_\sharp$ from Lemma~\ref{lem:propogate} corresponds to $C_\sharp = 3 C_*$.
Thus the family of remaining curves is always admissible at time $kn_*$.  Let $M^\ell(\sim)$ denote the index set of curves
that are not matched by time $k_*n_*$.  We estimate using \eqref{eq:match compare} at each time $n=k n_*$, 

\begin{equation}
\label{eq:gen match}
\begin{split}
\int_{W^1} \Lp_nf \, \psi_1 & = \sum_{k=1}^{k_*} \sum_{i \in M^1(k)} \int_{\bar U^1_i} \Lp_{n-kn_*} f \, \hT_{kn_*, U^1_i} \tpsi_1
+ \sum_{i \in M^1(\sim)} \int_{V^1_i} \Lp_{n-k_*n_*} f \, \hT_{k_*n_*, V^1_i} \tpsi_1 \\
& \le \sum_{k=1}^{k_*} \sum_{i \in M^2(k)} 2 \int_{\bar U^2_i} \Lp_{n-kn_*} f \, \hT_{kn_*, U^2_i} \tpsi_2 
+ \sum_{i \in M^1(\sim)} \int_{V^1_i} \Lp_{n-k_*n_*} f \, \hT_{k_*n_*, V^1_i} \tpsi_1
\end{split}
\end{equation}
We estimate the sum over unmatched pieices $M^\ell(\sim)$ by splitting the estimate in curves longer than $\delta$, $M^\ell(\sim; Lo)$,
and curves shorter than $\delta$, $M^\ell(\sim; Sh)$.
\[
\begin{split}
\sum_{i \in M^\ell(\sim)} & \int_{V^\ell_i} \Lp_{n-k_*n_*} f \, \hT_{k_*n_*, V^\ell_i} \tpsi_\ell 
= \sum_{i \in M^\ell(\sim; Lo)} \int_{V^\ell_i} \Lp_{n-k_*n_*} f \, \hT_{k_*n_*, V^\ell_i} \tpsi_\ell
+ \sum_{i \in M^\ell(\sim; Sh)} \int_{V^\ell_i} \Lp_{n-k_*n_*} f \, \hT_{k_*n_*, V^\ell_i} \tpsi_\ell \\
& \le  \sum_{i \in M^\ell(\sim; Lo)} \tri \Lp_{n-k_*n_*} f \tri_+  \int_{V^\ell_i} \hT_{k_*n_*, V^\ell_i} \tpsi_\ell
+ \sum_{i \in M^\ell(\sim; Sh)} A \tri \Lp_{n-k_*n_*} f \tri_- \delta |\psi_\ell|_{C^0} |J_{V^\ell_i}T_{n, n-k_*n_*}|_{C^0} \\
& \le (1-\tfrac{\ve_{n_*}}{2})^{k_*} 3L \tri \Lp_n f \tri_- +2  A  \tri \Lp_n f \tri_- \delta |\psi_\ell|_{C^0} \bar C_0 \, .
\end{split}
\]
where we have used \eqref{eq:tri} and the fact that $k_*n_* \ge n_0$.  For the sum over long pieces, we used that the total mass of unmatched
pieces decays exponentially in $k$, while for the sum over short pieces,
we used
Lemma~\ref{lem:full growth} and Remark~\ref{rem:improve} to sum over the Jacobians since $|W^1| \ge \delta_0/2$.
Finally, since $|\psi_1|_{C^0} \le e^{a \delta_0^\alpha} \fint_{W^1} \psi_1 \le \frac{4}{\delta_0}$, we conclude,
\[
\begin{split}
\sum_{i \in M^1(\sim)}  \left| \int_{V^1_i} \Lp_{n-k_*n_*} f \, \hT_{k_*n_*, V^1_i} \tpsi_1 \right|
& \le \left( 3L (1- \tfrac{\ve_{n_*}}{2})^{k_*}  + 8A \bar C_0 \tfrac{\delta}{\delta_0}  \right) \tri \Lp_n f \tri_- \\
& \le \left( 3L (1- \tfrac{\ve_{n_*}}{2})^{k_*}  + 8A \bar C_0 \tfrac{\delta}{\delta_0}  \right) \int_{W^2} \Lp_n f \, \psi_2 \, ,
\end{split}
\]
using the fact that $\int_{W^2} \psi_2 = 1$.  A similar estimate holds for the sum over curves in $M^2(\sim)$.
Finally, we put together this estimate with \eqref{eq:gen match} to obtain,
\[
\begin{split}
\int_{W^1} \Lp_nf \, \psi_1 
& \le \sum_{k=1}^{k_*} \sum_{i \in M^2(k)} 2 \int_{\bar U^2_i} \Lp_{n-kn_*} f \, \hT_{kn_*, U^2_i} \tpsi_2
+ \sum_{i \in M^1(\sim)} \int_{V^1_i} \Lp_{n-k_*n_*} f \, \hT_{k_*n_*, V^1_i} \tpsi_1 \\
& \le 2 \int_{W^2} \Lp^n f \, \psi_2 
+ 2  \sum_{j \in M^2(\sim)} \left| \int_{V^2_j} \Lp_{n-k_*n_*} f \, \hT_{k_*n_*, V^2_j} \tpsi_2 \right| \\
& \qquad + \sum_{i \in M^1(\sim)} \left| \int_{V^1_i} \Lp_{n-k_*n_*} f \, \hT_{k_*n_*, V^1_i} \tpsi_1 \right| \\
& \le \int_{W^2} \Lp_n f \, \psi_2 \left( 2 + 3 \big(3L (1- \tfrac{\ve_{n_*}}{2})^{k_*}  + 8A \bar C_0 \tfrac{\delta}{\delta_0}\big) \right)  \, .
\end{split}
\]
We choose $k_*$ such that 
\begin{equation}\label{eq:k*choice}
3L(1 - \frac{\ve_{n_*}}{2})^{k_*} < \frac 16.
\end{equation}
Note that this choice of $k_*$ depends only on $\delta_0$ via $\ve_{n_*}$,
and not on $\delta$.  Next, choose $\delta>0$ sufficiently small that 
\begin{equation}\label{eq:A-cond-s4}
8A \bar C_0 \delta/\delta_0 < \frac 16.
\end{equation}
These choices imply that 
\begin{equation}
\label{eq:balance}
\int_{W^1} \Lp_n f \, \psi_1 \le 3 \int_{W^2} \Lp_n f \, \psi_2 \, .
\end{equation}

Finally, we prove that $L$ must contract by at least $\frac 89$. This is implied directly by the first alternative of Proposition~\ref{prop:alternative}. So suppose instead that the second alternative holds.
Since \eqref{eq:balance} holds for all $W^1, W^2 \in \cW^s(\delta_0/2)$ and test functions $\psi_1, \psi_2$ with 
$\int_{W^1} \psi_1 = \int_{W^2} \psi_2 = 1$, we conclude
that, for $k \ge k_*$ and $m\geq N(\delta)^-$,
\[
\frac{\tri \Lp_{kn_*+m} f \tri_+}{\tri \Lp_{kn_*+m} f \tri_- }  \le \frac{160}{9} \frac{\tri \Lp_{kn_*} f \tri^0_+}{\tri \Lp_{kn_*} f \tri^0_- }
\le  \frac{160}{3} \le \frac{8}{9} L \, ,
\]
if we choose $L \ge 60$.  
\end{proof}


\subsection{Finite diameter}
\label{sec:diam}

In this section we prove the following proposition, which completes the proof of Theorem~\ref{thm:main}.

\begin{prop}
\label{prop:diameter}
For any  $\chi \in\left(\max \{ \frac 12,  \frac{1}{L}, \frac 1{\sqrt{A-1}} \} , 1\right)$,   the cone $\cC_{\chi c, \chi A, \chi L}(\delta)$ has  diameter less than
$\Delta:= \log \left( \frac{(1+\chi)^2}{(1-\chi)^2} \chi L \right)<\infty$ in $\cC_{c,A,L}(\delta)$,
assuming $\delta>0$ is sufficiently small to satisfy \eqref{eq:delta cond}.
\end{prop}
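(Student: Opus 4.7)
The plan is to bound $\rho(f,g) \le \Delta$ for arbitrary $f, g \in \cC_{\chi c, \chi A, \chi L}(\delta)$ by the standard two-sided comparison: it suffices to find $0 < \lambda_0 \le \mu_0$ with $\mu_0/\lambda_0 \le \tfrac{(1+\chi)^2}{(1-\chi)^2}\chi L$ such that both $g - \lambda_0 f$ and $\mu_0 f - g$ lie in $\cC_{c,A,L}(\delta)$; this gives $\bar\alpha(f,g) \ge \lambda_0$, $\bar\beta(f,g) \le \mu_0$, and hence $\rho(f,g) \le \log(\mu_0/\lambda_0) \le \Delta$. By the scale invariance of the Hilbert metric I first normalize $\tri f\tri_- = \tri g\tri_- = 1$, so $\tri f\tri_+, \tri g\tri_+ \le \chi L$, and the left-hand sides of \eqref{eq:cone 3} and \eqref{eq:cone 5} evaluated on $f$ or $g$ are bounded by the outer-cone right-hand sides multiplied by $\chi$ and $\chi^2$ respectively.

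For $h_\lambda := g - \lambda f$, applying the triangle inequality to each cone-defining quantity yields the elementary estimates
\[
\tri h_\lambda \tri_- \;\ge\; 1 - \lambda \chi L, \qquad \tri h_\lambda \tri_+ \;\le\; \chi L - \lambda,
\]
together with a $(1+\lambda)$-multiplicative upper bound on the LHSs of \eqref{eq:cone 3} and \eqref{eq:cone 5} for $h_\lambda$ compared to the inner-cone bounds for $g$. Consequently, $h_\lambda \in \cC_{c,A,L}(\delta)$ reduces to three explicit inequalities in $\lambda$: namely $(\chi L - \lambda)/(1 - \lambda\chi L) \le L$ from \eqref{eq:cone 2}, $(1+\lambda)\chi \le \tri h_\lambda\tri_-$ from \eqref{eq:cone 3}, and $(1+\lambda)\chi^2 \le \tri h_\lambda\tri_-$ from \eqref{eq:cone 5}. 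Since $\chi < 1$, the second implies the third, so only \eqref{eq:cone 2} and \eqref{eq:cone 3} will be binding.

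I would then take $\lambda_0 := (1-\chi)/((1+\chi)\chi L)$, for which $1 - \lambda_0\chi L = 2\chi/(1+\chi)$. The binding inequality \eqref{eq:cone 3} becomes $(1+\lambda_0)(1+\chi) \le 2$, equivalent to $\lambda_0 \le (1-\chi)/(1+\chi)$, which in turn is equivalent to $\chi L \ge 1$ -- precisely the standing hypothesis $\chi > 1/L$. Inequality \eqref{eq:cone 2} reduces to $(1+\chi)/2 \le 1$, which holds since $\chi < 1$. A symmetric analysis produces $\mu_0 := (1+\chi)\chi L/(1-\chi)$ with $\mu_0 f - g \in \cC_{c,A,L}(\delta)$, the same hypothesis $\chi > 1/L$ appearing at the analogous step. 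The extra standing hypotheses $\chi > 1/2$ and $\chi > 1/\sqrt{A-1}$, together with the smallness condition \eqref{eq:delta cond} on $\delta$, are used to absorb the residual $\delta$-dependent corrections that arise when replacing the naive bounds on $\tri h_\lambda\tri_\pm$ above by their precise values inherited from the cone definitions.

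The main obstacle is matching the exact ratio in the statement: the symmetric pair above gives $\mu_0/\lambda_0 = (1+\chi)^2 (\chi L)^2/(1-\chi)^2$, whereas the claim is the sharper $(1+\chi)^2 \chi L/(1-\chi)^2$. Closing this factor of $\chi L$ will require either an asymmetric choice of $(\lambda_0, \mu_0)$ that still verifies all three cone conditions simultaneously, or a refinement of the crude lower bound $\tri h_\lambda\tri_- \ge 1 - \lambda\chi L$ that exploits the $c$-condition of the inner cone (with its extra factor of $\chi$) to rule out the simultaneous attainment of $\tri g\tri_-$ and $\tri f\tri_+$ at a common $(W_*, \psi_*)$, so that $\tri h_\lambda\tri_-$ enjoys the larger lower bound $\min\{1-\lambda, \chi L(1-\lambda)/L\} = \chi(1-\lambda)$ in that regime.
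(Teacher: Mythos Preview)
Your approach differs fundamentally from the paper's: you try to bound $\rho(f,g)$ directly for arbitrary $f,g \in \cC_\chi$, whereas the paper bounds $\rho(1,f)$ for each $f$ by comparing to the constant function $1$. Since $\tri 1\tri_\pm = 1$ exactly, and since the corrections that $f - \lambda$ and $\mu - f$ produce in conditions \eqref{eq:cone 3} and \eqref{eq:cone 5} take simple explicit forms (e.g.\ in \eqref{eq:cone 5} the only new term is $\lambda\,\big||W^1| - |W^2|\big|$, controlled by \eqref{eq:W-difference} and \eqref{eq:delta cond}), the paper obtains $\bar\alpha(1,f) \ge \frac{1-\chi}{1+\chi}\tri f\tri_-$ and $\bar\beta(1,f) \le \frac{1+\chi^2}{1-\chi}\tri f\tri_+$, and the stated bound follows with the \emph{single} factor $\tri f\tri_+/\tri f\tri_- \le \chi L$. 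The hypotheses $\chi > 1/L$, $\chi > 1/\sqrt{A-1}$ and \eqref{eq:delta cond} enter only to order the three candidate $\bar\alpha_i$, $\bar\beta_i$ and to absorb the $\big||W^1|-|W^2|\big|$ term; they are not, as you suggest, used to ``absorb residual $\delta$-dependent corrections'' to $\tri h_\lambda\tri_\pm$.

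The obstacle you flag at the end is genuine, and your proposed fixes will not close it. The bound $\tri g - \lambda f\tri_- \ge 1 - \lambda\chi L$ can essentially be saturated: nothing in the cone prevents a single $(W_*, \psi_*)$ from nearly realizing both $\tri g\tri_- = 1$ and $\tri f\tri_+ = \chi L$ simultaneously, and condition \eqref{eq:cone 5} constrains behavior across nearby \emph{curves}, not across different cone elements on the same curve. Your direct $f$-to-$g$ comparison therefore pays the ratio $\chi L$ twice (once in $\lambda_0$, once in $\mu_0$), giving $\mu_0/\lambda_0 = \frac{(1+\chi)^2}{(1-\chi)^2}(\chi L)^2$, whereas routing through $1$ pays it only once. (A minor aside: the paper's proof literally bounds $\rho(1,f) \le \Delta$, so strictly speaking gives diameter $\le 2\Delta$ via the triangle inequality; this factor of $2$ is immaterial for the subsequent contraction estimates.)
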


\begin{proof}
For brevity, we will denote $\cC = \cC_{c,A,L}(\delta)$ and $\cC_\chi = \cC_{ \chi c, \chi A, \chi L}(\delta)$.
For $f \in \cC_\chi$, we will show that $\rho(f, 1) < \infty$, where $\rho$ denotes distance in the cone $\cC$.
Fix $f \in \cC_\chi$ throughout.

According to \eqref{eq:H def} if we find $\lambda>0$ such that $f-\lambda\succeq 0$, then $\bar \alpha(1,f)\geq \lambda$. 

Notice that $\tri f- \lambda \tri_\pm = \tri f \tri_\pm - \lambda$.  Hence  $f - \lambda$  satisfies \eqref{eq:cone 2} if 
\[
\tri f\tri_+-\lambda \le L (\tri f\tri_- - \lambda)\quad \Longleftarrow \quad \lambda \le \frac{L(1-\chi)}{L-1} \tri f\tri_-=: \bar \alpha_1 \, ,
\]
where we have used that $f \in \cC_\chi$.

Similarly,  $f - \lambda$  satisfies \eqref{eq:cone 3} if, for all $W \in \cW^s_-(\delta)$ and $\psi\in  \cD_{a,\beta}(W)$,
\[
|W|^{-q}\frac{\left|\int_W f\psi-\lambda\int_W \psi \right|}{\fint_W\psi}\leq A\delta^{1-q}(\tri f\tri_--\lambda)
 \quad \Longleftarrow \quad \lambda\leq \frac{(1-\chi) A \tri f\tri_-}{A+1}=: \bar \alpha_2 \, .
\]
Next, notice that for any $\lambda \ge 0$, $W^1, W^2 \in \cW^s_-(\delta)$ and 
$\psi_\ell \in \cD_{a,\alpha}(W^\ell)$,  
\begin{equation}
\label{eq:c cancel}
\begin{split}
\left| \frac{\int_{W^1}(f - \lambda) \psi_1}{\fint_{W^1} \psi_1} \right. & \left. - \frac{\int_{W^2}(f-\lambda)\psi_2}{\fint_{W^2} \psi_2} \right|
= \left| \frac{\int_{W^1} f \, \psi_1}{\fint_{W^1} \psi_1} - \frac{\int_{W^2} f \, \psi_2}{\fint_{W^2} \psi_2}
- \lambda(|W^1|- |W^2|) \right| \\
& \le \chi^2 d_{\cW^s}(W^1, W^2)^\gamma \delta^{1-\gamma}  c A \tri f \tri_- 
+ \lambda (\delta + C_s) d_{\cW^s}(W^1, W^2) \, ,
\end{split}
\end{equation}
where we have used \eqref{eq:W-difference}, so that $f - \lambda$ satisfies \eqref{eq:cone 5} if 
\[
\chi^2 d_{\cW^s}(W^1, W^2)^\gamma \delta^{1-\gamma}   c A \tri f \tri_- 
+ \lambda (\delta + C_s) \delta^{ 1- \gamma} d_{\cW^s}(W^1, W^2)^\gamma
\le d_{\cW^s}(W^1, W^2)^\gamma {\delta^{1-\gamma} } c A (\tri f \tri_- - \lambda) \, .
\]
This occurs whenever
\[
\lambda \le \frac{cA \tri f \tri_- (1-\chi^2)}{\delta+C_s+ cA}
\quad \Longleftarrow \quad
\lambda \le (1-\chi) \tri f \tri_- =: \bar \alpha_ 3 \, ,
\]
provided that $\delta$ is chosen sufficiently small that
\begin{equation}
\label{eq:delta cond}
\delta + C_s  \le \chi c A \, ,
\end{equation}
which is possible since $cA > 2C_s$ by \eqref{eq:cA}  and $\chi > 1/2$. 

Note that $\bar \alpha_2  \le \bar \alpha_3 \le \bar \alpha_1$, so that 
$\bar\alpha_2 = \min_i \{ \bar \alpha_i \}$.  Thus if $\lambda \le \bar \alpha_2$, then 
$f - \lambda \in \cC$, i.e.
$\bar \alpha(1, f) \ge \bar \alpha_2$.

Next, we proceed to estimate $\bar \beta(1,f)$ for $f \in \cC_\chi$.  If we find $\mu > 0$ such that $\mu - f \in \cC$, 
this will imply that $\bar \beta(1,f) \le \mu$.   Remarking that
$\tri \mu - f \tri_{\pm} = \mu - \tri f \tri_{\mp}$, we have that $\mu - f$ satisfies \eqref{eq:cone 2} 
 if
\[
\mu  \ge \frac{L \tri f \tri_+ - \tri f \tri_-}{L-1} \quad \Longleftarrow \quad
\mu \ge \frac{L}{L-1} \tri f \tri_+ =: \bar \beta_1 \, ,
\]
while $\mu - f$ satisfies \eqref{eq:cone 3} if for all $W \in \cW^s_-(\delta)$, $\psi \in \cD_{a,\beta}(W)$,
\[
|W|^{-q} \frac{| \mu \int_W \psi - \int_W f \, \psi |}{\fint_W \psi} \le A \delta^{1-q}(\mu - \tri f \tri_+)
\quad \Longleftarrow \quad 
\mu \ge \frac{(1 + \chi) A}{A-{2^{1-q} }} \tri f \tri_+ =: \bar \beta_2 \, .
\]
Finally, recalling \eqref{eq:c cancel} and again \eqref{eq:W-difference}, we have that
$\mu -f$ satisfies \eqref{eq:cone 5} whenever
\[
\chi^2 d_{\cW^s}(W^1, W^2)^\gamma \delta^{1-\gamma}  c A \tri f \tri_- 
+ \mu (\delta + C_s) \delta^{ 1-\gamma} d_{\cW^s}(W^1, W^2)^\gamma
\le d_{\cW^s}(W^1, W^2)^\gamma  \delta^{1-\gamma}  c A (\mu - \tri f \tri_+) \, .
\]
This is implied by,
\[
\mu \ge \frac{cA (1 + \chi^2)}{cA - (\delta + C_s) } \tri f \tri_+
\quad \Longleftarrow \quad
\mu \ge \frac{1 + \chi^2}{1 - \chi} \tri f \tri_+ =: \bar \beta_3 \, ,
\]
where again we have assumed \eqref{eq:delta cond}.

Defining $\bar \beta = \max_i \{ \bar \beta_i \}$, it follows that if $\mu \ge \bar \beta$, then
$\mu - f \in \cC$.  
Thus $\bar \beta \ge \bar \beta(1, f)$.
Since  $\chi > 1/L$ and $\chi^2 > 1/(A-1)$, it holds that
$\bar \beta_3 \ge \bar \beta_2 \ge \bar \beta_1$.  Thus $\bar \beta = \bar \beta_3$.
Our assumption also implies $\chi > 1/A$, so that 
$\bar \alpha_2 \ge \frac{1-\chi}{1+\chi} \tri f \tri_-$.

Finally, recalling \eqref{eq:H def}, we have
\[
\rho(1,f) = \log \left( \frac{\bar \beta(1,f)}{\bar \alpha(1,f)} \right)
\le \log \left( \frac{\bar \beta_3}{\bar \alpha_2} \right)
\le \log \left( \frac{\frac{1+\chi^2}{1-\chi}}{\frac{1-\chi}{1+\chi}} \frac{\tri f \tri_+}{\tri f \tri_-} \right)
\le \log \left( \frac{(1+\chi)^2}{(1-\chi)^2} \chi L \right) \, ,
\]
for all $f \in \cC_\chi$, completing the proof of the proposition.
\end{proof}

\begin{remark} Note that, setting $\chi_*=\max \{ \frac 12,  \frac{1}{L}, \frac 1{\sqrt{A-1}} \} $, for $\chi\leq \chi_*$  Proposition \ref{prop:diameter} implies only that the diameter of $\cC_{\chi c, \chi A, \chi L}(\delta)\subset \cC_{\chi_* c, \chi_* A, \chi_* L}(\delta)$, in $\cC_{c,A,L}(\delta)$, is bounded by
$ \log \left( \frac{(1+\chi_*)^2}{(1-\chi_*)^2} \chi_* L \right)$. If needed, a more accurate formula can be easily obtained, but it would be more cumbersome.
\end{remark}


\section{Loss of Memory and Convergence to Equilibrium }
\label{sec:exp-mix}

In this section we show how Theorem~\ref{thm:main} (i.e. Theorem~\ref{thm:cone contract} and Proposition~\ref{prop:diameter} ) imply
the loss of memory and convergence to equilibrium stated in Theorems~\ref{thm:memory state} and \ref{thm:equi state}. 
For a single map, the loss of memory is simply decay of correlations and the results are comparable to the ones obtained in \cite{demzhang11} since they apply to a similar (very) large class of observables (and possibly even distributions). Our loss of memory result is new for our class of billiards, although see  Remark~\ref{rmk:stenlund} and \cite{young zhang}
for loss of memory in a related billiards model.
Before proving the main results of this section (Theorem~\ref{thm:memory} and Corollary \ref{cor:extend} prove
Theorem~\ref{thm:memory state} while Theorem~\ref{thm:equi} and Corollary~\ref{cor:extend} prove
Theorem~\ref{thm:equi state}), we 
establish a key lemma that integration with respect to $\musrb$ against suitable
test functions respects the ordering in our cone.
Recall the vector space of functions $\cA$ defined in Section~\ref{sec:cone_dist}.

The parameters $a, q, \alpha, \beta, \gamma, c, A, L,\delta_0$ are fixed as to satisfy the relations described in Section \ref{sec:conditions}, hence Theorem~\ref{thm:main} holds true.
With Proposition~\ref{prop:almost} in mind, we prove our next lemma with respect to the slightly larger cone
$\cC_{c,A,3L}(\delta) \supset \cC_{c,A,L}(\delta)$.

\begin{lemma}
\label{lem:order}
Let $\delta>0$ be small enough that 
$2 C_\ell C_h (1+A)  ( \delta^{4/3} + \delta^{1/3 + \beta} a \ell_{\max} ) < 1$,
where $C_\ell, C_h>0$ are from \eqref{eq:dominate} and $\ell_{\max}$ is the maximum
diameter of the connected components of $M$.

Suppose $\psi \in C^1(M)$ satisfies $2(2\delta)^{1-\beta} |\psi'|_{C^0(M)} \le a \min_M \psi$. 
If $f, g \in \cA$ with $g-f \in \cC_{c,A,3L}(\delta)$,  then 
$\int f \, \psi \, d\musrb \le \int g \, \psi \, d\musrb$. 
\end{lemma}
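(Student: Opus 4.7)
The approach is to construct a measurable foliation of $M$ by homogeneous stable curves, disintegrate $\musrb$ along it, and apply the cone conditions leafwise. The key observation is that integrating $g-f$ against $\psi$ on a single leaf $W$ amounts to testing $g-f$ against $\psi\cdot\rho_W$, where $\rho_W$ is the conditional density of $\musrb$ on $W$; if this product lies in $\cD_{a,\beta}(W)$, then \eqref{eq:posi} (via \eqref{eq:cone 2}) provides the needed positivity for long leaves, and \eqref{eq:cone 3} provides the error bound for short leaves.

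Concretely, I would take line segments of the fixed admissible stable slope $-\cK_*$ (these are stable curves of zero curvature, hence in $\cW^s$) intersected with each homogeneity strip $\bH_k$, $k\ge k_0$, and with the core region $M\setminus\bigcup_{k\ge k_0}\bH_k$. Using the coordinates $(r,b)$ with $\vf=-\cK_* r+b$, the change of variables gives $d\musrb=\frac{c\cos\vf}{\sqrt{1+\cK_*^2}}\,ds\,db$, so the conditional density is $\rho_W(s)\propto\cos\vf(s)$ and the factor measure $d\nu(b)$ is $db$ (restricted). I would chop each segment of length $\ge\delta$ into ``long'' leaves of length in $[\delta,2\delta]$; the remaining ``short'' leaves have length $<\delta$. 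Since the vertical width of $\bH_k$ is $\asymp k^{-3}$, short leaves occur only for $k\gtrsim\delta^{-1/3}$, and $\sum_{k\gtrsim\delta^{-1/3}}\musrb(\bH_k)\asymp\sum_{k\gtrsim\delta^{-1/3}}k^{-5}=O(\delta^{4/3})$, the source of the $\delta^{4/3}$ in the hypothesis.

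The crucial regularity check is $\psi\rho_W\in\cD_{a,\beta}(W)$. Since $d(x,y)\le 2\delta$, the hypothesis on $\psi$ gives
\[
\log\psi(x)-\log\psi(y)\le \frac{|\psi'|_{C^0}}{\min\psi}d(x,y)\le \tfrac{a}{2(2\delta)^{1-\beta}}d(x,y)\le \tfrac{a}{2}d(x,y)^\beta,
\]
so $\psi|_W\in\cD_{a/2,\beta}(W)$. For $\rho_W$, on a homogeneous leaf in $\bH_k$ one has $|\log\rho_W(x)-\log\rho_W(y)|\le|\tan\vf|\,|\vf(x)-\vf(y)|\lesssim k^2 d(x,y)$; on leaves of length $\ge\delta$ one has $k\lesssim\delta^{-1/3}$, and so this is $\lesssim\delta^{-2/3}(2\delta)^{1-\beta}d(x,y)^\beta=O(\delta^{1/3-\beta}a\ell_{\max})\,d(x,y)^\beta$, the second term in the hypothesis. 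The constants $C_\ell,C_h$ from \eqref{eq:dominate} make this explicit, yielding $\rho_W\in\cD_{a/2,\beta}(W)$ and hence $\psi\rho_W\in\cD_{a,\beta}(W)$.

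With these ingredients the estimate is straightforward. Note $\tri g-f\tri_-\ge 0$ (else \eqref{eq:cone 2} with $L>1$ would force $\tri g-f\tri_+<\tri g-f\tri_-$). Splitting
\[
\int_M (g-f)\psi\,d\musrb=\int_B\int_W (g-f)(\psi\rho_W)\,dm_W\,d\nu(b)=\mathrm{I}_{\rm long}+\mathrm{I}_{\rm short},
\]
on long leaves $W\in\cW^s(\delta)$ equation \eqref{eq:posi} gives $\int_W (g-f)(\psi\rho_W)\ge\tri g-f\tri_-\int_W\psi\rho_W\,dm_W$, hence $\mathrm{I}_{\rm long}\ge\tri g-f\tri_-(1-O(\delta^{4/3}))\,(\min_M\psi)\,\musrb(M)$. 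On short leaves \eqref{eq:cone 3} gives $|\int_W(g-f)(\psi\rho_W)|\le A\delta^{1-q}|W|^q\tri g-f\tri_-\fint_W\psi\rho_W\,dm_W$, and integrating against $d\nu(b)$ with the measure bound $\musrb(\mathfrak F_{\rm short})=O(\delta^{4/3})$ yields $|\mathrm{I}_{\rm short}|\le CA(\sup_M\psi)(\delta^{4/3}+\delta^{1/3+\beta}a\ell_{\max})\tri g-f\tri_-$. The hypothesis $2C_\ell C_h(1+A)(\delta^{4/3}+\delta^{1/3+\beta}a\ell_{\max})<1$ is exactly what ensures $\mathrm{I}_{\rm long}+\mathrm{I}_{\rm short}\ge 0$.

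The main obstacle is the regularity estimate $\rho_W\in\cD_{a/2,\beta}(W)$: controlling $|\log\cos\vf|$ on leaves near $\vf=\pm\pi/2$ forces the use of homogeneity strips and the quantitative bound from \eqref{eq:dominate}, and the matching short-leaf estimate must absorb into the $\delta^{4/3}$ measure bound with constants lining up precisely with the hypothesis on $\delta$.
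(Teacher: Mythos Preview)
Your approach is essentially the paper's: disintegrate $\musrb$ along a foliation by homogeneous stable line segments, verify $\psi\cos\vf\in\cD_{a,\beta}(W)$ leafwise, use \eqref{eq:posi} on long leaves and \eqref{eq:cone 3} on short leaves (those in $\bH_k$ with $k\ge k_\delta=C_h\delta^{-1/3}$), and compare the two contributions.

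Two points where your bookkeeping diverges from the paper are worth flagging. First, you only argue $\rho_W=\cos\vf\in\cD_{a/2,\beta}(W)$ on \emph{long} leaves (via $k\lesssim\delta^{-1/3}$), but you also need it on short leaves to invoke \eqref{eq:cone 3}. The paper's argument is uniform: on \emph{any} homogeneous curve in $\bH_k$ one has $|W|\lesssim k^{-3}$, hence $k^2d(x,y)\le k^2|W|^{2/3}d(x,y)^{1/3}\lesssim d(x,y)^{1/3}\le (2\delta)^{1/3-\beta}d(x,y)^\beta$, and then the standing condition on $a$ from Lemma~\ref{lem:test contract} (not the hypothesis of the present lemma) gives $\cos\vf\in\cD_{a/2,\beta}(W)$. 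Second, the term $\delta^{1/3+\beta}a\ell_{\max}$ in the hypothesis does not come from the short-leaf bound or from the regularity of $\rho_W$; the short-leaf contribution is simply $O(A\,|\psi|_{C^0}\,\delta^{4/3})$. The $\delta^{1/3+\beta}$ appears only at the final step, when you replace $|\psi|_{C^0}$ by $\psi_{\min}$ via $|\psi|_{C^0}\le\psi_{\min}+\ell_{\max}|\psi'|_{C^0}\le\psi_{\min}\bigl(1+\tfrac{a}{2}\ell_{\max}(2\delta)^{\beta-1}\bigr)$, so that $(1+A)\delta^{4/3}|\psi|_{C^0}\lesssim\psi_{\min}(1+A)\bigl(\delta^{4/3}+a\ell_{\max}\delta^{1/3+\beta}\bigr)$.
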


\begin{proof}
Let $\psi_{\min} = \min_M \psi$.  The assumption on $\psi$ implies that $\psi \in \cD_{\frac{a}{2},\beta}(W)$ for each 
$W \in \cW^s_-(\delta)$ since,
\[
\left| \log \frac{\psi(x)}{\psi(y)} \right| \le 
\frac{1}{\psi_{\min}} |\psi(x) - \psi(y)| \le \frac{|\psi'|_{C^0(M)}}{\psi_{\min}} d(x,y) 
\le \frac{|\psi'|_{C^0(M)}}{\psi_{\min}} (2\delta)^{1-\beta} d(x,y)^\beta  \, .
\]

Suppose $f, g \in \cA$ satisfy  $g-f \in \cC_{c,A,3L}(\delta)$. 
Then according to
\eqref{eq:tri def} and \eqref{eq:cone 3}, for all $\psi \in \cD_{a,\beta}(W)$,
\begin{eqnarray}
\tri g - f \tri_- \! \! \! \! \! & \! \! \int_W \psi \; \le \; \int_W (g-f) \psi \, dm_W \; \le \; \tri g-f \tri_+ \int_W \psi
& \quad \forall\, W \in \cW^s(\delta)
\label{eq:long bound} \\
& \left| \int_W (g-f) \psi \, dm_W \right| \; \le \; \tri g-f \tri_- A \delta^{1-q}|W|^q \fint_W \psi
& \quad  \forall\, W \in \cW^s_-(\delta).
\label{eq:short bound}
\end{eqnarray}

Next, we disintegrate $\musrb$ according to a smooth foliation of stable curves as follows.
Since the stable cones defined in {\bf (H1)} are globally constant and uniform in the family
$\cF(\tau_*, \cK_*, E_*)$, we fix a direction in the stable cone
and consider stable curves in the form of line segments with this slope.  Let $k_\delta \ge k_0$
denote the minimal index $k$ of a homogeneity strip $\bH_k$ such that the stable line segments
in $\bH_k$ have length less than $\delta$.  Due to the fact that the minimum
slope in the stable cone is $\cK_{\min} > 0$, we have 
\begin{equation}
\label{eq:k delta}
k_\delta = C_h \delta^{-1/3},
\end{equation}
for some constant $C_h >0$ independent of $\delta$.

Now for $k < k_\delta$, 
we decompose $\bH_k$ into horizontal bands $B_{k,i}$ such that every maximal line segment of the
chosen slope in $B_{k,i}$ has equal length between $\delta$ and $2\delta$.  We do the
same on $\bH_0 := M \setminus (\cup_{k \ge k_0} \bH_k)$.
On each $B_{k,i}$, define a foliation of such parallel line segments $\{ W_\xi \}_{\xi \in \Xi_{k,i}} \subset \cW^s(\delta)$.
Using the smoothness of this foliation, we disintegrate $\musrb$ into conditional 
measures $\cos \vf(x) dm_{W_\xi}$ on $W_\xi$
and a factor measure $\hat\mu$ on the index set $\Xi_{k,i}$.  Note that our conditional
measures are not normalized - we include this factor in $\hat\mu$.  
Finally, on each homogeneity strip $\bH_k$, $k \ge k_\delta$, we carry out a similar
decomposition, but using homogeneous parallel line segments of maximal length in $\bH_k$ 
(which are necessarily shorter than length $\delta$).  
We use the notation $\{ W_\xi \}_{\xi \in \Xi_{k,1}} \subset \cW^s_-(\delta)$ for the foliations in these homogeneity strips
since there is only one band in each of these $\bH_k$.
Note that for all $k$ and $i$, we have $\hat\mu(\Xi_{k,i}) \le C_\ell$, 
for some constant $C_\ell$ depending only on the chosen slope and spacing of homogeneity strips.

Also, it follows as in \eqref{eq:distortion}, that for $x, y \in W \in \cW^s_-(\delta)$,
\[
\log \frac{\cos \vf(x)}{\cos \vf(y)} \le C_d (2\delta)^{1/3-\beta} d(x,y)^\beta \, ,
\]
so that $\cos \vf \in \cD_{\frac{a}{2}, \beta}(W)$ by the assumption of 
Lemma~\ref{lem:test contract}.
Thus $\psi \cos \vf \in \cD_{a, \beta}(W)$ for all $W \in \cW^s_-(\delta)$.

Using this fact and our disintegration of $\musrb$, we estimate the integral 
applying \eqref{eq:long bound} on $\Xi_{k,i}$ for $k< k_\delta$ and 
\eqref{eq:short bound} on $\Xi_{k,1}$ for $k \ge k_{\delta}$,
\begin{equation}
\label{eq:dominate}
\begin{split}
\int_M (g-f) & \psi \, d\musrb  = \sum_{i, k<k_\delta} \int_{\Xi_{k,i}} \int_{W_\xi} (g-f) \psi \cos \vf \, dm_{W_\xi} d\hat\mu(\xi) 
+ \sum_{k \ge k_\delta} \int_{\Xi_{k,1}} \int_{W_\xi} (g-f) \psi  \cos \vf \, dm_{W_\xi} d\hat\mu(\xi) \\
& \ge \tri g-f \tri_- \left(
\sum_{i, k< k_\delta} \int_{\Xi_{k,i}} \int_{W_\xi} \psi  \cos \vf \, dm_{W_\xi} d\hat\mu(\xi)
- A \delta \sum_{k \ge k_\delta} \int_{\Xi_{k,1}} \fint_{W_\xi} \psi \cos \vf  \, dm_{W_\xi} d\hat\mu(\xi) \right) \\
& \ge \tri g-f \tri_- \left( \psi_{\min} \musrb(M \setminus (\cup_{k \ge k_\delta} \bH_k)) - A \delta C_\ell 
|\psi|_{C^0} \sum_{k \ge k_\delta} k^{-2} \right) \\
& \ge \tri g-f \tri_- \left( \psi_{\min}(1 - 2 C_\ell C_h \delta^{4/3}) - |\psi|_{C^0} A C_\ell C_h 2 \delta^{4/3} \right) \, ,
\end{split}
\end{equation}
where we have estimated $\sum_{k \ge k_\delta} k^{-2} \le 2 k_\delta^{-1} \le 2 C_h \delta^{1/3}$ and
$\musrb(\cup_{k \ge k_\delta} \bH_k) \le 2 C_\ell C_h \delta^{4/3}$.

Now $|\psi|_{C^0} \le \psi_{\min} + \ell_{\max} |\psi'|_{C^0}$, where $\ell_{\max}$ 
is the maximum diameter of the connected components of $M$.  Then by the
assumption on $\psi$, we have
\[
\begin{split}
2 C_\ell C_h (1+A) \delta^{4/3} |\psi|_{C^0} & \le 2 C_\ell C_h (1+A) \delta^{4/3} \psi_{\min} (1 + \ell_{\max} \tfrac a2 (2\delta)^{\beta-1} ) \\
& \le \psi_{\min} 2 C_\ell C_h (1+A) ( \delta^{4/3} + a \ell_{\max} \delta^{1/3 + \beta})
 \le \psi_{\min} \, ,
\end{split}
\]
where for the last inequality we have used the assumption on $\delta$ in the statement of the
lemma.  We conclude that the lower bound in \eqref{eq:dominate} cannot be less than 0.
\end{proof}

\begin{remark}
\label{rem:after order}
Since Remark~\ref{rem:A-L} applies equally well to $\cC_{c,A,3L}(\delta)$, 
Lemma \ref{lem:order} implies there exists $\bar C \ge 1$ such that $\int_M f \, d\musrb \ge \bar C^{-1}  \tri f \tri_- > 0$ for all $f \in \cC_{c,A,L}(\delta)$. 

Using instead the upper bound in \eqref{eq:long bound} and following the estimate of \eqref{eq:dominate} yields,
\[
0 < \int_M f \psi \, d\musrb \le \tri f \tri_+ C  |\psi|_{C^0} \, ,
\]
for all $f \in \cC_{c,A,L}(\delta)$ and $\psi$ as in the statement of Lemma~\ref{lem:order}.  
Since any $\psi \in C^1(M)$ can be made to satisfy the condition of Lemma~\ref{lem:order}
by adding a constant (see the definition of $C_\psi$ in \eqref{eq:C psi} below), the estimate
can be extended to all $\psi \in C^1(M)$ to obtain,
\[
\int_M f \psi \, d\musrb \le \tri f \tri_+ C | \psi |_{C^1} \, .
\]
\end{remark}

Loss of memory and convergence to equilibrium, including equidistribution, readily follow from the contraction in the
projective metric $\rho_{\cC}(\cdot, \cdot)$ of the cone $\cC_{c, A, L}(\delta)$. Set $\musrb(f) = \int_M f \, d\musrb$.

Recall $N_\cF := N(\delta)^- + k_*n_*$ from Theorem~\ref{thm:cone contract} and the definition of
an $N_{\cF}$-admissible sequence from Section~\ref{sec:main}:
A sequence $( \iota_j )_j$, $\iota_j \in \cI(\tau_*, \cK_*, E_*)$, is $N_{\cF}$-admissible if 
there exist sequences $(T_k)_{k \ge 1} \subset \cF(\tau_*, \cK_*, E_*)$ and $(N_k)_{k \ge 1}$ 
with $N_k \ge N_\cF$, such that $T_{\iota_j} \in \cF(T_k, \kappa)$
for all $k \ge 1$ and $j \in [1 + \sum_{i=1}^{k-1} N_i, \sum_{i=1}^k N_i]$.

That is, an admissible sequence remains
in a $\kappa$ neighborhood of $T_k$ for $N_k \ge N_\cF$ iterates at a time, but may undergo large
changes between such blocks.

Our first theorem concerns loss of memory for functions in our cone, both with respect to $\musrb$ and with 
respect to the iteration of 
individual stable curves. It does not use property {\bf (H5)}, although it does use that
$\musrb$ is a conformal measure for $\cL_n$, i.e. $\musrb(\cL_n f) = \musrb(f)$.

\begin{theorem}
\label{thm:memory}
Let $\delta>0$ satisfy the assumption of Lemma~\ref{lem:order}.  
There exists $C>0$ and $\vartheta<1$ such that for all admissible sequences $(\iota_j)_j$, all $n \ge 0$, and all
$f, g \in \cC_{c,A,L}(\delta)$ with $\int_M f \, d\musrb = \int_M g \, d\musrb$:

\begin{itemize}
  \item[a)]  For all all $W \in \cW^s(\delta)$ and all $\psi \in C^1(W)$, we have 
\[
\left| \fint_{W} \Lp_n f  \,  \psi  \, dm_{W} - \fint_{W} \Lp_n g \,  \psi \, dm_{W} \right|
\le C \vartheta^n  \, |\psi|_{C^1} \min \{ \tri f \tri_+,  \tri g \tri_+ \} \, ;
\]
\item[b)] For all $\psi  \in C^1(M)$,
\begin{equation}
\label{eq:M conv}
\left| \int_M \cL_n f \, \psi \, d\musrb - \int_M \cL_n g \, \psi \, d\musrb \right| 
\le C \vartheta^n |\psi|_{C^1(M)} \min \{ \tri f \tri_+ , \tri g \tri_+ \} \, .
\end{equation}
\end{itemize}
\end{theorem}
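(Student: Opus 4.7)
The plan is the standard Birkhoff cone argument: combine the projective contraction of Theorem~\ref{thm:main} with the monotonicity of integration afforded by Lemma~\ref{lem:order}.

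First I would establish exponential contraction of the Hilbert projective metric $\rho_\cC$ on $\cC := \cC_{c,A,L}(\delta)$. By Theorem~\ref{thm:main}, for any $T_0 \in \cF(\tau_*, \cK_*, E_*)$ and any $\{\iota_j\}_{j=1}^m \subset \cI(T_0, \kappa)$ with $m \ge N_\cF$, $\Lp_m$ maps $\cC$ into the subcone $\cC_{\chi c, \chi A, \chi L}(\delta)$ of diameter at most $\Delta<\infty$ in $\cC$. Birkhoff's theorem (e.g.\ \cite[Theorem~1.1]{liv95}) then yields contraction of $\rho_\cC$ by the factor $\theta_* := \tanh(\Delta/4) < 1$. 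Along an admissible sequence, within each admissible block $B_k$ of length $N_k \ge N_\cF$ the $N_k$ iterates can be decomposed into $\lfloor N_k/N_\cF\rfloor$ pieces of length in $[N_\cF,2N_\cF)$ (set one piece to have length $N_\cF + (N_k\bmod N_\cF)$ and the rest length $N_\cF$); the operator of each piece satisfies the hypothesis of Theorem~\ref{thm:main} and hence contracts $\rho_\cC$ by $\theta_*$. Since $\lfloor N_k/N_\cF\rfloor \ge N_k/(2N_\cF)$ whenever $N_k \ge N_\cF$, summing over admissible blocks traversed in $n$ iterates (and absorbing leftover iterates near the endpoint into the prefactor) gives, for $f,g\in\cC$,
\[
\rho_\cC(\Lp_n f,\Lp_n g)\le C\,\vartheta^n,\qquad \vartheta:=\theta_*^{1/(2N_\cF)}<1.
\]

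Second, I would convert this into quantitative integral bounds. By definition of $\rho_\cC$, there exist $\bar\alpha,\bar\beta>0$ with $\bar\alpha\Lp_n f\preceq\Lp_n g\preceq\bar\beta\Lp_n f$ and $\log(\bar\beta/\bar\alpha)\le\rho_\cC(\Lp_n f,\Lp_n g)$. The hypothesis $\musrb(f)=\musrb(g)$ together with $T$-invariance of $\musrb$ from \textbf{(H5)} gives $\musrb(\Lp_n f)=\musrb(\Lp_n g)$; since $\musrb$ is strictly positive on $\cC$ by Remark~\ref{rem:after order}, applying $\musrb$ to the ordering forces $\bar\alpha\le 1\le\bar\beta$. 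Thus, for any positive linear functional $\ell$ on $\cC$,
\[
|\ell(\Lp_n g)-\ell(\Lp_n f)|\le (e^{\rho_\cC(\Lp_n f,\Lp_n g)}-1)\,\ell(\Lp_n f)\le C'\vartheta^n\,\ell(\Lp_n f).
\]

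For part (b), given $\psi\in C^1(M)$ I would set $\tilde\psi:=\psi+C_\psi$ with $C_\psi:=|\psi|_{C^0}+2(2\delta)^{1-\beta}|\psi'|_{C^0}/a \le C|\psi|_{C^1}$, so that $\tilde\psi>0$ satisfies $2(2\delta)^{1-\beta}|\tilde\psi'|_{C^0}\le a\min_M\tilde\psi$ and $\ell(h):=\int_M h\,\tilde\psi\,d\musrb$ is a positive functional on $\cC$ by Lemma~\ref{lem:order}. Because $\musrb(\Lp_n f)=\musrb(\Lp_n g)$, the constant correction cancels, so $\int_M (\Lp_n g-\Lp_n f)\psi\,d\musrb=\ell(\Lp_n g)-\ell(\Lp_n f)$; Remark~\ref{rem:after order} and Lemma~\ref{lem:first L} bound $\ell(\Lp_n f)$ by $C|\psi|_{C^1}\tri f\tri_+$, and swapping $f\leftrightarrow g$ yields the minimum. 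For part (a), given $\psi\in C^1(W)$ with $W\in\cW^s(\delta)$, the same choice of $C_\psi$ places $\tilde\psi:=\psi+C_\psi$ in $\cD_{a,\beta}(W)$, so $\ell(h):=\int_W h\,\tilde\psi\,dm_W$ is a positive functional on $\cC$ by the definition of $\tri\cdot\tri_-$ (see Remark~\ref{rem:A-L}). Writing
\[
\int_W(\Lp_n g-\Lp_n f)\psi\,dm_W=\int_W(\Lp_n g-\Lp_n f)\tilde\psi\,dm_W\;-\;C_\psi\!\int_W(\Lp_n g-\Lp_n f)\,dm_W,
\]
both terms are treated via the corresponding positive functionals (the second uses the constant test function $1\in\cD_{a,\beta}(W)$), each bounded by $C\vartheta^n\,|W|\,|\psi|_{C^1}\tri f\tri_+$; dividing by $|W|\in[\delta,2\delta]$ gives the claim.

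The main obstacle I anticipate is the combinatorial bookkeeping in the first step: extracting a single exponential rate $\vartheta^n$ from an admissible sequence whose block lengths $\{N_k\}$ may be arbitrarily large or close to $N_\cF$ requires the elementary inequality $\lfloor N_k/N_\cF\rfloor\ge N_k/(2N_\cF)$ for $N_k\ge N_\cF$, together with careful handling of the (at most $N_\cF$) leftover iterates near the endpoint $n$. The small-$n$ case $n<N_\cF$ and the boundary case when $n$ lands well inside an admissible block are handled by direct estimates via Lemma~\ref{lem:first L} and Remark~\ref{rem:after order}, at the cost of enlarging the constant $C$.
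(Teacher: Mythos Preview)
Your proposal is correct and follows essentially the same route as the paper: Birkhoff contraction via Theorem~\ref{thm:main} combined with the order-preserving property of integration from Lemma~\ref{lem:order}, then the shift $\tilde\psi=\psi+C_\psi$ to pass from positive test functions to general $C^1$ ones. The only cosmetic differences are that the paper packages your inequality $|\ell(\Lp_n g)-\ell(\Lp_n f)|\le(e^{\rho_\cC}-1)\ell(\Lp_n f)$ as a citation to \cite[Lemma~2.2]{LSV98} (applied to the order-preserving semi-norms $\tri\cdot\tri_+$ and $\|\cdot\|_\psi$ rather than to individual functionals), and is terser about the block combinatorics you spell out explicitly.
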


\begin{proof}
(a)
Recall the definition of $\tri \cdot \tri_+ $ for elements of  $\cA$ from Definition \ref{def:cA} and \eqref{eq:tri def}, 
\[
\tri f \tri_+ = \sup_{\stackrel{\scriptstyle W \in \cW^s(\delta)}{\psi \in \cD_{a,\beta}(W)}} \frac{\left|\int_W f \psi \, dm_W\right|}{\int_W \psi \, dm_W} ,
\]
and note that by \eqref{eq:posi}, $\tri \cdot \tri_+ $ is an order-preserving semi-norm in $\cA$.\footnote{ A semi-norm $\|\cdot\|$ is order preserving if $-g\preceq f\preceq g$ implies $\|f\|\leq \|g\|$.} One can check directly that $\cA$ is an integrally closed vector lattice.
Also $ \musrb(f) :=  \int_M f \, d\musrb$ is homogeneous and order preserving in $\cC_{c,A,3L}(\delta)$ by Lemma \ref{lem:order} applied to $\psi \equiv 1$. 

We would like to apply Theorem~\ref{thm:cone contract} to each block of $N_\cF$ iterates in the admissible sequence; 
however, at time $n$, the sequence may have completed fewer than $N_\cF$ iterates in its current block
so it may be that $\cL_n f, \cL_n g \notin \cC_{c,A,L}(\delta)$. But since $n \ge N_\cF > n_0$, 
it follows from Proposition~\ref{prop:almost}
that $\cL_n f, \cL_n g \in \cC_{c,A,3L}(\delta)$.
Then denoting by $\rho_{\cC'}$ the metric in the larger cone $\cC_{c,A,3L}(\delta)$,
\cite[Lemma 2.2]{LSV98} implies that,
for all $f,g \in \cC_{c,A,L}(\delta)$ with $\musrb(f) = \musrb(g)$,
\footnote{\cite[Lemma~2.2]{LSV98} is stated for order preserving norms but its proof holds verbatim for order preserving 
semi-norms, see \cite[Lemma D.4]{dkl21}.}
\begin{equation}
\label{eq:adapted}
\tri \cL_n f-\cL_n g\tri_+\leq \left(e^{\rho_{\cC'}(\cL_n f,\cL_n g)}-1\right)\min  \{\tri \cL_n f \tri_+, \tri \cL_n g \tri_+\}. 
\end{equation}
Using the definition of admissible sequence, we may peel off the most recent $j$ iterates, where $j < n_0 + N_\cF$, 
such that $\cL_{n-j} f ,\cL_{n-j} g \in \cC_{c,A,L}(\delta)$ and $n-j$ is chosen so that we have undergone at
least $N_\cF$ iterates in the current block at time $n-j$. Then applying Theorem~\ref{thm:cone contract} to each block of 
$N_k$ iterates, 
and using Proposition~\ref{prop:diameter} and \cite[Theorem~1.1 and Remark~1.2]{liv95}, 
for all $n \ge N_\cF$,
\[
\rho_{\cC'}(\cL_n f, \cL_n g) \le \rho_{\cC}(\cL_{n-j}f, \cL_{n-j}g) \le \vartheta^{n-j-k} \rho_{\cC}(\cL_k f, \cL_k g) \, ,
\]
where $\vartheta=\left[\tanh(\Delta/4)\right]^{ 1/(2 N_\cF)}$,
and $k \in [N_\cF, 2N_\cF -1]$ is the least integer $\ge N_\cF$ so that 
$\cL_{n-j-k}$ ends in a contracting block.

Finally, we use the fact that $\cL_k f, \cL_k g \in \cC_{\chi c, \chi A, \chi L}(\delta)$ 
together with Proposition~\ref{prop:diameter} to conclude
$\rho_{\cC}(\cL_k f, \cL_k g) \le \Delta$.
Combining these estimates with Lemma~\ref{lem:first L} yields, 
\begin{equation}
\label{eq:contract}
\tri \cL_n f-\cL_n g\tri_+\leq   C \vartheta^n\min \{\tri f \tri_+, \tri g \tri_+\},
\end{equation}
where $C = \frac 32 \Delta e^\Delta \vartheta^{-3 N_\cF - n_0}$. 
This proves (a) for any $W \in \cW^s(\delta)$
and $\psi \in \cD_{a,\beta}(W)$. 

To extend this estimate
to more general  $\psi \in C^1(W)$, define $\tpsi = \psi + C_\psi$, where
\begin{equation}
\label{eq:C psi}
C_\psi = |\psi_{\min}| + \tfrac{2}{a} |\psi'|_{C^0}(2\delta)^{1-\beta} \, .
\end{equation}
Then $\tpsi' = \psi'$ and $\min_W \tpsi \ge \frac{2}{a} |\tpsi'|_{C^0} (2\delta)^{1-\beta}$,
so that $\tpsi \in \cD_{\frac{a}{2}, \beta}(W)$ by the proof of Lemma~\ref{lem:order}. 
Then since also $C_\psi \in \cD_{a, \beta}(W)$, the required estimate
follows by writing $\psi = \tpsi - C_\psi$ and using the triangle inequality.

\medskip
\noindent
(b) Following the same strategy as above, given $\psi \in C^1(M)$ satisfying
the assumption of Lemma~\ref{lem:order}, we define a pseudo-norm for $f \in \cA$ by
\begin{equation}
\label{eq:psi seminorm}
\| f \|_\psi = \left| \int_M f \, \psi \, d\musrb \right| \, .
\end{equation}
By Lemma~\ref{lem:order}, $\| \cdot \|_\psi$ is an order-preserving semi-norm, and as in \eqref{eq:adapted}, invoking
again \cite[Lemma~2.2]{LSV98}, Theorem~\ref{thm:cone contract}, Proposition~\ref{prop:diameter}
and \cite[Theorem~1.1]{liv95}, we have for $f, g \in \cC_{c,A,L}(\delta)$ with
$\musrb(f) = \musrb(g)$ and $n \ge N_\cF$,
\[
\| \cL_n f - \cL_n g \|_\psi \le C \vartheta^n \min \{ \| \cL_n f \|_\psi , \| \cL_n g \|_\psi \} \le C \vartheta^n |\psi|_{C^0} \min \{
\tri f \tri_+ , \tri g \tri_+ \} \, ,
\]
where we applied Remark~\ref{rem:after order}.   This 
proves (b) for $\psi$ satisfying the assumption of 
Lemma~\ref{lem:order}.  We extend to more general $\psi \in C^1(M)$ by defining
$\tpsi = \psi + C_\psi$, where $C_\psi$ is given by \eqref{eq:C psi}, and arguing as in the
proof of part (a).
\end{proof}

Since our maps all preserve $\musrb$, the loss of memory also implies equidistribution of measures supported on stable curves and convergence to equilibrium, both of which are summarized in the following theorem.

\begin{theorem}
\label{thm:equi}
Let $\delta>0$ satisfy the assumption of Lemma~\ref{lem:order}.  
There exists $C>0$ such that for all $n \ge 0$ and admissible sequences $(\iota_j)_j \subset \cI(\tau_*, \cK_*, E_*)$, $\vartheta$ as in Theorem \ref{thm:memory}, and all $f, g \in \cC_{c,A,L}(\delta)$, with $\musrb(f) = \musrb(g)$:

\begin{itemize}
  \item[a)]  For all $W_1, W_2 \in \cW^s(\delta)$  and all $\psi_i \in C^1(W_i)$ with $\fint_{W_1} \psi_1 = \fint_{W_2} \psi_2$, we have
\[
\left| \fint_{W_1}  \Lp_n f  \,  \psi_1  \, dm_{W_1} - \fint_{W_2} \Lp_n g \,  \psi_2 \, dm_{W_2} \right|
\le C \vartheta^n  \, (|\psi_1|_{C^1} + |\psi_2|_{C^1} ) \musrb(f) \, ;
\]
in particular, for all $W \in \cW^s(\delta)$ and $\psi\in C^1(W)$,
\begin{equation}
\label{eq:avg conv}
\left| \fint_W \Lp_n f \, \psi \, dm_W - \musrb(f) \fint_W \psi \, dm_W \right| 
\le C \vartheta^n \,  |\psi|_{C^1} \musrb(f)   \, ;
\end{equation}
  \item[b)]  for all $\psi \in C^1(M)$,
\[
\left| \int_M f \, \psi \circ T_n \, d\musrb - \int_M f \, d\musrb \int_M \psi \, d\musrb \right|
\le C \vartheta^n |\psi|_{C^1(M)} \musrb(f) \, .
\]
\end{itemize}
\end{theorem}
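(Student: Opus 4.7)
The proof of Theorem~\ref{thm:equi} reduces, by invariance of $\musrb$, to an application of Theorem~\ref{thm:memory} with the auxiliary density $g_f := \musrb(f)\cdot\mathbf{1}$, where $\mathbf{1}$ denotes the constant function $1$. The first step I would carry out is the verification that $\mathbf{1}\in\cC_{c,A,L}(\delta)$: since $\tri\mathbf{1}\tri_-=\tri\mathbf{1}\tri_+=1$, condition \eqref{eq:cone 2} is trivial; for \eqref{eq:cone 3} the left-hand side equals $|W|^{1-q}\le(2\delta)^{1-q}\le A\delta^{1-q}$ because $A>4L\ge 2^{1-q}$; and \eqref{eq:cone 5} reduces, via \eqref{eq:W-difference}, to the inequality $\bigl||W^1|-|W^2|\bigr|\le(2\delta+C_s)d_{\cW^s}(W^1,W^2)\le cA\delta^{1-\gamma}d_{\cW^s}(W^1,W^2)^\gamma$, valid thanks to $d_{\cW^s}(W^1,W^2)\le\delta$, \eqref{eq:delta cond}, and \eqref{eq:cA}. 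Combined with {\bf (H5)} and the identification $\cL_Tf=f\circ T^{-1}$, this yields $\cL_n\mathbf{1}=\mathbf{1}$ for every admissible sequence; consequently, noting that $\musrb(f)>0$ for any $f\in\cC_{c,A,L}(\delta)$ by Remark~\ref{rem:after order}, the function $g_f$ belongs to the cone, satisfies $\musrb(g_f)=\musrb(f)$ and $\tri g_f\tri_+=\musrb(f)$, and obeys $\cL_ng_f=g_f$.

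For part (a), apply Theorem~\ref{thm:memory}(a) to the pair $(f,g_f)$ on $(W,\psi)$. Since $\fint_W\cL_ng_f\,\psi\,dm_W=\musrb(f)\fint_W\psi\,dm_W$ and $\min\{\tri f\tri_+,\musrb(f)\}\le\musrb(f)$, this immediately yields the ``in particular'' inequality
\[
\left|\fint_W\cL_nf\,\psi\,dm_W-\musrb(f)\fint_W\psi\,dm_W\right|\le C\vartheta^n|\psi|_{C^1}\musrb(f).
\]
The general statement of (a) then follows from the triangle inequality: apply this bound separately to $(f,W_1,\psi_1)$ and to $(g,W_2,\psi_2)$; because $\musrb(f)=\musrb(g)$ and $\fint_{W_1}\psi_1=\fint_{W_2}\psi_2$, the two ``constant'' middle terms $\musrb(f)\fint_{W_1}\psi_1$ and $\musrb(g)\fint_{W_2}\psi_2$ coincide, and the two differences combine into the desired estimate with prefactor $|\psi_1|_{C^1}+|\psi_2|_{C^1}$.

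For part (b), the invariance of $\musrb$ under each $T_{\iota_j}$ gives the duality $\int_Mf\,\psi\circ T_n\,d\musrb=\int_M\cL_nf\cdot\psi\,d\musrb$ (cf.\ Section~\ref{sec:transfer}). Applying Theorem~\ref{thm:memory}(b) to $(f,g_f)$ and using $\cL_ng_f=\musrb(f)\mathbf{1}$ then yields
\[
\left|\int_Mf\,\psi\circ T_n\,d\musrb-\musrb(f)\int_M\psi\,d\musrb\right|=\left|\int_M(\cL_nf-\cL_ng_f)\,\psi\,d\musrb\right|\le C\vartheta^n|\psi|_{C^1(M)}\musrb(f).
\]
The only mildly delicate point is the cone membership of $\mathbf{1}$; beyond that, the argument is a clean formal reduction, and I do not anticipate any genuine obstacle because the invariance of $\musrb$ replaces the need for any new quantitative estimate on the dynamics.
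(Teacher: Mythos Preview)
Your proof is correct and follows essentially the same approach as the paper: both reduce Theorem~\ref{thm:equi} to Theorem~\ref{thm:memory} by comparing $f$ with the constant $g_f=\musrb(f)\cdot\mathbf{1}$, using $\cL_n\mathbf{1}=\mathbf{1}$ and $\tri\musrb(f)\tri_+=\musrb(f)$, and then invoking the triangle inequality for the first display in part~(a). Your explicit verification that $\mathbf{1}\in\cC_{c,A,L}(\delta)$ is a detail the paper leaves implicit (it is essentially contained in the proof of Proposition~\ref{prop:diameter}), but otherwise the arguments coincide.
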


\begin{proof}
a) Since $\cL_n 1 = 1$ and $\tri \musrb(f) \tri_+ = \musrb(f)$, applying \eqref{eq:contract}  with $g = \musrb(f)$  implies,
\begin{equation}
\label{eq:equi}
\begin{split}
\left| \fint_W \Lp_n f \, \psi \, dm_W - \musrb(f) \fint_W \psi \right| &=
\fint_W \psi  \left| \frac{\int_W \Lp_n f \, \psi \, dm_W}{\int_W\psi} -  \frac{\int_W \Lp_n (\musrb(f)) \, \psi}{\int_W\psi}  \right|\\
&\le C \vartheta^n \, |\psi|_{C^0}  \musrb(f)\, ,
\end{split}
\end{equation}
which proves \eqref{eq:avg conv}  
for $\psi \in \cD_{a, \beta}(W)$.  We extend this estimate
to more general  $\psi \in C^1(W)$ by defining $\tpsi = \psi + C_\psi$ as in \eqref{eq:C psi}
and arguing as in the proof of Theorem~\ref{thm:memory}(a).
Finally, the first inequality of part (a) follows from an application of the triangle inequality.

\medskip
\noindent
b) Since $\musrb$ is conformal with respect to $\cL_{T}$ for each $T \in \cF(\tau_*, \cK_*, E_*)$,
and using that $\cL_n 1 = 1$, we have
\[
 \int_M f \, \psi \circ T_n \, d\musrb - \int_M f \, d\musrb \int_M \psi \, d\musrb
 = \int_M  \cL_n (f - \musrb(f))   \, \psi \, d\musrb \, .
\] 
Thus applying \eqref{eq:M conv} to $g = \musrb(f)$ proves part (b)
since $\tri \musrb(f) \tri_+ = \musrb(f)$.

\end{proof}

We may extend Theorems~\ref{thm:memory} and \ref{thm:equi} to piecewise
H\"older continuous functions, as long as the discontinuities are transverse to the stable cone.
Recall the definition of a regular partition $\pa$ from Definition~\ref{def:part}
and the set $C^t(\pa)$ of functions which are $t$-H\"older continuous on each element of $\pa$,
i.e. which satisfy
\[
|f|_{C^t(\pa)} = \sup_{P \in \pa} |f|_{C^t(P)} < \infty \, .
\]

\begin{cor}
\label{cor:extend}
Let $\pa$ be a regular partition of $M$ and let $t \ge \gamma$.  Then the convergence in Theorems~\ref{thm:memory} and \ref{thm:equi} extend to all 
$f, g \in C^t(\pa)$, with $\max\{|f|_{C^t(\pa)}, |g|_{C^t(\pa)}\}$ in place of $\min \{ \tri f \tri_+, \tri g \tri_+ \}$
on the right hand side in Theorem~\ref{thm:memory} and in place of $\musrb(f)$ on the
right hand side in Theorem~\ref{thm:equi}.
\end{cor}

The proof of this corollary relies on the following lemma.

\begin{lemma}
\label{lem:dominate}
If $\pa$ is a regular partition of $M$ and $f \in C^t(\pa)$ with $t \ge \gamma$, 
then $\lambda + f \in \cC_{c, A, L}(\delta)$ for any 
\[
\lambda \geq \max \left\{  \frac{L+1}{L-1} |f|_\infty , \frac{A+2^{1-q}}{A- 2^{1-q}} |f|_\infty, 
\frac{cA + (2 \delta^t + 8 K C_\pa C_s + 6 C_s) }{cA - 2C_s} |f|_{C^t(\pa)}\right\} \, .
\]
\end{lemma}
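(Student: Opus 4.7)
The plan is to verify each of the three defining conditions \eqref{eq:cone 2}, \eqref{eq:cone 3}, \eqref{eq:cone 5} of $\cC_{c,A,L}(\delta)$ for $h := \lambda + f$, exploiting the fact that
$\tri h \tri_\pm = \lambda + \tri f \tri_\pm$ by linearity of the integrals defining $\tri \cdot \tri_\pm$, so
\[
\lambda - |f|_{C^0} \le \tri h \tri_- \le \tri h \tri_+ \le \lambda + |f|_{C^0}.
\]
With this, condition \eqref{eq:cone 2} reduces to $\lambda + |f|_{C^0} \le L(\lambda - |f|_{C^0})$, which is the first lower bound on $\lambda$. For condition \eqref{eq:cone 3}, for any $W \in \cW^s_-(\delta)$ and $\psi \in \cD_{a,\beta}(W)$ one bounds $|\int_W h \psi| \le (\lambda + |f|_{C^0})\int_W \psi$ pointwise and uses $|W| \le 2\delta$ to get $|W|^{-q}|\int_W h\psi|/\fint_W\psi \le 2^{1-q}\delta^{1-q}(\lambda+|f|_{C^0})$, which is dominated by $A\delta^{1-q}(\lambda - |f|_{C^0})$ precisely when $\lambda \ge \frac{A + 2^{1-q}}{A - 2^{1-q}}|f|_{C^0}$.

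The substantive work lies in condition \eqref{eq:cone 5}. Writing $\bar h_i := \int_{W^i}h\psi_i/\int_{W^i}\psi_i$ for the $\psi_i$-weighted mean of $h$ on $W^i$ (and likewise $\bar f_i$), so $\bar h_i = \lambda + \bar f_i$, one has
\[
\frac{\int_{W^1}h\psi_1}{\fint_{W^1}\psi_1} - \frac{\int_{W^2}h\psi_2}{\fint_{W^2}\psi_2}
= |W^1|\bar h_1 - |W^2|\bar h_2
= \lambda\,(|W^1|-|W^2|) + \bigl(|W^1|\bar f_1 - |W^2|\bar f_2\bigr).
\]
For the $\lambda$ piece, \eqref{eq:W-difference} gives $\lambda||W^1|-|W^2|| \le \lambda(|W^1|+C_s)d_{\cW^s}(W^1,W^2) \le 2 C_s \lambda \, d_{\cW^s}(W^1,W^2)$ once $\delta \le C_s$. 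For the $f$ piece, I would parametrize $W^i$ as graphs $G_{W^i}(r) = (r,\vf_{W^i}(r))$ over intervals $I_i$, set $\Psi_i(r) = \psi_i(G_{W^i}(r))\|G_{W^i}'(r)\|$, note that $d_*(\psi_1,\psi_2) = 0$ implies $\Psi_1 = \Psi_2 =: \Psi$ on $I := I_1\cap I_2$, and decompose
\[
\int_{W^1} f\psi_1 - \int_{W^2} f\psi_2 = \int_I \Psi(r)\bigl[f(G_{W^1}(r)) - f(G_{W^2}(r))\bigr] dr
+ \int_{I_1\setminus I} f \circ G_{W^1}\,\Psi_1\,dr - \int_{I_2\setminus I} f\circ G_{W^2}\,\Psi_2\,dr.
\]
The first integral is bounded by $|f|_{C^1} d_{\cW^s}(W^1,W^2) \int_{W^1}\psi_1$ via the Lipschitz estimate $|G_{W^1}(r) - G_{W^2}(r)| \le d_{\cW^s}(W^1,W^2)$; the boundary integrals are bounded by $|f|_{C^0} C_s \max_i|\psi_i|_{C^0}\, d_{\cW^s}(W^1,W^2)$ via $|I_1 \triangle I_2| \le d_{\cW^s}(W^1,W^2)$. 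An analogous estimate for $|\int_{W^1}\psi_1 - \int_{W^2}\psi_2|$, together with the algebraic identity $|W^1|\bar f_1 - |W^2|\bar f_2 = |W^1|(\bar f_1 - \bar f_2) + (|W^1|-|W^2|)\bar f_2$ and the elementary bound $|\bar f_i| \le |f|_{C^0}$, should yield $\bigl||W^1|\bar f_1 - |W^2|\bar f_2\bigr| \le 6 C_s |f|_{C^1} d_{\cW^s}(W^1,W^2)$.

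Combining the two pieces, using $d_{\cW^s}(W^1,W^2) \le \delta^{1-\gamma} d_{\cW^s}(W^1,W^2)^\gamma$ (valid since $d_{\cW^s}(W^1,W^2) \le \delta$), one obtains
\[
\left|\frac{\int_{W^1}h\psi_1}{\fint_{W^1}\psi_1} - \frac{\int_{W^2}h\psi_2}{\fint_{W^2}\psi_2}\right|
\le (2 C_s \lambda + 6 C_s |f|_{C^1})\delta^{1-\gamma}d_{\cW^s}(W^1,W^2)^\gamma,
\]
and this is bounded by $cA(\lambda - |f|_{C^0})\delta^{1-\gamma}d_{\cW^s}(W^1,W^2)^\gamma$ exactly when $\lambda(cA-2C_s) \ge cA|f|_{C^0} + 6 C_s |f|_{C^1} \ge (cA+8C_s)|f|_{C^1}$ (using $|f|_{C^0} \le |f|_{C^1}$ and absorbing the $2C_s|f|_{C^1}$ slack), giving the third threshold $\lambda \ge \frac{cA+8C_s}{cA-2C_s}|f|_{C^1}$. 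The main obstacle is controlling $|W^1|\bar f_1 - |W^2|\bar f_2$ uniformly when $|W^1|, |W^2|$ can be arbitrarily small relative to each other and to $d_{\cW^s}(W^1,W^2)$: the denominators $\int_{W^i}\psi_i$ may then differ substantially, so one must either carry out a case split (using the trivial bound $|W^i|\bar f_i \le |W^i||f|_{C^0}$ when both $|W^i|$ are comparable to $d_{\cW^s}(W^1,W^2)$) or carefully cancel the leading contributions via the pointwise identity $\Psi_1 = \Psi_2$ on $I$, much as in the proof of Lemma~\ref{lem:compare}.
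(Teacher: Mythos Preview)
Your approach is essentially the same as the paper's. Conditions \eqref{eq:cone 2} and \eqref{eq:cone 3} are handled identically, and for \eqref{eq:cone 5} the paper proves precisely the bound you aim for (their claim \eqref{eq:C1 bound}, with constant $8C_s$ rather than $6C_s$), then combines it with the $\lambda\,||W^1|-|W^2||$ term just as you do.

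The obstacle you flag at the end is real, and the paper resolves it exactly by the case split you suggest. Specifically, the paper first disposes of the case $|W^2| < 2C_s\,\delta^{1-\gamma} d_{\cW^s}(W^1,W^2)^\gamma$ by the trivial bound $||W^1|\bar f_1 - |W^2|\bar f_2| \le 2|W^2|\,|f|_{C^0}$. In the complementary case, the lower bound on $|W^2|$ forces $I_{W^1}\cap I_{W^2}\neq\emptyset$ (via the argument following \eqref{eq:W2-long}), which is what makes $d_*(\psi_1,\psi_2)=0$ nontrivial and allows one to invoke \eqref{eq:psi2} and \eqref{eq:new-difference} to control $|\psi_2|_{C^0}$ and $||W^2|-\int_{W^2}\psi_2|$ in terms of $d_{\cW^s}(W^1,W^2)$. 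With those ingredients, the three-term decomposition \eqref{eq:C1 split} (which is a slight rearrangement of yours) goes through cleanly. One small slip in your write-up: the chain ``$\lambda(cA-2C_s) \ge cA|f|_{C^0} + 6C_s|f|_{C^1} \ge (cA+8C_s)|f|_{C^1}$'' has the second inequality reversed; what you need (and what holds) is that the stated threshold $\lambda \ge \frac{cA+8C_s}{cA-2C_s}|f|_{C^1}$ implies the required inequality, since $(cA+8C_s)|f|_{C^1} \ge cA|f|_{C^0} + 8C_s|f|_{C^1}$.
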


\begin{proof}[Proof of Corollary~\ref{cor:extend}]
Let $f, g \in C^t(\pa)$ with $\musrb(f) = \musrb(g)$ and let $\psi \in C^1(M)$.  
Let $\lambda_f, \lambda_g$ be the constants
from Lemma~\ref{lem:dominate} corresponding to $f$ and $g$, respectively, and set 
$\lambda = \max \{ \lambda_f, \lambda_g \}$.
Then $f + \lambda, g + \lambda \in \cC_{c,A,L}(\delta)$ and $\musrb(f+\lambda) = \musrb(g + \lambda)$, so that by Theorem~\ref{thm:memory}(b), for all $n \ge 0$,
\[
\begin{split}
\left| \int_M \cL_n (f-g) \, \psi \, d\musrb \right| & = \left| \int_M \cL_n (f+\lambda-(g+\lambda)) \, \psi \, d\musrb \right| \\
& \le C' \vartheta^n |\psi|_{C^1(M)} \max \{ |f|_{C^t(\pa)}, |g|_{C^t(\pa)} \} \, ,
\end{split}
\]
since $\tri f+ \lambda\tri_+ \le \lambda + |f|_\infty$, and by Lemma~\ref{lem:dominate},
$\lambda_f \ge C'' |f|_{C^t(\pa)}$, with analogous estimates for $g$.  This proves the analog of
part (b) Theorem~\ref{thm:memory} and the proof of part (a) follows similarly, replacing the integral over $M$
by the integral over $W \in \cW^s$.

The extension of Theorem~\ref{thm:equi} to $f, g \in C^t(\pa)$ follows analogously, 
replacing $f$ and $g$ in \eqref{eq:equi} with $f+\lambda$ and $g+\lambda$, respectively to prove the analogue of \eqref{eq:avg conv}, and then using the
triangle inequality to deduce the first inequality of part (a).  Finally, part (b) follows immediately once
$f$ is replaced by $f+\lambda$ since $\int_M \psi \circ T_n \, d\musrb = \int_M \psi \, d\musrb$ due to
{\bf (H5)}.
\end{proof}

\begin{proof}[Proof of Lemma~\ref{lem:dominate}]
We must show that $\lambda + f$ satisfies conditions \eqref{eq:cone 2} - \eqref{eq:cone 5} in the
definition of $\cC_{c,A,L}(\delta)$.  Since 
\begin{equation}
\label{eq:tri up/down}
\tri \lambda + f \tri_+ \le \lambda + | f |_\infty, \qquad \mbox{and} \qquad
\tri \lambda + f \tri_- \ge \lambda - |f|_\infty \, ,
\end{equation} 
to guarantee \eqref{eq:cone 2}, we need
\[
\frac{\lambda + |f|_\infty}{\lambda - |f|_\infty} \le L 
\qquad \impliedby \qquad
\lambda \ge |f|_\infty \frac{L+1}{L-1} \, .
\]
Next, to guarantee \eqref{eq:cone 3}, for $W \in \cW^s_-(\delta)$, $\psi \in \cD_{a, \beta}(W)$, 
we need,
\[
\begin{split}
|W|^{-q} \frac{\int_W (\lambda +f)\psi}{\fint_W \psi} \le A \delta^{1-q} (\lambda - |f|_\infty)
& 
 \quad \impliedby \quad
|W|^{1-q}(\lambda + |f|_\infty) \le A \delta^{1-q}(\lambda - |f|_\infty)
\\
& \quad \impliedby \quad
\lambda \ge |f|_\infty \frac{A + 2^{1-q}}{A - 2^{1-q}} \, .
\end{split}
\]
Lastly, we need to show that \eqref{eq:cone 5} is satisfied.  For this, we prove the claim:
\begin{equation}
\label{eq:C1 bound}
\left| \frac{\int_{W^1} f \psi_1}{\fint_{W^1} \psi_1 } - \frac{\int_{W^2} f \psi_2}{\fint_{W^2} \psi_2 } \right|
\le  ( 2\delta^t + 8 K C_\pa C_s + 6 C_s ) \delta^{1-\gamma} d_{\cW^s}(W^1, W^2)^\gamma  |f|_{C^t(\pa)}  \, ,
\end{equation}
for $W^1, W^2, \psi_1, \psi_2$ as in \eqref{eq:cone 5}.  
As in Section~\ref{subsec:contract c}, we partition $W^k$ into matched pieces $U^k_j$ and
unmatched pieces $V^k_i$ such that $U^1_j$, $U^2_j$ belong to the same element $P \in \pa$
and are defined over the same $r$-interval $I_j$ for each $j$.   By assumption on
$\pa$, $\# \{ U^k_j \}_j \le K$, $\# \{ V^k_j \}_{k,j} \le 2K$, and 
$|V^k_j| \le C_s C_{\pa} d_{\cW^s}(W^1, W^2)$.

Recalling the notation from Section~\ref{sec:distances}, we express the matched pieces as graphs
over their common $r$-interval,
$U^k_j = \{ G_{U^k_j}(r) = (r, \vf_{U^k_j}(r)) : r \in I_j \}$, for $k=1,2$.

As in Section~\ref{subsec:contract c}, we assume without loss of generality that $|W_2| \ge |W_1|$
and $\fint_{W_1} \psi_1 = 1$.  Also, we may assume 
$|W_2| \ge 2 C_s \delta^{1-\gamma} d_{\cW^s}(W_1, W_2)^\gamma$; 
otherwise, \eqref{eq:C1 bound} is trivially
bounded by $2 |W_2| |f|_\infty \le 4 C_s \delta^{1-\gamma} d_{\cW^s}(W_1, W_2)^\gamma |f|_\infty$.

Next,
\begin{equation}
\label{eq:C1 split}
\begin{split}
\left| \frac{\int_{W^1} f \psi_1}{\fint_{W^1} \psi_1 } - \frac{\int_{W^2} f \psi_2}{\fint_{W^2} \psi_2 } \right|
& \le \left| \int_{W^1} f \psi_1 - \int_{W^2} f \psi_2 \right|
+ \int_{W^2} |f| \psi_2 \left| 1 - \frac{1}{\fint_{W^2} f \psi_2}  \right| \\
& \le \sum_j \left| \int_{U^1_j} f \psi_1 - \int_{U^2_j} f \psi_2 \right| + \sum_{k,i} \left| \int_{V^k_i} f \psi_k \right| 
+ |f|_\infty \left| \int_{W^2} \psi_2 - |W^2| \right| 
\end{split}
\end{equation}
To estimate the first term on the right hand side, recalling \eqref{eq:psi-distance} and $d_*(\psi_1, \psi_2) = 0$, we have
for $r \in I_j$,
\[
\begin{split}
| & (f\psi_1) \circ G_{U^1_j}(r)  \| G'_{U^1_j}(r) \| - (f\psi_2) \circ G_{U^2_j}(r) \| G'_{U^2_j}(r) \| | \\
& = \psi_1 \circ G_{U^1_j}(r) \| G'_{U^1_j}(r) \| | f \circ G_{U^1_j}(r) - f \circ G_{U^2_j}(r) | 
\le \psi_1 \circ G_{U^1_j}(r) \| G'_{U^1_j}(r) \| H^t_P(f) d_{\cW^s}(W^1, W^2)^t \, ,
\end{split}
\]
where $H^t_P(f)$ denotes the H\"older constant of $f$ on $P \in \pa$.
Integrating over $I_j$ yields,
\begin{equation}
\label{eq:C1 first}
 \left| \sum_j \int_{U^1_j} f \psi_1 - \int_{U^2_j} f \psi_2 \right|
 \le \sum_j H^t_\pa(f) d_{\cW^s}(W^1, W^2)^t \int_{U^1_j} \psi_1
 \le |W^1| H^t_\pa(f) d_{\cW^s}(W^1, W^2)^t \, . 
\end{equation}
For the second term on the right side of \eqref{eq:C1 split},
$|V^k_i| \le C_s C_\pa d_{\cW^s}(W^1, W^2)$ plus \eqref{eq:psi2} implies
\begin{equation}
\label{eq:C1 second}
\sum_{k,i} \left| \int_{V^k_i} f \psi_k \right| \le 2K
 |f|_\infty  2 e^{2a(2\delta)^\alpha} C_s C_\pa d_{\cW^s}(W_1, W_2) \, ,
\end{equation}
while for the third term, \eqref{eq:new-difference} implies
\[
|f|_\infty \left| \int_{W^2} \psi_2 - |W^2| \right| \le |f|_\infty 6 C_s d_{\cW^s}(W^1, W^2) \, .
\]
Collecting this estimate together with \eqref{eq:C1 first} and \eqref{eq:C1 second} 
in \eqref{eq:C1 split}, and recalling \eqref{eq:delta_0 condition}, we obtain
\[
\left| \frac{\int_{W^1} f \psi_1}{\fint_{W^1} \psi_1 } - \frac{\int_{W^2} f \psi_2}{\fint_{W^2} \psi_2 } \right|
\le  2\delta H^t_\pa(f) d_{\cW^s}(W^1, W^2)^t + |f|_\infty (8K C_\pa  + 6) C_s d_{\cW^s}(W^1, W^2) 
 \, ,
\]
proving the bound in \eqref{eq:C1 bound} since $d_{\cW^s}(W_1, W_2) \le \delta$ and $t \ge \gamma$.

With the claim proved, we proceed to verify \eqref{eq:cone 5}.  Using \eqref{eq:W-difference} we estimate,
\[
\begin{split}
\left| \frac{\int_{W^1} ( f+ \lambda) \psi_1}{\fint_{W^1} \psi_1 } \right. & - \left. \frac{\int_{W^2} (f + \lambda) \psi_2}{\fint_{W^2} \psi_2 } \right| 
\le \left| \frac{\int_{W^1} f \psi_1}{\fint_{W^1} \psi_1 } - \frac{\int_{W^2} f \psi_2}{\fint_{W^2} \psi_2 } \right| + \lambda | |W^1| - |W^2| | \\
& \le (2 \delta^t + 8 K C_\pa C_s + 6 C_s) \delta^{1-\gamma} d_{\cW^s}(W_1, W_2)^\gamma |f|_{C^t(\pa)} + \lambda 2 C_s d_{\cW^s}(W^1, W^2) \, .
\end{split}
\]
Thus \eqref{eq:cone 5} will be verified if
\[
\begin{split}
(2 \delta^t + 8 K C_\pa C_s + 6 C_s) \delta^{1-\gamma} & d_{\cW^s}(W_1, W_2)^\gamma |f|_{C^t(\pa)} 
 + \lambda 2 C_s d_{\cW^s}(W^1, W^2) \\
& \le cA \delta^{1-\gamma} d_{\cW^s}(W_1, W_2)^\gamma (\lambda - |f|_\infty) \, ,
\end{split}
\]
which is implied by the final condition on $\lambda$ in the statement of the lemma since
$d_{\cW^s}(W_1, W_2) \le \delta$ and $cA > 2C_s$ by \eqref{eq:cA}.
\end{proof}


\section{Applications}\label{sec:appl}

Suppose that we have a billiard table $Q = \mathbb{T}^2 \setminus \cup_i B_i$
and that the particle can escape from the table by entering certain
sets $\bG \subset Q$, which we call {\em gates} or {\em holes}, 
but only at times $k N$ for some $N\in\bN$. 
One could easily consider also the case of $\bG \subset Q\times S^1$ (i.e. some velocity directions are forbidden,
as studied in \cite{dem bill}), but we prefer to keep things simple. 
In the literature, one often takes $N=1$, i.e. the particle can escape at each iterate of the map, but then the
holes are required to be very small, see for example \cite{DWY, dem inf, dem bill}. 
By contrast, in this paper we will be interested in relatively large holes and so we will replace the 
assumption of smallness with an assumption of occasional escape through possibly large sets.  This
will facilitate the application of this method to two situations we have in mind:  chaotic scattering (Section~\ref{sec:scattering})
and a random Lorentz gas (Section~\ref{sec:lorentz}).

We begin with the same setup as in Section~\ref{sec:bill family}, fixing $K$ numbers
$\ell_1, \ldots, \ell_K >0$ and identifying them as the arclengths of scatterers belonging to
$\cQ(\tau_*, \cK_*, E_*)$ for some fixed choice of $\tau_*, \cK_*, E_* \in \mathbb{R}^+$.  
As in Section~\ref{sec:uni hyp}, we fix an index set $\cI(\tau_*, \cK_*, E_*)$, identifying $\iota \in \cI(\tau_*, \cK_*, E_*)$ with a map $T_\iota \in \cF(\tau_*, \cK_*, E_*)$ induced by the table $Q_\iota \in \cQ(\tau_*, \cK_*, E_*)$.

A hole $\bG_\iota \subset Q_\iota$ induces a hole $H_\iota \subset M$ in the phase space of the
billiard map $T_\iota$.  We formulate here two abstract conditions on the set $H_\iota$, and then 
provide examples of concrete, physically relevant situations which induce holes
satisfying our conditions in Section~\ref{sequential}.  
\begin{itemize}
  \item[(O1)]  (Complexity)  There exists $P_0 >0$ such that any stable curve of
  length at most $\delta$ can be cut into at most $P_0$ pieces by $\partial H_\iota$, where $\delta$
  is the length scale of the cone $\cC_{c, A, L}(\delta)$.
  \item[(O2)]  (Uniform transversality)  There exists $C_t > 0$ such that, for any stable curve
  $W \in \cW^s$ and $\ve>0$, $m_W(N_\ve(\partial H_\iota)) \le C_t \ve$, where $N_\ve(A)$ is the $\ve$-neighborhood of $A$ in $M$.
\end{itemize}

\begin{remark}
Assumption (O2) can be weakened to, e.g., $m_W(N_\ve(\partial H_\iota)) \le C_t \ve^{1/2}$,
but this would then require $d_{\cW^s}(W^1, W^2) \le \delta^2$ in our definition of cone
condition \eqref{eq:cone 5}.  Similar modifications are made to weaken the transversality assumption in
the Banach space setting, see for example \cite{demzhang14, dem bill}.
\end{remark}

For $H \subset M$ satisfying (O1) and (O2), we let $\diam^s(H)$ denote the maximal length of a stable curve in $H$, which
we call the {\em stable diameter}.

As in Section~\ref{sec:mix},
we fix $T_0 \in \cF(\tau_*, \cK_*, \tau_*)$ and consider sequences $\{ \iota_j \}_j  \subset \cI(T_0, \kappa)$,
where $\kappa >0$ is from Lemma~\ref{lem:proper cross}(b).  Recalling \eqref{eq:close d},
this means we will initially consider sequential open systems comprised of maps $T \in \cF(\tau_*, \cK_*, E_*)$
with $\bd(Q(T), Q(T_0)) < \kappa$.
We will then extend this to $n_\star$-admissible sequences for appropriate $n_\star$ depending on $H$.

Denote by $\ind_{A}$ the characteristic function of the set $A$.  
The relevant transfer operator for the open
sequential system (opening once every $n_\star$ iterates) is given by 
$\cL_{H, n_\star}=\cL_{n_\star} \ind_{H^c}$, where $H^c$ denotes the
complement of $H$ in $M$, and $\cL_{n_\star} = \cL_{T_{\iota_{n_\star}}} \cdots \cL_{T_{\iota_1}}$ is the usual transfer 
operator for the $n_\star$-step sequential dynamics.
The main objective is to control the action of the multiplication operator $\ind_{H^c}$
on the cone $\cC_{c,A,L}(\delta)$.  
\begin{remark}\label{conefixed}
From now on we will consider parameters $c,A,L$ fixed so that all the conditions of Section \ref{sec:conditions} apply for all $\delta$ smaller that some fixed $\delta_*$.
\end{remark}

\subsection{ Relatively small holes}
\label{sec:small}

First we consider holes $H$ whose stable diameter is short compared to the length scale $\delta$. 

\begin{lemma}\label{lem:case-a} 
If $H \subset M$ satisfies (O1) and (O2), and if  $\diam^s(H)  \le \delta \left[ \frac{1}{4P_0A } \right]^{1/q}$, 
then 
\[
\ind_{H^c}[\cC_{c,A,L}(\delta)]\subset  \cC_{c',A',L'}(\delta),
\]
where 
\[
\begin{split}
&L'=2 P_0^{1-q} e^{a(2\delta)^\beta} A  \,  , \quad
A'= 2 P_0^{1-q} e^{a(2\delta)^\beta} A \, , \\
&c'=   P_0^q e^{a (2\delta)^\alpha} +
2 \left( 2^q \delta+ \tfrac 34  c \right) +  4  (P_0 + 2) P_0^{q-1} C_t^q   \, .
\end{split}
\]
\end{lemma}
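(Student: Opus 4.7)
The plan is to verify each of the three defining cone conditions \eqref{eq:cone 2}, \eqref{eq:cone 3}, \eqref{eq:cone 5} for $\ind_{H^c} f$ when $f \in \cC_{c,A,L}(\delta)$. The common tool throughout is the decomposition: for any $W \in \cW^s_-(\delta)$, by (O1) the set $W \cap H^c$ splits into $k \le P_0$ sub-stable curves $W_1,\dots,W_k$, and the complementary components of $W \cap H$ each have stable length bounded by $\diam^s(H)$.

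First I would establish the lower bound $\tri \ind_{H^c} f \tri_- \ge \tfrac12 \tri f \tri_-$. Writing $\int_W \ind_{H^c} f\,\psi = \int_W f\,\psi - \int_{W\cap H} f\,\psi$ for $W \in \cW^s(\delta)$, I apply \eqref{eq:cone 3} on each connected component of $W \cap H$, bound $\sum_k |\tilde W_k|^q$ by H\"older as $\le P_0 \diam^s(H)^q$, and use the hypothesis $\diam^s(H)^q \le \delta^q/(4P_0 A)$ together with $\int_W \psi \ge \delta \fint_W \psi$ and $e^{a(2\delta)^\beta} \le 2$ to conclude $|\int_{W \cap H} f\,\psi| \le \tfrac12 \tri f\tri_- \int_W \psi$. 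Next, for the upper bound and cone condition \eqref{eq:cone 3}, I apply \eqref{eq:cone 3} to each $W_i \subset W\cap H^c$, use $\fint_{W_i}\psi \le e^{a(2\delta)^\beta}\fint_W \psi$ from Lemma \ref{lem:avg}, and H\"older's inequality $\sum_i |W_i|^q \le P_0^{1-q}|W|^q$. This simultaneously yields $\tri \ind_{H^c} f \tri_+ \le A P_0^{1-q} e^{a(2\delta)^\beta}\tri f \tri_-$ and cone condition \eqref{eq:cone 3} with $A' = 2 A P_0^{1-q} e^{a(2\delta)^\beta}$, after replacing $\tri f \tri_-$ by $2 \tri \ind_{H^c} f \tri_-$ via the lower bound. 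Combining these two steps yields the stated value of $L'$.

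The delicate part is condition \eqref{eq:cone 5}. Given $W_1, W_2 \in \cW^s_-(\delta)$ with $d_{\cW^s}(W_1,W_2)\le \delta$ and $d_*(\psi_1,\psi_2)=0$, I project vertically between $W_1$ and $W_2$ (using that the stable cone is uniformly transverse to the vertical) and partition each $W_i \cap H^c$ into matched pieces $U_j^i$ (both endpoints of the vertical segment land in $H^c$) and unmatched pieces $V_k^i$. Hypothesis (O2) applied to $\partial H$ implies that the unmatched region lies in a neighborhood of $\partial H$ of width proportional to $d_{\cW^s}(W_1,W_2)$, so $\sum_k |V_k^i| \le C\, C_t\, d_{\cW^s}(W_1,W_2)$. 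The triangle inequality decomposes
\[
\left| \frac{\int_{W_1}\ind_{H^c} f\,\psi_1}{\fint_{W_1}\psi_1} - \frac{\int_{W_2}\ind_{H^c} f\,\psi_2}{\fint_{W_2}\psi_2} \right|
\le \sum_j \Delta_j + \sum_{i,k} \frac{|\int_{V_k^i} f\,\psi_i|}{\fint_{W_i}\psi_i},
\]
where $\Delta_j$ is the analogous difference for matched pieces. The unmatched contribution is bounded using \eqref{eq:cone 3} and H\"older, giving something of order $A P_0^{1-q}C_t^q\, d_{\cW^s}(W_1,W_2)^\gamma \delta^{1-\gamma} \tri f \tri_-$, where I use $\gamma \le q$ and $d_{\cW^s}(W_1,W_2)\le \delta$ to convert the exponent $q$ into $\gamma$. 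For matched pieces, the parameter intervals coincide, so $d_{\cW^s}(U_j^1,U_j^2) \le d_{\cW^s}(W_1,W_2)$ and the restrictions of the $\psi_i$ still satisfy $d_*(\cdot,\cdot)=0$, allowing me to apply \eqref{eq:cone 5} to each pair $(U_j^1,U_j^2)$.

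The main obstacle lies in the matched-pieces estimate: one must transition from the local normalization $\fint_{U_j^i}\psi_i$ appearing in \eqref{eq:cone 5} to the global $\fint_{W_i}\psi_i$ appearing in the target bound. I would handle this by adding and subtracting intermediate quantities (analogous to the decomposition in \eqref{eq:c-decomposition}) and exploiting the identity $\int_{U_j^1}\psi_1 = \int_{U_j^2}\psi_2$ implied by $d_*(\psi_1,\psi_2)=0$ when the parameter intervals coincide, combined with \eqref{eq:W-difference} to control $|U_j^1|-|U_j^2|$. The resulting three groups of terms give, respectively, the contributions $P_0^q e^{a(2\delta)^\alpha}$ (from the normalization mismatch, where the factor $P_0^q$ enters through H\"older on the $\le P_0$ matched pieces), $2(2^q \delta + \tfrac34 c)$ (from the main application of \eqref{eq:cone 5} and a length-ratio term), and $4(P_0+2)P_0^{q-1} C_t^q$ (from the unmatched pieces via (O2)). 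Finally, replacing $\tri f\tri_-$ by $2\tri \ind_{H^c} f\tri_-$ using Step 1 yields the stated $c'$.
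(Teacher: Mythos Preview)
Your plan matches the paper's proof closely: the lower bound $\tri \ind_{H^c} f\tri_- \ge \tfrac12 \tri f\tri_-$ via subtraction of the hole contribution, the upper bound and cone condition \eqref{eq:cone 3} via H\"older on the $\le P_0$ pieces of $W\cap H^c$, and the matched/unmatched decomposition for \eqref{eq:cone 5} are exactly what the paper does.

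One technical step you omit, however, is needed for the matched-piece estimate to close. Before decomposing, the paper first disposes of the case $|W^2|^q \le \tfrac{c}{2}\,\delta^{q-\gamma} d_{\cW^s}(W^1,W^2)^\gamma$ by a direct appeal to the freshly established $A'$-bound, and then works under the complementary lower bound on $|W^2|$. This lower bound is what allows one to control the factor $(\delta/|W^2|)^{1-q}\bigl||W^2|-\int_{W^2}\psi_2\bigr|$ arising in the normalization correction (see \eqref{eq:indicator split} and the subsequent use of \eqref{eq:new-difference}); without it that term is unbounded. It is precisely this estimate that produces the $\tfrac34 c$ contribution in $c'$. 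Relatedly, your attribution of the three groups is slightly swapped relative to the paper: the term $P_0^q e^{a(2\delta)^\alpha}$ comes from applying \eqref{eq:cone 5} directly to the matched pairs (carrying the ratio $\fint_{W^1_j}\psi_1/\fint_{W^1}\psi_1$), while $2(2^q\delta+\tfrac34 c)$ comes from the normalization-correction term $\bigl[1-\fint_{W^1_j}\psi_1\,\fint_{W^2}\psi_2/(\fint_{W^2_j}\psi_2\,\fint_{W^1}\psi_1)\bigr]$.
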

\begin{proof}
Letting $f \in \cC_{c, A, L}(\delta)$, we must control the cone conditions one by one. We begin with \eqref{eq:cone 2}.
Given $W\in\cW^s(\delta)$, let $\cG_0$ denote the collection of connected curves in $W\setminus H$. 
Then applying \eqref{eq:cone 3} to each $W' \in \cG_0$, 
for $\psi \in \cD_{a, \beta}(W)$, we estimate
\begin{equation}\label{eq:appli1}
\begin{split}
\int_W(\ind_{H^c} f )\psi \, dm_W&= \sum_{W'\in\cG_0}\int_{W'}f \psi \, dm_{W'}\\
&\leq \sum_{W'\in\cG_0}\fint_{W'}\psi dm_{W'} |W'|^{q}A\delta^{1-q} \tri f\tri_-\\
&\leq \sum_{W'\in\cG_0}|W'|^{q} e^{a(2\delta)^\beta}\fint_{W}\psi dm_{W} A\delta^{1-q} \tri f\tri_-\\
&\leq  P_0^{1-q} e^{a(2\delta)^\beta} A  \tri f\tri_- \int_{W}\psi \, dm_{W},
\end{split}
\end{equation}
where, in the last line, we have used the H\"older inequality to estimate the sum on $W'$, recalling that, by (O1), the sum has, at most, $P_0$ elements.
On the other hand, if the collection of disjoint curves $\{W_i\}$ is such that $\cup_i W_i=W\cap H$,
\[
\begin{split}
\int_W(\ind_{H^c} f )\psi \, dm_W&= \int_{W} f \psi \, dm_{W} -\int_W(\ind_{H} f )\psi \, dm_W\\
&\geq \tri f\tri_-\int_{W} \psi \, dm_{W}-\sum_i |W_i|^qA\delta^{1-q}\tri f\tri_-\fint_{W_i} \psi \, dm_{W_i}\\
&\geq \left\{1- e^{a(2\delta)^\beta} A P_0 \delta^{-q} \diam^s(H)^q  \right\} \tri f\tri_- \int_{W}\psi \, dm_{W} \, .
\end{split}
\]
Hence, for $\diam^s(H)$ small enough,
\begin{equation}
\label{eq:lower H}
\tri \ind_{H^c}f\tri_-\geq \frac 12 \tri f\tri_-.
\end{equation}
Accordingly, taking the supremum over $W,\psi$ in \eqref{eq:appli1},
\[
 \tri \ind_{H^c} f\tri_+\leq 2  P_0^{1-q} e^{a(2\delta)^\beta} A \tri \ind_{H^c} f\tri_-=:L' \tri \ind_{H^c} f\tri_- 
\]
Next, to verify \eqref{eq:cone 3}, if $W\in\cW^s_-(\delta)$, then estimating
as in \eqref{eq:appli1},
\begin{equation}\label{eq:3H}
\begin{split}
\int_W(\ind_{H^c} f )\psi \, dm_W&=\sum_{W'\in\cG_0}\int_{W'}f \psi \, dm_{W'}\\
&\leq\sum_{W'\in\cG_0}e^{a(2\delta)^\beta} |W'|^{q}A\delta^{1-q} \tri f\tri_- \fint_{W}\psi \, dm_{W}\\
&\leq P_0^{1-q}|W|^q e^{a(2\delta)^\beta} A\delta^{1-q} \tri f\tri_- \fint_{W}\psi \, dm_{W}\\
&\leq 2 P_0^{1-q}|W|^q e^{a(2\delta)^\beta} A\delta^{1-q} \tri\ind_{H^c} f\tri_- \fint_{W}\psi \, dm_{W}\\
&=: A'|W|^q \delta^{1-q} \tri\ind_{H^c} f\tri_- \fint_{W}\psi \, dm_W \, ,
\end{split}
\end{equation}
where we have used \eqref{eq:lower H} for the third inequality.

We are left with the last cone condition, \eqref{eq:cone 5}. We take $W^1, W^2 \in \cW^s_-(\delta)$ with $d_{\cW^s}(W^1, W^2) \le \delta $,
and $\psi_i \in \cD_{a,\alpha}(W_i)$ with $d_*(\psi_1, \psi_2) = 0$.

As in Section~\ref{subsec:contract c}, we may assume without loss of generality that $|W^2|\geq |W^1|$ and $\fint_{W^1} \psi_1 = 1$. First of all note that, by condition \eqref{eq:cone 3} and our estimate above, 
\[
\frac{\int_{W^k} \ind_{H^c} f\psi_k}{\fint_{W^k}\psi_k}\leq A' |W^k|^q\delta^{1-q}\tri \ind_{H^c} f\tri_-
\leq \frac 12 d_{\cW^s}(W^1,W^2)^\gamma \delta^{1-\gamma}  c A' \tri \ind_{H^c} f\tri_-,
\]
for $k=1,2$,
provided $ |W^2|^q \leq \delta^{q- \gamma} \frac c2 d_{\cW^s}(W^1,W^2)^{\gamma}$.  
Accordingly, it suffices to consider the case $|W^2|^q \geq \delta^{q-\gamma} \frac c2 d_{\cW^s}(W^1,W^2)^{\gamma}$. 

It follows from \eqref{eq:W-difference} that  
$|W^1|^q \geq \frac 12 \delta^{q-\gamma} \frac c2 d_{\cW^s}(W^1,W^2)^\gamma$, recalling that $d_{\cW^s}(W^1, W^2) \le \delta$ and \eqref{eq:q-gamma1}.
By (O2), we may decompose $W^k \cap H^c$ into at most $P_0$ `matched' pieces 
$W_j^k$ such that  $d_{\cW^s}(W_j^1,W_j^2)\leq d_{\cW^s}(W^1,W^2)$ and $I_{W^1_j} = I_{W^2_j}$,
and  at most\footnote{According to (O1), $W^k$ is divided into at most $P_0$ pieces, and 
$W^k \cap H^c$ comprises at most $\frac{P_0}{2}+1$ of them.  Each such piece can give rise
to at most 2 unmatched pieces.} 
$P_0 + 2$ `unmatched' pieces $\wcW^k_i$, which satisfy, 
\[
|\wcW^k_i| \leq C_t d_{\cW^s}(W^1,W^2).
\]
Then, using condition \eqref{eq:cone 3} and noticing that $d_*(\psi_1|_{W^1_j},\psi_2|_{W^2_j})=0$, 
\begin{equation}
\label{eq:indicator c}
\begin{split}
&\left|\frac{\int_{W^1} \ind_{H^c} f \psi_1}{\fint_{W^1}\psi_1}  - \frac{\int_{W^2} \ind_{H^c} f \psi_2}{\fint_{W^2}\psi_2} \right|\leq
\sum_j\left|\frac{\int_{W^1_j} f \psi_1}{\fint_{W^1}\psi_1}  - \frac{\int_{W^2_j} f \psi_2}{\fint_{W^2}\psi_2} \right|+
\sum_{i,k} |\wcW^k_i|^q \delta^{1-q} A \tri f \tri_- e^{a(2\delta)^\alpha} \\
&\leq \; \sum_j \frac{\fint_{W^1_j}  \psi_1}{\fint_{W^1}\psi_1}d_{\cW^s}(W^1, W^2)^\gamma \delta^{1-\gamma}  c A \tri f \tri_-  
+\sum_j \left|\frac{\int_{W^2_j} f \psi_2}{\fint_{W^2}\psi_2}\left[ 1- \frac{\fint_{W^1_j}  \psi_1\fint_{W^2}  \psi_2}{\fint_{W_j^2}  \psi_2\fint_{W^1}\psi_1}\right]\right|\\
&\phantom{\leq}
\quad + 8 (P_0+2) C_t^q d_{\cW^s}(W^1,W^2)^{\gamma} \delta^{1-\gamma} A \tri \ind_{H^c} f\tri_-,
\end{split}
\end{equation}
using \eqref{eq:lower H}.  Next, since $I_{W^1_j} = I_{W^2_j}$, recalling
Remark~\ref{rem:change-int} and \eqref{eq:W-difference} we have 
$\int_{W^1_j} \psi_1 = \int_{W^2_j} \psi_2$ and\footnote{Since $I_{W^1_j} = I_{W^2_j}$,
the term on the right side of \eqref{eq:W-difference} proportional to $C_s$ is absent in this case.} $||W^1_j| - |W^2_j|| \le |W^1_j| d_{\cW^s}(W^1, W^2)$.
Then applying \eqref{eq:cone 3} and recalling $\fint_{W_1} \psi_1 = 1$,
\begin{equation}
\label{eq:indicator split}
\begin{split}
& \left|\frac{\int_{W^2_j} f \psi_2}{\fint_{W^2}\psi_2} \left[ 1- \frac{\fint_{W^1_j}  \psi_1\fint_{W^2}  \psi_2}{\fint_{W_j^2}  \psi_2\fint_{W^1}\psi_1}\right]\right|
\le A \tri f \tri_- \frac{\fint_{W^2_j} \psi_2}{\fint_{W^2} \psi_2} |W^2_j|^q \delta^{1-q}
\left| 1 - \frac{|W^2_j|}{|W^1_j|} \fint_{W^2} \psi_2 \right| \\
& \; \; \le A \tri f \tri_- e^{a (2\delta)^\alpha} \left( |W^2_j|^q \delta^{1-q} \left|1- \frac{|W^2_j|}{|W^1_j|} \right|
+ \frac{|W^2_j|^q}{|W^2|^q} \left(\frac{\delta}{|W^2|} \right)^{1-q} \left| |W^2| - \int_{W^2} \psi_2 \right| \frac{|W^2_j|}{|W^1_j|} \right) \\
& \; \; \le A \tri f \tri_- 2 \left( |W^2_j|^q \delta^{1-q} d_{\cW^s}(W^1, W^2)
+ 2 \frac{|W^2_j|^q}{|W^2|^q} \left(\frac{\delta}{|W^2|} \right)^{1-q} \left| |W^2| - \int_{W^2} \psi_2 \right|  \right).
\end{split}
\end{equation}
Next, recalling $|W^2| \ge  \delta^{1 - \frac{\gamma}{q} } [c/2]^{\frac 1q} d_{\cW^s}(W^1, W^2)^{\frac{\gamma}{q}}$ 
and using \eqref{eq:new-difference} yields,
\[
\begin{split}
 \left(\frac{\delta}{|W^2|} \right)^{1-q} \left| |W^2| - \int_{W^2} \psi_2 \right|  
& \le 6 C_s [2/c]^{\frac{1}{q}(1-q)} { \delta^{\frac{\gamma}{q} - \gamma} } d_{\cW^s}(W^1, W^2)^{1+\gamma - \frac{\gamma}{q}} \\
& \le 4^{-\frac{1}{q} 6 c \delta^{1-\gamma} } d_{\cW^s}(W^1, W^2)^\gamma \, ,
\end{split}
\]
where we have again used \eqref{eq:q-gamma1} and $d_{\cW^s}(W^1, W^2) \le \delta$.
Using this estimate and the fact that $q \le 1/2$ in \eqref{eq:indicator split} and summing over $j$ yields,
\[
\begin{split}
\sum_j \left|\frac{\int_{W^2_j} f \psi_2}{\fint_{W^2}\psi_2} \left[ 1- \frac{\fint_{W^1_j}  \psi_1\fint_{W^2}  \psi_2}{\fint_{W_j^2}  \psi_2\fint_{W^1}\psi_1}\right]\right|
& \le 2 A  \delta^{1-\gamma}  \tri f \tri_- d_{\cW^s}(W^1, W^2)^\gamma \sum_j \delta^{1-q}
|W^2_j|^q +  \frac 34  c \frac{|W^2_j|^q}{|W^2|^q} \\
& \le 2 A \delta^{1-\gamma} \tri f \tri_- d_{\cW^s}(W^1, W^2)^\gamma P_0^{1-q} \left( 2^q \delta +  \tfrac 34  c \right) \, .
\end{split}
\]
Finally, using this estimate in \eqref{eq:indicator c} concludes the proof of the lemma,
\[
\begin{split}
\left|\frac{\int_{W^1} \ind_{H^c} f \psi_1}{\fint_{W^1}\psi_1}  - \frac{\int_{W^2} \ind_{H^c} f \psi_2}{\fint_{W^2}\psi_2} \right|
& \leq
d_{\cW^s}(W^1, W^2)^\gamma  \delta^{1-\gamma}  A 2 P_0^{1-q}  \tri \ind_{H^c} f \tri_- 
\left( P_0^q e^{a (2\delta)^\alpha} \; + \right.  \\
& \quad \left. + \; 2 \left( 2^q \delta + \tfrac 34  c \right) +  4   (P_0 + 2) P_0^{q-1} C_t^q \right) \, ,
\end{split}
\]
where we have again used \eqref{eq:lower H}.
\end{proof}

Remark that, by Theorem~\ref{thm:cone contract}, we know that there exists 
$N_{\cF}\in\bN$, $N_{\cF}\leq k_*n_*+C_{\star}|\ln\delta|$ where $n_*$, defined in Lemma \ref{lem:proper cross}, and $k_*$ from Theorem~\ref{thm:cone contract}, are uniform in $\cF(\tau_*, \cK_*, E_*)$, while $C_\star$ depends only on $c,A,L$, such that 
$\Lp_{N_{\cF}} \cC_{c,A,L}(\delta) \subset \cC_{\chi c, \chi A, \chi L}(\delta)$, for all $\{ \iota_j \}_{j=1}^{N_\cF} \subset \cI(T_0, \kappa)$. 

To state the next result we need to make explicit the coice of the cone parameters. Let $c', A', L'$ be given by Lemma~\ref{lem:case-a}.   Choose (minimal)
$c'' \ge c'$, $A'' \ge A'$ and $L'' \ge L'$ satisfying the conditions in Section~\ref{sec:conditions}, and 
$\delta'' \le \delta$ satisfying \eqref{eq:delta_0 ineq} and \eqref{eq:A-cond-s4} with respect to $A''$ and $L''$.

Define $N_{\cF}' = k_*n_* + C'_\star |\ln \delta''|$, where $C'_\star$, $k_*$ and $n_*$
are from Theorem~\ref{thm:cone contract} applied to the cone $\cC_{c'',A'',L''}(\delta'')$.  
Since, as remarked in Proposition~\ref{prop:almost}
and Theorem~\ref{thm:cone contract}, $\chi$ is independent of the cone parameters,
we have $\cL_n \cC_{c'',A'',L''}(\delta'') \subset \cC_{\chi c'', \chi A'', \chi L''}(\delta'')$ for all $n \ge N_{\cF}'$.

Recall that $\kappa>0$ from Lemma~\ref{lem:proper cross} depends only on the family
$\cF(\tau_*, \cK_*, E_*)$.

\begin{prop}\label{prop:case-a} 
Let $n_\star = N_{\cF}'$.  There exists $J \in \mathbb{N}$, depending only on $c,A,L, P_0,C_t$, 
such that if assumptions (O1) and (O2) are satisfied 
and $\diam^s(H) \le {\delta''} \left[ \frac{1}{4P_0A} \right]^{1/q}$, then there exists $\chi' \in (0,1)$ such that 
for all $n\geq J n_\star$, and all $n_\star$-admissible sequences $(\iota_j )_{j \ge 1}$,
$[\cL_n\ind_{H^c}]\cC_{c,A,L}(\delta'')\subset  \cC_{\chi' c,\chi' A,\chi' L}(\delta'')$.
\end{prop}
\begin{proof}
For $n = m N_{\cF}'$,
we may apply both Lemma~\ref{lem:case-a} and Theorem~\ref{thm:cone contract} to obtain,
\[
[\Lp_n \ind_{H^c}] \cC_{c,A,L}(\delta'') \subset \Lp_{mN_{\cF}'} \cC_{c'',A'',L''}(\delta'')
\le \cC_{\chi^m c'', \chi^m A'', \chi^m L''}(\delta'') \, ,
\]
for as long as $\chi^m c'' > c$, $\chi^m A'' > A$ or $\chi^m L'' > L$.  Letting $m_1$ denote
the greatest $m$ such that $\chi^m c'' > c$ or $\chi^m A'' > A$  or $\chi^m L'' > L$, and setting
$J = m_1+1$ produces the required contraction.
\end{proof}

\begin{remark} 
Taking $\kappa=0$ we can also consider the case of a single map, $T_{\iota_j} = T_0$ for each $j$.
Then once we know the transfer operator for the open system acts as a strict contraction on the cone, it is straightforward to
recover the usual full set of results for open systems with exponential escape, including a unique escape
rate and limiting conditional invariant measure for all elements of the cone.  See Theorem~\ref{thm:open}
for an example. 
\end{remark}


\subsection{ Large holes}
\label{sec:large}

The preceding pertains to relatively small holes. For many applications, large holes must be considered. 
To do so requires either a much closer look at the combinatorics of the trajectories or requiring the holes to open at even longer time intervals than what was needed before. 
We will pursue the second, much easier, option with the intent to show that large holes are not 
out of reach. To work with large holes it is convenient to strengthen hypothesis (O1):
\begin{itemize}
 \item[(O$1'$)]  (Complexity)  There exists $P_0 >0$ such that any stable curve of
  length at most $\delta_0$ can be cut into at most $P_0$ pieces by $\partial H$.
 \end{itemize}
 The main difference between small and large holes is that, according to Lemma~\ref{lem:case-a},
 for holes with sufficiently small stable diameter, multiplication by $\ind_{H^c}$ maps $\cC_{c,A,L}(\delta)$
 into a cone with larger parameters; by contrast, for large holes, multiplying by the indicator function may produce functions that do not belong to any cone and we must use mixing to recover this property, as detailed in Lemma~\ref{lem:large hole}.
To avoid trivialities, we only consider holes with $\musrb(H) < 1$.
 
When iterating $T^{-1}_nW$ for $W \in \cW^s$, we will need to distinguish between elements of 
$\G_n(W)$ which intersect $H$ and those that do not.  Recall that $\G_n(W)$ subdivides
long homogeneous connected components of $T^{-1}_nW$ into curves of length between
$\delta_0$ and $\delta_0/3$.  We let $\G_n^H(W)$ denote the connected components
of $W_i \cap H^c$, for $W_i \in \G_n(W)$, where $H^c = M \setminus H$.  Following the notation of 
Section~\ref{sec:prop proof}, let $Lo_n^H(W; \delta)$ denote those elements of $\G_n^H(W)$
having length at least $\delta$ and let $Sh_n^H(W; \delta)$ denote those elements having length
at most $\delta$. 
 
Without the small hole condition, hypotheses (O$1'$) and (O2) are insufficient to prove Lemma~\ref{lem:case-a};
however, one can recover the results of Proposition \ref{prop:case-a} and its consequences
provided one is willing to wait for a longer time. 
To prove the following result, we recall again Definition~\ref{def:admissible} of 
admissible sequence.
We call a sequence $( \iota_j )_{j \ge 1}$, $\iota_j \in \cI(\tau_*, \cK_*, E_*)$, {\em $N$-admissible} if
there exist sequences $(T_k)_{k \ge 1} \subset \cF(\tau_*, \cK_*, E_*)$ and $(N_k)_{k \ge 1}$ 
with $N_k \ge N$, such that $T_{\iota_j} \in \cF(T_k, \kappa)$
for all $k \ge 1$ and $j \in [1 + \sum_{i=1}^{k-1} N_i, \sum_{i=1}^k N_i]$.

\begin{lemma}\label{lem:H3}
If (O1$'$) and (O2) are satisfied, then for each $\delta>0$ small enough (depending on $\musrb(H)$) there exists $n_\delta\in\bN$, $n_\delta\leq C\ln\delta^{-1}$ for some constant $C>0$, such that for all 
$n_\delta$-admissible sequences $(\iota_j)_j$,
all $W\in\cW^s(\delta)$ and $n\geq n_\delta$,
 \[
 \sum_{W'\in Lo^H_n(W; \, \delta)} |W|^{-1}\int_{W'} J_{W'}T_n {\; \geq\;} \frac 12(1 - \musrb(H)) \, .
 \]
 \end{lemma}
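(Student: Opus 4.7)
The plan is to decompose the sum into three parts: the total $\G_n^H$-contribution $\sum_{W' \in \G_n^H(W)} |T_n W'|$ evaluated via a change of variables identity, the short piece contribution controlled via the growth lemma, and the mass falling in the hole controlled via equidistribution (Corollary \ref{cor:extend}).

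First, since each $x \in W$ has a unique preimage under $T_n$ in $\bigsqcup_{W_i \in \G_n(W)} W_i$, a change of variables on each branch yields the identity
\[
\sum_{W_i \in \G_n(W)} |T_n(W_i \cap H^c)| \;=\; |W| - \int_W \cL_n \ind_H\, dm_W.
\]
Thus the content of the lemma is essentially to show that $\fint_W \cL_n \ind_H\, dm_W$ is close to $\musrb(H)$, plus an estimate removing the short pieces. For the main term, I would approximate $\ind_H$ from above and below by functions $f_\pm \in C^1(M)$ with $f_- \le \ind_H \le f_+$, $|f_\pm|_{C^1} \le C \eta^{-1}$, and $\musrb(f_+-f_-)\le \eta$. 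Such approximations exist by a disintegration argument along the foliation used in the proof of Lemma~\ref{lem:order}: assumption (O2) gives $m_W(N_\varepsilon(\partial H))\le C_t\varepsilon$ on every stable leaf, so (together with the analogous bound from (O2)-type transversality on the leaves of the complementary foliation that comes from the global cones in {\bf (H1)}) $\musrb(N_\varepsilon(\partial H))\le C'\varepsilon$, enabling the smoothing on a $C\eta$-neighbourhood of $\partial H$. Applying Corollary~\ref{cor:extend} with $\psi\equiv 1$ and $g\equiv \musrb(f_\pm)$,
\[
\left| \fint_W \cL_n \ind_H\, dm_W - \musrb(H) \right| \;\le\; \eta + C\vartheta^n \eta^{-1},
\]
and the choice $\eta=\vartheta^{n/2}$ makes the right side $O(\vartheta^{n/2})$.

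For the short pieces, each $W_i \in \G_n(W)$ satisfies $|W_i|\le\delta_0$ and, by (O$1'$), is cut by $\partial H$ into at most $P_0$ pieces. Since $|W'|<\delta$ on $Sh_n^H(W;\delta)$, estimating the Jacobians by Lemma~\ref{lem:full growth}(b) (plus Remark~\ref{rem:improve}, since $|W|\ge\delta$ allows $|W|/\delta_0$ to replace the bare $\bar C_0$),
\[
\sum_{W' \in Sh_n^H(W;\delta)} |T_n W'| \;\le\; \delta \, P_0 \sum_{W_i \in \G_n(W)} |J_{W_i}T_n|_{C^0(W_i)} \;\le\; P_0 \delta\bigl( \bar C_0 \delta_0^{-1}|W| + C_0\theta_1^n\bigr),
\]
which divided by $|W|\ge\delta$ is bounded by $2P_0\bar C_0\delta/\delta_0 + P_0 C_0\theta_1^n$.

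Combining the three ingredients gives
\[
\frac{1}{|W|}\sum_{W' \in Lo_n^H(W;\delta)} |T_n W'| \;\ge\; (1-\musrb(H)) - C\vartheta^{n/2} - 2P_0\bar C_0\delta/\delta_0 - P_0 C_0 \theta_1^n,
\]
and the desired lower bound $\tfrac12(1-\musrb(H))$ follows by first taking $\delta$ small enough that $2P_0\bar C_0\delta/\delta_0\le\tfrac18(1-\musrb(H))$, and then taking $n\ge n_\delta := C_\sharp \log\delta^{-1}$ with $C_\sharp$ large enough that both $\theta_1^n$ and $\vartheta^{n/2}$ are smaller than a fixed multiple of $1-\musrb(H)$. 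The hardest part is the $C^1$ approximation and the resulting $\musrb$-thickness estimate on $N_\varepsilon(\partial H)$: the cone techniques give us equidistribution only in $C^1$, so the price of passing to indicators is the $\eta + \vartheta^n\eta^{-1}$ trade-off, and without the transversality built into (O2) we would not be able to close this loop with the rate $n_\delta = O(\log\delta^{-1})$ stated in the lemma.
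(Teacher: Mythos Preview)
Your proof is correct, but the paper takes a shorter route. Instead of mollifying $\ind_H$ and invoking Corollary~\ref{cor:extend} for $C^1$ functions, the paper observes that $\ind_{H^c}+1$ lies directly in a larger cone $\cC_{c',A',L'}(\delta)$: the verification of cone conditions~\eqref{eq:cone 2}--\eqref{eq:cone 5} for this specific function proceeds as in Lemma~\ref{lem:case-a} using (O$1'$) and (O2), but the small-hole assumption is no longer needed because the added constant already guarantees $\tri\ind_{H^c}+1\tri_-\ge 1$. One then applies~\eqref{eq:avg conv} of Theorem~\ref{thm:equi} directly in that larger cone to obtain $|\fint_W\cL_n(\ind_{H^c})-(1-\musrb(H))|\le C_H\vartheta^n$, with no $\eta$ versus $\vartheta^n\eta^{-1}$ trade-off and hence the full rate $\vartheta^n$ rather than $\vartheta^{n/2}$. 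The short-piece estimate and the final assembly are then the same as yours.

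Your mollification approach is the one the paper itself later uses in Proposition~\ref{prop:entry} (for indicators whose boundary geometry is not compatible with the cone), so it is a perfectly legitimate alternative here; it just costs a square root in the rate. One minor correction: for the bound $\musrb(N_\varepsilon(\partial H))\le C'\varepsilon$ you do not need transversality with a complementary foliation. Disintegrating $\musrb$ along the stable-line foliation of Lemma~\ref{lem:order} and applying (O2) on each leaf (together with (O$1'$) to bound the number of components) already gives the estimate.
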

\begin{proof}
Arguing exactly as in  Lemma \ref{lem:case-a} it follows that if (O$1'$) and (O2) are satisfied, then there exists $c'\geq c,A'\geq A, L'\geq L$ such that $\ind_{H^c}+  1  \in \cC_{c',A',L'}(\delta)$
and we may choose $c', A', L'$ and $\delta>0$ such that the conditions of
Theorem~\ref{thm:cone contract} are satisfied. 
Setting $n_\delta := N_{\cF}'$ from Theorem~\ref{thm:cone contract} for these cone parameters, we apply
equation~\eqref{eq:avg conv} of Theorem~\ref{thm:equi} to this larger cone,
\[
\left|\fint_W \cL_n (\ind_{H^c}) - (1-\musrb(H)  ) \right|=\left|\fint_W \cL_n (\ind_{H^c}+1) -  2
+  \musrb(H)\right|\leq C_H\vartheta^n \, .
\]
On the other hand, recalling Lemma \ref{lem:full growth},
\[
\begin{split}
\left| \fint_W \cL_n (\ind_{H^c})-\sum_{W'\in Lo^H_n(W; \, \delta)} |W|^{-1}\int_{W'} J_{W'}T_n \right|&\leq \sum_{W'\in Sh^H_n(W; \, \delta)} |W|^{-1}\int_{W'} J_{W'}T_n\\
&\leq P_0(\bar  C_0\delta_0^{-1}\delta+C_0\theta_1^n),
\end{split}
\] 
which implies the lemma.
\end{proof}

We are now able to state the analogue of Proposition \ref{prop:case-a} without the small hole condition.  Note, however, that now $n_\star$ has a worse dependence on $\delta$ that we refrain from making explicit.
We recall from Remark~\ref{conefixed} that we have fixed the parameters $c, A, L$ of the cone, but
we may choose $\delta$ smaller as needed.

\begin{prop}\label{prop:case-b} 
Under assumptions (O1$'$) and (O2), for each $\delta>0$ small enough there exist
$\chi' \in (0,1)$ and $J, n_\star\in\bN$ depending on (O$1'$), (O2), $\musrb(H)$ and $\delta$, such that, for all 
$n_\star$-admissible sequences $(\iota_j)_j$ and for all $n\geq J n_\star$,
$[\cL_n\ind_{H^c}]\cC_{c,A,L}(\delta)\subset  \cC_{\chi' c,\chi' A,\chi' L}(\delta)$.
\end{prop}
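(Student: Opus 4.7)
The plan is to mirror the two-step structure of Proposition~\ref{prop:case-a}, replacing the small-hole analogue (Lemma~\ref{lem:case-a}) with an iterated version that uses Lemma~\ref{lem:H3} to compensate for the mass lost to the (possibly large) hole. Rather than attempting to control the bare multiplication operator $\ind_{H^c}$, which can decrease $\tri\cdot\tri_-$ arbitrarily much when $H$ is large, I will work directly with the composition $\cL_{n_1}\ind_{H^c}$ for $n_1\geq n_\delta$ large enough, where $n_\delta$ is the iteration count from Lemma~\ref{lem:H3}.

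\emph{Step 1 (one opening followed by $n_1$ iterates maps into a larger cone).} Let $f\in\cC_{c,A,L}(\delta)$, $W\in\cW^s(\delta)$ and $\psi\in\cD_{a,\beta}(W)$. Decompose
\[
\int_W \cL_{n_1}\ind_{H^c}f\,\psi\,dm_W
=\sum_{W'\in Lo^H_{n_1}(W;\delta)}\int_{W'} f\,\hT_{n_1,W'}\psi
+\sum_{W'\in Sh^H_{n_1}(W;\delta)}\int_{W'} f\,\hT_{n_1,W'}\psi.
\]
On long pieces, subdivide into curves of length in $[\delta,2\delta]$ and apply \eqref{eq:cone 2}: each integral lies between $\tri f\tri_-\int_{T_{n_1}W'}\psi$ and $\tri f\tri_+\int_{T_{n_1}W'}\psi$. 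Summing via Lemma~\ref{lem:H3} and absorbing the $\cD_{a,\beta}$ factor $e^{a(2\delta)^\beta}\le2$ yields long-piece bounds of $\tfrac14(1-\musrb(H))\tri f\tri_-\int_W\psi$ from below and $\tri f\tri_+\int_W\psi$ from above. On short pieces, cone condition \eqref{eq:cone 3} combined with Lemma~\ref{lem:full growth}(b) and an extra factor $P_0$ from (O1$'$) yields an error bounded by a multiple of $AP_0(\bar C_0\delta\delta_0^{-1}+C_0\theta_1^{n_1})\tri f\tri_-\int_W\psi$. Taking $\delta$ small enough in terms of $\musrb(H)$ and then $n_1$ large enough that $\theta_1^{n_1}\le\delta/\delta_0$, this error is dominated by a quarter of the main term, giving
\[
\tri\cL_{n_1}\ind_{H^c}f\tri_-\ge\tfrac18(1-\musrb(H))\tri f\tri_-,\qquad
\tri\cL_{n_1}\ind_{H^c}f\tri_+\le 2\tri f\tri_+.
\]

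\emph{Step 2 (other cone conditions and iterated contraction).} Conditions \eqref{eq:cone 3} and \eqref{eq:cone 5} for $\cL_{n_1}\ind_{H^c}f$ are verified by repeating the arguments of Lemma~\ref{lem:case-a} and Section~\ref{subsec:contract c} applied at the $n_1$-th iterate. The matched/unmatched decomposition of two nearby curves uses the vertical foliation of Section~\ref{subsec:contract c}; the unmatched pieces come from both classical singularities and the hole boundary, the latter controlled by (O2) together with the $P_0$ bound from (O1$'$), with each cohort of unmatched pieces created at step $k\le n_1$ contracting as $\Lambda^{-k}$. The result is an inclusion $\cL_{n_1}\ind_{H^c}\cC_{c,A,L}(\delta)\subset\cC_{c_1,A_1,L_1}(\delta)$ for parameters $c_1,A_1,L_1\ge c,A,L$ depending on $c,A,L,\musrb(H),\delta,P_0,C_t$ and the geometric data. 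Choose $c,A,L$ large enough (and satisfying Section~\ref{sec:conditions}) that $c_1,A_1,L_1$ also satisfy these conditions. Theorem~\ref{thm:cone contract} applied to $J$ consecutive admissible blocks of $N_\cF$ iterates then contracts $\cC_{c_1,A_1,L_1}(\delta)$ into $\cC_{\chi^J c_1,\chi^J A_1,\chi^J L_1}(\delta)$; choosing $J$ so that $\chi^J\max\{c_1/c,A_1/A,L_1/L\}\le\chi$ and setting $n_\star:=n_1+JN_\cF$ completes the proof.

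The main obstacle is Step 1. Unlike in Lemma~\ref{lem:case-a}, there is no pointwise lower bound on the surviving fraction $\ind_{H^c}$ along a single short stable curve---some $W\in\cW^s(\delta)$ may even lie entirely inside $H$---so the lower bound on $\tri\cdot\tri_-$ can be recovered only after iteration, in an averaged sense, via Lemma~\ref{lem:H3}. Consequently, the short-piece error term of order $A\delta$ coming from \eqref{eq:cone 3} must be dominated by a small fraction of $1-\musrb(H)$, which is the origin of the ``$\delta$ small depending on $\musrb(H)$'' requirement in the statement, and the reason $n_\star$ cannot be made explicit in terms of $\delta$ alone.
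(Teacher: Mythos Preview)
Your proposal is correct and follows essentially the same two-stage strategy as the paper: first show that $\cL_{n_1}\ind_{H^c}$ maps $\cC_{c,A,L}(\delta)$ into a larger cone $\cC_{c_1,A_1,L_1}(\delta)$ (the paper isolates this as Lemma~\ref{lem:large hole}, obtaining the explicit constants $c'=cP_0$, $A'=6A/(1-\musrb(H))$, $L'=9L/(1-\musrb(H))$), then iterate Theorem~\ref{thm:cone contract} in blocks of $N_\cF$ to contract back into $\cC_{\chi c,\chi A,\chi L}(\delta)$. Your treatment of the lower bound on $\tri\cdot\tri_-$ via Lemma~\ref{lem:H3}, the short-piece error via \eqref{eq:cone 3} with the extra $P_0$ factor, and the matched/unmatched decomposition for \eqref{eq:cone 5} with hole-boundary contributions controlled by (O2) all match the paper's argument; your closing remark about why $\delta$ must be small in terms of $\musrb(H)$ is exactly the mechanism at work.
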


Before proving Proposition \ref{prop:case-b}, we state an auxiliary lemma, similar to Lemma~\ref{lem:case-a}.

\begin{lemma}
\label{lem:large hole}
If $H$ satisfies (O$1'$) and (O2),
there exists\footnote{ Since we have fixed the cone constants $c,A,L$, the number $\bar n_\delta$
depends on the constants appearing in (O$1'$) and (O2) as well as $\musrb(H)$ and the choice
of $\delta$, from Lemma~\ref{lem:H3}.} 
$\bar n_\delta \ge n_\delta$ such that  for $n \ge \bar n_\delta$ and all $n_\delta$-admissible sequences $(\iota_j)_j$, we have
$[\Lp_n\ind_{H^c}] \cC_{c,A,L}(\delta) \subset \cC_{c', A', L'}(\delta)$, where
\[
c' = cP_0, \quad A' = A \frac{6}{1-\musrb(H)}, \quad \mbox{and} \quad L' = L \frac{9}{1-\musrb(H)} \, .
\]
\end{lemma}

\begin{proof}[Proof of Proposition~\ref{prop:case-b}]
As in Section~\ref{sec:small}, we may choose minimal $c'' \ge c'$, $A'' \ge A'$ and $L'' \ge L'$
and $\delta >0$ sufficiently small to satisfy the hypotheses of Theorem~\ref{thm:cone contract}.
Then letting $n_\star = \max \{ N_{\cF}' , \bar n_\delta \}$, with\footnote{$N_{\cF}'$ is number from
Theorem~\ref{thm:cone contract} applied to the cone with larger constants $c'', A'', L''$.} 
$ N_{\cF}' = C'_\star |\ln \delta|  + k_*n_*$ as before,
we may apply both Lemma~\ref{lem:large hole} and Theorem~\ref{thm:cone contract} to obtain,
\[
[\Lp_n \ind_{H^c}] \cC_{c,A,L}(\delta) \subset \Lp_{m n_\star} \cC_{c'',A'',L''}(\delta)
\le \cC_{\chi^m c'', \chi^m A'', \chi^m L''}(\delta) \, ,
\]
for as long as $\chi^m c'' > c$ or $\chi^m A'' > A$ or $\chi^m L'' > L$.  Letting $m_1$ denote
the greatest  $m$ such that $\chi^m c''  > c$, $\chi^m A''  > A$ or $\chi^m L''  >  L$, and setting
$J = m_1+1$ produces the required contraction.
\end{proof}

\begin{proof}[Proof of Lemma~\ref{lem:large hole}]
Let $n\geq n_\delta$ (from Lemma \ref{lem:H3}) and $f \in \cC_{c, A, L}(\delta)$.  For each $W\in  \cW^s(\delta)$ and $\psi  \in \cD_{a,\beta}(W)$, we have
\begin{equation}
\label{eq:H split}
\int_W\psi \, \cL_n ( \ind_{H^c} f )=\sum_{W_i\in Lo^H_{n}(W; \delta)}\int_{W_i}  \hT_{n,i} \psi \, f  + \sum_{W_i \in Sh^H_{n}(W; \delta)}\int_{W_i }  \hT_{n,i} \psi \, f ,
\end{equation}
where we are using the notation of  Section~\ref{sec:test} for the test functions.  
Since any element of $\G_n(W)$ may produce up to $P_0$ elements of  
$Sh^H_{n}(W; \delta)$ according to assumption (O$1'$), we estimate
\[
\begin{split}
\int_W \psi \, \cL_n ( \ind_{H^c} f ) & \le  \sum_{W_i \in Lo_n^H(W;  \delta)} \tri f \tri_+  \int_{T_nW_i} \psi + 
A P_0 \tri f \tri_- e^{a(2\delta)^\beta} \int_W \psi \; (\bar C_0 \delta \delta_0^{-1} + C_0 \theta_1^n) \\
& \le \tri f \tri_+ \int_W \psi \; \left( 1 + A P_0e^{a(2\delta)^\beta} (\bar C_0 \delta \delta_0^{-1} 
+ C_0 \theta_1^n) \right) \, ,
\end{split}
\]
where we have used $|W| \ge \delta$ and cone condition \eqref{eq:cone 3},  as well as
Lemma~\ref{lem:full growth}(b) to sum over elements of $Sh^H_n(W; \delta)$.
 
Analogously, using Lemma \ref{lem:H3},
\[
\begin{split}
\int_W \psi \, \cL_n ( \ind_{H^c} f ) & \ge  \sum_{W_i \in Lo_n^H(W; \delta)} \tri f \tri_-  \int_{T_nW_i} \psi -
A P_0 \tri f \tri_- e^{a(2\delta)^\beta} \int_W  \psi  \; (\bar C_0 \delta \delta_0^{-1}+ C_0 \theta_1^n) \\
& \ge \tri f \tri_- \int_W \psi \; \left( \frac {e^{-a(2\delta)^\beta}}2(1-\musrb(H))- A P_0e^{a(2\delta)^\beta} (\bar C_0 \delta \delta_0^{-1} + C_0 \theta_1^n ) \right) \, .
\end{split}
\]
Let $n_2$ be such that $2AP_0C_0\theta_1^{n_2}\leq \frac {1}{24} (1- \musrb(H))$, then for $n\geq n_2$ and $\delta$ small enough we have
\begin{equation}
\label{eq:lower big H}
\tri \Lp_n(\ind_{H^c} f) \tri_- \ge \tri f \tri_-  \;  \frac{1}{6}(1-\musrb(H)).
\end{equation}
Accordingly, for $n \ge \max\{n_2, n_\delta \} =: \bar n_\delta$ and $\delta$ small enough, we obtain
\begin{equation}
\label{eq:L bound}
\frac{\tri \cL_n ( \ind_{H^c} f ) \tri_+ }{\tri \cL_n ( \ind_{H^c} f ) \tri_-}
\le \frac{ \frac 32 \tri f \tri_+}{\tri f \tri_- (\frac 16(1-\musrb(H))} \le \frac{9L}{1-\musrb(H)} =: L' \, .
\end{equation}

The contraction of $A$ follows step-by-step from our estimates in 
Section~\ref{sec:contraction-A}.  Taking $W \in \cW^s_-(\delta)$ and grouping terms as in \eqref{eq:H split} we treat both
long and short pieces precisely as in Section~\ref{sec:contraction-A} with the additional
observation that each element of $\G_n(W)$ produces at most $P_0$ elements
of $Sh_n^H(W; \delta)$ by assumption (O$1'$).  Thus \eqref{eq:A} becomes,
\begin{equation}
\label{eq:A H}
\begin{split}
& \frac{|\int_W \psi \, \Lp_n (\ind_{H^c}f) |}{\fint_W \psi} 
 \le A \delta^{1-q} |W|^q \tri f \tri_-
\left( 2 L A^{-1} + P_0 e^{a(2\delta)^\beta} ( \bar C_0 \delta_0^{-1} |W| + C_0 \theta_1^n )^{1-q}  \right) 
\\
& \quad \le A \delta^{1-q}|W|^q \tri \cL_n(\ind_{H^c}f) \tri_- \frac{6}{1-\musrb(H)} =: A' \delta^{1-q}|W|^q \tri \cL_n(\ind_{H^c}f) \tri_-\, ,
\end{split}
\end{equation}
where we have applied \eqref{eq:lower big H} and assumed $n \ge \max\{n_2, n_\delta \}$.

Finally, we show how the parameter $c$ contracts from cone condition \eqref{eq:cone 5}.
Following Section~\ref{subsec:contract c}, we take $W^1, W^2 \in \cW^s_-(\delta)$ with
$d_{\cW^s}(W^1, W^2) \le  \delta$, and $\psi_k \in \cD_{a,\alpha}(W^k)$ with 
$d_*(\psi_1, \psi_2) = 0$.  As before, we assume without loss of generality 
that $|W^2|\geq |W^1|$ and $\fint_{W^1} \psi_1 = 1$. 

We begin by recording that, by \eqref{eq:A H}, 
\[
\frac{\int_{W^k} \psi_k \, \Lp_n(\ind_{H^c} f)}{\fint_{W^k}\psi_k}\leq A' |W^k|^q\delta^{1-q}\tri \Lp_n (\ind_{H^c} f) \tri_-\leq \frac 12 d_{\cW^s}(W^1,W^2)^\gamma 
\delta^{1-\gamma}  c A' \tri \ind_{H^c} f\tri_-,
\]
for $k=1,2$, provided $|W^2|^q \leq \delta^{q-\gamma} \frac c2 d_{\cW^s}(W^1,W^2)^{\gamma}$.
Accordingly, it suffices to consider the case $|W^2|^q \geq \delta^{\gamma - q} \frac c2
d_{\cW^s}(W^1,W^2)^{\gamma}$. 

It follows from \eqref{eq:W-difference} that  
$|W^1|^q \geq \frac 12 \delta^{q-\gamma} \frac c2 d_{\cW^s}(W^1,W^2)^\gamma$, recalling that $d_{\cW^s}(W^1, W^2) \le  \delta$ and \eqref{eq:q-gamma1}.

Next, following \eqref{eq:prepare-c}, we decompose elements of $\G_n^H(W^k)$ into
matched and unmatched pieces, as in \eqref{eq:unstable split}.  We estimate the
unmatched pieces precisely as in \eqref{eq:V}, noting that by (O$1'$) and the transversality
condition $(O2)$, each previously unmatched element of
$\G_n(W^k)$ may be subdivided into at most $P_0$ additional unmatched pieces $V^k_j$, 
while each matched element may produce up to $P_0$ additional unmatched pieces
each having length at most,
\[
|V^k_j| \le C_t C_5 \Lambda^{-n} d_{\cW^s}(W^1, W^2) \, ,
\]
by Lemma~\ref{lem:compare}(a).  Thus,
\begin{equation}
\label{eq:V H}
\sum_{j,k} \left| \int_{V^k_j} f \, \hT_{n,V^k_j}\psi_k \right| \le \frac{9  P_0}{1-\musrb(H)} C_4 A L 
\delta^{1-\gamma}  d_{\cW^s}(W^1, W^2)^\gamma \tri \Lp_n(\ind_{H^c} f) \tri_- \, ,
\end{equation}
where we have used \eqref{eq:lower big H} in \eqref{eq:tri} to estimate
\begin{equation}
\label{eq:convert}
\tri \Lp_n f \tri_- \le \tri \Lp_n f \tri_+ \le \tfrac 32 \tri f \tri_+ \le \tfrac 32 L \tri f \tri_- \le \tfrac{9 L}{1-\musrb(H)} \tri \Lp_n(\ind_{H^c} f) \tri_- \, . 
\end{equation}

The estimate on matched pieces proceeds precisely as in \eqref{eq:c-decomposition}, and with an
additional factor of $P_0$ in \eqref{eq:summingU2}, we arrive at \eqref{eq:use-in-4},
again applying \eqref{eq:lower big H},
\[
\begin{split}
& \sum_j \left|\int_{U^1_j} f \, \hT_{n,U^1_j} \psi_1 -\int_{U^2_j} f \,  \hT_{n,U^2_j}\psi_2\right| \\
&\le 
\tfrac{6P_0}{1-\musrb(H)} 24 \bar C_0 C_s A \delta^{1-\gamma} d_{\cW^s}(W^1, W^2)^\gamma \tri \Lp_n(\ind_{H^c} f) \tri_- \left( 2^q 40 C_5 \delta^{q-\gamma} 
+ c C_5 \Lambda^{-n \gamma}  
+  2^q C_5  \Lambda^{-n}  \delta \right) .
\end{split}
\]
Combining this estimate together with \eqref{eq:V H} in \eqref{eq:prepare-c} (with $A'$ in place of
$A$ in \eqref{eq:prepare-c}), and recalling \eqref{eq:unstable split}, yields by \eqref{eq:switch test},
\[
\left|\frac{\int_{W^1} \cL_nf \, \psi_1}{\fint_{W^1}\psi_1} -\frac{\int_{W^2} \cL_nf \, \psi_2}{\fint_{W^2}\psi_2}\right|
\le \frac{6 P_0 }{1-\mu(H)} c A  \delta^{1-\gamma}  d_{\cW^s}(W^1, W^2)^\gamma \tri \Lp_n(\ind_{H^c} f) \tri_- \, ,
\]
where we have applied \eqref{eq:c cond} to simplify the expression.  Setting $c' = P_0 c$ and
recalling the definition of $A'$ from \eqref{eq:A H} completes the proof of the lemma.
\end{proof}


\subsection{ Loss of memory for sequential open billiards}
\label{sequential}
We conclude the section by illustrating several physically relevant models to which our results apply. 
Admittedly, we cannot treat the most general cases, yet we believe the following shows convincingly that the 
techniques developed here can be the basis of a general theory.

Dispersing billiards with small holes have been studied in \cite{DWY, dem inf, dem bill},
and results obtained regarding the existence and uniqueness of limiting distributions
in the form of SRB-like conditionally invariant measures, and singular invariant measures
supported on the survivor set.  In the present context, we are interested in generalizing these
results to the non-stationary setting.
Analogous results for sequences of expanding maps with holes have been proved in
\cite{ott1, ott2}.

For concreteness, we give two example of physical holes that satisfy our hypotheses, 
following
\cite{DWY, dem bill}.

\smallskip
\noindent
{\em Holes of Type I.}  Let $\bG \subset \partial Q$ be an arc in the boundary of one of the
scatterers.  Trajectories of the billiard flow are absorbed when they collide with
$\bG$.  This induces a hole $H$ in the phase space $M$ of the billiard map of the
form $(a,b) \times [-\pi/2, \pi/2]$.  Note that $\partial H$ consists of two vertical lines, so that
$H$ satisfies assumption (O2) since the vertical direction is uniformly transverse to the
stable cone, as well as assumptions (O1) and (O1$'$) with $P_0 = 3$.

\smallskip
\noindent
{\em Holes of Type II.}  Let $\bG \subset Q$ be an open convex set bounded away from
$\partial Q$ and having a $C^3$ boundary.  Such a hole induces a hole $H$ in $M$ 
via its `forward shadow.'

We define $H$ to be the set of $(r,\vf) \in M$ whose backward trajectory under the billiard flow enters
$\bG$ before it collides with $\partial Q$.  Thus points in $M$ which are about to enter
$\bG$ before their next collision under the forward billiard flow are considered still in the open
system, while those points in $M$ which would have passed through $\bG$ on the way
to their current collision are considered to have been absorbed by the hole.

With this definition, the geometry of $H$ is simple to state:  if we view $\bG$ as an additional
scatterer in $Q$, then $H$ is simply the image of $\bG$ under the billiard map.  Thus
$H$ will have connected components on each scatterer that has a line of sight to $\bG$,
and $\partial H$ will comprise curves of the form $\cS_0 \cup T(\cS_0)$, which
are positively sloped curves, all uniformly transverse to the stable cone.  Thus holes 
of Type II satisfy (O2) as well as (O1) and (O1$'$) with $P_0=3$.  (See the discussion
in \cite[Section~2.2]{dem bill}.)

\medskip
Still other holes are presented in \cite{dem bill} such as side pockets, or holes that depend
on both position and angle, which satisfy (O1), (O1$'$) and (O2), but for the sake of brevity, we do not
repeat those definitions here.

As noted, both holes of Type I and Type II satisfy (O1) and (O1$'$) with $P_0=3$.  Moreover, holes
of Type I satisfy (O2) with $C_t$ depending only on the maximum slope of curves in the
stable cone, which is uniform in the family $\cF(\tau_*, \cK_*, E_*)$ according to {\bf (H1)}:  
this (negative) slope is bounded below by $-\cK_{\max} - \frac{1}{\tau_{\min}}$, so choosing
$C_t \ge \cK_* + \tau_*^{-1}$ suffices.  Since $\partial H$ for holes of Type II have positive
slope, the same choice of $C_t$ will suffice for such holes to satisfy (O2).

Fix $\cF(\tau_*, \cK_*, E_*)$ and define $\cH(P_0, C_t)$ to be the collection of holes 
$H \subset M$ with $\musrb(H) \le 1/2$ and satisfying (O1) or (O$1'$) and (O2) with the given constants $P_0$ and $C_t$. 
We define a non-stationary open billiard by 
fixing a sequence of holes $H_k \in \cH(P_0, C_t)$, $k \in \mathbb{Z}^+$, satisfying either 
(O1) and (O2) or (O$1'$) and (O2).  In the first case, let $n_\star$ be from 
Proposition~\ref{prop:case-a}, while in the second, let $n_\star$ be 
from Proposition~\ref{prop:case-b}.\footnote{ Requiring $\musrb(H) \le 1/2$ enables a uniform choice of $n_\star$ for all $H \in \cH(P_0, C_t)$.}
Next, choose an $n_\star$-admissible sequence $(\iota_j )_j$, $\iota_j \in \cI(\tau_*, \cK_*, E_*)$. 

Recall \eqref{eq:n k notation}:  For $u, v \in \mathbb{N}$, $v > u$,
let $T_{v,u} = T_{\iota_v} \circ \cdots \circ T_{\iota_{u+1}}$.
For each $k \ge 1$, the open system relative to $H_k$ is defined by 
$\rT_k : (T_{k n_\star, (k-1)n_\star})^{-1}(M \setminus H_k) \to M \setminus H_k$, where
\[
\rT_k(x) = T_{\iota_{kn_\star }} \circ \cdots \circ T_{\iota_{(k-1) n_\star}} (x) \, \mbox{ for } \, x \in (T_{kn_\star, (k-1)n_\star})^{-1}(M \setminus H_k) \, .
\]
To concatenate these open maps into a sequential system, define
\[
 \rT_{j,i}(x) = \rT_j \circ \cdots \circ \rT_i(x) \, \mbox{ for } \, x \in \cap_{l=1}^j \rT_i^{-1} \circ \cdots \circ \rT_l^{-1}(M \setminus H_l) \, ,
\]
thus we allow escaping once every $n_\star$ iterates along the admissible sequence.
The transfer operator for the sequential open system is defined by 
\begin{equation}\label{eq:sequential}
\rL_{j,i} f = \cL_{T_{\iota_{(j+1)n_\star}} \circ \cdots T_{\iota_{jn_\star}} } \ind_{H_j^c} \cdots \cL_{T_{\iota_{(i+1)n_\star}} \circ \cdots T_{\iota_{in_\star}} }
\ind_{H_i^c} f \, .
\end{equation}
We will be interested in the evolution of probability densities under the sequential system,
given by $\frac{\rL_{n,k} f}{ \int_M \rL_{n,k} f \, d\musrb }$. 
Note that if $f\in \cC_{c,A,L}(\delta)$ then $\int_M \rL_{n,k} f \, d\musrb > 0$ for each $n$ (thus the normalization is well defined).
When $f \ge 0$, this normalization coincides with the $L^1(\musrb)$ norm; however, we use the integral
rather than the $L^1$ norm as the normalization since the integral is order preserving with respect to our cone, while the $L^1$ norm is not.
We conclude the section with a result regarding exponential loss of memory for the sequence of open billiards.
\begin{theorem}
\label{thm:sequential}
Fix $\tau_*, \cK_*>0$ and $E_* <\infty$, and let $a,c, A, L, \delta$ and $\delta_0$ satisfy the 
conditions of Theorem~\ref{thm:cone contract} and Lemma~\ref{lem:order}.  Let $P_0, C_t > 0$.
There exist $C>0$ and $\vartheta <1$ such that for all sequences $(H_i)_i \subset  \cH(P_0, C_t)$ satisfying either
(O1) and (O2) or (O$1'$) and (O2), all $n_\star$-admissible sequences $(\iota_j)_j \subset \cI(\tau_*, \cK_*, E_*)$,
for all $\psi \in C^1(M)$, all $f, g \in  \cC_{c,A,L}(\delta)$, all $n\ge1$ and all $1 \le k \le n$,
\[
\left| \int_M \frac{\rL_{n,k} f}{ \musrb( \rL_{n,k} f ) } \, \psi \, d\musrb  
-  \int_M \frac{\rL_{n,k} g}{ \musrb (\rL_{n,k} g )} \, \psi \, d\musrb \right|
\le C L \vartheta^{n-k} |\psi|_{C^1(M)}  \, .
\]
\end{theorem}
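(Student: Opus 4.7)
The plan is to adapt the argument of Theorem~\ref{thm:memory} (in particular Theorem~\ref{thm:memory state}(b)) to the sequential open dynamics encoded by $\rL_{n,k}$. First I would verify that $\rL_{n,k}$ acts as a composition of strict cone contractions. By Proposition~\ref{prop:case-a} (when the holes $H_i$ are small in the stable direction) or Proposition~\ref{prop:case-b} (in general, with $n_\star$ larger), each building block $[\cL_{T_{\iota_{(i+1)n_\star-1}}\circ\cdots\circ T_{\iota_{in_\star}}}]\ind_{H_i^c}$ maps $\cC_{c,A,L}(\delta)$ into $\cC_{\chi c,\chi A,\chi L}(\delta)$, provided $n_\star\ge J N_{\cF}$ is chosen to simultaneously satisfy the contraction requirements for the closed dynamics (Theorem~\ref{thm:cone contract}) and for the hole multiplication (Lemma~\ref{lem:case-a} or Lemma~\ref{lem:large hole}). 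Admissibility of $(\iota_j)_j$ guarantees that these inclusions all go through on every block. Combining this cone invariance with the finite-diameter estimate of Proposition~\ref{prop:diameter} and Liverani's Birkhoff theorem \cite[Theorem~2.1]{liv95}, the projective metric $\rho_\cC$ contracts geometrically under $\rL_{n,k}$, i.e.\ $\rho_\cC(\rL_{n,k}u,\rL_{n,k}v)\le C\vartheta^{n-k}$ for some $\vartheta\in(0,1)$ depending only on $c,A,L,\delta,\delta_0$ and the parameters of the family.

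Next I would normalize in such a way that the order-preserving semi-norm trick of \cite[Lemma~2.2]{LSV98} applies cleanly. Set
\[
\lambda=\frac{\musrb(\rL_{n,k}f)}{\musrb(\rL_{n,k}g)},
\]
which is strictly positive by Lemma~\ref{lem:order} and Remark~\ref{rem:after order}. Since $\cC_{c,A,L}(\delta)$ is closed under positive scaling, $\lambda g\in\cC_{c,A,L}(\delta)$, and $\musrb(\rL_{n,k}f)=\musrb(\rL_{n,k}(\lambda g))$. A direct manipulation gives
\[
\int_M\frac{\rL_{n,k}f}{\musrb(\rL_{n,k}f)}\psi\,d\musrb-\int_M\frac{\rL_{n,k}g}{\musrb(\rL_{n,k}g)}\psi\,d\musrb
=\frac{1}{\musrb(\rL_{n,k}f)}\int_M\bigl(\rL_{n,k}f-\rL_{n,k}(\lambda g)\bigr)\psi\,d\musrb.
\]
Writing $\psi=\tpsi-C_\psi$ with $\tpsi:=\psi+C_\psi$ and $C_\psi$ as in \eqref{eq:C psi}, the $C_\psi$ piece drops out because $\rL_{n,k}f$ and $\rL_{n,k}(\lambda g)$ have equal $\musrb$-integrals; moreover by the proof of Lemma~\ref{lem:order}, $\tpsi$ satisfies the hypothesis there, so $\|\cdot\|_\psi:=|\int_M\cdot\,\tpsi\,d\musrb|$ is an order-preserving semi-norm on $\cC_{c,A,L}(\delta)$ in the sense of \eqref{eq:psi seminorm}.

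Then I would invoke \cite[Lemma~2.2]{LSV98} exactly as in the proof of Theorem~\ref{thm:memory}(b). Since $\rL_{n,k}f$ and $\rL_{n,k}(\lambda g)$ lie in $\cC_{c,A,L}(\delta)$ and have the same $\musrb$-integral,
\[
\|\rL_{n,k}f-\rL_{n,k}(\lambda g)\|_\psi\le\bigl(e^{\rho_\cC(\rL_{n,k}f,\rL_{n,k}(\lambda g))}-1\bigr)\min\bigl\{\|\rL_{n,k}f\|_\psi,\|\rL_{n,k}(\lambda g)\|_\psi\bigr\}\le C'\vartheta^{n-k}\|\rL_{n,k}f\|_\psi.
\]
By Remark~\ref{rem:after order} and cone condition \eqref{eq:cone 2},
\[
\|\rL_{n,k}f\|_\psi\le C\,\tri\rL_{n,k}f\tri_+|\tpsi|_{C^1}\le C\,L\,\tri\rL_{n,k}f\tri_-\,|\psi|_{C^1}\le C''L\,\musrb(\rL_{n,k}f)\,|\psi|_{C^1}.
\]
Dividing by $\musrb(\rL_{n,k}f)$ yields the claimed bound $CL\vartheta^{n-k}|\psi|_{C^1(M)}$.

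The main obstacle I foresee is the first step: organizing the sequential dynamics so that both the admissibility hypothesis (which controls the \emph{billiard} sequence in $N_{\cF}$-blocks) and the strict cone contraction coming from the hole multiplication (which requires the full opening period $n_\star\ge JN_{\cF}$ of Propositions~\ref{prop:case-a}/\ref{prop:case-b}) hold simultaneously. Concretely, one must check that $n_\star$ can be taken as a multiple of $N_{\cF}$, that the constants $c,A,L,\delta$ are compatible with both Theorem~\ref{thm:cone contract} and Lemmas~\ref{lem:case-a}/\ref{lem:large hole}, and that the resulting contraction rate $\vartheta$ is uniform in the admissible sequence $(\iota_j)_j$ and in the hole sequence $(H_i)_i\subset\cH(P_0,C_t)$. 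Once this bookkeeping is done, the remaining steps are essentially identical to the closed-case argument.
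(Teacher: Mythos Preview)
Your proposal is correct and follows essentially the same approach as the paper's own proof: establish that each block of $\rL_{n,k}$ is a strict cone contraction via Proposition~\ref{prop:case-a} or \ref{prop:case-b} with uniform constants, then feed this into the order-preserving semi-norm argument of \cite[Lemma~2.2]{LSV98} exactly as in Theorem~\ref{thm:memory}(b), and finish with Remark~\ref{rem:after order} and cone condition~\eqref{eq:cone 2} to get the factor $L$. The only cosmetic difference is that the paper normalizes both $\rL_{n,k}f$ and $\rL_{n,k}g$ by their $\musrb$-integrals and invokes the scale-invariance of $\rho_{\cC}$, whereas you rescale $g$ by $\lambda=\musrb(\rL_{n,k}f)/\musrb(\rL_{n,k}g)$; these are equivalent, and your concern about the bookkeeping of $n_\star$ versus $N_{\cF}$ is already handled by the uniformity statements in Propositions~\ref{prop:case-a} and \ref{prop:case-b}.
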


\begin{proof}
Remark that the constants appearing in Propositions \ref{prop:case-a} and \ref{prop:case-b} are uniform,
depending only on $\cF(\tau_*, \cK_*, E_*)$, $P_0$ and $C_t$.
Hence, if $f, g \in\cC_{c,A,L}(\delta)$, then for each $k\leq n\in\bN$, $\rL_{n,k} f$, $\rL_{n,k} g \in \cC_{c,A,L}(\delta)$. 
Since  $\int_M \frac{\rL_{n,k} f}{ \musrb(\rL_{n,k} f )  } d\musrb = \int_M \frac{\rL_{n,k} g}{\musrb( \rL_{n,k} g ) } d\musrb =1$, the theorem follows arguing exactly as in the proof of Theorem \ref{thm:memory}(b),  using again the order preserving
semi-norm $\| \cdot \|_\psi$, as well as the fact that by Remark~\ref{rem:after order},
\[
\frac{ \| \rL_{n,k} f \|_\psi }{ \musrb( \rL_{n,k} f) } \le \bar C |\psi|_{C^1} \frac{\tri  \rL_{n,k} f \tri_+}{\tri \rL_{n,k} f \tri_-} \le \bar C L |\psi|_{C^1} \, .
\]
When invoking \eqref{eq:adapted}, it holds that $\rho_{\cC}( \rL_{n,k} f/ \musrb( \rL_{n,k} f ), \rL_{n,k} g/ \musrb( \rL_{n,k} g) ) = \rho_{\cC} (\rL_{n,k} f, \rL_{n,k} g)$ due to the projective nature of the metric. 
\end{proof}

Note that, by changing variables, 
$\int_M \rL_{n,k}  f \, \psi \, d\musrb = \int_{\rM_{n,k}} f \, \psi \circ \rT_{n,k} \, d\musrb$, 
where $\rM_{n,k} = \cap_{i=k}^n \rT_k^{-1} \circ \cdots \circ \rT_i^{-1}(M \setminus H_i)$.
Thus the conclusion of the theorem is equivalent to the expression,
\[
\left| \frac{\int_{\rM_{n,k}} f \, \psi \circ \rT_{n,k} \, d\musrb}{\int_{\rM_{n,k}} f \, d\musrb}
-  \frac{\int_{\rM_{n,k}} g \, \psi \circ \rT_{n,k} \, d\musrb}{\int_{\rM_{n,k}} g \, d\musrb} \right|
\le C  L  \vartheta^{n-k} |\psi|_{C^1(M)}   \, .
\]

Next we show that sequential systems with holes allow us to begin investigating some physical problems that have attracted much attention: chaotic scattering and random Lorentz gasses.


\subsection{Chaotic scattering (boxed)}
\label{sec:scattering}
Consider a collection of strictly convex pairwise disjoint obstacles $\{B_i\}$ in $\R^2$ {\em for which the non-eclipsing condition may fail}.\footnote{ Remember that the {\em non-eclipsing condition} is the requirement that the convex hull of any two obstacles does not intersect any other obstacle.} Assume that there exists a closed rectangular box $R=[a,b]\times[c,d]$ such that if an obstacle does not intersect its boundary, then it is contained in the box. In addition, if an obstacle intersects the boundary of $R$, then it is symmetrical with respect to a reflection  across all the linear pieces of the boundary which the obstacle intersects (see Figure \ref{fig:chaotic} for a picture).  Finally, we will assume a finite horizon condition on the cover $\widetilde Q$ defined after Remark~\ref{rem:whyB}.

\begin{remark}
The restriction regarding symmetrical reflections on the configuration of obstacles is necessary only because we did not develop the theory in the case of billiards in a polygonal box (see Remark \ref{rem:whyB} and the following text to see why this is relevant). Such an extension is not particularly difficult and should eventually be done. Other extensions that should be within reach of our technology are more general types of holes and billiards with corner points. Here, however, we are interested in presenting the basic ideas; addressing all possible situations would make our message harder to understand.
\end{remark}

\begin{figure}
\begin{centering}
\begin{tikzpicture}[scale=0.6]
\fill[gray!60, rotate=45] (6,3) ellipse (1 and 1.8);
\fill[gray!60, rotate=-45] (1,7) ellipse (1 and 2);
\fill[gray!60] (8,3) circle (1);
\fill[gray!60] (3,8) ellipse (1 and .5);
\fill[gray!60] (5,0) ellipse (.7 and 2);
\fill[gray!60] (2,4) ellipse (.5 and .3);
\fill[gray!60] (3,1.5) ellipse (.3 and .5);
\fill[gray!60] (0,0) circle (1.7);
\draw[dashed] (0,0)--(8,0);
\draw [dashed](0,0)--(0,8);
\draw[dashed](0,8)--(8,8);
\draw[dashed](8,8)--(8,0);
\draw (-8,2)--(-8,6);
\draw[->](-8,2)--(-6,2);
\draw[->](-8,3)--(-6,3);
\draw[->](-8,4)--(-6,4);
\draw[->](-8,5)--(-6,5);
\draw[->](-8,6)--(-6,6);
\node at (-7,1) {{\tiny Incoming particle beam}};
\end{tikzpicture}
\caption{Obstacle configuration for which the non-eclipse condition fails and the box $R$ (dashed line). }\label{fig:chaotic}
\end{centering}
\end{figure}
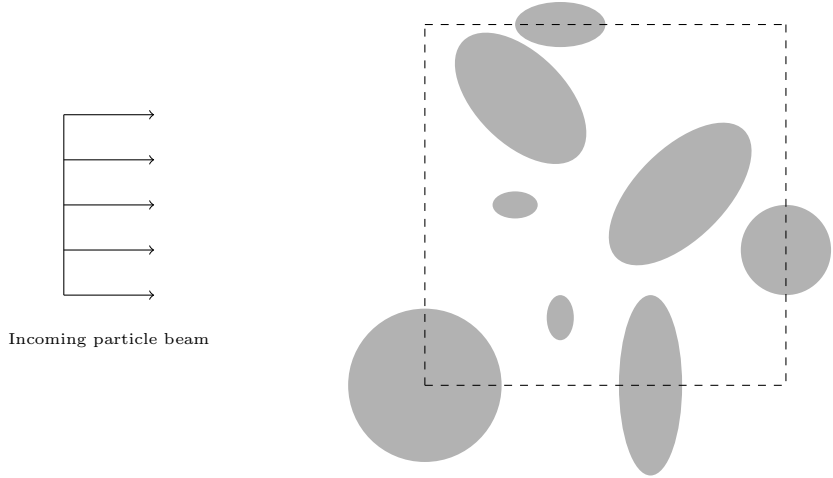

\begin{lemma}\label{lem:escape}
If a particle exits $R$ at time $t_0\in\R$, then, in the time interval $(t_0,\infty)$,  it will experience only a finite number of collisions and it will never enter $R$ again. 
\end{lemma}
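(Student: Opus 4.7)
The plan is to combine a reflection-symmetry argument controlling the monotonicity of velocity with the boundedness of the obstacles. Without loss of generality, suppose the particle exits $R$ through the top edge $\{y = d\}$ at $t_0$ (the other three edges are handled identically, and the measure-zero case of exit through a corner of $R$ follows by continuity). The exit condition forces $v_y(t_0^+) > 0$. The core claim is that the $y$-component of velocity strictly increases at every subsequent collision. Indeed, any obstacle $B_i$ meeting the open upper half-plane $\{y > d\}$ must, by hypothesis, be symmetric under reflection about $\{y = d\}$; being strictly convex with $C^3$ boundary, its unique leftmost and rightmost boundary points must lie on the axis of symmetry $\{y = d\}$, so at every boundary point $(x_*, y_*) \in \partial B_i$ with $y_* > d$ the outward unit normal $n = (n_x, n_y)$ satisfies $n_y > 0$. (If $B_i$ also crosses a side wall of $R$, an additional symmetry is imposed but only reinforces the conclusion in the upper strip.) Applying the elastic reflection rule $v^+ = v^- - 2(v^-\cdot n)n$ with $v^-\cdot n < 0$ yields
\[
v_y^+ - v_y^- = -2(v^-\cdot n)\,n_y > 0.
\]

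Since $v_y$ is piecewise constant between collisions and strictly increases at each collision after $t_0$, $v_y(t) \ge v_y(t_0^+) > 0$ for all $t > t_0$, hence $y(t) \ge d + v_y(t_0^+)(t - t_0)$. In particular $y(t) > d$ for all $t > t_0$, so the particle never re-enters $R \subset \{y \le d\}$. Setting $Y := \sup_i \sup_{(x,y)\in B_i} y$ (finite by boundedness of the obstacles), no collision can occur once $y(t) > Y$, i.e., once $t > t_* := t_0 + (Y-d)/v_y(t_0^+)$. On the compact interval $[t_0, t_*]$ the pairwise disjointness of the obstacles yields a uniform lower bound $\varepsilon_0 := \min_{i\ne j}\operatorname{dist}(B_i, B_j) > 0$ on the distance between two consecutive collision points: strict convexity of each $B_i$ guarantees that a half-line leaving $B_i$ in an outward direction never re-enters $B_i$, so two consecutive collisions must occur on distinct obstacles. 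With unit speed, the total number of collisions in $[t_0, t_*]$ is then at most $(t_* - t_0)/\varepsilon_0 < \infty$.

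The principal obstacle in this plan is the positivity claim $n_y > 0$ on $\partial B_i \cap \{y > d\}$: one must rule out horizontal tangents strictly above the axis of symmetry, which is precisely where strict convexity of the scatterers plays a decisive role (a merely convex symmetric obstacle with a flat top would break the monotonicity). Note that the finite-horizon hypothesis on $\widetilde Q$ stated before the lemma is not invoked here; it becomes relevant only in the subsequent analysis of trajectories that remain inside $R$.
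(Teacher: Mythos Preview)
Your proof is correct and follows essentially the same approach as the paper's: exploit the reflection symmetry of any obstacle met beyond the exit edge to show that the outward velocity component is non-decreasing at each collision, hence the particle never returns; then invoke boundedness of the obstacle configuration and the positive minimum inter-obstacle distance to bound the number of collisions. The paper argues with $\eta_1 \ge 0$ via the convex segment joining $p$ to its reflection, while you obtain the slightly sharper $n_y > 0$ by locating the extremal points of the strictly convex symmetric scatterer on the symmetry axis; you also make the finiteness argument more explicit by bounding the exit time through the linear lower bound on $y(t)$. These are elaborations rather than a different route.
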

\begin{proof}
Recall that $R=[a,b]\times[c,d]$.
Of course, the lemma is trivially true if, after exiting $R$, the particle has no collisions. Let us imagine that the particle, after exiting from the vertical side $(b,c)-(b,d)$, collides instead  with the obstacle $B_i$ at the point $p=(p_1,p_2)$. Note that $B_i$ must then intersect the same boundary, otherwise it would be situated to the left of the line $x=b$ and the particle could not collide since necessarily $p_1>b$. Our hypothesis that $B_i$ be symmetric with respect to reflection across $x=b$ implies that also $( 2b - p_1, p_2 )\in\partial B_i$. Thus, by the convexity of $B_i$, the horizontal segment joining $p$ and $(2b - p_1, p_2)$ is contained in $B_i$. This implies that, calling $\eta=(\eta_1,\eta_2)$ the normal to $\partial B_i$ in $p$, it must be $\eta_1\geq 0$. In addition, if $v=(v_1,v_2)$ denotes the particle's velocity just before collision, it must be that $v_1>0$ since the particle has crossed a vertical line to exit $R$. Finally, $\langle v,\eta\rangle\leq 0$, otherwise the particle would not collide with $B_i$. But since the velocity after collision is given by $v^+=v-\langle v,\eta\rangle\eta$, it follows $v^+_1=v_1-\langle v,\eta\rangle\eta_1\geq v_1$. That is, the particle cannot come back to the box $R$. Since all the obstacles are contained in a larger box $B_1$ and since there is a minimal distance between obstacles, the above also implies that the particle can have only finitely many collisions in the future. The other cases can be treated exactly in the same manner.
\end{proof}

\begin{remark}\label{rem:whyB}
We want to consider a scattering problem: the particles enter the box coming from far away and with random position and/or velocity, interact and, eventually, leave the box. The basic question is how long they stay in the box or, better, what is the probability that they stay in the box longer than some time $t$.
This is nothing other than an open billiard with holes. Unfortunately, the holes are large and our current theory allows us to deal with large holes only if enough hyperbolicity is present.  To extend the result to systems with small hyperbolicity is a very important (and hard) problem as one needs to understand the combinatorics of the trajectories for long times.

An alternative is to study the scattering problem under the non-eclipsing condition.
 Such an assumption avoids the technicalities associated with billiards
and results in Axiom A dynamics with a natural finite Markov partition for the collision map  
on the survivor set. This problem has been studied and strong results proving exponential escape as well as
exponential mixing on the survivor set have been obtained in both discrete \cite{Mo91, LM96, Mo07} as well as continuous \cite{St} time.  There are also recent results on the rigidity problem
for such open billiards \cite{DS20, DS22}.
Yet
the condition is artificial once there are more than 2 scatterers, hence the importance of developing an alternative approach.
\end{remark}
Given the above remark we modify the system in order to have the needed hyperbolicity. This is not completely satisfactory, yet it shows that our machinery can deal with large holes and illustrates exactly what further work is necessary to address the general case.

{\em Fixing $N$ sufficiently large, we suppose that when a particle enters the box, the boundaries of the box become reflecting and are transparent again only between the collisions $kN$ and $kN+1$, $k\in\bN$,  counting only collisions with the convex obstacles.} 

More precisely, consider the billiard in $R$ with elastic reflection at $\partial R$. We call such a billiard table $Q$. Let $M= \big( \cup_i\partial B_i \cap R \big) \times [-\frac \pi 2,\frac \pi 2]$ be the Poincar\'e section,\footnote{ Recall that  $\vf \in [-\frac \pi 2,\frac \pi 2]$ is the angle made
by the post-collision velocity vector and the outward pointing normal to the boundary.} and consider the Poincar\'e map $T:M\to M$ describing the dynamics from one collision with a convex body to the next.  Unfortunately, this is not a type of billiard that fits our assumptions since the table has corner points.
Yet, when the particle collides with  $\partial R$ we can reflect the box and imagine that the particle continues in a straight line. Note that, by our hypothesis, the image of the obstacles that intersect the boundary are the obstacles themselves; this is the reason why we restrict the obstacle configuration. We can then reflect the box three times, say across its right and top sides and then once more to make a full rectangle with twice the width and height of $R$,  and identify the opposite sides of this larger rectangle. In this way we obtain a torus $\bT^2$ containing pairwise disjoint convex obstacles. Such a torus is covered by four copies of $R$, let us call them $\{R_i\}_{i=1}^4$. We call such a billiard 
$\widetilde Q$, and we consider the Poincar\'e map $\widetilde T$ which maps from one collision with a convex body to the next,  and denote its phase space by $\widetilde M = \cup_{i=1}^4 \tM_i$.  

{\em Our final assumption on the obstacle configuration is that $\widetilde Q$  is a Sinai billiard with finite horizon. }\\
Hence $\widetilde T : \widetilde M \circlearrowleft$ falls within the scope of our theory. By construction there is a map $\pi:\widetilde M \to M$ which sends the motion on the torus to the motion in the box. Indeed, if $\tilde x\in\widetilde M$ and $x=\pi(\tilde x)$, then $T^n(x)=\pi(\widetilde T^n(\tilde x))$, for all $n\in\bN$.

We then consider the maps $\tilde S=\widetilde T^N$  and $S=T^N$, again $\pi(\widetilde S(\tilde x))=S(\pi(\tilde x))$. 
Define also the projections $\tilde \pi_1 : \widetilde M \to \widetilde Q$ and $\pi_1 : M \to Q$, which map
a point in the Poincar\'e section to its position on the billiard table.
For $\tilde x \in \widetilde M$, let us call $\widetilde O(\tilde x)$ the straight trajectory in $\bT^2$ between $\tilde \pi_1(\tilde x)$ and $\tilde \pi_1(\widetilde T(\tilde x))$, and setting $x = \pi(\tilde x)$, $O(x)$ the trajectory between $\pi_1(x)$ and $\pi_1(T((x))$. Note that the latter trajectory can consist of several straight segments joined at the boundary of $R$, where a reflection takes place. By construction, if $\widetilde O(\tilde x)$ intersects $m$ of the sets $\partial R_i$, then the trajectory  
$O(x)$ experiences $m$ reflections with $\partial R$. Accordingly, we introduce, in our billiard system $(\widetilde M,\widetilde S)$, the following holes : $\widetilde H=\widetilde T \{\tilde x\in\widetilde M\;:\; \widetilde O(\tilde x) \cap(\cup_i \partial R_i)\neq \emptyset\}$ and set $H=\pi(\widetilde H)$. 

The above makes precise the previous informal statement: the system $(M,S)$ with hole $H$, describes the dynamics of the billiard $(M,T)$ in which the  particle can exit $R$ only at the times $kN$, $k\in\bZ$.  
The transfer operator associated with the open system $(M, S; H)$
is $\ind_{H^c} \cL_S \ind_{H^c}$, yet since $(\ind_{H^c} \cL_S \ind_{H^c})^n = \ind_{H^c} (\cL_S \ind_{H^c})^n$, it is
equivalent to study the asymptotic properties of $\rL_S :=\cL_S \ind_{H^c}$.

For a function $f : M \to \mathbb{C}$, we define its lift $\tf : \tM \to \mathbb{C}$ by $\tf = f \circ \pi$.  The
pointwise identity then follows,
\begin{equation}
\label{eq:ptwise}
\rL_{\tS} \tf := \cL_{\tS} ( \ind_{\tH^c} \tf) = \cL_{\tS} ((\ind_{H^c} f) \circ \pi) = (\rL_S f) \circ \pi \, .
\end{equation}
While $\widetilde H$ is not exactly a hole of Type II, its boundary nevertheless comprises increasing curves since it is a forward image under the flow of a wave front with zero curvature (a segment of $\partial R_i$).   
Hence condition (O$1'$) of Section~\ref{sec:large}
holds with $P_0 =3$
and condition (O2) holds with $C_t$ depending only on the uniform angle between
the stable cone and the vertical and horizontal directions in $\widetilde M$. 
Thus Proposition~\ref{prop:case-b} applies to $\rL_{\tS}$ with $n_\star$ depending on 
$C_t$ and $P_0 = 3$.
In fact, our next result shows that also $\rL_S$ contracts $\cC_{c, A, L}(\delta)$ on $M$.

\begin{prop}
\label{prop:project}
Let $n_\star \in \mathbb{N}$ be from Proposition~\ref{prop:case-b} corresponding to $P_0=3$ and $C_t>0$.  
Then for each small enough $\delta>0$, there exist $c, A, L >0$, $\chi \in (0,1)$ such that
choosing $N \ge n_\star$, $\rL_S(\cC_{c,A, L}(\delta)) \subset \cC_{\chi c, \chi A, \chi L}(\delta)$, where
$S = T^N$.
\end{prop}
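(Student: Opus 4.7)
The plan is to reduce Proposition~\ref{prop:project} to the already-established Proposition~\ref{prop:case-b} applied to $\rL_{\tS}$ on $\tM$, via the lift $\pi^*\colon f\mapsto f\circ\pi$. The argument factors into three steps: (i) verify that the hypotheses of Proposition~\ref{prop:case-b} apply to $\rL_{\tS}$, (ii) establish a \emph{lifting correspondence} between the cones on $M$ and on $\tM$, (iii) use the pointwise identity \eqref{eq:ptwise} to descend the contraction from $\tM$ to $M$.

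Step (i) is essentially done in the text preceding the Proposition: $\tH$ has boundary consisting of increasing curves, so (O1$'$) holds with $P_0=3$ and (O2) with $C_t$ determined by the uniform angle between stable and unstable cones in $\tM$. Choosing $N\ge n_\star$ as in Proposition~\ref{prop:case-b} then yields parameters $c,A,L,\chi$ such that $\rL_{\tS}\bigl(\cC_{c,A,L}(\delta)|_{\tM}\bigr)\subset \cC_{\chi c,\chi A,\chi L}(\delta)|_{\tM}$.

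Step (ii) is the heart of the argument: I would show that, for the same parameters $c,A,L,\delta$,
\[
f\in\cC_{c,A,L}(\delta)|_M \iff f\circ\pi\in\cC_{c,A,L}(\delta)|_{\tM}.
\]
The key structural fact is that $\pi$ restricted to each $\tM_i$ is a bijective isometry onto $M$, whose branches act by reflections across the sides of $R$; these reflections preserve the stable cones (since they preserve $d\vf/dr$), the homogeneity strips (which depend only on $|\vf|$), and arclength. Consequently, every stable curve $W\in\cW^s(M)$ lifts to four isometric stable curves $\tW_i\subset\tM_i$, and every $\tW\in\cW^s(\tM)$ projects to a stable curve in $M$ of the same length (with reflections when $\tW$ straddles a boundary between $\tM_i$'s, but these reflections still give a curve tangent to the stable cone). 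Under this correspondence, test functions in $\cD_{a,\beta}$ and $\cD_{a,\alpha}$ match, integrals $\int_W f\psi\,dm_W=\int_{\tW}\tf\tpsi\,dm_{\tW}$ agree, and both $d_{\cW^s}$ and $d_*$ are preserved; therefore $\tri f\tri_\pm^M=\tri\tf\tri_\pm^{\tM}$ and each of conditions \eqref{eq:cone 2}, \eqref{eq:cone 3}, \eqref{eq:cone 5} on $M$ translates verbatim to the analogous condition on $\tM$, and vice versa.

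For step (iii), given $f\in\cC_{c,A,L}(\delta)|_M$, step (ii) gives $\tf\in\cC_{c,A,L}(\delta)|_{\tM}$; step (i) then produces $\rL_{\tS}\tf\in\cC_{\chi c,\chi A,\chi L}(\delta)|_{\tM}$; by \eqref{eq:ptwise}, $\rL_{\tS}\tf=(\rL_S f)\circ\pi$, so the reverse direction of step (ii) descends this to $\rL_S f\in\cC_{\chi c,\chi A,\chi L}(\delta)|_M$. The main obstacle is the careful execution of step (ii) for stable curves $\tW$ that straddle boundaries between different $\tM_i$'s: such a $\tW$ projects to a broken curve in $M$ involving a reflection at $\partial R$, and one must check that the ``piecewise'' curve still satisfies the membership, regularity, and distance conditions defining $\cW^s(M)$. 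The symmetry hypothesis on the obstacles and the fact that reflections act as isometries preserving $(r,\vf)$-slopes make this verification routine but non-trivial, and this is where the bulk of the bookkeeping will occur.
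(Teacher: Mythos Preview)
Your proposal is correct and follows the same three-step approach as the paper: lift via $\pi^*$, apply Proposition~\ref{prop:case-b} on $\tM$, and descend via \eqref{eq:ptwise} using that each $\pi_i=\pi|_{\tM_i}$ is an isometry. The paper is in fact terser than you on step (ii)---it simply asserts that projections of stable curves in $\tM$ are stable curves in $M$ and that $\tri\cdot\tri_\pm$ agree, without discussing the straddling case at all---so your explicit flagging of that verification is appropriate caution rather than a gap.

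One small correction to your description of the straddling case: a reflection across a side of $R$ reverses orientation and hence acts on phase-space coordinates as $(r,\vf)\mapsto(2r_0-r,-\vf)$, so the projection of a straddling $\tW$ is not a single ``broken'' curve in $\cW^s(M)$ but two disjoint stable arcs landing at $(r_0,\vf_0)$ and $(r_0,-\vf_0)$. The actual verification therefore proceeds by splitting the integral over $\tW$ into integrals over two short curves in $\cW^s_-(\delta)|_M$ and applying the cone conditions (in particular \eqref{eq:cone 3}) piecewise, rather than by checking that some folded object lies in $\cW^s(M)$.
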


\begin{proof}
As already noted above, Proposition~\ref{prop:case-b} implies the existence of $\delta, c, A, L$ and $\chi$ such that
$\rL_{\tS}(\tC_{c,A, L}(\delta)) \subset \tC_{\chi c, \chi A, \chi L}(\delta)$ if we choose $N \ge n_\star$.  Note that
the constant $C_t$ is the same on $\tM$ and $M$.  In fact the same choice of parameters for the
cone works for $\rL_S$.

For any stable curve $W$, $\pi^{-1}W = \cup_{i=1}^4 \tW_i$ where each $\tW_i$ is a stable curve satisfying
$\pi(\tW_i) = W$.  Since $\pi$ is invertible on each $\tM_i$, we may define the restriction 
$\pi_i = \pi|_{\tM_i}$ such that $\pi_i^{-1}(W) = \tW_i$.  Conversely, the projection of any stable curve $\tW$ 
in $\tM$ is also a stable curve in $M$.

Since each $\pi_i$ is an isometry, and recalling \eqref{eq:ptwise},  for any stable curve $W \subset M$, each $f \in \cC_{c,A,L}(\delta)$, and all $n \ge 0$, 
\[
\int_{\tW_i} \psi \circ \pi \, \rL_{\tS}^n \tf \, dm_{\tW} = \int_W \psi \, \rL_S^n f \, dm_W, \quad \forall \; \psi \in C^0(\tW),
\]
where $\tf = f \circ \pi$.  Moreover, if $\psi \in \cD_{a, \beta}(W)$, then $\psi \circ \pi \in \cD_{a, \beta}(\tW_i)$, 
for each $i = 1, \ldots, 4$.  This implies in particular that 
$\tri \rL_S^n f \tri_{\pm} = \tri \rL_{\tS}^n \tf \tri_{\pm}$
for all $n \ge 0$, and that $f \in \cC_{c,A,L}(\delta)$ if and only if $\tf = f \circ \pi \in \tC_{c,A,L}(\delta)$.
Consequently, $\rL_S f \in \cC_{\chi c, \chi A, \chi L}(\delta)$ if and only if $\rL_{\tS} \tf \in \tC_{\chi c, \chi A, \chi L}(\delta)$,
which proves the proposition.
\end{proof}

In contrast to the sequential systems studied in Section~\ref{sequential}, the open billiard in this section corresponds to
a fixed billiard map $T$ (and its lift $\widetilde T$).  Thus we can expect the (normalized) iterates of $\rL_{S}$ to converge
to a type of equilibrium for the open system.  Such an equilibrium is termed a limiting or physical conditionally invariant measure in the
literature, and often corresponds to a maximal eigenvalue for $\rL_{S}$ on a suitable function space.  
Unfortunately, conditionally
invariant measures for open ergodic invertible systems are necessarily singular with respect to the invariant measure and 
so will not be contained in our cone $\cC_{c, A, L}(\delta)$, which is a set of functions.  
However, we will show that for our open billiard,
the limiting conditional invariant measure is contained in the completion of $\cC_{c,A,L}(\delta)$ with respect
to the following norm.

\begin{defin}
Let $\bV = \textrm{span} \big( \cC_{c,A,L}(\delta) \big)$.  For all $f \in \bV$ we define 
\[
\| f \|_\star = \inf \{ \lambda \ge 0 : - \lambda \preceq f \preceq \lambda \} \, .
\]
\end{defin}

\begin{lemma}
\label{lem:star}
The function $\|\cdot\|_\star$ has the following properties:
\begin{itemize}
  \item[a)] The function $\| \cdot \|_\star$ is an order-preserving norm, that is: $- g \preceq f \preceq g$ implies 
$\| f \|_\star \le \| g \|_\star$.
  \item[b)]  There exists $C>0$ such that for all $f \in \cC_{c,A,L}(\delta)$ and $\psi \in C^1(M)$,
  \[
  \left| \int_{M} f \, \psi \, d\musrb \right|  \le C \tri f \tri_+ | \psi |_{C^1(M)} \le C \| f \|_{\star} | \psi |_{C^1(M)} \, .
  \]
\end{itemize}
\end{lemma}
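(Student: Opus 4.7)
The plan is to verify (a) and (b) directly from the definition of $\|\cdot\|_\star$ together with properties of the cone $\cC_{c,A,L}(\delta)$ established earlier in the paper. For part (a), I first show that $\|\cdot\|_\star$ is a norm on $\bV$. Absolute homogeneity follows from the fact that $\cC_{c,A,L}(\delta)$ is a convex cone: $-\lambda\preceq f\preceq\lambda$ if and only if $-|\alpha|\lambda\preceq\alpha f\preceq|\alpha|\lambda$, for all $\alpha\in\R$. The triangle inequality is immediate upon adding the two relations $\lambda_i\pm f_i\in\cC\cup\{0\}$ for $i=1,2$. Positive-definiteness is the only delicate point: if $\|f\|_\star=0$, then $\lambda_n\pm f\in\cC\cup\{0\}$ for some $\lambda_n\downarrow 0$; one may assume these belong to $\cC$ for large $n$, and then applying the closure property recorded in footnote~\ref{foo:close} to $g:=\lambda_1-f\in\cC$ and to the constant $1\in\cC$ (which lies in the cone by Lemma~\ref{lem:dominate}) with scalars $\alpha_n:=\lambda_n-\lambda_1\to-\lambda_1$, one concludes $-f\in\cC\cup\{0\}$; similarly $f\in\cC\cup\{0\}$, and the hypothesis $\cC\cap(-\cC)=\emptyset$ then forces $f=0$.

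The order-preservation in (a) is an immediate consequence of the transitivity of $\preceq$: if $-g\preceq f\preceq g$ and $-\lambda\preceq g\preceq\lambda$, then $f\preceq g\preceq\lambda$ and $-\lambda\preceq -g\preceq f$, hence $\|f\|_\star\le\lambda$; taking the infimum over $\lambda>\|g\|_\star$ yields $\|f\|_\star\le\|g\|_\star$. For part (b), the first inequality $|\int_M f\psi\,d\musrb|\le C\tri f\tri_+|\psi|_{C^1}$ for $f\in\cC_{c,A,L}(\delta)$ is exactly the content of Remark~\ref{rem:after order}. The second inequality reduces to $\tri f\tri_+\le\|f\|_\star$, which I would deduce from the extension of $\tri\cdot\tri_+$ to $\cA$ introduced in the proof of Theorem~\ref{thm:memory}(a), which was shown there to be an order-preserving semi-norm: if $-\lambda\preceq f\preceq\lambda$ then $\tri f\tri_+\le\tri\lambda\tri_+$, and a direct computation gives $\tri c\tri_+=|c|$ for constant functions $c$; taking the infimum over admissible $\lambda$ completes the argument.

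The only non-formal step in the whole argument is the proof of positive-definiteness in (a), which genuinely uses the specific closure notion of footnote~\ref{foo:close} together with the properness $\cC\cap(-\cC)=\emptyset$ of the cone. Everything else is a straightforward consequence of definitions and of results already established, so the lemma should follow cleanly once this point is handled; in particular, no new cone estimates or transfer-operator bounds are required.
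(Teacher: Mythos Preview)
Your proposal is correct and follows essentially the same route as the paper. The only differences are cosmetic: you spell out explicitly how the closure property of footnote~\ref{foo:close} is invoked (the paper just asserts ``since $\cC$ is closed, this yields $f,-f\in\cC\cup\{0\}$''), and for the inequality $\tri f\tri_+\le\|f\|_\star$ in (b) you appeal to the order-preserving semi-norm extension from the proof of Theorem~\ref{thm:memory}(a), whereas the paper argues directly via Remark~\ref{rem:A-L} that $\tri\lambda-f\tri_-\ge0$ forces $\int_W f\psi/\int_W\psi\le\lambda$---but these two formulations are equivalent.
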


\begin{proof}
In this proof, for brevity we write $\cC$ in place of $\cC_{c,A,L}(\delta)$.

\smallskip
\noindent
a)  First, we show that $\| f \|_\star < \infty$ for any $f \in \bV$, i.e. for any $f \in \bV$ we can find $\lambda >0$
such that $\lambda + f, \lambda - f \in \cC$.
By the proof of Proposition~\ref{prop:diameter}, we claim\footnote{ This claim implies that the cone is Archimedean.} 
that for any $f \in \cC$, we can
find $\mu >0$ such that $\mu - f$ belongs to $\cC$.  This follows since the second part of the proof with $\chi = 1$ yields that $\mu - f$ satisfies \eqref{eq:cone 2} if $\mu \ge \frac{L}{L-1} \tri f \tri_+$, it satisfies \eqref{eq:cone 3} if $\mu \ge \frac{2A}{A- 2^{1-q}} \tri f \tri_+$, and
it satisfies \eqref{eq:cone 5} if $\mu \ge \frac{2cA}{cA - \delta - C_s} \tri f \tri_+$.  Taking $\mu$ large enough to satisfy these 3 conditions proves the claim.

Next, consider $f = \alpha g + \beta h$ with $g, h \in \cC$ and $\alpha, \beta \in \mathbb{R}$.  If $\alpha, \beta >0$, then since $\cC$ is closed under addition, the above claim yields 
$\mu>0$ such that $\mu -  f$ and $\mu + f$ are in $\cC$ and thus $\| f \|_\star \le \mu$.  
It remains to consider the case $\alpha < 0$, $\beta >0$ since the remaining cases are similar.
Let $\mu_g >0$ satisfy $\mu_g - g$ belongs to $\cC$.  Set $A = \mu_g |\alpha|$.  Then
$A + f = |\alpha| (\mu_g - g) + \beta h$ is the sum of elements in $\cC$ and thus is in $\cC$.
Similarly, let $\mu_h >0$ satisfy $\mu_h - h$ belongs to $\cC$ and set $B = \mu_h \beta$.
Then $B - f = |\alpha| g + \beta (\mu_h - h)$ is again in $\cC$.  Thus
$\| f \|_\star \le \max \{ A, B \}$.

Next, if $\| f \|_\star = 0$, then there exists a sequence $\lambda_n \to 0$ such that $- \lambda_n \preceq f \preceq \lambda_n$, and so $\lambda_n + f, \lambda_n - f \in \cC$ for each $n$.  Since $\cC$ is closed (see footnote~\ref{foo:close}), this yields
$f, -f \in \cC \cup \{ 0 \}$ and so $f = 0$ since $\cC \cap - \cC = \emptyset$ by construction.

Since $f \preceq g$ is equivalent to $\nu f \preceq \nu g$ for $\nu\in\bR_+$, it follows immediately that $\|\nu f\|_\star=\nu\| f \|_\star$.

To prove the triangle inequality, let $f, g\in\bV$.   For each $\ve>0$, there exists 
$a,b$, $a\leq \ve+\| f\|_\star$, $b\leq \ve+\|g\|_\star$ such that 
$-a \preceq f\preceq a$ and $-b \preceq g\preceq b$.  Then
\[
-(\| f\|_\star+\| g\|_\star+2\ve) \preceq -(a+b)\preceq f+g\preceq a+b\leq \| f\|_\star+\| g\|_\star+2\ve \, ,
\]
implies the triangle inequality by the arbitrariness of $\ve$. We have thus proven that $\|\cdot\|_\star$ is a norm.

Next, suppose that $-g\preceq f\preceq g$ and let $b$ be as above. Then
\[
-\| g\|_\star - \ve \preceq -b \preceq -g\preceq f\preceq g\preceq b \preceq \| g\|_\star + \ve
\]
which implies $\| f\|_\star\leq \| g\|_\star$, again by the arbitrariness of $\ve$. Hence, the norm is order preserving.

\smallskip
\noindent
b) The first inequality is contained in Remark~\ref{rem:after order}.  For the second inequality, we will prove
that
\begin{equation}
\label{eq:major}
\tri f \tri_+ \le \| f \|_\star \qquad \mbox{for all $f \in \cC$}.
\end{equation}
To see this, note that if $-\lambda \preceq f \preceq \lambda$, then $\tri \lambda - f \tri_- \ge 0$ by
Remark~\ref{rem:A-L}.  Thus for any $\tW \in \widetilde \cW^s$ and $\psi \in \cD_{a,\beta}(\tW)$,
\[
0 \le \frac{\int_{W} (\lambda - f) \, \psi}{\int_{W} \psi } \implies \frac{\int_{W} f \, \psi}{\int_{W} \psi} \le \lambda \, ,
\]
and taking suprema over $W$ and $\psi$ yields $\tri f \tri_+ \le \lambda$, which implies \eqref{eq:major}.
\end{proof}

Define $\bV_\star$ to be the completion of $\bV$ in the $\| \cdot \|_\star$ norm.  
$\bV_\star$ is a Banach space.
Let $\cC_\star$ be the closure of $\cC_{c, A, L}(\delta)$ in $\bV$.

We remark that by Lemma~\ref{lem:star}(b), $\cC_\star$ embeds naturally into $(C^1(M))'$, where $(C^1(M))'$ is the closure of $C^0(M)$
with respect to the norm
$\| f \|_{-1} = \sup_{| \psi |_{C^1}\leq1} \int_{M} f \psi \, d\musrb$. 
We shall show that the conditionally invariant measure for the open system $(M, T; H)$ belongs to $\cC_\star$.

\begin{theorem} 
\label{thm:open}
Let $(M, S; H)$ be as defined above, where $S = T^N$.  If $N \ge n_\star$, where $n_\star$ is from Proposition~\ref{prop:case-b}, then: 
\begin{itemize}
  \item[a)] $\displaystyle h := \lim_{n \to \infty} \frac{\rL_S^n 1}{\musrb( \rL_S^n 1 ) }$ is an element of $\cC_\star$.
  Moreover, $h$ is a nonnegative probability measure satisfying $\rL_S h = \nu h$ for some $\nu \in (0,1)$
  such that 
  $$\log \nu = \lim_{n \to \infty} \frac 1n \log \musrb(\cap_{i=0}^n S^{-i}(M \setminus H)) \, ,$$ 
  i.e. $- \log \nu$ is the escape rate of the open system.
  \item[b)]   There exists $C>0$ and $\vartheta \in (0,1)$ such that for all $f \in \cC_{c,A,L}(\delta)$ and $n \ge 0$,
  \[
\left\| \frac{\rL_S^n f}{ \musrb(\rL_S^n f)}  - h \right\|_\star \le C \vartheta^n\, .
\]  
  In addition, there exists a linear functional $\ell : \cC_{c, A, L}(\delta) \to \mathbb{R}$
  such that for all $f \in \cC_{c,A,L}(\delta)$, $\ell(f) > 0$ and 
  \[
\| \nu^{-n}\rL^n_{S} f-  \ell( f) h \|_\star \leq C \vartheta^n \ell( f ) \| h \|_\star.
\]
The constant $C$ depends on $\cC_{c,A,L}(\delta)$, but not on $f$.
\end{itemize}
\end{theorem}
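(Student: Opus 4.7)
The plan is to combine the Birkhoff contraction furnished by Propositions~\ref{prop:project} and \ref{prop:diameter} with the adapted-norm device of \cite[Lemma~2.2]{LSV98}, now applied with the order-preserving norm $\|\cdot\|_\star$ of Lemma~\ref{lem:star}. First I would verify that the constant function $1$ lies in $\cC_{c,A,L}(\delta)$ (its three cone conditions reduce to elementary bounds using \eqref{eq:W-difference} and \eqref{eq:cA}), so that $g_n:=\rL_S^n 1$ lies in $\cC_{c,A,L}(\delta)$ for every $n\ge 0$, and in $\cC_{\chi c,\chi A,\chi L}(\delta)$ for $n\ge 1$. Proposition~\ref{prop:diameter} then yields, via Birkhoff--Hopf, the strict Hilbert-metric contraction $\rho_\cC(\rL_S f,\rL_S g)\le \vartheta\,\rho_\cC(f,g)$ with $\vartheta:=\tanh(\Delta/4)<1$, for $f,g\in\cC_{c,A,L}(\delta)$.

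For part (a), set $\hat g_n:=g_n/\musrb(g_n)$, well-defined by Remark~\ref{rem:after order}. Since $\musrb$ is positive, homogeneous, and order-preserving on the cone (Lemma~\ref{lem:order}), \cite[Lemma~2.2]{LSV98} applied to $\|\cdot\|_\star$ gives
\[
\|\hat g_n-\hat g_{n+k}\|_\star\le \bigl(e^{\rho_\cC(\hat g_n,\hat g_{n+k})}-1\bigr)\|\hat g_n\|_\star\le C\vartheta^n,
\]
the uniform bound $\|\hat g_n\|_\star\le C$ for $n\ge 1$ coming from \eqref{eq:major} together with the computation of $\bar\beta_3$ in the proof of Proposition~\ref{prop:diameter} applied to $\hat g_n\in\cC_{\chi c,\chi A,\chi L}(\delta)$ with $\musrb(\hat g_n)=1$. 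Thus $h:=\lim_n\hat g_n$ exists in $\cC_\star$, is nonnegative (since $\int \hat g_n\psi\,d\musrb\ge 0$ for $\psi\ge 0$ by Lemma~\ref{lem:order} and Lemma~\ref{lem:star}(b) passes this to the limit), and satisfies $\musrb(h)=1$ by $\|\cdot\|_\star$-continuity of $\musrb$. The projective identity $\rL_S\hat g_n=\nu_n\hat g_{n+1}$ with $\nu_n:=\musrb(g_{n+1})/\musrb(g_n)\in(0,1]$, combined with the exponential convergence $\nu_n\to\nu$ justified in the next paragraph, forces $\rL_S h=\nu h$. The identification of the escape rate then follows from the $S$-invariance of $\musrb$ giving $\musrb(g_n)=\musrb\bigl(\bigcap_{i=0}^{n-1}S^{-i}(M\setminus H)\bigr)$, so that exponentially converging ratios of positive numbers yield $n^{-1}\log\musrb(g_n)\to\log\nu$, with $\nu<1$ since $H$ has positive $\musrb$-measure.

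For part (b), the same adapted-norm argument applied with an arbitrary $f\in\cC_{c,A,L}(\delta)$ in place of $1$ delivers the first bound, $\|\hat f_n-h\|_\star\le C\vartheta^n$ with $\hat f_n:=\rL_S^n f/\musrb(\rL_S^n f)$. To construct the dual eigenfunctional, I would define $a_n(f):=\nu^{-n}\musrb(\rL_S^n f)$; then
\[
\frac{a_{n+1}(f)}{a_n(f)}=\frac{\nu_n(f)}{\nu},\qquad \nu_n(f):=\musrb(\rL_S\hat f_n),
\]
and the decomposition $\rL_S\hat f_n-\nu h=\nu_n(f)(\hat f_{n+1}-h)+(\nu_n(f)-\nu)h$ combined with the $\|\cdot\|_\star$-continuity of $\musrb$ gives $|\nu_n(f)-\nu|\le C\vartheta^n$. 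Summability of $|a_{n+1}(f)/a_n(f)-1|\le C\vartheta^n$ produces the limit $\ell(f):=\lim_n a_n(f)$, which is positive because $a_n(f)\ge \nu^{-n}\bar C^{-1}\tri\rL_S^n f\tri_->0$ and the multiplicative corrections $\prod_{m\ge n}a_{m+1}/a_m$ stay bounded below. The final estimate is then a one-line triangle inequality applied to $\nu^{-n}\rL_S^n f=a_n(f)\hat f_n$.

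The main obstacle I anticipate is the bookkeeping needed to obtain the final bound with the explicit factor $\ell(f)\|h\|_\star$ on the right-hand side: one must track all the implicit constants in the adapted-norm inequality and in the ratio analysis of $a_{n+1}/a_n$ so that they depend on $f$ only through $\ell(f)$ (equivalently, through the common normalization provided by $\musrb(\rL_S^n f)$). Everything else is a fairly mechanical application of the strict cone contraction already proved in Theorem~\ref{thm:main} and of the functional-analytic setup of Lemma~\ref{lem:star}, once one checks that the Hilbert metric contraction constant $\vartheta$ can be taken to coincide with the one appearing in the loss-of-memory statements of Section~\ref{sec:exp-mix}.
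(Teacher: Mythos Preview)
Your proposal is correct and follows essentially the same strategy as the paper: contract the cone via Proposition~\ref{prop:case-b}/\ref{prop:project} and Proposition~\ref{prop:diameter}, then apply \cite[Lemma~2.2]{LSV98} with the order-preserving norm $\|\cdot\|_\star$. Two organizational differences are worth noting.

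First, the paper normalizes by $\|\cdot\|_\star$ rather than by $\musrb$ to obtain the Cauchy sequence, defining $h_0=\lim_n \rL_S^n 1/\|\rL_S^n 1\|_\star$ and only afterward renormalizing to $h=h_0/h_0(1)$; this avoids having to bound $\|\hat g_n\|_\star$ separately, since the normalized elements have $\|\cdot\|_\star$-norm identically $1$. Your route via $\musrb$-normalization also works, but requires the auxiliary bound you sketch using $\bar\beta_3$.

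Second, and more substantively, the paper handles the final inequality of part~(b) by a second application of \cite[Lemma~2.2]{LSV98}, this time with $\ell$ itself as the homogeneous order-preserving functional (after defining $\ell(f)=\limsup_n\nu^{-n}\musrb(\rL_S^n f)$ and observing it is order-preserving). Applying the lemma to the pair $(f,\ell(f)h)$, which share the same $\ell$-value, yields
\[
\|\nu^{-n}\rL_S^n f-\ell(f)h\|_\star=\nu^{-n}\|\rL_S^n f-\ell(f)\rL_S^n h\|_\star\le C\vartheta^n\,\ell(f)\|h\|_\star
\]
in one stroke, with the factor $\ell(f)\|h\|_\star$ appearing automatically as $\min\{\|\nu^{-n}\rL_S^n f\|_\star,\ell(f)\|h\|_\star\}$; linearity of $\ell$ and the fact that the $\limsup$ is a limit then follow \emph{a posteriori} by integrating this bound. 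This completely sidesteps the ratio-tracking and constant bookkeeping you flag as the main obstacle. Your triangle-inequality approach via $\nu^{-n}\rL_S^n f=a_n(f)\hat f_n$ is correct but longer, and getting the clean factor $\ell(f)\|h\|_\star$ (rather than just $\ell(f)$) requires an extra step. Also, your justification of $\nu<1$ from $\musrb(H)>0$ alone is too quick; the paper invokes monotonicity together with the known exponential escape for small holes \cite{DWY,dem bill}.
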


\begin{remark}
(a)  The conclusions of Theorem~\ref{thm:open} apply equally well to the open system $(\tM, \tS; \tH)$.

\smallskip
\noindent
(b) By Lemma~\ref{lem:star}(b), the convergence in the $\| \cdot \|_\star$ norm given by Theorem~\ref{thm:open}(b) 
implies convergence when integrated against smooth functions $\psi \in C^1(M)$. As usual, by standard approximation arguments, the same holds for H\"older functions.

\smallskip
\noindent
(c) Also by Lemma~\ref{lem:star}(b), the above convergence in $\| \cdot \|_\star$ implies leafwise convergence as well.  First note that
for $W \in \cW^s(\delta)$, each $f \in \cC_{c,A,L}(\delta)$ induces a leafwise distribution on $W$ defined by
$f_W(\psi) = \int_W f \, \psi \, dm_W$, for $\psi \in \cD_{a,\beta}(W)$.  This extends by density to $f \in \cC_\star$.
Since $h \in \cC_\star$ by Theorem~\ref{thm:open}(a), let $h_W$ denote the leafwise measure induced by $h$ on $W \in \cW^s(\delta)$.
Then by Lemma~\ref{lem:star}(b) and Theorem~\ref{thm:open}(b), there exists $C>0$ such that for all $n \ge 0$,
\[
\left| \frac{\int_W \rL_S^n f \, \psi \, dm_W}{\musrb(\rL_S^n f)} - h_W(\psi) \right| \le C \delta^{-1} \vartheta^n \, ,
\quad \forall f \in \cC_{c,A,L}(\delta) , \forall \psi \in C^\beta(W) \, ,
\]
and also,
\[
\left| \nu^{-n} \int_W \rL_S^n f \, \psi \, dm_W - \ell(f) h_W(\psi) \right| \le C \delta^{-1} \vartheta^n \ell(f) \, .
\]
In particular, the escape rate with respect to $f dm_W$ on each $W \in \cW^s(\delta)$ equals the escape rate with respect to
$\musrb$. 
\end{remark}
\begin{proof}[Proof of Theorem~\ref{thm:open}]
We argue as in the proof of Theorem~\ref{thm:memory}.   Recalling that $\| \cdot \|_\star$ is an order-preserving
norm, we can apply \cite[Lemma 2.2]{LSV98},  taking the homogeneous function $\rho$ to also be $\| \cdot \|_\star$
and obtain that, as in \eqref{eq:adapted}, for all $f,g\in\cC_{c,A,L}(\delta)$,
\begin{equation}
\label{eq:cauchy}
\left\| \frac{\rL^n_{S} f}{\| \rL^n_{S} f \|_\star}-\frac{\rL^n_{S} g}{\| \rL^n_{S} g\|_\star} \right\|_\star \leq C\vartheta^n \, ,
\end{equation}
since $\left\| \frac{\rL^n_{S} f}{ \| \rL^n_{S} f \|_\star }  \right\|_* = 1$ and similarly for $g$.    
This implies that $\left( \frac{\rL^n_{S} f}{\| \rL^n_{S} f \|_\star} \right)_{n \ge 0}$ is a Cauchy sequence in the $\| \cdot\|_\star$ norm, 
and in addition, the limit is independent of $f$.
Hence, defining $h_0 = \lim_{n \to \infty} \frac{\rL_{S}^n 1}{\| \rL_{S}^n 1\|_\star}$, we have
$h_0 \in \cC_\star$ with $\| h_0 \|_\star = 1$  such that\footnote{ Note that $\rL_{S}$ extends naturally to $(\cC^1(M))'$
and therefore to $\cC_\star$.} for all $\psi \in C^1(M)$,
\[
\int_{M} \rL_{S} h_0 \psi =\lim_{n\to\infty}\frac1{\| \rL^n_{S} 1\|_\star}\int_{\tM} \rL^{n+1}_{S} 1\psi=\lim_{n\to\infty}\frac{\| \rL^{n+1}_{S} 1\|_\star}{\| \rL^n_{S}1\|_\star}\int_{M} h_0 \psi
=\| \rL_{S} h_0\|_\star \int_{M} h_0\psi=:\nu \int_{M} h_0\psi \, ,
\]
where all integrals are taken with respect to $\musrb$. Thus, $\rL_{S} h_0=\nu h_0$.
Moreover, the definition of $h_0$ implies that,
\begin{equation}
\label{eq:h0}
| h_0(\psi) | \le |\psi|_{C^0} \lim_{n \to \infty} \frac{ \musrb( \rL_{S}^n 1 )}{\| \rL_{S}^n 1\|_\star} = | \psi|_{C^0} h_0(1)
\, , \quad \forall \, \psi \in C^1(M) \, ,
\end{equation}
thus $h_0$ is a measure.  In addition, by the positivity of $\rL_{S}$, $h_0$ is a nonnegative measure and
since $\| h_0 \|_\star = 1$, it must be that $h_0(1) \neq 0$.  Thus we may renormalize and define
\[
h := \frac{1}{h_0(1)} h_0 \, .
\]
Then $\frac{\ind_{H^c} h}{h(H^c)}$ represents the limiting conditionally invariant probability measure for the open 
system $(M, S; H)$.  However, we will work with $h$ rather than its restriction to $H^c$ because $h$ contains information 
about entry into $H$, which we will exploit in Proposition~\ref{prop:entry} below.

Due to the equality in \eqref{eq:h0}, $h$ has the alternative characterization,
\[
h = \lim_{n \to \infty} \frac{\rL_{S}^n 1}{\musrb( \rL_{S}^n 1)} = \lim_{n \to \infty} \frac{\rL_{S}^n 1}{\musrb( \rM^n)} \, , 
\]
as required for item (a) of the theorem, where $\rM^n = \cap_{i=0}^n S^{-i} (M \setminus H)$ and
convergence is in the $\| \cdot \|_\star$ norm.

Remark that \eqref{eq:cauchy} implies $\frac{\rL_{S}^n f}{\| \rL_{S}^n f \|_\star}$ converges to $h_0$ at the
exponential rate $\vartheta^n$.  Integrating this relation and using Lemma~\ref{lem:star}(b), we conclude that
in addition the normalization ratio $\frac{\musrb( \rL_{S}^n f)}{\| \rL_{S}^n f \|_\star }$ converges to
$h_0(1)$ at the same exponential rate.  Putting these two estimates together and using the
triangle inequality yields for all $n \ge 0$,
\[
\left\| \frac{\rL_{S}^n f}{\musrb(\rL_{S}^n f)} - h \right\|_\star \le C \vartheta^n h_0(1)^{-1} \, , \quad \forall \, f \in \cC_{c,A,L}(\delta) \, ,
\]
proving the first inequality of item (b).

Next, for each, $f \in \cC_{c,A,L}(\delta)$ let
\begin{equation}\label{eq:conve_l}
\ell(f)=\limsup_{n\to\infty} \nu^{-n}  \musrb( \rL^n_{S} f ) \, . 
\end{equation}
Note that $\ell$ is bounded, homogeneous of degree one and order preserving. 
By Lemma~\ref{lem:star}(b), $\ell$ can be extended to $\cC_\star$.
Since $\ell(h)=1$,
$\nu^{-n} \rL_{S}^n h = h$ and $\ell(\nu^{-n}\rL^n_{S} f)=\ell(f)$ we can apply, again, 
\cite[Lemma 2.2]{LSV98}
as in \eqref{eq:adapted} to $f$ and $\ell(f)h$ and obtain
\begin{equation}
\label{eq:ell0-prop}
\| \nu^{-n}\rL^n_{S} f-  h \ell( f) \|_\star = \nu^{-n} \| \rL^n_{S} f - \ell(f) \rL^n_{S} h \|_\star 
\leq C \vartheta^n \ell( f ) \| h \|_\star \, ,
\end{equation}
proving the second inequality of item (b) of the theorem. Note that \eqref{eq:ell0-prop} also implies
(integrating and applying Lemma~\ref{lem:star}(b) ) that the limsup in \eqref{eq:conve_l} is, in fact, a limit, and hence $\ell$ is linear.  Remark that $\ell$ is also nonnegative for $f \in \cC_{c,A,L}(\delta)$ by Remark~\ref{rem:after order}. 

By definition, if $f \in \cC_{c,A,L}(\delta)$ and $\lambda> \| f \|_\star$ then $\lambda + f, \lambda - f \in \cC_{c,A,L}(\delta)$,
so that using the linearity and nonnegativity of $\ell$ yields,
\begin{equation}
\label{eq:dom ell}
- \lambda \ell(1) \le \ell(f) \le \lambda \ell(1) \, , \quad \forall \; f \in \cC_{c,A,L}(\delta), \; \lambda > \| f \|_\star \, .
\end{equation}
Thus either $\ell(f) = 0$ for all $f \in \cC_{c,A,L}(\delta)$ or $\ell(f) \neq 0$ for all $f \in \cC_{c,AL}(\delta)$.  But
if the first alternative holds, then by the continuity of $\ell$ with respect to the $\| \cdot \|_\star$ norm 
(Lemma~\ref{lem:star}(b)), $\ell$ is identically 0 on $\cC_\star$, which is a contradiction since $\ell(h) =1$.
Thus $\ell(f) > 0$ for all $f \in \cC_{c,A,L}(\delta)$.

Finally, applying \eqref{eq:ell0-prop} to $f \equiv 1$ integrated with respect to $\musrb$ and using again Lemma~\ref{lem:star}(b), we obtain
\[
| \nu^{-n} \musrb(\rM^n)  - \ell(1) | \le C \vartheta^n \ell(1) \| h \|_\star \, ,
\]
which in turn implies that $\log \nu = \lim_{n \to \infty} \frac 1n \log \musrb(\rM^n)$  since $\ell(1) \neq 0$, as required for the
remaining item of part (a) of the theorem.  Note that $\nu \neq 0$ by Remark~\ref{rem:after order} and \eqref{eq:lower big H}, while $\nu \neq 1$ by monotonicity since the escape rate for this class of billiards is known
to be exponential for arbitrarily small holes \cite{DWY, dem bill}.
\end{proof}

We can use Theorem~\ref{thm:open} to obtain exit statistics from the open billiard in the plane.
As an example, for $\theta \in [0, 2\pi)$ let us define $H_\theta$ to be the set of $x \in H$ such that
the first intersection of  $O(T^{-1} x)$ with $\partial R$ has velocity 
making an angle of $\theta$ with the positive horizontal axis.  
Note that $H_\theta$ is a finite union of  increasing
curves since it is the image of a wave front with zero curvature moving with parallel velocities.  
The fact that $H_\theta$
comprises increasing  curves is not altered by the fact that the flow in $R$ may reflect off of $\partial R$ 
several times before arriving at a scatterer because such collisions are neutral; also, since the corners 
of $R$ are right angles, the flow remains continuous at these corner points.

Suppose the incoming particles at time zero are distributed according to a probability 
measure $f d\musrb$ with density $f \in\cC_{c,A,L}(\delta)$. 
The probability that a particle leaves the box at time $nN$ with a direction in the interval $\Theta=[\theta_1, \theta_2]$, 
call it  $\bP_f(x_n\in[\theta_1, \theta_2])$, can be expressed as 
\begin{equation}
\label{eq:P def}
\bP_f(x_n\in[\theta_1, \theta_2]) = \int_M \ind_{H_\Theta} \rL_S^n f \, d\musrb \, ,
\end{equation}
 where $H_\Theta := \cup_{\theta \in \Theta} H_\theta$.  Although the boundary of $H_\Theta$ comprises
increasing curves as already mentioned, the restriction on the angle may prevent $\partial H_\Theta$ from enjoying the property of continuation of singularities common to billiards.  See Figure~\ref{fig:hole} (see also
\cite[Sect.~8.2.2]{dem bill} for other examples of holes without the continuation of singularities property).

Similarly, for $p \in \partial R$, define $H_p$ to be the set of $x \in H$ such that the last intersection of $O(T^{-1}x)$
with $\partial R$ is $p$.  Then for an interval $P \subset \partial R$, we define $H_P = \cup_{p \in P} H_p$,
and $\int_M \ind_{H_P} \rL_S^n f$ denotes the probability that a particle leaves the box at time $nN$ through
the boundary interval $P$.

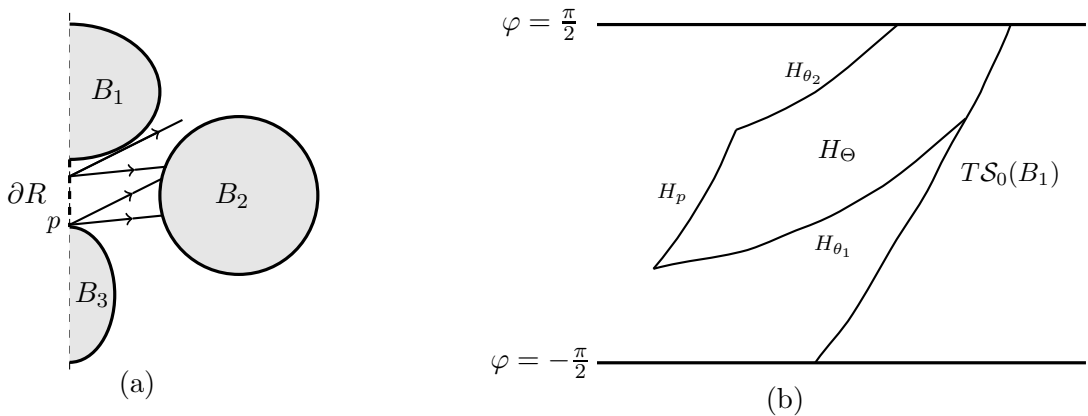
\begin{figure}[ht]
\begin{centering}
\begin{minipage}{.48 \linewidth} 
\hspace{1.5cm}	
\begin{tikzpicture}[scale=0.30]
\draw [dashed](0,3.5)--(0,-12.5);
\draw[very thick, dashed] (0,-3)--(0,-6);
\fill[gray!20!white] (0,3)--(0,-3) arc (-90:90:4 and 3)--cycle;
\draw[very thick] (0,-3) arc (-90:90:4 and 3);
\fill[gray!20!white] (0,-6)--(0,-12) arc (-90:90:2 and 3)--cycle;
\draw[very thick] (0,-12) arc (-90:90:2 and 3);
\fill[gray!20!white] (7.5,-4.6) circle (3.5);
\draw[very thick] (7.5,-4.6) circle (3.5);
\node at (1.7,0) {$B_1$};
\node at (7.2,-4.6) {$B_2$};
\node at (1,-9) {$B_3$};
\node at (-1.9, -4.5) {$\partial R$};
\node at (-0.7, -5.9) {\small $p$};
\node at (3, -13) {(a)};

\draw[thick, ->] (0,-5.9) -- (2.8, -5.62); 
\draw[thick] (2.8, -5.62) -- (4.1, -5.49);
\draw[thick, ->] (0,-5.9) -- (2.8, -4.5); 
\draw[thick] (2.8, -4.5) -- (4.1, -3.85);

\draw[thick] (0, -3.75) -- (5, -1.25); 
\draw[thick, ->] (0, -3.75) -- (4, -1.75);
\draw[thick, ->] (0, -3.75) -- (2.9, -3.46); 
\draw[thick] (2.9, -3.46) -- (4.3, -3.32);

\end{tikzpicture}

\end{minipage}
\begin{minipage}{.48 \linewidth}
\hspace{1.5cm}

\begin{tikzpicture}[scale = 0.5]

\draw[very thick] (2,11) -- (15,11);
\draw[very thick] (2,2) -- (15,2);

\draw[thick] (13,11) to[out=250, in=67.5] (12.2,9.2) to[out=242.5, in=65] (11,7) to [out=245, in=60] (9.8,5)
	to [out=240, in=57.5] (8.8,3.3) to [out=237.5, in=55] (7.8,2);
\draw[thick] (10,11) to[out=220, in=35] (7.8,9.2) to[out=210, in=20] (5.7,8.2);
\draw[thick] (11.8,8.5) to[out=220, in=35] (9.6,6.7) to[out=210, in=20] (7.5,5.6) to [out=200, in=18] (6,5)
	to [out=195, in=15] (3.5,4.5);

\draw[thick] (5.7,8.2) to[out=245, in=60] (5,6.7) to[out=240, in=50] (3.5,4.5);

\node at (0.5,2){$\vf = - \frac{\pi}{2}$};
\node at (0.5,11){$\vf = \frac{\pi}{2}$};
\node at (8.4,7.65){\small $H_{\Theta}$};
\node at (13, 7){\small $T\cS_0(B_1)$};
\node at (7.5,9.7){\scriptsize $H_{\theta_2}$};
\node at (8.3,5.1){\scriptsize $H_{\theta_1}$};
\node at (4,6.5){\scriptsize $H_p$};
\node at (7,1) {(b)};

\end{tikzpicture}
\end{minipage}
\end{centering}

\caption{a) Sample rays with $\theta = \theta_1$ and $\theta=\theta_2$
striking the scatterer $B_2$. The point $p$ is the topmost point of $\partial B_3$.
b) Component of $H_\Theta$ on the scatterer $B_2$.  In this configuration, $H_{\theta_1}$
intersects
the singularity curve $T\cS_0$ coming from $B_1$ while $H_{\theta_2}$ reaches $\cS_0$ directly; however,
the left boundary of $H_{\Theta}$ is an arc of $H_p$ and the continuation of singularities properties fails for a hole
of this type since $\theta_1 > 0$. }
\label{fig:hole}
\end{figure}

\begin{prop}
\label{prop:entry}
For any intervals of the form $\Theta = [\theta_1, \theta_2]$, or $P = [p_1, p_2]$, any $f \in C^1(M)$ with
$f \ge 0$ and
$\int f \, d\musrb = 1$, and all $n \ge 0$, we have\footnote{ If instead $f \in \cC_{c,A,L}(\delta)$, $f \ge 0$ and
$\int f \, d\musrb =1$, 
then $\| f \|_{C^1}$ can be dropped from the right hand side.}  
\[
\begin{split}
&\bP_{f} (x_n\in \Theta) =\nu^n h (\ind_{H_\Theta})  \ell(f)+ \|f\|_{C^1} \cO\big(\nu^n\vartheta^{\frac{q}{q+1}n}\big) \, ,
\quad \mbox{and} \quad\\
&\bP_{f} (x_n\in P) =\nu^n h (\ind_{H_P})  \ell(f)+  \|f\|_{C^1} \cO\big(\nu^n\vartheta^{\frac{q}{q+1}n}\big) \, .
\end{split}
\]
\end{prop}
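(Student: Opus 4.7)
}
The plan is to treat both statements simultaneously, since the only feature of $H_\Theta$ and $H_P$ that will be used is that their boundaries consist of a uniformly bounded number of $C^1$ arcs lying in the unstable cone. Indeed, $\partial H_{\theta_k}$ is the image of a zero-curvature wavefront under the flow, $\partial H_{p_k}$ is a single trajectory emanating from $p_k$, and any remaining boundary pieces lie in $\cS_0\cup T\cS_0$; all of these are positively sloped in canonical coordinates, while the stable cone is uniformly negatively sloped, so $\partial H_\Theta$ and $\partial H_P$ are uniformly transverse to stable curves. I first reduce to the case $f\in\cC_{c,A,L}(\delta)$. By Lemma~\ref{lem:dominate}, one can pick $\lambda\le C_1\|f\|_{C^1}$ such that both $f+\lambda$ and the constant $\lambda$ lie in the cone; writing $f=(f+\lambda)-\lambda$ and invoking linearity of $\ell$ (Theorem~\ref{thm:open}) together with $\ell(f+\lambda)-\lambda\ell(1)=\ell(f)$, one sees that it suffices to prove the bound for cone elements, and then $\|f\|_{C^1}$ enters the error through $\|f+\lambda\|_\star+\lambda\le C\|f\|_{C^1}$. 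From here on assume $f\in\cC_{c,A,L}(\delta)$.

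Fix $\ve\in(0,\delta)$ and choose a mollifier $\psi_\ve\in C^1(M)$ with $0\le\psi_\ve\le 1$, $\psi_\ve\equiv\ind_{H_\Theta}$ outside $N_\ve(\partial H_\Theta)$, and $|\psi_\ve|_{C^1}\le C\ve^{-1}$. Theorem~\ref{thm:open}(b) combined with Lemma~\ref{lem:star}(b) then yields
\[
\int_M \psi_\ve\,\rL_S^n f\,d\musrb \;=\; \nu^n\, \ell(f)\, h(\psi_\ve) \;+\; O\!\bigl(\nu^n\vartheta^n\ell(f)/\ve\bigr).
\]
For the boundary contribution I will disintegrate $\musrb$ along stable fibres exactly as in the proof of Lemma~\ref{lem:order}. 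By the transversality property above, each stable leaf $W$ meets $N_\ve(\partial H_\Theta)$ in at most $P_*$ subarcs of length at most $C_*\ve$, with $P_*, C_*$ depending only on the configuration. Applying cone condition \eqref{eq:cone 3} on each subarc with the constant test function gives
\[
\int_{W\cap N_\ve(\partial H_\Theta)}\rL_S^n f\,dm_W \;\le\; C\,A\,\delta^{1-q}\ve^q\,\tri\rL_S^n f\tri_-,
\]
and integrating against the factor measure, then using Remark~\ref{rem:after order} and $\musrb(\rL_S^n f)=\nu^n\ell(f)+O(\nu^n\vartheta^n\ell(f))$ from Theorem~\ref{thm:open}(b), yields
\[
\int_{N_\ve(\partial H_\Theta)}\rL_S^n f\,d\musrb \;\le\; C''\,\ve^q\,\nu^n\,\ell(f).
\]
Exactly the same inequality applied to $h\in\cC_\star$, which inherits the cone condition \eqref{eq:cone 3} as a closed condition under the $\|\cdot\|_\star$-approximation $h=\lim_k \rL_S^k 1/\musrb(\rL_S^k 1)$, produces $|h(\psi_\ve)-h(\ind_{H_\Theta})|\le h(N_\ve(\partial H_\Theta))\le C\ve^q$.

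Combining the three estimates gives
\[
\bP_f(x_n\in\Theta) \;=\; \nu^n\,\ell(f)\,h(\ind_{H_\Theta}) \;+\; O\!\Bigl(\nu^n\,\ell(f)\,\bigl(\vartheta^n/\ve + \ve^q\bigr)\Bigr).
\]
Balancing the two error contributions by choosing $\ve := \vartheta^{n/(q+1)}$ makes each of them equal to $\vartheta^{nq/(q+1)}$, which is exactly the claimed rate. The very same argument applies verbatim to $H_P$, since $\partial H_P$ enjoys the same transversality property. The main technical hurdle is precisely the geometric verification that $\partial H_\Theta$ and $\partial H_P$ have uniformly bounded complexity and uniform transversality to the stable cone in the presence of reflections off $\partial R$ and the singularity sets (as illustrated in Figure~\ref{fig:hole}); this amounts to a finite configuration-dependent case analysis yielding constants $P_*,C_*$ that are independent of $n$ and $\ve$, and so do not affect the asymptotic rate.
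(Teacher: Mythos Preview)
Your approach is essentially the same as the paper's: mollify $\ind_{H_\Theta}$, apply Theorem~\ref{thm:open}(b) together with Lemma~\ref{lem:star}(b) to the smooth part, use transversality plus cone condition~\eqref{eq:cone 3} and the disintegration of Lemma~\ref{lem:order} for the boundary layer, then balance $\ve=\vartheta^{n/(q+1)}$; the extension from $\cC_{c,A,L}(\delta)$ to $C^1$ via Lemma~\ref{lem:dominate} and linearity is also identical (the paper does this at the end rather than the beginning).

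The one place where your write-up is looser than the paper is the bound on $|h(\psi_\ve)-h(\ind_{H_\Theta})|$. You assert that $h\in\cC_\star$ ``inherits cone condition~\eqref{eq:cone 3} as a closed condition'' under $\|\cdot\|_\star$-approximation, but this is not immediate: the leafwise functionals on short curves $W\in\cW^s_-(\delta)$ and the quantity $\tri\cdot\tri_-$ are not a priori continuous for $\|\cdot\|_\star$. The paper avoids this by introducing a $C^1$ dominant $\tpsi_\ve$ with $|\ind_{H_\Theta}-\psi_\ve|\le\tpsi_\ve$, $\tpsi_\ve\equiv 1$ on $N_\ve(\partial H_\Theta)$ and supported in $N_{2\ve}(\partial H_\Theta)$; the uniform bound $\int_M\tpsi_\ve\,\rL_S^k 1/\musrb(\rL_S^k 1)\le C\ve^q$ then passes to the limit directly by Lemma~\ref{lem:star}(b) since $\tpsi_\ve\in C^1(M)$, and nonnegativity of the measure $h$ gives $|h(\ind_{H_\Theta}-\psi_\ve)|\le h(\tpsi_\ve)\le C\ve^q$. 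This is a cleaner route than arguing that the full cone structure survives completion.
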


\begin{remark}
If $f \in \cC_{c,A,L}(\delta)$, then $\ell(f) > 0$ by Theorem~\ref{thm:open}(b), and Proposition~\ref{prop:entry} provides a precise asymptotic 
for the escape of particles through $H_{\Theta}$ and $H_P$.  For more general $f \in C^1(M)$, it may be
that $\ell(f) =0$, in which case Proposition~\ref{prop:entry} merely gives an upper bound on the exit statistic compared
to the rate of escape given by $\nu$.
\end{remark}

\begin{proof}
We prove the statement for $\ind_{\Theta}$.  The statement for $\ind_P$ is similar.  

To start with we assume $f\in \cC_{c,A,L}(\delta)$, and  $f \ge 0$ with $\int f \, d\musrb = 1$.
As already mentioned,
$\partial H_\Theta$ comprises finitely many increasing curves in $M$ and so $H_\Theta$ satisfies
(O$1'$) and (O2) with $P_0=3$ and $C_t$ depending only on the uniform angle between the 
stable cone and $\partial H_\Theta$, which is strictly positive due to {\bf (H1)}. 
Since $\ind_{H_\Theta}$
is not in $C^1(M)$, we cannot apply Lemma~\ref{lem:star}(b) directly;  we will use a mollification to bypass this
problem.
 
Let $\rho : \bR^2 \to \bR^2$ be a nonnegative, $C^\infty$ function supported in the unit disk with $\int \rho = 1$, and define
$\rho_\ve(\cdot) = \ve^{-2} \rho( \cdot \, \ve^{-1} )$.  For $\ve > 0$, define the mollification,
\[
\psi_\ve(x) = \int \ind_{H_\Theta}(y) \rho_\ve(x-y) \, dy \qquad x \in M \, .
\]
We have $| \psi_\ve|_{C_0} \le 1$ and $| \psi_\ve' |_{C^0} \le C\ve^{-1}$.  Note that $\psi_\ve = \ind_{H_\Theta}$
outside an $\ve$-neighborhood of $\partial H_\theta$ (including $\cS_0$).  Letting $\tpsi_\ve$ denote a $C^1$ function with $|\tpsi_\ve|_{C^0} \le 1$,  which is 1 on $N_\ve(\partial H_\Theta)$ and 0 on $M \setminus N_{2 \ve}(\partial H_\Theta)$,
we have $| \ind_{H_\Theta} - \psi_\ve | \le \tpsi_\ve$.
Due to (O2), for any
$W \in \cW^s$ such that $W \cap N_\ve(\partial H_{\Theta}) \neq \emptyset$, 
using first the fact that $f \ge 0$ and then applying cone condition \eqref{eq:cone 3},
\begin{equation}
\label{eq:mollify}
\begin{split}
\int_W | \ind_{H_\Theta} - \psi_\ve | \, \rL_S^n f \, dm_W 
& \le \int_W \tpsi_\ve \, \rL_S^n f \, dm_W \le \int_{W \cap N_{2\ve}(\partial H_{\Theta})} \rL_S^n f \, dm_W \\
& \le 2^{1+q} A \delta^{1-q} C_t^q \ve^q \tri \rL_S^n f \tri_- \, ,
\end{split}
\end{equation}
where we have used the fact that $W \cap N_{2\ve}(\partial H_{\Theta})$ has at most 2 connected components
of length $2C_t \ve$.  Then integrating over $M$ and disintegrating $\musrb$ as in the proof of Lemma~\ref{lem:order},
we obtain,
\begin{equation}
\label{eq:epsilon}
\int_M | \ind_{H_\Theta} - \psi_\ve | \, \frac{\rL_S^n f}{\musrb(\rL_S^n f)} \, d\musrb 
\le \int_M  \tpsi_\ve \, \frac{\rL_S^n f}{\musrb(\rL_S^n f)} \, d\musrb  \le C \ve^q \frac{ \tri \rL_S^n f \tri_-}{\musrb(\rL_S^n f)} \, .
\end{equation}
By Remark~\ref{rem:after order}, $\musrb(\rL_S^n f) \ge \bar C^{-1} \tri \rL_S^n f \tri_-$, so the bound is uniform in $n$.
Since $\tpsi_\ve \in C^1(M)$ the  bound carries over to $h(\tpsi_\ve)$, and
since $h$ is a nonnegative measure, to $h(\ind_{H_\Theta} - \psi_\ve)$.  Thus for each $n \ge 0$ and $\ve>0$,
\begin{equation}
\label{eq:cone asy}
\begin{split}
\int \ind_{H_\Theta} \, \rL_S^n f \, d\musrb 
& = \int  ( \ind_{H_\Theta} - \psi_\ve ) \, \rL_S^n f \, d\musrb
+ \left( \int \psi_\ve \, \rL_S^n f \, d\musrb - \nu^n \ell(f) h(\psi_\ve) \right) \\
& \qquad + \nu^n \ell(f) h(\psi_\ve - \ind_{H_\Theta}) +
\nu^n \ell(f) h(\ind_{H_\theta}) \\
& = \cO\big(\ve^q \nu^n \ell(f)\big) + \cO\big(|\psi_\ve|_{C^1} \nu^n \vartheta^n \ell(f) \big) + \nu^n \ell(f) h(\ind_{H_\Theta}) \, ,
\end{split}
\end{equation}
where we have applied \eqref{eq:epsilon} to the first and third terms and 
Theorem~\ref{thm:open}(b) and Lemma~\ref{lem:star}(b) to the second term.  
Since $|\psi_\ve|_{C^1} \le \ve^{-1}$, choosing $\ve = \vartheta^{n/(q+1)}$
yields the required estimate for $f\in \cC_{c,A,L}(\delta)$. 

To conclude, note that by Lemma~\ref{lem:dominate}, there exists $C_\flat>0$ such that, if $f\in C^1(M)$, then, 
for each $\lambda\geq C_\flat \|f\|_{\cC^1}$, $\lambda+f\in\cC_{c,A,L}(\delta)$. 
Hence, by the linearity of the integral, $\ell(f)$ as defined in \eqref{eq:conve_l} can be extended to $f \in C^1$
by $\ell(f) = \ell(\lambda + f) - \ell(\lambda)$, and the limsup is in fact a limit since since the limit exists for $\lambda + f, \lambda \in \cC_{c,A,L}(\delta)$ (see \eqref{eq:ell0-prop} and following). 

Now take $f \in C^1$ with $\int f \, d\musrb =1$ and $\lambda \ge C_\flat \| f \|_{C^1}$ as above.   
Then, necessarily $\lambda + f \ge 0$, and so recalling \eqref{eq:P def}, we have
\[
\begin{split}
\bP_{\frac{\lambda+f}{1+\lambda}}(x_n\in \Theta)&=\int_M \ind_{H_\Theta} \rL_S^n { \left( \tfrac{\lambda+f}{1+\lambda} \right) }
=\frac{\lambda}{1+\lambda}\int_M \ind_{H_\Theta} \rL_S^n1+\frac{1}{1+\lambda}\int_M \ind_{H_\Theta} \rL_S^n f \\
&= \frac{\lambda}{1+\lambda}\bP_{1}(x_n\in \Theta)+\frac{1}{1+\lambda}\bP_{f}(x_n\in \Theta) .
\end{split}
\]
Hence by \eqref{eq:cone asy},
\[
\begin{split}
\bP_{f}(x_n\in \Theta)&=(1+\lambda)\bP_{\frac{\lambda+f}{1+\lambda}}(x_n\in \Theta)-\lambda \bP_{1}(x_n\in \Theta)\\
&=\nu^n h (\ind_{H_{\Theta}}) \big(\lambda \ell(1)+\ell(f) \big)-\nu^n h (\ind_{H_{ \Theta}}) \lambda \ell(1)+ \lambda \cO\big(\nu^n\vartheta^{\frac{q}{q+1}n}\big)\\
&=\nu^n h (\ind_{H_{ \Theta}}) \ell(f)+\|f\|_{\cC^1}\cO\big(\nu^n\vartheta^{\frac{q}{q+1}n}\big).
\end{split}
\]
\end{proof}


\subsection{Random Lorentz gas (lazy gates)} 
\label{sec:lorentz}
Consider a Lorentz gas as described in \cite[Section 2]{AL}. That is, we have a lattice of cells of size one with 
circular obstacles of fixed  radius $r$ at their corners and a random obstacle $B(z)$ of fixed 
radius $\rho$ and center in a set $\cO$ at their interior.\footnote{ The assumption that all obstacles are circular is not essential and can be relaxed by requiring that the obstacles at the corners are symmetric with respect to reflections as described in Section~\ref{sec:scattering}. }
The central obstacle is small enough not to intersect with the other obstacles but large enough to prevent trajectories from crossing the cell without colliding with an obstacle.
We call the openings between different cells {\em gates}, see Figure 5b, and require that no trajectory can cross two
gates without making at least one collision with the obstacles.  Thus we fix $r$ and $\rho$ satisfying\footnote{ Finite horizon requires $r \ge \frac{1}{1+\sqrt{2}}$, yet our added condition that a particle cannot cross diagonally from, say, $\hat R_1$ to $\hat R_2$ without making a collision requires further that $r \ge \frac 13$.}
the following conditions:
\begin{equation}
\label{eq:r rho}
 \tfrac{1}{3} \le r < \tfrac 12 \, , \quad \mbox{and} \quad 1-2r < \rho < \tfrac{\sqrt{2}}{2} - r \, .
\end{equation}
With $r$ and $\rho$ fixed, the set of 
possible configurations of the central obstacle are described by $\omega \in \Omega=\cO^{\bZ^2}$. 
In order to ensure that particles cannot cross directly from $\hat R_1$ to $\hat R_3$ or from $\hat R_2$ to $\hat R_4$
without colliding with an obstacle, and to ensure a minimum distance between scatterers, we fix $\ve_* > 0$ and require the center $c = (c_1, c_2)$ of the random obstacle 
$B_\omega$, $\omega \in \Omega$, 
(the central obstacle $C_5$ in Figure~5b) to satisfy,
\begin{equation}
\label{eq:c}
1 - (r+\rho - \ve_*)  \le c_1, c_2 \le r+\rho - \ve_* \, .
\end{equation}
Note that \eqref{eq:r rho} and \eqref{eq:c} imply that all possible positions of the central scatterer $B_\omega$ result
in a billiard table with $\tau_{\min} \ge \tau_* := \min \{ \ve_*, 1-2r \} > 0$.  

On $\Omega$ the space of translations $\xi_z$, $z\in\bZ^2$, acts naturally as $[\xi _z(\omega)]_x=\omega_{z+x}$, see Figure 5a. We assume that the obstacle configurations are described by a measure $\bP_e$ which is ergodic with respect to the translations.

\begin{figure}[ht]\
\begin{minipage}{.48 \linewidth} 
\hspace{1.5cm}	
\begin{tikzpicture}[scale=0.40]
\fill[gray!20!white] (0,0) circle (1.5);
\draw (0,0) circle (1.5);
\fill[gray!20!white] (4,0) circle (1.5);%
\draw (4,0) circle (1.5);
\fill[gray!20!white] (8,0) circle (1.5);%
\draw (8,0) circle (1.5);
\fill[gray!20!white] (0,-4) circle (1.5);
\draw (0,-4) circle (1.5);
\fill[gray!20!white] (4,-4) circle (1.5);%
\draw (4,-4) circle (1.5);
\fill[gray!20!white] (8,-4) circle (1.5);%
\draw (8,-4) circle (1.5);
\fill[gray!20!white] (0,-8) circle (1.5);
\draw (0,-8) circle (1.5);
\fill[gray!20!white] (4,-8) circle (1.5);%
\draw (4,-8) circle (1.5);
\fill[gray!20!white] (8,-8) circle (1.5);%
\draw (8,-8) circle (1.5);
\fill[gray!20!white] (2.2,-2) circle (1);
\draw (2.2,-2) circle (1);
\node at (2.2,-2) {{\tiny$ B_\omega(a)$}};
\fill[gray!20!white] (6,-1.8) circle (1);
\draw (6,-1.8) circle (1);
\node at (6,-1.8) {{\tiny$ B_\omega(b)$}};
\fill[gray!20!white] (2,-6.3) circle (1);
\draw (2,-6.3) circle (1);
\node at (2,-6.3) {{\tiny$ B_\omega(0)$}};
\fill[gray!20!white] (6,-5.7) circle (1);
\draw (6,-5.7) circle (1);
\node at (6.1,-5.7) {{\tiny$ B_\omega(c)$}};
\node at  (5,-11) {$a=(1,0); b=(1,1); c=(1,0)$};
\end{tikzpicture}
\end{minipage}
\begin{minipage}{.48 \linewidth}
\hspace{1.3cm}
\begin{tikzpicture}[scale=0.60]
\draw[dashed] (0,0)--(8,0);
\draw [dashed](0,0)--(0,-8);
\draw[dashed](0,-8)--(8,-8);
\draw[dashed](8,-8)--(8,0);
\draw[very thick, dashed] (3,0)--(5,0);
\draw[very thick, dashed] (3,-8)--(5,-8);
\draw[very thick, dashed] (0,-3)--(0,-5);
\draw[very thick, dashed] (8,-5)--(8,-3);
\fill[gray!20!white] (0,0)--(0,-3.5) arc (-90:0:3.5)--cycle;
\draw[very thick] (0,-3.5) arc (-90:0:3.5);
\fill[gray!20!white] (0,-8)--(3.5,-8) arc (0:90:3.5)--cycle;
\draw[very thick] (3.5,-8) arc (0:90:3.5);
\fill[gray!20!white] (8,-8)--(4.5,-8) arc (180:90:3.5)--cycle;
\draw[very thick] (4.5,-8) arc (180:90:3.5);
\fill[gray!20!white] (8,0)--(4.5,0) arc (180:270:3.5)--cycle;
\draw[very thick] (4.5,0) arc (180:270:3.5);
\fill[gray!20!white] (4.3,-4.4) circle (1.5);
\draw[very thick] (4.3,-4.4) circle (1.5);
\node at (1.7,-1.2) {$C_2$};
\node at (6.3,-1.2) {$C_1$};
\node at (1.7,-6.2) {$C_3$};
\node at (6.4,-6.2) {$C_4$};
\node at (4,-4) {$C_5$};
\node at (8.7,-4) {$\hat R_1$};
\node at (-.8,-4) {$\hat R_3$};
\node at (4,-.7) {$\hat R_2$};
\node at (4,-7.3) {$\hat R_4$};
\node at (.3,-1.5) {$r$};
\node at (5,-4.7) {$\rho$};
\draw (4.3,-4.4)--(5.8,-4.4); 
\end{tikzpicture}
\end{minipage}
\\ \vskip.1cm
{ Fig 5a: {\it Configuration of random obstacles $B_\omega(z)$}\hskip 1.2cm Fig 5b: {\it Poincar\'e section $C_i$ and gates $\hat R_i$\hspace{.5cm}}}
\end{figure}

Exactly as in the section \ref{sec:scattering}, we assume that the gates are reflecting and become transparent only after $N$ collisions with the obstacles. Thus when the particle enters a cell it will stay in that cell for at least $N$ collisions 
with the obstacles, hence the {\em lazy} adjective. 

As described in section \ref{sec:scattering}, when the particle reflects against a gate one can reflect the table  three times and see the flow (for the times at which the gates are closed) as a flow in a finite horizon Sinai billiard on the two torus. 
Note that the Poincar\'e section $M=\cup_{i=1}^5 C_i\times[-\frac\pi 2,\frac \pi 2]$ in each cell is exactly the same for each $\omega$ and $z$
since the arclength of the boundary is always the same, 
while the Poincar\'e map $T_z$ changes depending on the position of the central obstacle, see Figure 5b. 
Let us call $\cF(\tau_*)$ the collection of the different resulting billiard maps
corresponding to tables that maintain a minimum distance $\tau_*>0$ between obstacles, as required by
\eqref{eq:r rho} and \eqref{eq:c}. 
  (Note that the
parameters $\cK_*$ and $E_*$ of Section~\ref{sequential} are fixed in this class once $r$ and $\rho$ are fixed.)
The only difference with Section~\ref{sec:scattering}, as far as the dynamics in a cell is concerned, 
consists in the fact that we have to be more specific about which cell the particle enters, 
as now exiting from one cell means entering into another.

Recalling the notation of Section~\ref{sec:scattering}, if we call $R(z)$ the cell at the position $z\in\bZ^2$, then the 
gates $\hat R_i$ are subsets of $\partial R(z)$.  We denote  by $\widetilde R(z)$
the lifted cell (viewed as a subset of $\mathbb{T}^2$) after reflecting $R(z)$ three times, and
by $(\tM, \tT_z)$ the corresponding billiard map.  As before, the projection $\pi: \tM \to M$ satisfies
$\pi \circ \tT = T \circ \pi$. 
Then the hole $\widetilde H(z)$ can be written as $\widetilde H(z)=\cup_{i=1}^4 \widetilde H_i(z)$, where 
$\pi(\tH_i(z))=: H_i(z)$ are the points $x \in M$ such that $O(T^{-1}x) \cap \partial R(z)  \in \hat R_i$.\footnote{  The hole depends on the trajectory of $x$, which is different in different cells and hence depends on $z$,
while the gates $\hat R_i$ are independent of $z$.}  Due to our assumption \eqref{eq:r rho}, this point of
intersection is unique for each $x$ since consecutive collisions with $\partial R$ cannot occur.  
Then $H(z) = \pi(\tH(z)) = \cup_{i=1}^4 H_i(z)$.

As discussed in Section~\ref{sec:scattering}, the holes are neither of Type I nor of Type II, 
yet they satisfy (O1$'$) and (O2) with $P_0 = 3$ and $C_t$ depending only on the uniform angle 
between the stable cone for the induced billiard map and the horizontal and vertical directions.

Yet for our dynamics, when a particle changes cell at the $N$th collision, it is because after $N-1$ collisions,
that particle is in $G_i(z) := T_z^{-1}H_i(z)$, and in fact it will never reach $H_i(z)$.  Unfortunately, the geometry
of $G(z) := \cup_{i=1}^4 G_i(z)$ is not convenient for our machinery  since
$\partial G(z)$ may contain stable curves,
yet we will be able reconcile this difficulty after
defining the dynamics precisely as follows.

The phase space is $\bZ^2\times M$. For $x \in M$, denote by $p(x)$ the position of $x$ in $R(z)$ and by
$\theta(x)$ the angle of its velocity with respect to the positive horizontal axis in $R(z)$. 
We define 
\[
w(z, x )=\begin{cases} 0=:w_0 &\text{ if } x  \not \in G(z)\\
e_1=: w_1&\text{ if } x \in G_1(z)\\
e_2=:w_2&\text{ if } x \in G_2(z)\\
-e_1=:w_3&\text{ if } x \in G_3(z)\\
-e_2=:w_4&\text{ if } x \in G_4(z).
\end{cases}
\]
Also we set $\mathfrak{W}=\{w_0,\dots, w_4\}$.
If $ x  \in G_i(z)$, then we call $\bar q(x)=(q,\theta) \in \hat R_i\times  [0, 2\pi)$ the point 
$\bar q$ such that $q=O(x)\cap \hat R_i$ and $\theta = \theta(x)$, i.e. without reflection at $\hat R_i$.
We then consider $\bar q$ as a point in the cell $z+w(z,x) = z + w_i$ and call $T_{z,i}(x)$ the post-collisional velocity at the next collision with an obstacle under the flow starting at $\bar q$.  
Note that in the cell $R(z+w_i)$, $\bar q \in \hat R_{\bar i}$, where $\bar i = i + 2 \mbox{ (mod 4*)}$.\footnote{ By (mod 4*) we mean cyclic addition on 1, 2, 3, 4 rather than 0, 1, 2, 3.}
Thus if $\Phi_t^z$ denotes the flow in $R(z)$, then with this notation, $G_i(z)$ is the projection on $M$ of
$\hat R_i$ under the inverse flow $\Phi_{-t}^z$ while $H_{\bar i}(z+w(z,x))$ is the projection on $M$
of $\hat R_{\bar i}$ under the forward flow $\Phi_t^{z+w_i}$.  Thus,
\begin{equation}
\label{eq:correct}
H_{\bar i}(z + w_i) = T_{z,i} G_i(z) \implies \ind_{G_i(z)} \circ T_{z,i}^{-1} = \ind_{H_{\bar i}(z+w_i)} \, ,
\end{equation}
which is a relation we shall use to control the action of the relevant transfer operators below. 

Differing slightly from the previous section, here it is convenient to set $S_z=T_z^{N-1}$, and define
\[
F(z,x)=\begin{cases} (z, S_z\circ T_z(x))=:(z, \widehat S_z(x)) &\text{ if } x\not \in G(z)\\
(z+w(z,x), S_{z+w(z,x)}\circ T_{z,i}(p))=:(z+w(z,x), \widehat S_z(x))&\text{ if } x\in G_i(z).
\end{cases}
\]
We set $(z_n,x_n)=F^n(z,x)$ and we call $n$ the {\em macroscopic time}, which corresponds to $Nn$ collisions
with the obstacles.
The above corresponds to a dynamics in which when the particle enters a cell, it is 
trapped in the cell for $N$ collisions with the obstacles; then the gates open and until the next collision the particle can change cells, after 
which it is trapped again for $N$ collisions and so on.

We want to compute the probability that a particle visits the sets  $G_{k_0}(z_0),\cdots G_{k_{n-1}}(z_{n-1})$,  in this order, where we have set $G_0(z)=M\setminus \cup_{i=1}^4 G_i(z)$. Similarly, we define $H_0(z) = M \setminus \cup_{i=1}^4 H_i(z)$.  This itinerary corresponds to a particle that at time $i$ changes its position in the lattice by 
$w_{k_i}$. 
Following the notation of \cite{AL}, we call $\bP_\omega$ the probability distribution in the path space 
$\mathfrak{W}^\bN$ conditioned on the central obstacles being in the positions specified  
by $\omega\in\Omega$. 
Hence, if the particle starts from the cell $z_0 = (0,0)$ with $x$  distributed according to a 
probability measure $f d\musrb$ with density $f \in \cC_{c,A,L}(\delta)$, then we have\footnote{ Since $z_0 = (0,0)$, it is equivalent to
specify $z_1, \ldots z_n$ or $w_{k_0}, \ldots w_{k_{n-1}}$ since $w_{k_j}$ can be recovered as $w_{k_j} = z_{j+1} - z_j$.} $z_n= \sum_{k=0}^{n-1} w_{k_i}$ and, for each obstacle distribution $\omega\in \Omega$,
\begin{equation}\label{eq:path_prob}
\begin{split}
\bP_\omega(z_0, z_1,\dots, z_n)  =&\int_M  f(x) \ind_{G_{k_0}(z_0)}(x)\ind_{G_{k_1}(z_1)}(\widehat S_{0}(x))\cdots \\
&\phantom{\int_M}
\cdots \ind_{G_{k_{n-1}}(z_{n-1})}(\widehat S_{z_{n-2}}\circ\cdots\circ \widehat S_{0}(x)) \, d\musrb(x)  \\
=&\int_M\rL_{G_{k_{n-1}}(z_{n-1})}\cdots \rL_{G_{k_0}(z_0)} f \, d\musrb 
\end{split}
\end{equation}
where $\rL_{G_{k_j}(z_j)} := \cL_{T_{z_{j+1}}}^{N-1}\cL_{T_{z_j,k_j}}\ind_{G_{k_j}(z_j)}$, and we have set $T_{z,0}:=T_z$. See \cite{AL} for more details. We will prove below that if  $N$ is sufficiently large, then Theorem \ref{thm:sequential} applies to each 
operator $\rL_{G_{k}}$.  This suffices to obtain an exponential  loss of memory property (the analogue of the result obtained for piecewise expanding maps in \cite[Theorem 6.1]{AL}), that is property {\bf Exp} in \cite[Section 4.1]{AL}. 
This is the content of the following theorem.

\begin{theorem}
\label{thm:mixing}
There exist $C_*>0$, $\vartheta\in (0,1)$ and $N \in \mathbb{N}$ such that for $\bP$-a.e. $\omega \in \Omega$,
if $x$ is distributed according to $f \in \cC_{c,A,L}(\delta)$, with $f \ge 0$ and $\int_M f \, d\musrb= 1$,
$z_0=(0,0)$, and the gates open only once every $N$ collisions, 
then for all $n > m \ge 0$ and 
all $w \in \mathfrak{W}^\bN$,
\begin{equation}\label{eq:gibbs}
\left|  \bP_{\omega}(w_{ k_n }\; | \; w_{ k_0} \ldots w_{ k_{n-1}}) - \bP_{\xi_{z_{m}} \omega} ( w_{k_n} \; | \; w_{k_m} \ldots w_{k_{n-1}}) \right| \leq C_* \vartheta^{n-m}.
\end{equation}
\end{theorem}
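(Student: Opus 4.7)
The plan is to realize both conditional probabilities in \eqref{eq:gibbs} as ratios of integrals involving the same ``tail'' composition of operators applied to two different initial densities, and then to bound their difference via the contraction of the Birkhoff cone $\cC_{c,A,L}(\delta)$ under those operators. Setting $g_m^\omega := \rL_{G_{k_{m-1}}(z_{m-1})}\circ\cdots\circ \rL_{G_{k_0}(0)}\,f$ (with $g_0^\omega=f$), formula \eqref{eq:path_prob} yields
\[
\bP_\omega(w_{k_n}\mid w_{k_0},\dots,w_{k_{n-1}}) \;=\; \frac{\int_M \rL_{G_{k_n}(z_n)}\,\cR\,g_m^\omega\,d\musrb}{\int_M \cR\,g_m^\omega\,d\musrb}, \qquad \cR:=\rL_{G_{k_{n-1}}(z_{n-1})}\circ\cdots\circ \rL_{G_{k_m}(z_m)}.
\]
I would then observe that the cocycle identity $\rL^{\xi_{z_m}\omega}_{G_k(z-z_m)}=\rL^\omega_{G_k(z)}$---which holds because $B_{\xi_{z_m}\omega}(y)=B_\omega(y+z_m)$, so the dynamics in cell $z$ under $\omega$ matches the dynamics in cell $z-z_m$ under $\xi_{z_m}\omega$---implies that $\bP_{\xi_{z_m}\omega}(w_{k_n}\mid w_{k_m},\dots,w_{k_{n-1}})$ is the identical ratio with $g_m^\omega$ replaced by $f$.

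Next I would verify the uniform cone contraction of every macroscopic operator $\rL_{G_k(z)}=\cL_{T_{z+w_k}}^{N-1}\cL_{T_{z,k}}\ind_{G_k(z)}$ for $N$ large enough, with constants $c,A,L,\chi$ independent of $k,z,\omega$. Two geometric inputs are needed. First, $G_k(z)=T_z^{-1}H_k(z)$ is the set of $(p,\theta)\in M$ whose free forward trajectory crosses the gate $\hat R_k$ before meeting an obstacle; its boundary is made of finitely many tangency curves (grazing the endpoints of $\hat R_k$ or a shadowing obstacle), all of them unstable curves in $M$, and the compactness of the allowed central obstacle configurations enforced by \eqref{eq:r rho}--\eqref{eq:c} gives uniform constants $P_0,C_t$ for $(O1')$ and $(O2)$. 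Second, inside $\rL_{G_k(z)}$ the $N-1$ consecutive iterations of the constant map $T_{z+w_k}$ supply the mixing required by Lemma~\ref{lem:proper cross} and Theorem~\ref{thm:cone contract} as soon as $N-1\ge N_\cF$, while the single preliminary step $\cL_{T_{z,k}}$ should be absorbed as one cone-preserving step in the spirit of Proposition~\ref{prop:almost}. Adapting Proposition~\ref{prop:case-b} (together with Lemma~\ref{lem:large hole}) would then produce uniform $\chi\in(0,1)$ and $c,A,L>0$ with $\rL_{G_k(z)}\,\cC_{c,A,L}(\delta)\subset\cC_{\chi c,\chi A,\chi L}(\delta)$ for every admissible $k,z,\omega$.

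Given this uniform contraction, the conclusion is a standard Birkhoff--Bushell argument. By hypothesis $f\in\cC_{c,A,L}(\delta)$, and cone invariance propagates this to $g_m^\omega\in\cC_{c,A,L}(\delta)$ for every $m\ge 0$, so for $n>m$ Proposition~\ref{prop:diameter} together with \cite[Theorem~2.1]{liv95} gives
\[
\rho_\cC(\cR f,\;\cR g_m^\omega)\le C\vartheta^{n-m}
\]
for constants $C>0$ and $\vartheta\in(0,1)$ independent of the sequences, of $f$, and of $\omega$. By \eqref{eq:H def} there exist $0<\alpha\le\beta$ with $\log(\beta/\alpha)\le C\vartheta^{n-m}$ and $\alpha\,\cR f\preceq \cR g_m^\omega\preceq \beta\,\cR f$. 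Since both $u\mapsto\int_M u\,d\musrb$ (by Lemma~\ref{lem:order} applied to $\psi\equiv 1$) and $u\mapsto\int_M\rL_{G_{k_n}(z_n)}u\,d\musrb$ (by Remark~\ref{rem:after order} combined with cone invariance) are strictly positive and order-preserving on $\cC_{c,A,L}(\delta)$, sandwiching numerator and denominator of the two ratios and using that each conditional probability lies in $[0,1]$ yields
\[
\big|\bP_\omega(w_{k_n}\mid w_{k_0},\dots,w_{k_{n-1}}) - \bP_{\xi_{z_m}\omega}(w_{k_n}\mid w_{k_m},\dots,w_{k_{n-1}})\big|\le e^{C\vartheta^{n-m}}-1 \le C_*\vartheta^{n-m},
\]
which is \eqref{eq:gibbs} (the case $m=0$ is trivial).

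The hard part will be Step~2: the transition map $T_{z,k}$ appearing once inside $\rL_{G_k(z)}$ is generally \emph{not} $d_\cF$-close to the constant map $T_{z+w_k}$ that fills out the remainder of the macroscopic step, so the underlying sequence of billiard maps is not literally admissible in the sense of Theorem~\ref{thm:sequential} and Proposition~\ref{prop:case-b} cannot be invoked as a black box. One must re-run the arguments of Sections~\ref{sec:L contract} and~\ref{sec:large} exploiting that $T_{z,k}$ occurs exactly once per step while the subsequent $N-1\ge N_\cF$ iterates form a genuinely constant admissible block of the correct mixing length; a secondary point is the uniform geometric analysis of $\partial G_k(z)$, whose bounding tangency curves are structurally different from those of the Type~I and Type~II holes of Section~\ref{sequential} and must be controlled explicitly from \eqref{eq:r rho}--\eqref{eq:c}.
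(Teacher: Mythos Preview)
Your overall architecture---rewrite both conditional probabilities as ratios with the same tail operator $\cR$ applied to $f$ and to $g_m^\omega$, prove uniform cone contraction of each macroscopic block $\rL_{G_k(z)}$, then conclude by a Birkhoff sandwich with the order-preserving functional $\musrb(\cdot)$---matches the paper's proof. The final ratio estimate and the use of Lemma~\ref{lem:order} are exactly what the paper does (with $\bar\cL_m f=g_m^\omega/\musrb(g_m^\omega)$ in place of your $g_m^\omega$, which is immaterial by homogeneity).

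The genuine gap is in your Step~2, and it is not the admissibility issue you flag at the end. Your claim that $\partial G_k(z)$ consists of unstable curves is false. The set $G_k(z)$ is defined by a \emph{forward} flow condition (the outgoing trajectory from $x\in M$ reaches $\hat R_k$ before hitting an obstacle), so its boundary is made of forward-tangency curves and curves of rays aimed at the endpoints of $\hat R_k$; these are decreasing (stable-type) curves in $(r,\varphi)$, just like $T^{-1}\cS_0$. Hence (O2) fails for $G_k(z)$: a stable curve $W$ can run along $\partial G_k(z)$ and $m_W(N_\ve(\partial G_k(z)))$ is not $O(\ve)$. The paper says this explicitly and does \emph{not} try to verify (O1$'$)/(O2) for $G_k(z)$.

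The fix the paper uses---and which you are missing---is the commutation identity \eqref{eq:correct}: since $H_{\bar k}(z+w_k)=T_{z,k}G_k(z)$, one has $\ind_{G_k(z)}\circ T_{z,k}^{-1}=\ind_{H_{\bar k}(z+w_k)}$, and therefore
\[
\rL_{G_k(z)}g=\cL_{T_{z+w_k}}^{N-1}\cL_{T_{z,k}}(\ind_{G_k(z)}g)=\cL_{T_{z+w_k}}^{N-1}\bigl(\ind_{H_{\bar k}(z+w_k)}\,\cL_{T_{z,k}}g\bigr).
\]
Now the indicator is that of an $H$-set, whose boundary \emph{is} made of unstable curves (it is a forward image of the flat gate), so (O1$'$) and (O2) hold uniformly. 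The single unrestricted step $\cL_{T_{z,k}}$ sends $\cC_{c,A,L}(\delta)$ into some $\cC_{c',A',3L}(\delta)$ by the Section~\ref{sec:cone} estimates, and then $\cL_{T_{z+w_k}}^{N-1}\ind_{H_{\bar k}(z+w_k)}$ is exactly the situation of Proposition~\ref{prop:case-b} with a \emph{constant} map (trivially admissible), giving the uniform contraction for $N-1\ge n_\star$. This also dissolves the admissibility worry you raise: after the swap, all $N-1$ iterates in the block use the single map $T_{z+w_k}$, so no closeness between $T_{z,k}$ and $T_{z+w_k}$ is ever needed.
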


\begin{proof}
Note that for $m \ge 0$, $\xi_{z_m}\omega$ sends the cell at $z_m$ to (0,0).  Thus according to
equation \eqref{eq:path_prob}, for $x$ distributed according to $f \in \cC_{c,A,L}(\delta)$ with $z_0 = (0,0)$,
we have
\[
\bP_{\xi_{z_m}\omega} (w_{k_m}, \ldots w_{k_n}) = \int_M \rL_{G_{k_n}(z_n)} \cdots \rL_{G_{k_m}(z_m)}f\, d\musrb\, .
\]
As remarked earlier, the sets $G_i(z)$ do not satisfy assumption (O2) so that Proposition~\ref{prop:case-b} does not
apply directly.  Yet, it follows from \eqref{eq:correct} that for $g \in \cC_{c,A,L}(\delta)$,
\[
\rL_{G_{k_j}(z_j)} g =  \cL_{T_{z_{j+1}}}^{N-1}\cL_{T_{z_j,k_j}}(\ind_{G_{k_j}(z_j)} g)  
= \cL_{T_{z_{j+1}}}^{N-1} \big( \ind_{H_{\bar k_j}(z_{j+1})} \cL_{T_{z_j,k_j}}g \big) \, ,
\]
where, as before, $\bar k_j = k_j + 2$ (mod 4*).  Then, just as in the proof of Proposition~\ref{prop:case-b},
it may be the case that $\cL_{T_{z_j, k_j}}g$ is not in $\cC_{c,A,L}(\delta)$.  Yet,
it is immediate from our estimates in Section~\ref{sec:cone} that $\cL_{T_{z_j, k_j}}g \in \cC_{c',A', 3L}(\delta)$
for any billiard map $T_{z_j, k_j} \in \cF(\tau_*)$ for some constants $c', A'$ depending only on $\cF(\tau_*)$.  
As in the proof of Proposition~\ref{prop:case-b}, we may choose 
constants $c'' \ge c'$, $A'' \ge A'$ and $L'' \ge 3L$ and $\delta >0$ sufficiently small to satisfy
the hypotheses of Theorem~\ref{thm:cone contract}.
Then since the sets $H_i(z)$ do satisfy (O$1')$ and (O2) with $P_0 = 3$ and $C_t$ depending only on the
angle between the stable cone and the vertical and horizontal directions, which has a uniform minimum in the family $\cF(\tau_*)$, 
there exists $\chi<1$ and $N$ sufficiently large as in Proposition~\ref{prop:case-b} so that\footnote{ Here in fact our operators are of the form $\cL^n\ind_H$ while in Proposition~\ref{prop:case-b} they have the form
$\cL_n \ind_{H^c}$ for some set $H$.  Yet, this is immaterial since the boundaries of $H$ and $H^c$ in $M$ are the
same so that (O$1'$) and (O2), and in particular Lemma~\ref{lem:H3}, apply equally well to both sets.}
$\big[ \cL_{T_{z_{j+1}}}^{N-1} \ind_{H_{\bar k_j}(z_{j+1})} \big] \cC_{c',A', 3L}(\delta) \subset \cC_{\chi c, \chi A, \chi L}(\delta)$, and both $\chi$ and $N$ are independent of $z_{j+1}$ and $k_j$.  This implies in particular that
\[
\rL_{G_i(z)} \cC_{c,A,L}(\delta) \subset \cC_{\chi c, \chi A, \chi L}(\delta) \qquad \mbox{for each $i$ and all $z \in \bZ^2$.}
\]
Now the assumption that the gates only open every $N$ collisions implies that for every 
$\omega \in \Omega$, every path is the result of
an $N$-admissible sequence.

As in the proof of Theorem~\ref{thm:memory}, using the fact that $\musrb( \cdot )$ is homogeneous and
order preserving on $\cC_{c,A,L}(\delta)$ and that $\musrb(\bar \cL_m f) = \musrb(f) =1$, where
$\bar \cL_m f = \frac{\rL_{G_{k_m-1}(z_{m-1})} \cdots \rL_{G_{k_0}(z_0)} f}{\int_M \rL_{G_{k_m-1}(z_{m-1})} \cdots \rL_{G_{k_0}(z_0)} f} \in \cC_{c,A,L}(\delta)$, we estimate as in \eqref{eq:adapted} and \eqref{eq:contract},
\begin{equation}
\label{eq:integral contract}
\begin{split}
\int_M  & \rL_{G_{k_{n-1}}(z_{n-1})} \cdots \rL_{G_{k_m}(z_m)} (f - \bar\cL_m f) \, d\musrb \\
& \le C \vartheta^{n-m} \min \left\{ \int_M \rL_{G_{k_{n-1}}(z_{n-1})} \cdots \rL_{G_{k_m}(z_m)} f,
\int_M  \rL_{G_{k_{n-1}}(z_{n-1})} \cdots \rL_{G_{k_m}(z_m)} \bar\cL_m f \right\} \, ,
\end{split}
\end{equation}
for some $\vartheta<1$ depending on the diameter of $\cC_{\chi c, \chi A, \chi L}(\delta)$ in $\cC_{c,A,L}(\delta)$. 

Finally, the left hand side of \eqref{eq:gibbs} reads
\[
\begin{split}
&\left| \frac{\int_M\rL_{G_{k_{n-1}}(z_{n-1})}\cdots \rL_{G_{k_0}(z_0)} f}{\int_M\rL_{G_{k_{n-2}}(z_{n-2})}\cdots \rL_{G_{k_0}(z_0)}f}
-\frac{\int_M\rL_{G_{k_{n-1}}(z_{n-1})}\cdots \rL_{G_{k_m}(z_m)} f}{\int_M\rL_{G_{k_{n-2}}(z_{k_{n-2}})}\cdots \rL_{G_{k_m}(z_m)}f }\right| \\
& \le \left| \frac{\int_M\rL_{G_{k_{n-1}}(z_{n-1})}\cdots \rL_{G_{k_m}(z_m)} \bar \cL_m f -  \int_M\rL_{G_{k_{n-1}}(z_{n-1})}\cdots \rL_{G_{k_m}(z_m)} f   }{\int_M\rL_{G_{k_{n-2}}(z_{n-2})}\cdots \rL_{G_{k_m}(z_m)} \bar\cL_m f} \right| \\
& \qquad + \left| \frac{\int_M\rL_{G_{k_{n-1}}(z_{n-1})}\cdots \rL_{G_{k_m}(z_m)} f}{\int_M\rL_{G_{k_{n-2}}(z_{k_{n-2}})}\cdots \rL_{G_{k_m}(z_m)} \bar \cL_m f }     - \frac{\int_M\rL_{G_{k_{n-1}}(z_{n-1})}\cdots \rL_{G_{k_m}(z_m)} f}{\int_M\rL_{G_{k_{n-2}}(z_{k_{n-2}})}\cdots \rL_{G_{k_m}(z_m)}f }\right| \\
& \le C \vartheta^{n-m} + C \vartheta^{n-m-1} \, ,
\end{split}
\]
where we have applied \eqref{eq:integral contract} twice and used the fact that
$\frac{\int_M\rL_{G_{k_{n-1}}(z_{n-1})}\cdots \rL_{G_{k_m}(z_m)} g}{\int_M\rL_{G_{k_{n-2}}(z_{k_{n-2}})}\cdots \rL_{G_{k_m}(z_m)}g } \le 1$ for any $g \in \cC_{c,A,L}(\delta)$.  
\end{proof}

In particular, Theorem \ref{thm:mixing}, together with\footnote{ Remark that \cite[Theorem~6.4]{AL} requires
$\musrb(G_i(z))$ to be the same for each $i$ and $z$, independently of $\omega$.  This is precisely the case
here since $G_i(z)$ is defined as the projection of $\hat R_i$ under the inverse flow $\Phi^z_{-t}$, and 
Leb$(\hat R_i \times [0, 2\pi))$ in the phase space of the flow is independent of $i$, while 
$\musrb$ is the projection onto $M$ of Lebesgue measure, which is invariant under the flow.}
  \cite[Theorem~6.4]{AL},
implies that $\lim_{n\to \infty} \frac 1nz_n =0$ for $\bP_e$ almost all $\omega$, that is, the walker has, $\bP_e$-almost-surely, no drift.   See \cite[Section~6]{AL} for details.\footnote{The arguments in \cite[Section~6]{AL} are developed for expanding maps, but the relevant parts apply verbatim to the present context.}
This latter fact could be deduced also from the ergodicity result in \cite[Theorem 5.4]{Lenci06}; however, Theorem  \ref{thm:mixing} is much stronger (indeed, by \cite[Theorem 6.4]{AL}, it implies \cite[Theorem 5.4]{Lenci06}) since it proves some form of memory loss that is certainly not implied by ergodicity alone. It is therefore sensible to expect that more information on the random walk will follow from Theorem  \ref{thm:mixing}, although this will require further work.

We conclude with a corollary of Theorem~\ref{thm:mixing} which implies the same exponential loss of
memory for particles distributed according to two different initial distributions.  For $f \in \cC_{c,A,L}(\delta)$,
let $\bP_{\omega, f}( \cdot)$ denote the probability in the path space 
$\mathfrak{W}^\bN$ conditioned on the central obstacles being in position $\omega\in\Omega$ and with
$x$ initially distributed according to $f d\musrb$. 

\begin{cor}
There exist $C>0$ and $\vartheta\in (0,1)$ such that for all $f, g \in \cC_{c,A,L}(\delta)$ with $\int_M f = \int_M g =1$
and $\bP$-a.e. $\omega \in \Omega$, if
$z_0=(0,0)$, then for all $n \ge 0$ and 
all $w \in \mathfrak{W}^\bN$,
\[
\left|  \bP_{\omega, f}(w_{ k_n} \; | \; w_{ k_0} \ldots w_{ k_{n-1}}) - \bP_{\omega, g} ( w_{k_n} \; | \; w_{k_0} \ldots w_{k_{n-1}}) \right| \leq C \vartheta^n.
\]
\end{cor}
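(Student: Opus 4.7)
The plan is to combine the cone contraction behind Theorem~\ref{thm:mixing} with an elementary Birkhoff-cone ratio argument. Define the linear functionals
\begin{equation*}
\Psi_m(\mu) := \int_M \rL_{G_{k_{m-1}}(z_{m-1})} \cdots \rL_{G_{k_0}(z_0)}\mu \, d\musrb, \qquad m \ge 0,
\end{equation*}
so that $\bP_{\omega,h}(w_{k_n}\mid w_{k_0}\ldots w_{k_{n-1}}) = \Psi_{n+1}(h)/\Psi_n(h)$ for any initial density $h$. Since each $\rL_{G_{k_j}(z_j)}$ sends $\cC_{c,A,L}(\delta)$ into $\cC_{\chi c,\chi A,\chi L}(\delta) \subset \cC_{c,A,L}(\delta)$ uniformly in $z_j$, $k_j$, and $\omega$ (as established in the proof of Theorem~\ref{thm:mixing}), and since integration against $\musrb$ is order-preserving on $\cC_{c,A,L}(\delta)$ by Lemma~\ref{lem:order} with $\psi \equiv 1$, each $\Psi_m$ is an order-preserving linear functional on the cone. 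It is moreover strictly positive on densities $h\in\cC_{c,A,L}(\delta)$ with $\musrb(h)=1$, by Remark~\ref{rem:after order}.

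Next I would apply Birkhoff contraction directly to the iterated pair $f_n := \rL_{G_{k_{n-1}}(z_{n-1})} \cdots \rL_{G_{k_0}(z_0)}f$ and its analogue $g_n$. After just one application of $\rL_{G_{k_0}(z_0)}$ both $f_1, g_1 \in \cC_{\chi c,\chi A,\chi L}(\delta)$, whose diameter $\Delta < \infty$ inside $\cC_{c,A,L}(\delta)$ is supplied by Proposition~\ref{prop:diameter}. By \cite[Theorem~2.1]{liv95}, the remaining $n-1$ applications contract the projective metric at a uniform rate $\vartheta<1$, yielding
\begin{equation*}
\rho_{\cC}(f_n, g_n) \le \vartheta^{n-1}\Delta .
\end{equation*}
Choosing $\alpha, \beta > 0$ with $\alpha f_n \preceq g_n \preceq \beta f_n$ and $\log(\beta/\alpha) = \rho_{\cC}(f_n, g_n)$, the two order-preserving functionals on $\cC_{c,A,L}(\delta)$ given by $\mu \mapsto \int_M \mu\,d\musrb$ and $\mu \mapsto \int_M \rL_{G_{k_n}(z_n)}\mu\,d\musrb$ both convert this cone bracketing into the scalar bracketing by $\alpha$ and $\beta$. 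In other words, both $\Psi_n(g)/\Psi_n(f) \in [\alpha,\beta]$ and $\Psi_{n+1}(g)/\Psi_{n+1}(f) \in [\alpha,\beta]$, so
\begin{equation*}
\frac{\bP_{\omega, g}(w_{k_n}\mid w_{k_0}\ldots w_{k_{n-1}})}{\bP_{\omega, f}(w_{k_n}\mid w_{k_0}\ldots w_{k_{n-1}})}
\;=\;\frac{\Psi_{n+1}(g)/\Psi_{n+1}(f)}{\Psi_n(g)/\Psi_n(f)}
\;\in\; \bigl[e^{-\rho_{\cC}(f_n,g_n)},\, e^{\rho_{\cC}(f_n,g_n)}\bigr].
\end{equation*}

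Since the left-hand side is a ratio of quantities in $[0,1]$ with numerator bounded by one, the corollary follows from $|e^{\vartheta^{n-1}\Delta} - 1| \le C\vartheta^n$ (after absorbing the factor $\vartheta^{-1}\Delta e^{\Delta}$ into $C$). The only nontrivial verification in the whole scheme is that the compositions of $\rL_{G_{k_j}(z_j)}$ preserve the cone with uniform contraction rate across all sequences $(k_j)$ and all $\omega$; but that is precisely what the proof of Theorem~\ref{thm:mixing} already established, so I do not anticipate any substantial new obstacle beyond recording these ingredients in the order above.
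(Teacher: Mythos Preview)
Your argument is correct and uses essentially the same ingredients as the paper: the uniform cone contraction of the operators $\rL_{G_{k_j}(z_j)}$ established in the proof of Theorem~\ref{thm:mixing}, together with the order-preservation of integration against $\musrb$ from Lemma~\ref{lem:order}. The paper's own proof is one line (replace $\bar\cL_m f$ by $g$ in \eqref{eq:integral contract} and rerun the final triangle-inequality estimate of Theorem~\ref{thm:mixing}); your route via the Hilbert-metric ratio of two order-preserving functionals is an equivalent packaging of the same Birkhoff mechanism.
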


\begin{proof}
The proof is the same as that of Theorem~\ref{thm:mixing} since \eqref{eq:integral contract} holds as well with
$\bar \cL_m f$ replaced by $g$.
\end{proof}

\section*{Declarations}

\subsection*{Competing Interests}

This work was supported by the PRIN Grant ``Regular and stochastic behavior in dynamical systems" (PRIN 2017S35EHN). C. Liverani acknowledges the MIUR Excellence Department Project awarded to the Department of Mathematics, University of Rome Tor Vergata, CUP E83C18000100006.  M. Demers was partially supported by NSF grants DMS 1800321 and DMS 2055070.

C. Liverani is a member of the Editorial Board of Communications in Mathematical Physics.

\subsection*{Data Sharing}

Data sharing is not applicable to this article as no datasets were generated or analyzed during the current study.

\newpage


\small

%
%
%

\end{document}